\title{Continuum asymptotics for tree growth models\\ with uniform backward dynamics}
\author{David Geldbach\footnote{Department of Statistics, University of Oxford; {david.geldbach@stats.ox.ac.uk / david.geldbach@gmail.com}}}
\date{\today}
\newtheorem{theorem}{Theorem}
\newtheorem{lemma}[theorem]{Lemma}
\newtheorem{prop}[theorem]{Proposition}
\newtheorem{cor}[theorem]{Corollary}
\theoremstyle{definition}
\newtheorem{definition}[theorem]{Definition}
\newtheorem{remark}[theorem]{Remark}
\newtheorem{example}[theorem]{Example}
\newtheorem{construction}[theorem]{Construction}
\numberwithin{equation}{section} 
\numberwithin{theorem}{section}
\newcommand*{\eps}{\ensuremath{\varepsilon}}            
\newcommand*{\RR}{\ensuremath{\mathbb{R}}}	        	
\newcommand*{\NN}{\ensuremath{\mathbb{N}}}		        
\newcommand*{\PP}{\ensuremath{\mathbb{P}}}	        	
\newcommand*{\TT}{\ensuremath{\mathbb{T}}}	        	
\newcommand*{\II}{\ensuremath{\mathds{1}}}	        	
\newcommand*{\indep}{\perp \!\!\! \perp}                
\newcommand*{\bfT}{\ensuremath{\textbf{T}}}	        	
\newcommand*{\bfS}{\ensuremath{\textbf{S}}}	        	
\DeclareMathOperator\supp{supp}	        
\DeclareMathOperator\spn{span}	        
\newcommand*{\dGP}{\ensuremath{d_{\mathrm{GP}}}}	        
\begin{document}

\maketitle

\begin{abstract}
    We study (plane) tree-valued Markov chains $(T_n,n \geq 1)$ with uniform backward dynamics and show that they can be obtained by sampling from a real tree.
    As non--plane trees, every such Markov chain is represented by a weighted real tree. We equip this real tree with a planar order as well as some extra functions for the full representation theorem. 
    We also show that under an inhomogeneous rescaling after trimming leaves $(T_n, n\geq 1)$ converges to a random real tree in the Gromov--Prokhorov metric. This makes use of a special class of real trees, interval partition trees, which were introduced by Forman (2020). Moreover, this generalises and sheds some new light on work by Evans, Grübel and Wakolbinger (2017) on the binary special case.
\end{abstract}

\section{Introduction}

This paper studies a large family of tree--valued Markov chains with the property that the trees grow in time. The goal is to classify these Markov chains and to establish metric space scaling limits.
Let $\widetilde{\TT}$ be the space of rooted plane trees.
The tree growth chains that we consider take values in the space 
\begin{equation}\label{eq:def:planetrees}
    \TT_n = \left\{ T \in \widetilde{\TT}: T \text{ has $n$ leaves; for all $x \in T$}: \deg(x) \neq 2; \deg(\text{root})=1 \right\}, \quad n \geq 1.
\end{equation}
Leaves are non-root vertices of degree $1$. Trees with the condition $\deg(\text{root})=1$ are sometimes called \emph{planted} trees. Plane trees can be formally constructed using the Ulam--Harris encoding as subsets of $\bigcup_{k \geq 0} \NN^k_0$, the set of words with the alphabet $\NN_0$. We call $T\subset \bigcup_{k \geq 0} \NN^k_0$ a rooted plane tree if $\emptyset \in T$, if $(x_1,\ldots,x_k)\in T$ implies $(x_1,\ldots,x_{k-1}) \in T$ whenever $k\geq 1$, and if $(x_1,\ldots,x_k)\in T$ implies $(x_1,\ldots,x_k-1) \in T$ whenever $x_k \geq 1$. The empty word, $\emptyset$, is thought of as the root of $T$. Sometimes we consider labelled trees, denote by $\widetilde{\TT}^{\mathrm{labelled}}$ the set of plane trees where some (not necessarily all) vertices are labelled by $\{1,\ldots,n\}$ for some $n$.

\begin{definition}\label{def:uniform_backwards} 
We call $(T_n, n\geq 1)$ a \emph{tree growth chain with uniform backward dynamics} if $(T_n, n\geq 1)$ is a Markov chain such that for all $n\geq 1$ we have $T_n \in \TT_n$ and such that the backward dynamics are uniform in the following sense: given $n\geq 2$ and $T_n$,  the following procedure yields a tree with the same distribution as $T_{n-1}$.
\begin{enumerate}
    \item Choose a leaf of $T_n$ uniformly,
    \item remove this leaf and its associated edge,
    \item remove any vertex of degree $2$ and replace it and its two adjacent edges by a single edge.
\end{enumerate}
\end{definition}

\begin{figure}[hbt]
    \centering
    \includegraphics[scale = 1]{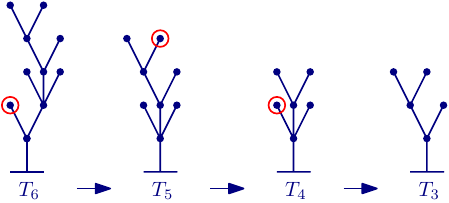}
    \caption{An example for the uniform backward dynamics. The red circle indicates which leaf has been uniformly chosen.}
    \label{fig:backwards_dynamics}
\end{figure}

Note that $\TT_1$ only has one element which is a tree consisting of two vertices, one thought of as the root and one thought of as a leaf -- the assumption $T_1\in \TT_1$ thus serves as a (deterministic) initial condition. Clearly the backward dynamics applied to $T_n \in \TT_n$ yield an element of $\TT_{n-1}$, see also Figure \ref{fig:backwards_dynamics} for an illustration. We do not make any further assumptions on the forward dynamics. 

The goal of this article is to classify all tree growth chains with uniform backward dynamics. In particular, we want to understand the different possible forward dynamics. We then use the classification of forward dynamics to show a scaling limit for $(T_n,n\geq 1)$ represented as a sequence of metric spaces. Characterising the forward dynamics is also the object of Evans, Grübel and Wakolbinger \cite{evans_doob-martin_2017} who study the special case of binary trees. Their work forms a basis for a lot of the ideas in this article, and we comment on the relationship to it in Remark \ref{remark:what_evans_grbel_wknbger_do}.

A key notion in the classification is that of real trees.

\begin{definition} [Real tree]
A real tree ($\RR$--tree) is a complete, separable metric space $(\bfT,d)$ with the property that for each $x,y \in \bfT$, there is a unique non--self--intersecting path from $x$ to $y$, denoted by $\llbracket x,y \rrbracket$, and furthermore this path is isometric to a closed real interval. We call $(\bfT,d,r)$ \emph{rooted} if we have a marked point $r\in \bfT$. We call $(\bfT,d,\mu)$ \emph{weighted} if we have a probability measure $\mu$ defined on the Borel $\sigma$--algebra of $\bfT$.
\end{definition}

Unless specified otherwise, all our real trees are weighted and rooted, and we often abbreviate $(\bfT, d ,r, \mu)$ to $\bfT$. The real trees that we consider in this paper are not necessarily (locally) compact. 

We consider two notions of subtrees for real trees. Given $x\in \bfT$, the \emph{fringe subtree} of $x$ is
\begin{equation}\label{def:fringe_subtree}
    F_\bfT(x)=\big\{y \in \bfT: x \in \llbracket r,y \rrbracket \big\},
\end{equation}
which we root at $x$. On the other hand, the \emph{subtrees of} $x$ are the connected components of $F_\bfT(x) \backslash \{x\}$. To each of them we add $x$ and root them at $x$. We define the \emph{degree} of $x \in \bfT$ as the number of connected components of $\bfT \backslash \{x \}$, which might be infinite. We call $x$ a leaf if $\deg(x)=1$ and a branchpoint if $\deg(x) \geq 3$ (we do not call the root $r$ a leaf even if $\deg(r)=1$). Given $x_1,\ldots, x_n \in \bfT$, the span of $x_1,\ldots, x_n$, denoted by $\spn(x_1,\ldots, x_n)$, is the minimal connected subset of $\bfT$ containing $x_1,\ldots, x_n$ as well as the root. In particular, $\spn(x)=\llbracket r,x \rrbracket$. We denote the ancestral partial order on $\bfT$ by $\preceq$: this means that $x \preceq y$ if and only if $x\in \llbracket r,y \rrbracket$.

We consider a special class of real trees. Denote by $\supp(\mu)$ the closed support of $\mu$.

\begin{definition}[IP--tree]\label{def:ip-tree}
A rooted, weighted real tree $(\bfT, d, r, \mu)$ is an \textit{interval--partition tree} if it possesses the following properties.
\begin{enumerate}
    \item \textit{Spanning.} Every leaf is in the support of $\mu$, i.e.\ $\bfT = \spn(\supp(\mu))$.
    
    \item \textit{Spacing.} For $x\in \bfT$, if $x$ is either a branchpoint or lies in the support of $\mu$, then
    \begin{equation}\label{eq:IP_spacing}
        d(r,x)+\mu(F_{\bfT}(x)) = 1.
    \end{equation}
\end{enumerate}
\end{definition}

IP--trees were introduced by Forman \cite{forman_exchangeable_2020} and arise as natural representatives for equivalence classes under mass--structural isomorphisms. Informally speaking, a map between two weighted, rooted real trees is a mass--structural isomorphism if it preserves the measure and the tree structure but not the metric. This leads to some freedom in the choice of metric which is then determined by the spacing property. The name \emph{interval partition} tree originates from a method of constructing IP--trees using the so--called \emph{bead--crushing} construction which we do not use here. 

\begin{figure}[tbh]
    \centering
    \includegraphics[width=0.9\linewidth]{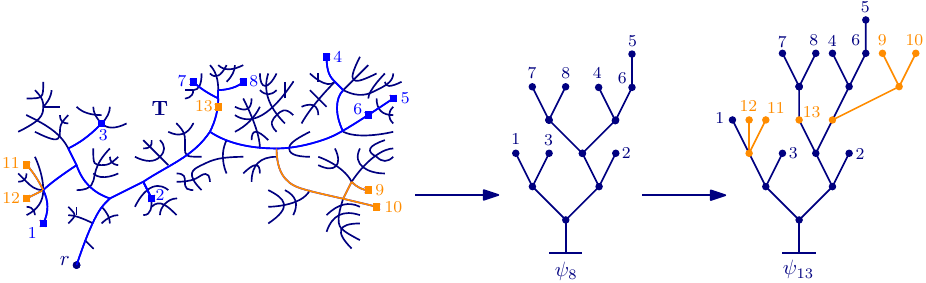}
    \caption{An example for $\psi$. Note that $\psi_8(x_1,\ldots,x_8)$ embeds into $\psi_{13}(x_1,\ldots,x_{13})$ respecting the labels.}
    \label{fig:psi_example}
\end{figure}

Call $a\in \bfT$ an atom if $\mu(\{a\})>0$. We can decompose $\mu = \mu_{\mathrm{atoms}} + \mu_s + \mu_\ell $ where $\mu_{\mathrm{atoms}}$ is supported on the atoms of $\mu$, $\mu_s$ is supported diffusely on $\bfT \backslash \{ \text{leaves} \}$ and $\mu_\ell$ is supported diffusely on the leaves of $\bfT$. We can choose the not necessarily closed supports of $\mu_{\mathrm{atoms}}, \mu_s, \mu_\ell$ to be disjoint. Further, we can choose $\supp(\mu_s)$ in such a way that for every $x\in  \supp(\mu_s)$ we have $\deg(x)=2$. This is possible because there are at most countably many branchpoints since $\bfT$ is separable.

\begin{definition}\label{def:planar}
We call $\psi=\{\psi_n,n\geq 2\}$, a family of measurable maps $\psi_n:\bfT^n\rightarrow \widetilde{\TT}^{\mathrm{labelled}}$, a planar order for $(\bfT,d,r)$ if it satisfies the following properties:
\begin{enumerate}
    \item As a rooted unlabelled, non--plane tree, $\psi_n(x_1,\ldots,x_n)$ has the same tree structure as $\spn(x_1, \ldots,x_n)$. Let $\tilde{r}_n$ be its root. If $\deg(\tilde{r}_n)>1$, add a new vertex $r_n$, connect it to $\tilde{r}_n$ and call it the root of $\psi_n(x_1,\ldots,x_n)$. If $\deg(\tilde{r}_n)=1$, set $r_n = \tilde{r}_n$.

    \item The vertex of $\psi_n(x_1,\ldots,x_n)$ corresponding to $x_i$ is labelled $i$.
    
    \item $\psi $ is consistent in the sense
    that for every $n,m \in \NN$ and every $x_1,\ldots,x_{n+m} \in \bfT$ we have that $\psi_n (x_1,\ldots,x_n)$ embeds into $\psi_{n+m}(x_1,\ldots,x_{n+m})$ respecting the planar order and labels.
\end{enumerate}
\end{definition}

Observe that $\psi_n(x_1,\ldots,x_n)$ is a partially labelled, planted, rooted plane tree with labels $\{1,\ldots,n\}$. Every leaf is labelled, some vertices might carry multiple labels (if $x_i=x_j$ say), and a vertex in $\psi_n(x_1,\ldots,x_n)$ has degree $2$ only if it is labelled. See Figure \ref{fig:psi_example} for an illustration. 

We can now state our main theorem. Note that the set of probability distributions of tree growth chains with uniform backward dynamics is a convex set. A distribution is called \emph{extremal} if it cannot be written as a non--trivial convex combination of two other distributions. Call a tree growth chain extremal if its distribution is extremal in this sense. 

\begin{theorem}[Classification]\label{thm:main_thm_new_version}
    Given an extremal tree growth chain $(T_n, n\geq 1)$ with uniform backward dynamics, its distribution is determined by 
    \begin{enumerate}
        \item an IP--tree $(\bfT, d, r, \mu)$,
        \item a planar order $\psi$ for $\bfT$,
        \item a function $\lambda:\bfT \to [0,1]$ called branch weight function,
        \item for every atom $a$ of $\mu$, a function $\beta_a:\{\text{subtrees of $a$}\} \to [0,1]$ called branchpoint weight function.
    \end{enumerate}
    Specifically, we can construct the distribution $\rho_\bfT$ of $(T_n, n\geq 1)$, see Construction \ref{construction:sampling_the_bridge} below.
    Here $\bfT$ is an abbreviation for $(\bfT,d,r,\mu,\psi,\lambda,\{\beta_a\}_a)$.
    These objects are unique in the following sense
    \begin{enumerate}
        \item $(\bfT, d, r, \mu)$ is unique up to isometries that preserve $\mu$ and $r$,
        \item $\psi$ is unique given a representative of the isometry class of $(\bfT, d, r, \mu)$,
        \item $\lambda$ is unique as element of $L^1(\mu_s)$, 
        \item for every atom $a$, $\beta_a$ is unique if we impose that $\beta_a(\bfS_1) \leq \beta_a(\bfS_2)$ for any two subtrees $\bfS_1$ and $\bfS_2$ of $a$ where $\bfS_1$ is to the left of $\bfS_2$, i.e.\ where given any $x_1 \in \bfS_1 \backslash\{a\}$ and $x_2 \in \bfS_2 \backslash \{a\}$, the leaf labelled $1$ is to the left of the leaf labelled $2$ in $\psi_2(x_1,x_2)$.
    \end{enumerate}

    \begin{remark}
        This classification simplifies to $(\bfT, d, r, \mu, \psi)$ if we can a priori determine that $\mu$ is supported diffusely on the leaves of $\bfT$. This is the case for many examples -- see in Examples \ref{example:Marchal} and \ref{example:patricia}.
    \end{remark}
\end{theorem}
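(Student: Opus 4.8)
The plan is to identify the extremal tree growth chains with the boundary of the backward dynamics, and to recover the decorating data from a.s.\ limits of normalised reduced trees. First I would pass to the leaf-labelled chain $(\mathcal T_n)$, recording for each $n\ge 2$ the leaf deleted in the backward step of Definition~\ref{def:uniform_backwards} as the vertex carrying label $n$; then the leaves of $\mathcal T_n$ carry labels $1,\dots,n$ with nested label sets, and since rooted plane trees have no non-trivial automorphisms and the successive uniform deletions induce a uniformly random deletion order, the conditional law of the labelling given the unlabelled $T_n$ is \emph{uniform} --- the ``exchangeable''/Chinese-restaurant property that drives everything. Separately, the distributions of tree growth chains with uniform backward dynamics form a convex set whose extreme points are precisely those with trivial tail $\sigma$-algebra $\bigcap_n\sigma(T_n,T_{n+1},\dots)$; I fix such an extremal chain, so that every tail-measurable functional below is a.s.\ constant.

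\textbf{The IP--tree and the planar order.} For fixed $k$, the subtree $\mathcal R_k^{(n)}$ of $\mathcal T_n$ spanned by the leaves labelled $1,\dots,k$ and the root has a combinatorial type independent of $n\ge k$ (adding a leaf labelled $>k$ never resolves it further), so the only data that evolves are the normalised fringe masses $m_v^{(n)}=n^{-1}\#\{\text{leaves of }\mathcal T_n\text{ in }F_{\mathcal T_n}(v)\}$ at its vertices $v$. Using the uniform labelling, $\EE[m_v^{(n)}\mid\sigma(T_n,T_{n+1},\dots)]=m_v^{(n+1)}$, so $(m_v^{(n)})_{n\ge k}$ is a bounded reverse martingale and converges a.s.; extremality makes the limits $m_v$ deterministic. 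Declaring, in the limit, the root-to-$v$ distance to equal $1-m_v$, the finite trees $(\mathcal R_k)_k$ form a consistent (projective) family of rooted $\RR$--trees carrying consistent leaf positions $x_1,x_2,\dots$, and the completion of their increasing union is a rooted weighted real tree $(\bfT,d,r,\mu)$ in which $\mu$ is the a.s.\ weak limit of $n^{-1}\sum_{i\le n}\delta_{x_i}$. The Spanning and Spacing properties of Definition~\ref{def:ip-tree} are respectively the statements that the $x_i$ are $\mu$-dense below every point and that distance $=1-$ fringe mass, so $\bfT$ is an IP--tree; the planar order $\psi$ of Definition~\ref{def:planar} comes from the fact that the plane shape of the subtree of $T_m$ spanned by the first $n$ sampled leaves stabilises as $m\to\infty$, with consistency inherited from the nesting of the $\mathcal T_m$.

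\textbf{Reading off $\lambda$ and the $\beta_a$.} It remains to show that, given $(\bfT,d,r,\mu,\psi)$, the forward transitions are determined by $\lambda$ and $\{\beta_a\}$ and by nothing else. Condition on $\mathcal T_n$ and on the asymptotic position in $\bfT$ of the next leaf --- which has law $\mu$ by the exchangeable coupling --- and decompose $\mu=\mu_{\mathrm{atoms}}+\mu_s+\mu_\ell$ as in the excerpt. If the position lies in $\supp(\mu_\ell)$ the discrete move is forced by $\psi$; if it lies in $\supp(\mu_s)$ the new leaf either creates a branchpoint there or slides past on one of the two sides, and disintegrating this choice against $\mu_s$ yields, as a Radon--Nikodym derivative, a branching-rate function $\lambda\in L^1(\mu_s)$; if the position is an atom $a$ the new leaf enters one of the existing subtrees of $a$ (or starts a fresh one) by a P\'olya-type split which, after the obvious size-biasing, is encoded by weights $\beta_a$ on the subtrees of $a$. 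Showing that these local rules genuinely \emph{exhaust} the forward freedom --- equivalently, that $(\mathcal R_k^{(n)},\mu)$ is a sufficient statistic through which the one-step kernel factorises --- is, I expect, the main obstacle; the delicate case is the diffuse one, where the forward kernel must be disintegrated over a continuum of degree-$2$ points and the conditional rates assembled into a single $\mu_s$-a.e.\ defined function (hence the $L^1(\mu_s)$ statement), and where non-local-compactness (infinitely many subtrees at a branchpoint, infinitely many atoms) forces the urn schemes to be run, and their convergence checked, in a countable-state setting.

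\textbf{The converse and uniqueness.} Conversely, Construction~\ref{construction:sampling_the_bridge} builds $\rho_\bfT$ from $(\bfT,d,r,\mu,\psi,\lambda,\{\beta_a\})$ by sampling i.i.d.\ leaves from $\mu$, assembling the labelled trees $\mathcal T_n$ from the reduced trees and the urn rules dictated by $\lambda$ and the $\beta_a$, and forgetting labels; one checks directly that deleting a uniform leaf is exactly the backward urn step, so $\rho_\bfT$ is a tree growth chain with uniform backward dynamics, and that it is extremal since its tail is generated by the deterministic a.s.\ limits built into the construction. Uniqueness is then bookkeeping: $(\bfT,d,r,\mu)$ is recovered from the deterministic reduced trees $\mathcal R_k$, hence is unique up to $\mu$- and $r$-preserving isometry --- precisely the uniqueness of IP--tree representatives of Forman~\cite{forman_exchangeable_2020}; given a representative, $\psi$ is pinned by the stabilised plane shapes; $\lambda$, being a Radon--Nikodym derivative, is unique in $L^1(\mu_s)$; and each $\beta_a$, a priori defined only up to the one-parameter gauge of its underlying P\'olya urn, is pinned down once its values are required to increase from left to right along the subtrees of $a$. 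The binary case of \cite{evans_doob-martin_2017} is the sub-case where $\bfT$ is binary, $\mu=\mu_\ell$ and $\psi$ is essentially trivial, so that neither $\lambda$ nor the $\beta_a$ appear.
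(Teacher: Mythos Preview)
Your reverse–martingale construction of the IP--tree from stabilised fringe masses is a reasonable route to $(\bfT,d,r,\mu)$, and is morally what the paper obtains via an ultrametric plus Gufler's sampling theorem. But your treatment of the planar data contains a genuine misreading of what $\lambda$ and $\{\beta_a\}$ encode. When the new sample lands at $x\in\supp(\mu_s)$, it does \emph{not} ``either create a branchpoint or slide past'': a fresh leaf is always attached at (the vertex corresponding to) $x$, and $\lambda(x)\in[0,1]$ is simply the conditional probability that this leaf sits to the left rather than the right of the unique subtree above $x$. Likewise, when the sample hits an atom $a$, the new leaf is attached \emph{at} $a$; $\beta_a$ records, for each subtree $\bfS$ of $a$, the probability that this leaf lies to the left of $\bfS$ in the planar order. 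There is no P\'olya urn on subtrees and no ``entering a subtree'' here. So your disintegration/Radon--Nikodym picture is aimed at the wrong quantity, and the sufficiency statement you flag as ``the main obstacle'' is not the one that actually arises.

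The real technical content --- which your proposal does not supply --- is the decomposition of the \emph{planarity} of the chain into a deterministic piece $\psi$ and residual randomness governed by $(\lambda,\{\beta_a\})$. The paper handles this by encoding $(T_n)$ as an exchangeable, ergodic dendritic system $(\NN,\sim,\preceq,p)$, representing $(\sim,\preceq)$ via i.i.d.\ $\mu$-samples on $\bfT$, and then invoking Aldous--Hoover--Kallenberg to write the planarity function as $p(i,j)=F(\xi_i,U_i,\xi_j,U_j,U_{ij})$ for a measurable $F$ satisfying consistency relations inherited from $(P1)$--$(P4)$. The heart of the existence proof is then a case analysis showing that these relations force $F$ to factor through a deterministic planar order $\psi$ on pairs that are $\preceq$-incomparable, a single-coordinate function (yielding $\lambda$) on $\supp(\mu_s)$, and threshold functions $\beta_a$ at atoms. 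Your sketch (``the plane shape stabilises'') at best defines $\psi$ on almost-sure sample configurations, not on all of $\bfT^n$ as Definition~\ref{def:planar} requires, and gives no argument that the residual left/right randomness depends only on $(\xi_i,U_i)$ rather than on the pair $(\xi_i,\xi_j)$ or on $U_{ij}$ --- this is exactly what the $(F3)$/$(F4)$ analysis in the paper establishes and is where Lemma~\ref{lemma:app:ind} is used repeatedly.
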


Before we construct $\rho_\bfT$, we remark that
this theorem is very similar in spirit to a long list of theorems that seek to classify exchangeable random objects. The most classical one is de Finetti's theorem that states that the distribution of every sequence of exchangeable real random variables is a mixture of the distributions of sequences of $i.i.d.$ random variables. Another notable one is Kingman's paintbox theorem that describes every exchangeable partition of $\NN$ as a mixture of paintboxes. 
The article \cite{forman_representation_2018} by Forman, Haulk, Pitman also classifies a family of exchangeable objects, hierarchies in their case, by sampling from real trees and the work \cite{foutel-rodier_exchangeable_2021} of Foutel-Rodier, Lambert, Schertzer classifies various exchangeable objects via combs which are tree--like as well. Gerstenberg \cite{gerstenberg_exchangeable_2020} classifies exchangeable interval hypergraphs, trees are a special case here, by sampling from a random subset of $[0,1]^2$. See Kallenberg \cite{kallenberg_foundations_2021} for the classical theorems, and \cite{forman_representation_2018} for a non--exhaustive list of references to similar, modern theorems. 

\begin{construction} \label{construction:sampling_the_bridge}
Assume we are given $(\bfT,d,r,\mu,\psi,\lambda,\{\beta_a\}_a)$, we construct $\rho_\bfT$ for Theorem \ref{thm:main_thm_new_version} as the distribution of  $(T_n,n\geq1)$. To construct $(T_n,n\geq 1)$ with $T_n \in \TT_n$, we proceed as follows:
\begin{enumerate}
    \item Sample $(\xi_i,i\geq 1)\in \bfT$ $i.i.d.$ from $\mu$ and $(U_i,i\geq 1)$ independent uniform random variables on $[0,1]$.

    \item Set $S_n = \psi_n(\xi_1,\ldots, \xi_n)$.

    \item To construct $T_n$, we add leaves to $S_n$: for each vertex $x\in S_n$ which is labelled but not a leaf we add as many leaves to $x$ as there are labels at $x$. To each leaf $y\in S_n$ that has more than one label, we add as many leaves to $y$ as there are labels at $y$ (i.e.\ we add a new vertex and edge connected to $x$). See Figure \ref{fig:sampling_construction} for an illustration. We need to determine the planar order of the new leaves, for the leaf labelled $i$:

    \begin{enumerate}
        \item If $\xi_i \in \supp(\mu_s)$: then $\deg(\xi_i)=2$, which means that almost surely the leaf $i$ is attached to a vertex of degree $2$ in $S_n$. If $U_i \leq \lambda(\xi_i)$, we orient the leaf labelled $i$ to the left of the subtree of the vertex that was labelled $i$ in $S_n$ and otherwise to the right.

        \item If $\xi_i = a$ for an atom $a$: assume the leaf labelled $i$ is attached to the vertex $x\in S_n$. The subtrees of $x$ in $S_n$ naturally correspond to subtrees of $a$ in $\bfT$. We orient the leaf $i$ to the left of a subtree of $x$ in $S_n$ if $U_i \leq \beta_a(\bfS)$ (respectively to the right if $U_i > \beta_a(\bfS)$) for every subtree $\bfS$ of $a$. See Figure \ref{fig:where_are_the_leaves} for an illustration.

        \item If $\xi_i=\xi_j=a$ for $i\neq j$ then we attach two leaves to the same branchpoint. If $U_i < U_j$, we orient the leaf $i$ to the left of leaf $j$ and vice versa.
        
    \end{enumerate}

    \item Delete the labels to obtain $T_n \in\TT_n$. Doing this construction simultaneously for all $n$ yields $(T_n , n\geq 1)$.
\end{enumerate}
\end{construction}

\begin{figure}[ht]
    \centering
    \includegraphics[scale=1]{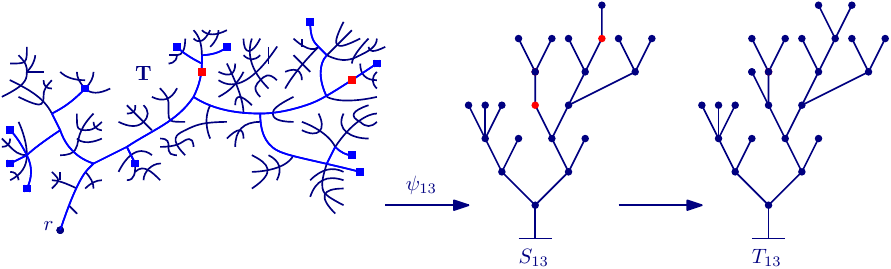}
    \caption{An example for sampling $T_{13}$. First, we sample $(\xi_i, i\leq 13)$ from $\bfT$; secondly, use $\psi_{13}$ to map $\spn(\xi_1, \ldots,\xi_{13})$ to a plane tree; and thirdly we add leaves to some interior vertices (in red) to obtain $T_{13}$. In the last step, the planar order is determined by $\lambda$ and $\{\beta_a\}_a$ as well as some additional randomness.}
    \label{fig:sampling_construction}
\end{figure}

\begin{figure}[ht]
    \centering
    \includegraphics[scale=1]{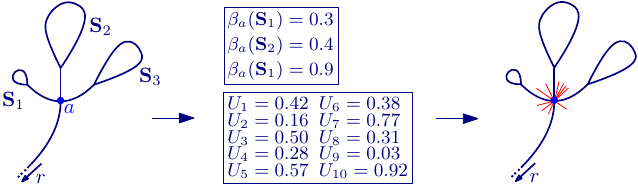}
    \caption{An illustration of the use of branchpoint weight function $\beta_a$ in Construction \ref{construction:sampling_the_bridge} where $\xi_1=\ldots=\xi_{10} =a$. Here $\beta_a(\bfS_i)$ is the probability of a leaf attached to $a$ to be to the left of $\bfS_i$ (by an abuse of notation $\bfS_i$ is a subtree of $a$ in $\bfT$ as well as a subtree of $S_n$ of the vertex corresponding to $a$).}
    \label{fig:where_are_the_leaves}
\end{figure}

\begin{remark}
    This results in the sequence $(T_n, n\geq 1)$ being a tree growth chain with uniform backward dynamics. Indeed, this is true because a backward step corresponds to removing $\xi_n$ from the construction. Once the labels are removed, this corresponds to uniformly choosing a point from $\xi_1, \ldots, \xi_n $ which in turn means choosing a leaf of $T_n$ uniformly.
\end{remark}

\begin{remark}
    Both in Theorem \ref{thm:main_thm_new_version} and in Construction \ref{construction:sampling_the_bridge} we could replace the planar order $\psi$ by a partial order $<$ on $\bfT$. Here we require that $<$ is a total order on the leaves, for $x,y\in \bfT$ which are not leaves we require that $x<y$ if and only if $\ell_x < \ell_y$ for all leaves $\ell_x,\ell_y$ with $x \prec \ell_x$ and $y \prec \ell_y$. We then recover $\psi_n$ by using $<$ to turn $\spn(x_1,\ldots,x_n)$ into a plane tree. 
\end{remark}

Theorem \ref{thm:main_thm_new_version} deals with extremal tree growth chains. Given a tree growth chain which is not extremal, we can decompose it into extremal tree growth chains. 

\begin{cor} \label{cor:decomposition}
    For every tree growth chain $(T_n, n\geq 1)$ with uniform backward dynamics there exists a unique probability measure $\nu$ on a space of probability measures $\mathcal{M}_1$ such that 
    \begin{equation}
        \PP \left( (T_n, n\geq 1)\in \cdot \right) = \int_{\mathcal{M}_1} \rho \left( (T_n, n\geq 1)\in \cdot \right) \nu (d\rho),
    \end{equation}
    where $\nu$--almost every $\rho$ is extremal, and there is $\bfT=(\bfT,d,r,\mu, \psi, \lambda, \{\beta_a\}_a)$ such that $\rho=\rho_\bfT$.
\end{cor}

We can now give some examples of tree growth chains. We start with Marchal's tree growth algorithm, introduced by Marchal \cite{marchal_note_2008}, which generalises Rémy's tree growth algorithm \cite{remy_procede_1985} which only grows binary trees (here $\alpha = 2$). 

\begin{example}[Marchal's tree growth algorithm] \label{example:Marchal}
 Let $\alpha \in (1,2]$, we construct $(T_n^M(\alpha), n\geq 1)$ recursively. By our general assumptions on tree growth chains, $T_1^M(\alpha)$ is given by a tree with two vertices and one edge. For $n\geq 1$, construct $T_{n+1}^M(\alpha)$ from $T_n^M(\alpha)$ as follows:
\begin{enumerate}
    \item Assign weight $\alpha-1$ to each edge of $T_n^M(\alpha)$ and weight $k-1-\alpha$ to each branchpoint of $T_n^M(\alpha)$ with degree $k\geq 3$. Choose an edge or a branchpoint according to these weights. 
    \item If an edge $e$ has been chosen, split it into two edges $e_1,e_2$ and attach a new leaf to the new vertex. For the new planar order, orient the new leaf to the left or right with probability $1/2$ independently from the previous steps.
     
    \item If a branchpoint $v$ has been chosen, attach a new leaf to this branchpoint. For the new planar order, orient the new leaf uniformly among the existing other children of the branchpoint. 
\end{enumerate}
This is a well studied model. The fact that the backward dynamics of Marchal's tree growth algorithm are uniform goes back to Haas, Miermont, Pitman and Winkel \cite{haas_continuum_2008}. There is a well known connection between Marchal's tree growth algorithm and $(\mathcal{T}_\alpha, 1<\alpha\leq 2)$, stable real trees. $(\mathcal{T}_\alpha, d,r,\mu_\alpha)$ is a random weighted, rooted real tree. Stable trees were introduced by Duquesne and Le Gall \cite{duquesne_random_2002} with $\mathcal{T}_2$ being the Brownian continuum random tree introduced by Aldous \cite{aldous_continuum_1993, aldous_continuum_1991}. 

Because $\mathcal{T}_\alpha$ is random, the distribution of Marchal's tree growth is not extremal, for this we need to condition on a realisation of $(\mathcal{T}_\alpha,d,r,\mu_\alpha)$. Then $(\bfT, d, r, \mu)$ is given by the unique mass--structural equivalence class of $(\mathcal{T}_\alpha, d,r,\mu_\alpha)$ which is an IP--tree, using \cite[Theorem 1]{forman_exchangeable_2020}. Because $\mu_\alpha$ is almost surely supported diffusely on the leaves, $\lambda$ and $\{\beta_a\}_a$ are almost surely trivial. Theorem \ref{thm:main_thm_new_version} then provides the existence of the planar order $\psi$. Alternatively, it can be obtained using encoding functions, see Duquesne \cite{duquesne_coding_2006}. More general background on encoding functions for real trees can be found in Evans \cite[Example 3.14]{evans_probability_2008}.
\end{example}

\begin{figure}[ht]
    \centering
    \begin{tabular}{c c}
        \includegraphics[scale = 0.22]{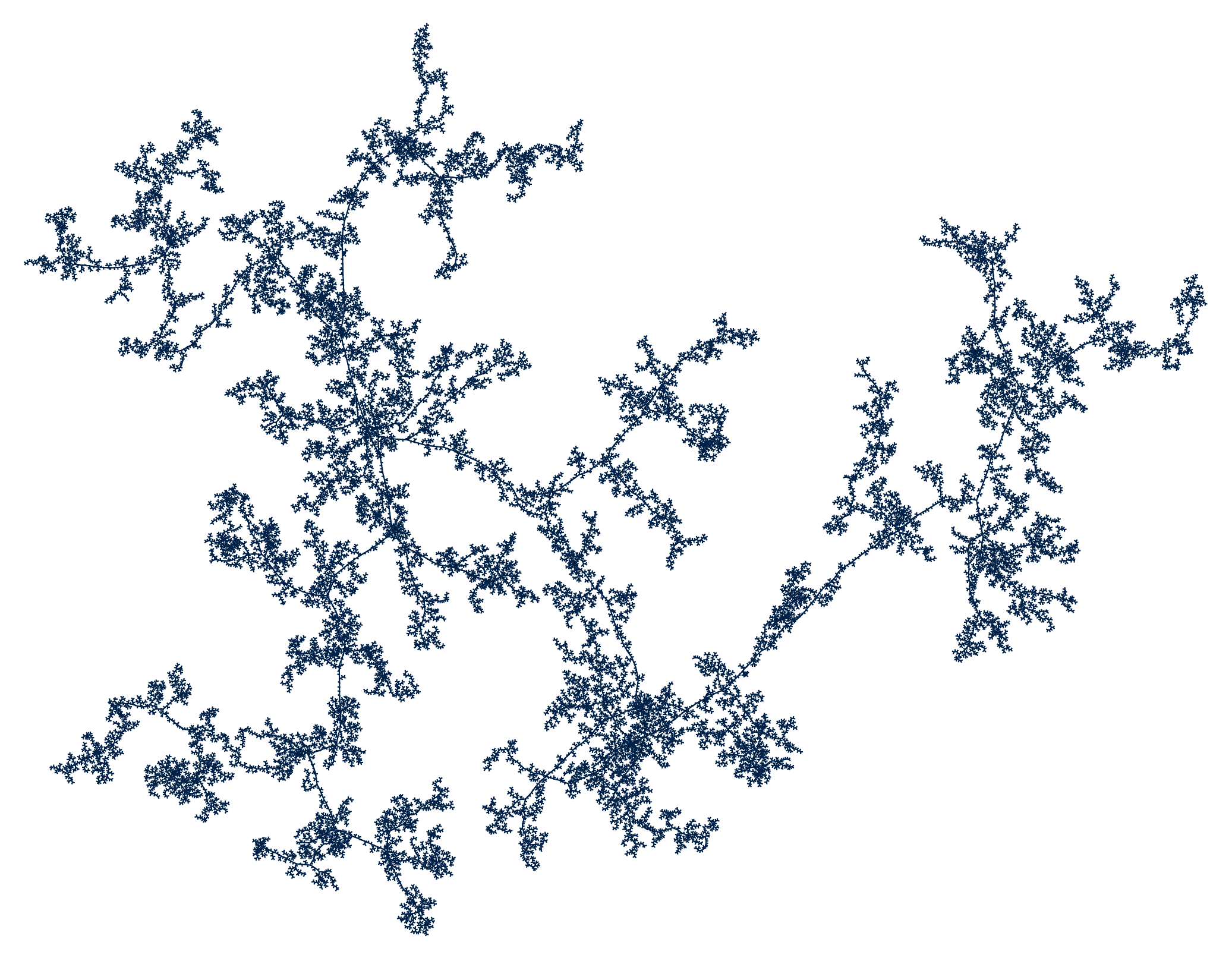} &  
        \includegraphics[scale = 0.22]{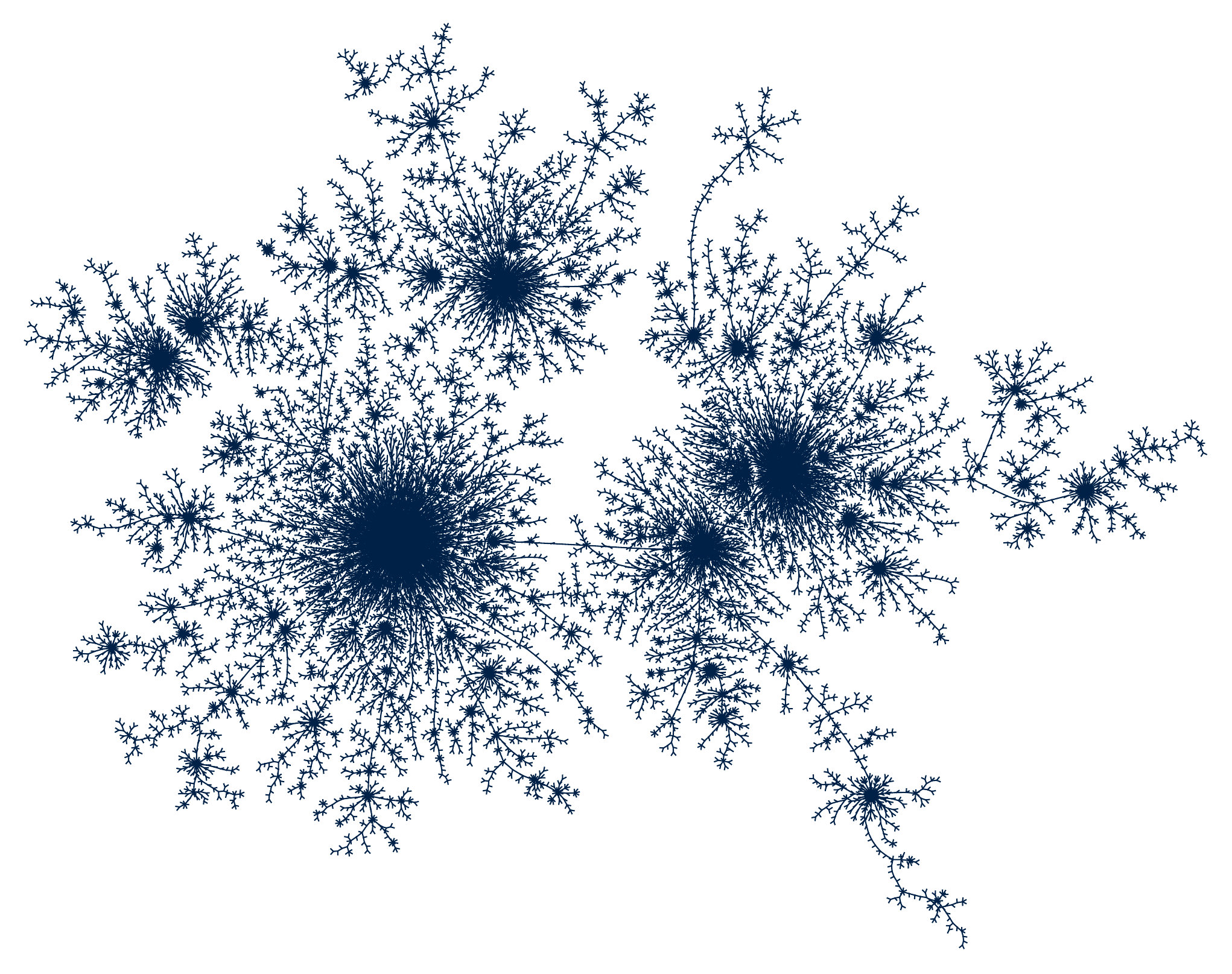}
    \end{tabular}
    \caption{A simulation of Marchal's tree growth after $25000$ growth steps for different values for $\alpha = 2$ (left) and $\alpha = 1.4$ (right) using Mathematica. These trees approximate the Brownian continuum random tree (left) and the $1.4$--stable tree (right).}
    \label{fig:marchal_example}
\end{figure}

\begin{remark}\label{remark:what_evans_grbel_wknbger_do} 
    The example of Marchal's tree growth algorithm is related to the work of Evans, Grübel and Wakolbinger \cite{evans_doob-martin_2017}.
  In their article, the authors study Rémy's tree growth \cite{remy_procede_1985} -- the case of $\alpha=2$ in Marchal's tree growth.
    Their main theorem \cite[Theorem 8.2]{evans_doob-martin_2017} gives a sampling representation for the binary tree growth chains with uniform backward dynamics, and thus corresponds to
    Proposition \ref{prop:evans}.
    The authors phrase their result as a classification of the Doob--Martin boundary which is equivalent to classifying extremal tree growth chains. Note that the Doob--Martin boundary is largely determined by the backward dynamics of a Markov chain. This, combined with the fact that for any $\alpha\in (1,2)$ we have
    for every $T\in \TT_n$
    \begin{equation*}
         \PP\left(T_n^M(\alpha) = T\right) >0,
    \end{equation*}
    implies that the Doob--Martin boundary is the same for $(T^M_n(\alpha),n\geq 1)$ for any parameter $\alpha\in (1,2)$ for Marchal's tree growth algorithm.     
    This article extends their work as our framework allows for multi--furcating trees instead of binary trees just like $\alpha$--stable trees extend the Brownian continuum random tree or like Marchal's tree growth extends Rémy's tree growth. Further, even in the case $\alpha=2$ we believe that our variation of their construction is more descriptive by identifying a unique representation.
\end{remark}

\begin{figure}[tbht]
    \centering
    \includegraphics[scale=1.2]{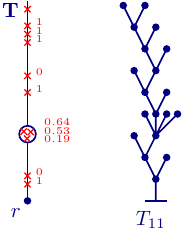}
    \caption{An example for $T_{11}$ if $\bfT = [0,1]$. The red crosses stand for $(\xi_i, i\leq 11)$. The red number next to them states the outcome of the coin-flip that determines if the leaf is attached left or right of the spine. The blue circle indicates an atom $a$. Here, we have $\beta_a([a,1])=1/2$ which means that if $U_i>1/2$ (in red), we orient the corresponding leaf to the right and to the left otherwise.}
    \label{fig:example:line_tree}
\end{figure}

\begin{example} \label{example:interval}
Assume Theorem \ref{thm:main_thm_new_version} yields $\bfT = [0,1], r=0$, the usual Euclidean distance $d=\vert \cdot \vert$ and with $\mu,\psi,\lambda, \{\beta_a\}_a$ arbitrary. We observe:
\begin{enumerate}
    \item There is only one valid planar order, $\psi_n(x_1,\ldots,x_n)$ is a line graph of length $k$ (thought of as plane tree) where $k$ is the number of distinct points in $(x_1,\ldots,x_n)$.

    \item For $(\bfT, d,r,\mu)$ to be an IP--tree, the spacing property of Definition \ref{def:ip-tree} requires that $\mu([0,x)) =x$ for all $x\in \supp(\mu)$. The spacing property translates to $1\in \supp(\mu)$.

    \item For any atom $a$, $a$ only has one subtree, namely $[a,1]$. Therefore the function $\beta_a$ can be identified with a single threshold $\beta(a)$.
\end{enumerate}

Let us construct $T_n$ in this example: after sampling $n$ points from $\bfT$ we receive a line graph of length $k\leq n$ where $k$ is the number of distinct points sampled, this line graph is a plane tree. To every vertex of the line graph, except for the two endpoints, we attach one or multiple leaves according to the number of points sampled on $\bfT$. For every leaf corresponding to a point $x\in \supp(\mu_s)$ we flip a coin with parameter $\lambda(x)$ to decide if we attach to the left or to the right of the spine. 
Similarly, for each leaf attached to an atom $a$ we draw a uniform random variable $U$ on $[0,1]$. If $U \leq \beta(a)$ we attach the leaf to the left of the spine and if $U>\beta(a)$ we attach it to the right of the spine.
This results in $(T_n, n\geq 1)$ being a sequence of growing spines with leaves hanging off on the sides, see Figure \ref{fig:example:line_tree} for an illustration.

\end{example}

\begin{example}\label{example:patricia}
    This is the main object of study of \cite{birkner_patricia_2020}:
    Let $\ell\in \NN, \ell\geq 2$ and let $S_n^\ell$ be the $\ell$--ary plane tree of height $n$, here every vertex has $\ell$ offspring. Turn $S_n^\ell$ into a real tree $\bfS_n^\ell$ by assigning intervals of length $2^{-k}$ to the edges at distance $k$ to the root, gluing them at the branchpoints. Let $\bfT^\ell$ be the completion of $\bigcup_{n\geq 1} \bfS_n^\ell$. Consider now any diffuse probability measure $\mu$ on the leaves of $\bfT^\ell$, \cite[Theorem 1.5]{forman_exchangeable_2020} states that there exists a choice of metric $d_\mu$ on $\bfT^\ell$ that renders $(\bfT^\ell, d_\mu, r, \mu)$ an IP--tree. (Note that $\mu$ can be thought of as a distribution on $[0,1]$ by considering $\ell$--adic expansions.) Because $(S_n^\ell, n\geq 1)$ are plane trees, this induces a natural choice of planar order for $(\bfS_n^\ell, n\geq 1)$ which induces maps $(\psi_n, n\geq 1)$ for $\bfT^\ell$.
    
    This corresponding Markov chain is also called the PATRICIA chain, see \cite{birkner_patricia_2020} for a study of this in the case of binary trees. PATRICIA stands for \emph{"practical algorithm to retrieve information coded in alphanumeric"}. Given $z_1,\ldots, z_n \in \{0,\ldots, \ell-1\}^{\infty}$, words of infinite length in the alphabet $\{0,\ldots, \ell-1\}$, we can construct words $y_i,i\leq n,$ of finite length such that $y_i$ is an initial segment of $z_i$ for all $i\leq n$, all $y_i$ are distinct and that $y_1,\ldots, y_n$ are the minimal length words with this property. These $y_1,\ldots, y_n$ form a tree with $n$ leaves, the so--called radix sort tree. 
    Consider now $\mu$ as measure on $\{0,\ldots, \ell-1\}^{\infty}$ and let $Z_1,\ldots, Z_n$ be $i.i.d.$ $\mu$--samples. Then $T_n$ is the radix sort tree corresponding to $Z_1,\ldots, Z_n$. See Figure \ref{fig:example:d_ary_tree} for an illustration.

    \begin{figure}[bht]
        \centering
        \includegraphics[scale=1.0]{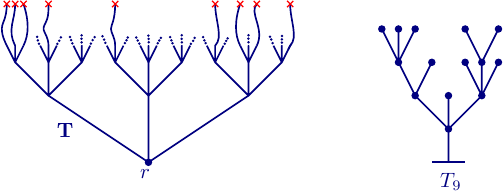}
        \caption{An example for $T_9$ if $\bfT^3$ is the $3$--ary tree. The red crosses stand for $(\xi_i, i\leq 9)$, sampled from the leaves of $\bfT^3$.}
        \label{fig:example:d_ary_tree}
    \end{figure}
\end{example}

Even though there are many examples of tree growth models that satisfy uniform backward dynamics, there are also many models that do no satisfy this assumption. Some examples include preferential attachment trees or random recursive trees. 

We now turn to metric space scaling limits of $(T_n,n\geq 1)$. This can be motivated by Marchal's tree growth algorithm, see Example \ref{example:Marchal}. Consider Marchal's tree growth algorithm $(T^M_n(\alpha), n \geq 1)$ and the $\alpha$--stable trees $(\mathcal{T}_\alpha,d,r,\mu_\alpha)$. We now think of $T^M_n(\alpha)$ as a metric space using the graph distance $d_n$ as metric. We also equip $T^M_n(\alpha)$ with a probability measure $\mu_n$ which is uniform on the vertices. Curien and Haas \cite[Theorem 5]{curien_stable_2013} show that 
\begin{equation}\label{eq:scaling_limit_marchal}
    \left(T^M_n(\alpha),n^{-1+1/\alpha}d_n,r_n,\mu_n\right) \xrightarrow[n \to \infty ]{\PP-a.s.} \left( \mathcal{T}_\alpha, d, r, \mu_\alpha \right),
\end{equation}
as metric spaces in the Gromov--Hausdorff--Prokhorov topology. Related statements are \cite[Corollary 24]{haas_continuum_2008}, \cite[Theorem 3.2]{marchal_note_2008} and \cite[Theorem 3.3.3]{duquesne_random_2002}. We do not use the Gromov--Hausdorff--Prokhorov topology and do not introduce it here. Instead we use the Gromov--Prokhorov topology.

\begin{definition}\label{def:GP_in_introduction}
    Let $(M_1,d_1,r_1,\mu_1)$ and $(M_2,d_2,r_2,\mu_2)$ be two rooted and weighted metric spaces.
    The Gromov--Prokhorov distance $\dGP(M_1, M_2)$ is the infimum of $\eps > 0$ such that exists a measurable subset $R \subseteq M_1 \times M_2$ and a coupling $\nu$ of $\mu_1$ and $\mu_2$, such that
    \begin{align*}
        \nu(R) \geq 1- \eps
        \quad \text{and} \quad
        \sup_{(x,y),(x',y') \in R} \vert d_1(x,x') - d_2(y,y') \vert \leq \eps.
    \end{align*}
\end{definition}

The map $\dGP$ is indeed a metric on isometry classes of weighted, complete, separable metric spaces. Further, we note that the induced topology is Polish, see \cite[Theorem 3.9]{janson_gromov-prohorov_2020}. The induced topology is sometimes also called the Gromov--weak topology and has an alternative formulation using sampling test--functions; more on this can be found in Athreya, Löhr, Winter \cite{athreya_gap_2016}.

We aim to show a statement similar to \eqref{eq:scaling_limit_marchal} for any tree growth chain with uniform backward dynamics. Because this is a very large class of Markov chains, we need to refine the rescaling. Instead of assigning a length $n^{\beta},\beta <0$ to every edge, we rescale $T_n$ inhomogeneously after trimming the leaves. 

\begin{definition}\label{def:trimming_rescaling}
    We define a rooted and weighted metric space $(T_n^{\mathrm{trim}}, d_n^{\mathrm{trim}}, r_n, \mu_n^{\mathrm{trim}})$ as a subtree of $T_n$ as follows:
    \begin{enumerate}
        \item  Remove every leaf and its corresponding edge, call the resulting set $T_n^{\mathrm{trim}}$.
        \item  For every leaf $x \in T_n$, distribute mass $1/n$ to the vertex in $T_n^{\mathrm{trim}}$ that is connected to $x$ in $T_n$. This defines a probability measure $\mu_n^{\mathrm{trim}}$ on $T_n^{\mathrm{trim}}$.
        \item Rescale edge lengths according to an \emph{inhomogenous IP--rescaling}. This means for an edge $(x,y)$ of $T_n^{\mathrm{trim}}$, we set its length to 
\begin{equation}\label{eq:ip_rescaling}
    d_n^{\mathrm{trim}}(x,y) = \left\vert \mu_n^{\mathrm{trim}} \left( F_{T_n^{\mathrm{trim}}}(x) \right) - \mu_n^{\mathrm{trim}} \left( F_{T_n^{\mathrm{trim}}}(y) \right) \right\vert.
\end{equation}
    We extend this to a metric $d_n^{\mathrm{trim}}$ on $T_n^{\mathrm{trim}}$ by adding up edge lengths along the unique path between two vertices. Let $r_n$ be the root of $T_n^{\mathrm{trim}}$.
    \end{enumerate}
\end{definition}




Note that also $d_n^{\mathrm{trim}}(x,y) = \vert \mu_n ( F_{T_n}(x) ) - \mu_n ( F_{T_n}(y) ) \vert$ for an edge $(x,y) \in T_n^{\mathrm{trim}}$.
See Figure \ref{fig:rescaling_example} for an example. The inhomogeneous rescaling \eqref{eq:ip_rescaling} should be compared to the spacing property \eqref{eq:IP_spacing} of IP--trees, this rescaling renders $T_n^{\mathrm{trim}}$ a discrete analogue of IP--trees.
In this setting, $(T_n,n\geq 1)$ satisfies a scaling limit.

\begin{figure}[bth]
    \centering
    \includegraphics[scale = 1.0]{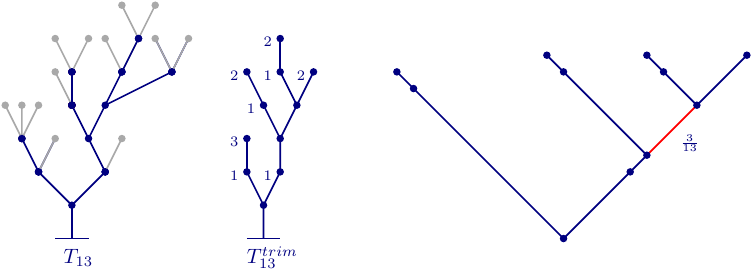}
    \caption{An example of the trimming and rescaling. On the left, there is $T_{13}$; in the middle is $T_{13}^{\mathrm{trim}}$ with a number $k$ indicating an atom of weight $k/13$ and on the right $T_{13}^{\mathrm{trim}}$ is drawn to scale after the rescaling. The marked edge has length $3/13$.}
    \label{fig:rescaling_example}
\end{figure}

\begin{theorem}\label{thm:scaling_limits_new}
    Let $(T_n, n\geq 1)$ be a tree--growth chain with uniform backward dynamics. Then there exists a random IP--tree $(\bfT,d,r,\mu)$ such that 
    \begin{equation}
        (T_n^{\mathrm{trim}}, d_n^{\mathrm{trim}}, r_n, \mu_n^{\mathrm{trim}})\xrightarrow{n\to \infty} (\bfT, d,r,\mu),
    \end{equation}
    almost--surely in the Gromov--Prokhorov topology. Further, the law of $(\bfT,d,r,\mu)$ is given by the pushforward of $\nu$ from Corollary \ref{cor:decomposition} under the projection onto the space of IP--trees.
\end{theorem}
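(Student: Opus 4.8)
The plan is to reduce to the extremal case via Corollary~\ref{cor:decomposition} and then prove almost-sure convergence along the Doob--Martin realisation. By Corollary~\ref{cor:decomposition}, it suffices to prove the statement conditionally on $(T_n, n\geq 1) \sim \QQ^b$ for $\nu$-a.e.\ $b\in \partial\widetilde\TT$; indeed once we know that for each extremal chain $\QQ^b$ the rescaled trimmed trees converge a.s.\ to a \emph{deterministic} IP--tree $\bfT(b)$, integrating over $\nu$ gives the general statement, with the law of the limit being the pushforward of $\nu$ under $b\mapsto \bfT(b)$. So fix an extremal chain; by Theorem~\ref{thm:main_thm_new_version} it is $\rho_\bfT$ for some $(\bfT,d,r,\mu,\psi,\lambda,\{\beta_a\}_a)$, and the target IP--tree is the associated $(\bfT,d,r,\mu)$ — stripping the decorations is exactly the projection onto the space of IP--trees. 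It then remains to show
\begin{equation*}
    (T_n^{\mathrm{trim}}, d_n^{\mathrm{trim}}, r_n, \mu_n^{\mathrm{trim}}) \xrightarrow{n\to\infty} (\bfT,d,r,\mu) \quad \text{a.s.\ in } \dGP,
\end{equation*}
using the Construction~\ref{construction:sampling_the_bridge} representation, in which $T_n$ is built from an i.i.d.\ sample $(\xi_i)_{i\geq 1}$ from $\mu$ together with the auxiliary uniforms.

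The main step is to identify $(T_n^{\mathrm{trim}}, d_n^{\mathrm{trim}}, \mu_n^{\mathrm{trim}})$ with a discrete approximation of $\mathrm{spn}(\xi_1,\dots,\xi_n)$ inside $\bfT$, up to an error vanishing in $\dGP$. First I would observe that $T_n^{\mathrm{trim}}$ is, as a combinatorial tree, essentially $S_n = \psi_n(\xi_1,\dots,\xi_n)$ with the planted root edge and possibly a bounded amount of boundary structure removed: trimming deletes exactly the leaves added in step 3 of the Construction, which sit above the labelled vertices of $S_n$, so $T_n^{\mathrm{trim}}$ is (a minor modification of) the shape of $\mathrm{spn}(\xi_1,\dots,\xi_n)$. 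Next, the key point is that the inhomogeneous IP--rescaling $d_n^{\mathrm{trim}}$ is precisely the \emph{empirical} analogue of the IP--spacing: for a vertex $x\in T_n^{\mathrm{trim}}$ corresponding to a point $\hat x\in\bfT$ in the span, $\mu_n^{\mathrm{trim}}(F_{T_n^{\mathrm{trim}}}(x)) = \tfrac1n\#\{i\le n:\hat x\preceq \xi_i\}$, which by the law of large numbers (applied simultaneously over the countably many relevant fringe events, or via Glivenko--Cantelli over the nested family of fringe subtrees, which is a $\mu$-Donsker-type class since it is linearly ordered along each ray and a VC-type class of sets on the tree) converges a.s.\ to $\mu(F_\bfT(\hat x))$. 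By the spacing property \eqref{eq:IP_spacing} this equals $1-d(r,\hat x)$, so $d_n^{\mathrm{trim}}$-distances to the root converge to $d$-distances to the root along the sampled points; combined with the tree structure (lengths add along paths, and on a tree knowing all root-distances and the genealogical order determines all pairwise distances) this gives convergence of the finite metric measure space spanned by $\xi_1,\dots,\xi_k$ for each fixed $k$.

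Finally I would upgrade finite-dimensional convergence to $\dGP$-convergence: by the sampling characterisation of the Gromov--weak topology (Athreya--Löhr--Winter, as cited), it is enough to show that for every fixed $k$ the matrix of rescaled pairwise distances among $k$ uniformly $\mu_n^{\mathrm{trim}}$-chosen vertices of $T_n^{\mathrm{trim}}$ converges in distribution to the distance matrix of $k$ i.i.d.\ $\mu$-points in $\bfT$ — and by exchangeability and the Construction, sampling $k$ vertices from $\mu_n^{\mathrm{trim}}$ is, up to a negligible coupling error, the same as picking $k$ of the sample points $\xi_1,\dots,\xi_n$, hence asymptotically the same as $k$ fresh i.i.d.\ $\mu$-points; the previous paragraph controls their rescaled distances. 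To get the \emph{almost-sure} (not merely in-distribution) statement, I would run this with the $k$ test points taken to be the \emph{first} $k$ sample points $\xi_1,\dots,\xi_k$ (which are genuinely i.i.d.\ $\mu$ and whose empirical subtree masses converge a.s.), show that $\mathrm{spn}(\xi_1,\dots,\xi_k)$ with its rescaled empirical metric converges a.s.\ to $\mathrm{spn}(\xi_1,\dots,\xi_k)\subseteq\bfT$ as $n\to\infty$, and then let $k\to\infty$ using that $\mathrm{spn}(\xi_1,\dots,\xi_k)\to\bfT=\mathrm{spn}(\mathrm{supp}\,\mu)$ (the spanning property) to conclude via a diagonal/Cauchy argument in $\dGP$. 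The hard part will be this last uniform control — showing that the part of $T_n^{\mathrm{trim}}$ \emph{not} seen by the first $k$ points carries $\dGP$-mass tending to $0$ uniformly in $n$ as $k\to\infty$, i.e.\ a tightness/uniform-negligibility estimate on small-mass fringe subtrees; this is where the non-compactness of $\bfT$ bites and where one must use the IP--spacing to bound diameters of small-mass pieces (a fringe subtree of $\mu$-mass $\delta$ has $d$-diameter $\le\delta$, and correspondingly the rescaled discrete pieces have diameter $\le\delta+o(1)$), together with a second-moment or Azuma argument to control how much $\mu_n^{\mathrm{trim}}$-mass escapes into such pieces.
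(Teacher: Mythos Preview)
Your reduction to the extremal case via Corollary~\ref{cor:decomposition} and then to Construction~\ref{construction:sampling_the_bridge} matches the paper exactly; the extremal statement is packaged separately as Theorem~\ref{thm:scaling_limit} and proved by inserting an intermediate space $\bfS_n=\spn(r,\xi_1,\dots,\xi_n)$ with empirical measure $\mu_n=\tfrac1n\sum_{i\leq n}\delta_{\xi_i}$ and its own IP-rescaled metric, then bounding $\dGP(T_n^{\mathrm{trim}},\bfS_n)$ and $\dGP(\bfS_n,\bfT)$ separately by constructing explicit couplings and correspondences in the sense of Definition~\ref{def:GP_in_introduction}.

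There is a genuine gap in your identification of $T_n^{\mathrm{trim}}$. Trimming does \emph{not} delete only the leaves added in step~3 of the Construction: it also deletes every leaf of $S_n$ that carries a single label, i.e.\ every sample point $\xi_i$ that is extremal among $\xi_1,\dots,\xi_n$. Hence $T_n^{\mathrm{trim}}$ corresponds not to $\spn(\xi_1,\dots,\xi_n)$ but to the span of the projections $\eta^n(\xi_i)$ onto the nearest ancestral pairwise common ancestor (Lemma~\ref{lemma:isometry_trimming}), with $\mu_n^{\mathrm{trim}}$ sitting at those projections rather than at the $\xi_i$. Controlling $\dGP(T_n^{\mathrm{trim}},\bfS_n)$ is then precisely the ``hard part'' you flag at the end, and the paper spends a full lemma (Lemma~\ref{prop:scaling_limit_part_2}) on it; your IP-spacing diameter bound on small-mass fringes is indeed the key ingredient there. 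For the other half, the paper does not use the sampling characterisation of the Gromov--weak topology: it proves the uniform estimate $\sup_{x\in\bfT}|\mu_n(F_\bfT(x))-\mu(F_\bfT(x))|\to 0$ directly (Lemma~\ref{lemma:control_on_Ft}) by constructing a finite deterministic partition of $\bfT$ into small-diameter, small-mass pieces plus a light remainder (Lemma~\ref{lemma:ip_trees_partition}) and then builds the $(R,\nu)$ pair from that partition. Your proposed VC/Glivenko--Cantelli shortcut is plausible---the class $\{F_\bfT(x):x\in\bfT\}$ is laminar---but it is not an off-the-shelf fact and would itself require an argument of roughly the same weight as the partition lemma.
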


This answers a question of Forman \cite[Question $2$]{forman_exchangeable_2020} if IP--trees arise as scaling limits of suitably rescaled discrete random trees. In fact, any IP--tree can be obtained in this way by using it in Construction \ref{construction:sampling_the_bridge}.

\begin{remark}
This theorem is optimal: both homogeneous rescaling as well as the Gromov-Hausdorff-Prokhorov topology are unsuitable under the general assumption that $(T_n,n\geq 1)$ is a tree growth chain with uniform backward dynamics. To see why a homogeneous rescaling does not work, join two typical realisations of an $\alpha$--stable tree $\mathcal{T}_\alpha$ and an $\alpha'$--stable tree $\mathcal{T}_{\alpha'}$ at the root with $\alpha > \alpha'$. If we were to rescale by $n^{\beta}$, then we would need both $\beta = -1+\frac{1}{\alpha}$ and $\beta = -1+\frac{1}{\alpha'}$ according \eqref{eq:scaling_limit_marchal} for the correct convergence. This is of course not possible. 
To see why the Gromov-Hausdorff-Prokhorov topology is too strong, construct a real tree $\bfT$ in the following way: for $k\geq 1$, let $a_k$ be an atom of weight $2^{-k}$. Connect $a_k$ to the root $r$ by an interval segment of length $1-2^{-k}$. One can see that for all $n\geq 1$ we have $d_{GHP}(T_n, \bfT) \geq 1/2$. This is because there is $k\in \NN$ such that for all $i\leq n$ we have $\xi_i \neq a_k$. This implies that $T_n$ will not converge after rescaling, in essence this is due to $\bfT$ not being a compact metric space. 
\end{remark}

\begin{remark}
    Instead of IP--trees, an alternative limit object would be algebraic trees as introduced by Löhr and Winter \cite{lohr_spaces_2021}. These are tree--like objects where the explicit metric space structure is replaced by a branch point map. 

    Nussbaumer and Winter study sequences of algebraic trees in the specific case of the $\alpha$--Ford model, see \cite{nussbaumer_algebraic_2020}. The fact that the $\alpha$--Ford model has uniform backward dynamics has been shown by Ford \cite[Prop. 42]{ford_probabilities_2005}. For this model, Theorem \ref{thm:scaling_limits_new} can be seen as an alternative to \cite[Section 3]{nussbaumer_algebraic_2020} using real trees. 
\end{remark}

The structure of this paper is as follows: in Section \ref{sec:background} we introduce an encoding for tree--valued Markov chains, namely dendritic systems. In Section \ref{sec:main_proofs} we prove Theorem \ref{thm:main_thm_new_version} and in Section \ref{sec:proof_of_evans} we prove an important auxiliary statement. Lastly in Section \ref{sec:scaling_limits} we prove Theorem \ref{thm:scaling_limits_new}.

\section{Dendritic systems}\label{sec:background}

In this section we introduce the notion of dendritic systems. These objects aim to generalise finite leaf--labelled plane trees to infinitely many labels with a strong focus on the leaves. The reason for considering dendritic systems is that they allow us to encode a tree--valued Markov chain as a more static object. This notion is similar to that of didendritic systems which has been introduced in \cite[Def. 5.8]{evans_doob-martin_2017} and slightly modified in \cite[Def. 6.2]{birkner_patricia_2020} which was introduced to generalise binary trees. Our notion has the advantage of accommodating multi--furcating trees as well.

\begin{definition}[Dendritic system]\label{def:planar_den_system}  Let $L\subset \NN$ be a finite or countably infinite set which we call leaf labels.  A planar dendritic system $\mathcal D = (L,\sim,\preceq,p)$ is the collection of the following objects: an equivalence relation $\sim$ on $L\times L$, we denote the space of equivalence classes as $T$; a partial order $\preceq$ on $T$ (which we call the ancestral partial order) and a function $p:T\times T\rightarrow \{0,1,-1\}$ (which we call the planarity function) satisfying the following properties for all $i,j,k,\ell \in L$:
\begin{itemize}
\item[(C1)] $(i,j) \sim (j,i)$, and $(i,j) \sim (k,k)$ if and only if $i=j=k$.
\item[(C2)] $(i,j) \preceq (i,i)$.
\item[(C3)] $(i,j)\preceq(k,\ell)$ and $(k,\ell)\preceq(i,j)$ if and only if $(i,j)\sim(k,\ell)$.
\item[(C4)] $a((i,j),(k,\ell))=\min_\preceq \{ (i,j),(k,\ell),(i,\ell),(i,k),(j,\ell),(j,k) \}$ exists in $T$.
\end{itemize}
Further, the planarity function $p$ satisfies for all $x,y,z\in T$:
\begin{itemize}
\item[(P1)] $p(x,y)=-p(y,x)$.
\item[(P2)] $p(x,y)=0$ if and only if $x\preceq y$ or $y \preceq x$.
\item[(P3)] If $p(x,y) = 1$ and $p(y,z)=1$ then $p(x,z)=1$.
\item[(P4)] If $p(x,y) = 1 $ and $y\preceq z$ then $p(x,z)=1$.
\end{itemize}
\end{definition}

The idea is that (C$1$)--(C$4$) encode a tree--like structure and that (P$1$)--(P$4$) determine a planar structure on $T$. In Lemma \ref{lemma:den_to_tree} we will show that this does indeed generalise plane, leaf--labelled trees.

Here $x \prec y$ corresponds to $x \preceq y$ and $x \neq y$. We will refer to $\{(i,i);i\in L \}$ as the leaves of $\mathcal D$. Moreover, consider two arbitrary vertices $(i,j)$ and $(k,\ell)$. Unless there is an ancestral relationship between $(i,j)$ and $(k,\ell)$, $(P4)$ allows us to determine $p((i,j),(k,\ell))$, namely $p((i,j),(k,\ell))=p(i,k)=p(j,k)=p(i,\ell)=p(j,\ell)$ where we abuse notation to write $i=(i,i)$. We can rephrase this as follows.

\begin{lemma}\label{lemma:den_sys_leaves_p}
For a dendritic system $(L,\sim,\preceq,p)$, $p$ is uniquely determined by $\preceq$ and $\{p(i,j);i,j\in L\}$ where we write $i=(i,i)$. 
\end{lemma}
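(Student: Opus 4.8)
The plan is to show that any vertex $x \in T$ can be written as $x = a((i,i),(j,j))$ for some leaves $i,j \in L$, and that the value of $p$ on arbitrary pairs is then forced by (P1)--(P4) together with $\preceq$. First I would establish the representation of vertices by leaves: given an equivalence class $x \in T$, pick a representative $(i,j) \in L \times L$; using (C4) and (C1), one checks that $a((i,j),(i,j)) = \min_\preceq\{(i,j),(i,i),(j,j)\}$. By (C2) both $(i,j) \preceq (i,i)$ and $(i,j) \preceq (j,j)$, so this minimum is $(i,j)$ itself, i.e. $(i,j) \preceq (i,i)$ and $(i,j)\preceq(j,j)$ realize $x$ as the meet of the two leaves $(i,i)$ and $(j,j)$. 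Hence every vertex is of the form $a(i,j)$ with $i,j$ leaves (writing $i = (i,i)$).

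Next I would handle the ancestral case. If $x \preceq y$ or $y \preceq x$, then (P2) gives $p(x,y) = 0$ directly from $\preceq$, so there is nothing to determine. So assume $x$ and $y$ are $\preceq$-incomparable, and write $x = a(i,j)$, $y = a(k,\ell)$. The claim is $p(x,y) = p(i,k)$, and moreover $p(i,k) = p(i,\ell) = p(j,k) = p(j,\ell)$, so the common value is well-defined from the leaf data. To see $p(x,y) = p(i,k)$: since $x = a(i,j) \preceq (i,i)$, we have $x \preceq i$, so if $p(x, k) \ne 0$ then by (P4) applied with $x \preceq i$... — more precisely, I want to relate $p(x,y)$ to $p(x,k)$ and then to $p(i,k)$. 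Note $y = a(k,\ell) \preceq k$; if $p(x,y)=1$ then (P4) gives $p(x,k)=1$, and symmetrically if $p(x,y)=-1$ then $p(y,x)=1$ and (P4) with $x\preceq i$ gives $p(y,i)=1$, i.e. $p(i,y)=-1$, then again (P4) with $y \preceq k$... Conversely one recovers $p(x,y)$ from $p(x,k)$ using that $x$ and $k$ are incomparable (which follows because $y \preceq k$ and $x,y$ incomparable forces $x \not\preceq k$; and $k \preceq x$ would give $k \preceq x \preceq i$ contradicting... one needs to check $k \not\preceq x$, using incomparability of $x,y$ and $y \preceq k$). I would carry this out by first reducing $p(x,y)$ to $p(x,k)$ (one leaf), then $p(x,k)$ to $p(i,k)$ (both leaves), each reduction being an application of (P1), (P4), and a short incomparability check; the fact that the answer does not depend on which of $k,\ell$ (resp. $i,j$) we chose follows by running the argument with the other representative, or directly from (P3)--(P4) since $k$ and $\ell$ lie in a common fringe above $y$.

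The main obstacle I anticipate is the bookkeeping around $\preceq$-incomparability: (P4) only lets one push $p$ \emph{up} the tree (from $y$ to $z$ when $y \preceq z$), so to move from a vertex $x = a(i,j)$ down to its defining leaves one must argue in the contrapositive and repeatedly verify that the relevant pairs remain incomparable, which is where (C4) and the meet structure get used. A clean way to organize this is to prove a single lemma: \emph{if $x \preceq x'$, $y \preceq y'$, and $x',y'$ are incomparable, then $x,y$ are incomparable and $p(x,y) = p(x',y')$} — this follows from (P2), (P4) and (P1) — and then apply it twice with $x' = i$, $y' = k$. Granting this, uniqueness of $p$ is immediate: $\preceq$ tells us when $p$ is $0$, and on incomparable pairs $p(x,y)$ equals $p(i,k)$ for any leaves $i \succeq x$, $k \succeq y$, which is data contained in $\{p(i,j) : i,j \in L\}$.
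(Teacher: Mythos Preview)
Your approach matches the paper's, which gives only a one-line justification before the lemma: for incomparable $(i,j)$ and $(k,\ell)$ one has $p((i,j),(k,\ell))=p(i,k)=p(j,k)=p(i,\ell)=p(j,\ell)$ via (P4), while the comparable case is handled by (P2). You are filling in the details the paper omits.

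There is one slip. Your ``clean lemma'' --- \emph{if $x \preceq x'$, $y \preceq y'$, and $x',y'$ are incomparable, then $x,y$ are incomparable and $p(x,y)=p(x',y')$} --- is false as stated: take $x=y=a(x',y')$. The implication you need runs the other way: \emph{if $x,y$ are incomparable and $x\preceq x'$, $y\preceq y'$, then $x',y'$ are incomparable and $p(x,y)=p(x',y')$}. This is the direction that lets you start from the given incomparable pair $x=(i,j)$, $y=(k,\ell)$ and push up to the leaves $i,k$.

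A second minor point: (P1), (P2), (P4) alone do not suffice. Axiom (P4) only pushes the \emph{second} argument up when $p=+1$, so from $p(x,y)=1$ and $y\preceq k$ you get $p(x,k)=1$, but to then replace $x$ by $i$ you need the symmetric statement ``$p(x,k)=1$ and $x\preceq i$ imply $p(i,k)=1$'', and this uses (P3): if $p(i,k)=-1$ then $p(k,i)=1$, and (P3) with $p(x,k)=1$ yields $p(x,i)=1$, contradicting (P2); if $p(i,k)=0$ then either $i\preceq k$ (forcing $x\preceq k$, so $p(x,k)=0$) or $k\preceq i$ (so (P4) gives $p(x,i)=1$, again contradicting (P2)). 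With these two corrections your argument goes through.
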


In the case where $L$ is finite, without loss of generality $L=\{1,\ldots,n\}$ for some $n\in \NN$, dendritic systems correspond to rooted plane trees with leaf labels. Recall from \eqref{eq:def:planetrees} the set $\TT_n$ of plane, rooted, planted trees without vertices of degree $2$ and $n$ leaves. We additionally equip these trees with leaf labels,
\begin{equation*}
    \TT_n^{\mathrm{labelled}} = \left\{T\in \TT_n: \text{the leaves of $T$ are labelled by $\{1,\ldots,n\}$} \right\}.
\end{equation*}

\begin{lemma}\label{lemma:den_to_tree}
The set of all dendritic systems with $L=\{1,\ldots,n\}$ is in bijection with $ \TT_n^{\mathrm{labelled}}$.
\end{lemma}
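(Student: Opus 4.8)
The plan is to construct explicit maps in both directions between dendritic systems on $L = \{1, \ldots, n\}$ and $\TT_n^{\mathrm{labelled}}$, and to check that they are mutually inverse.

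\medskip

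\noindent\textbf{From a dendritic system to a tree.} Given $\mathcal{D} = (L, \sim, \preceq, p)$, I would take the vertex set of the tree to be $T$, the set of $\sim$-equivalence classes, together with one extra root vertex. The key point is that by (C4) the element $a((i,j),(k,\ell))$ plays the role of the most recent common ancestor of the leaves involved; in particular $a((i,i),(j,j))$ is the branchpoint (or leaf) where the ancestral lines of $i$ and $j$ diverge. Using $\preceq$ I would declare $x$ to be the parent of $y$ if $x \prec y$ and there is no $z$ with $x \prec z \prec y$; one has to verify this notion of parent is well-defined, i.e.\ that the ancestors $\{z : z \preceq y\}$ of any $y \in T$ form a finite totally ordered chain. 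Finiteness follows since $L$ is finite and each strictly ancestral class must be of the form $a((i,i),(j,j))$ for suitable $i,j$ (there are no unlabelled vertices of degree $\le 2$, exactly matching the defining condition $\deg \neq 2$, $\deg(\text{root})=1$ of $\TT_n$); total ordering follows from (C3) and (C4). The planar order on the children of a vertex is read off from $p$: given two children $x, y$ of a common vertex, (P1)--(P2) say $p(x,y) \in \{1,-1\}$, and I would put $x$ to the left of $y$ iff $p(x,y) = 1$; (P3) guarantees this is a genuine total order on the set of children (transitivity), and (P4) guarantees it is consistent with descending into subtrees so that it extends to a planar order on the whole tree. Finally attach the root $r$ above the $\preceq$-minimal class to make the tree planted. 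The leaf of this tree carrying label $i$ is the class $(i,i)$, which by (C1) is distinct for distinct $i$, so the leaf labelling is a bijection with $\{1,\ldots,n\}$.

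\medskip

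\noindent\textbf{From a tree to a dendritic system.} Conversely, given $T \in \TT_n^{\mathrm{labelled}}$, for $i, j \in L$ let $b(i,j)$ be the branchpoint of $T$ at which the paths from the root to leaves $i$ and $j$ last agree (with $b(i,i)$ the leaf $i$ itself). Define $(i,j) \sim (k,\ell)$ iff $b(i,j) = b(k,\ell)$, let $\preceq$ be the genealogical order pulled back from $T$, and define $p((i,j),(k,\ell))$ by comparing the positions of $b(i,j)$ and $b(k,\ell)$ in the planar order of $T$, set to $0$ if one is an ancestor of the other. Checking (C1)--(C4) and (P1)--(P4) is then a routine unwinding of the definitions of $b$ and of the plane-tree structure; (C4) in particular is just the statement that the youngest common ancestor of the six relevant branchpoints exists, which it does because $T$ is a finite tree.

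\medskip

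\noindent\textbf{Mutual inverseness and the main obstacle.} With Lemma \ref{lemma:den_sys_leaves_p} in hand, a dendritic system is determined by $\preceq$ together with the restriction of $p$ to $L \times L$, which makes the two compositions easy to track: starting from a tree, the $b(i,j)$ reconstruct exactly the classes, the induced order, and the leaf-level planarity, and starting from a dendritic system the classes $a((i,i),(j,j))$ are by construction the $b(i,j)$ of the associated tree. I expect the main obstacle to be the bookkeeping showing that the vertex set produced in the first direction is precisely a tree in $\TT_n$ — i.e.\ that every non-leaf, non-root class has degree $\ge 3$ and that the root has degree $1$ — since this is where the axioms (C1)--(C4) must be used most carefully (a class $x \in T$ with $\le 2$ children and not equal to a leaf would have to be some $a((i,j),(k,\ell))$, and one must rule this out or show it cannot occur given that classes are indexed by unordered pairs of leaves). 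Everything else is a direct, if somewhat lengthy, verification.
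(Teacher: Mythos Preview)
Your proposal is correct and follows essentially the same approach as the paper: both directions are constructed identically (most recent common ancestors $b(i,j)$ for tree-to-dendritic, equivalence classes as vertices with $\preceq$ giving the tree structure and $p$ ordering the children for dendritic-to-tree), and both invoke Lemma~\ref{lemma:den_sys_leaves_p} to reduce the planarity check to leaves. If anything you are slightly more careful than the paper: you explicitly adjoin an extra root above the $\preceq$-minimal class to make the tree planted, which the paper's proof omits, and you correctly flag the verification that non-leaf classes have at least two children (hence degree $\ge 3$) as the main bookkeeping point, which the paper leaves implicit.
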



\begin{proof}[Proof of Lemma \ref{lemma:den_to_tree}.]
On the one hand, let $T\in  \TT_n^{\mathrm{labelled}}$. We direct every edge towards the root. For two leaves labelled $i,j\in T$, we let $b(i,j)\in T$ be their most recent common ancestor. Define a dendritic system as follows: $(i,j)\sim (k,\ell)$ if $b(i,j)=b(k,\ell)$, $(i,j)\preceq (k,\ell)$ if there is a directed path from $b(k,\ell)$ to $b(i,j)$, and $p(i,j)=1$ for two leaves labelled $i,j$ if $i$ precedes $j$ in the lexicographic order of the Ulam--Harris encoding (e.g.\ $(1,1,2,6)$ precedes $(1,1,4,3,2)$). By Lemma \ref{lemma:den_sys_leaves_p}, this determines $p$ uniquely.

On the other hand, let $\mathcal D = (L,\sim,\preceq,p)$ be a dendritic system with $L=\{1,\ldots,n\}$. We want to define a plane leaf--labelled tree $T$. The equivalence classes of $(L\times L,\sim)$ are the vertices and we add an edge between $x$ and $y$ if there is no $z$ such that $x \prec z \prec y$. Because $L$ is finite, this yields a tree. We direct an edge $(x,y)$ to $x$ if $x \prec y$ and to $y$ otherwise. The root $r$ is now the minimal element of this directed tree, it exists due to $(C3)$ and $(C4)$. Lastly, we need to impose a planar order on $T$, i.e.\ a valid Ulam--Harris encoding of the vertices. This is done iteratively from the root $r$, encoded by $\emptyset$. Then every vertex has finitely many children $x_1,\ldots,x_n$. Due to $(P3)$ there is a permutation $\sigma$ such that $p(x_{\sigma(i)},x_{\sigma(j)})=1$ if $i<j$. $x_i$ is then encoded by its parents encoding appended with $\sigma(i)$. Loosely speaking, $p$ determines a permutation at each branchpoint of $T$ which we use to obtain a planar order.

One can see that the two procedures described above are inverse to each other.
\end{proof}

This bijection extends to sequences of plane trees: if the trees are suitably consistent, the sequence corresponds to a dendritic system with labels given by $\NN$. 

\begin{lemma}\label{lem:den_system_to_sequences_of_trees}
    Let $(\widetilde{T}_n,n\geq 1)$ be a sequence of leaf--labelled plane trees such that $\widetilde{T}_n \in \TT_n^{\mathrm{labelled}}$. 
    Assume that this sequence is consistent in the sense that when removing the leaf labelled $n+1$ from $\widetilde{T}_{n+1}$ as well as any vertices of degree $2$ we obtain $\widetilde{T}_n$. Then there exists a unique dendritic system $\mathcal{D}=(\NN, \sim, \preceq,p)$ such that $\mathcal{D}$ restricted to $[n]$ is isomorphic to $\widetilde{T}_n$. This induces a bijection between dendritic systems and sequences of consistent plane trees.
\end{lemma}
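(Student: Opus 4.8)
The plan is to build on Lemma~\ref{lemma:den_to_tree}, which already gives us the bijection at each finite level, and to show that the consistency hypothesis on $(\widetilde T_n, n\geq 1)$ is exactly what is needed to glue these finite-level bijections into a single dendritic system on the countable label set $\NN$. First I would use Lemma~\ref{lemma:den_to_tree} to associate to each $\widetilde T_n$ its dendritic system $\mathcal D_n = ([n], \sim_n, \preceq_n, p_n)$. The key observation is that removing the leaf labelled $n+1$ (and any resulting degree-$2$ vertex) from $\widetilde T_{n+1}$ corresponds, on the dendritic side, to restricting $\sim_{n+1}, \preceq_{n+1}, p_{n+1}$ to pairs of labels in $[n]$ and then re-indexing the equivalence classes: the most recent common ancestor of two leaves $i,j\in[n]$ is unaffected by deleting leaf $n+1$, except possibly being relabelled if it had degree $2$ after the deletion, but in that case it was a vertex that gets suppressed and the genealogical and planarity relations between the surviving $[n]$-classes are preserved. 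So I would prove a compatibility lemma: $\mathcal D_{n+1}$ restricted to $[n]$ is isomorphic (as a dendritic system) to $\mathcal D_n$, where the restriction of a dendritic system $(L,\sim,\preceq,p)$ to a subset $L'\subseteq L$ is simply $(L', \sim|_{L'\times L'}, \preceq|_{T'}, p|_{T'\times T'})$ with $T'$ the classes meeting $L'$.

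Granting that compatibility, I would then define $\mathcal D = (\NN, \sim, \preceq, p)$ by taking direct limits: for $i,j\in\NN$, pick any $n\geq \max(i,j)$ and declare $(i,j)\sim(k,\ell)$, $(i,j)\preceq(k,\ell)$, $p(i,j)$ according to $\mathcal D_n$; the compatibility lemma makes this independent of the choice of $n$. One subtlety: a ``vertex'' of $\mathcal D$ is an equivalence class of $\sim$ on $\NN\times\NN$, and one should check this class is the increasing union of the finite-level classes under the re-indexing maps, so that the genealogical order $\preceq$ and (using Lemma~\ref{lemma:den_sys_leaves_p}) the planarity function $p$ descend consistently. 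Then I would verify axioms (C1)--(C4) and (P1)--(P4) for $\mathcal D$: each axiom only involves finitely many labels $i,j,k,\ell$, so it holds for $\mathcal D$ because it holds for $\mathcal D_n$ with $n$ large enough to contain all the labels involved — here I would flag that (C4) needs a small argument that the element $a((i,j),(k,\ell))$ computed at level $n$ is genuinely the meet in the limit poset, which again follows from the restriction maps being order-embeddings. By construction $\mathcal D$ restricted to $[n]$ is isomorphic to $\mathcal D_n$, which by Lemma~\ref{lemma:den_to_tree} corresponds to $\widetilde T_n$.

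For uniqueness, suppose $\mathcal D'$ is another dendritic system on $\NN$ whose restriction to each $[n]$ is isomorphic to $\widetilde T_n$; then its restriction to $[n]$ is isomorphic to $\mathcal D_n$, and since $\mathcal D_{n+1}$ restricted to $[n]$ already determines $\mathcal D_n$ up to the canonical isomorphism, the isomorphisms can be chosen compatibly in $n$, and their union is an isomorphism $\mathcal D \cong \mathcal D'$. (Strictly, one should fix the labelling of leaves throughout so that these isomorphisms are the identity on $L=\NN$, forcing $\mathcal D = \mathcal D'$ on the nose.) Finally, to get the claimed bijection between dendritic systems with label set $\NN$ and sequences of consistent plane trees, I would note the construction above is one direction, and the reverse direction sends a dendritic system $\mathcal D$ on $\NN$ to the sequence $(\widetilde T_n)$ where $\widetilde T_n$ is the plane tree corresponding to $\mathcal D$ restricted to $[n]$ via Lemma~\ref{lemma:den_to_tree}; the consistency hypothesis for this sequence is exactly the statement that deleting leaf $n+1$ from $\widetilde T_{n+1}$ gives $\widetilde T_n$, which follows because restriction of $\mathcal D$ commutes with the deletion operation on trees — the same compatibility lemma read backwards. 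That these two maps are mutually inverse follows from the finite-level bijection of Lemma~\ref{lemma:den_to_tree}.

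The main obstacle I anticipate is the bookkeeping around degree-$2$ vertices in the deletion step: when leaf $n+1$ is removed, its parent may drop to degree $2$ and get suppressed, so the set of equivalence classes at level $n+1$ does not simply contain the set at level $n$ — there is a genuine re-indexing. Making the ``restriction of a dendritic system'' operation precise enough that the compatibility lemma ``$\mathcal D_{n+1}|_{[n]} \cong \mathcal D_n$'' is both true and usable, and checking that this suppression never disturbs the $\preceq$- and $p$-relations among the surviving leaves, is the technical heart of the argument; everything else is a routine direct-limit / finitary-axiom verification.
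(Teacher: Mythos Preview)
Your proposal is correct and follows essentially the same route as the paper: build the finite dendritic systems $\mathcal D_n$ via Lemma~\ref{lemma:den_to_tree}, observe that $\mathcal D_{n+1}\vert_{[n]} = \mathcal D_n$ under the consistency hypothesis, and define $\mathcal D$ as the direct limit, with uniqueness and the inverse map immediate. Your anticipated ``main obstacle'' about degree-$2$ suppression and re-indexing is in fact a non-issue, since the data $(\sim,\preceq,p)$ is recorded entirely in terms of leaf-label pairs $(i,j)$ rather than abstract vertex names, so restriction to $[n]$ is literal restriction of relations and no re-indexing occurs; the paper accordingly writes $\mathcal D_n\vert_{[m]} = \mathcal D_m$ as an equality rather than an isomorphism and dispenses with the bookkeeping you flag.
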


\begin{proof}
    Let $m<n$ and let $\mathcal{D}_n =([n], \sim_n, \preceq_n, p_n)$ and $\mathcal{D}_m = ([m], \sim_m, \preceq_m, p_m)$ be the dendritic systems corresponding to $\widetilde{T}_{n}$ and $\widetilde{T}_{m}$ according to Lemma \ref{lemma:den_to_tree} respectively. Our assumptions on $(\widetilde{T}_n,n\geq 1)$ imply that $\mathcal{D}_n \big\vert_{[m]} = \mathcal{D}_m$. This means we can define $\mathcal{D}$ inspecting $\mathcal{D}_n:$ if for $i,j,k,\ell \leq n$ we have $(i,j) \sim_n (k,\ell)$ in $\mathcal{D}_n$, then also set $(i,j) \sim (k,\ell)$ in $\mathcal{D}$. The consistency between $\mathcal{D}_n$ and $\mathcal{D}_m$ implies that the choice of $n$ does not matter when defining $\sim$. We define $\preceq$ and $p$ similarly. By construction, we have $\mathcal{D}\big\vert_{[n]} =\mathcal{D}_n$. Further, this determines $\mathcal{D}$ uniquely, compare also to \cite[Lemma 5.13]{evans_doob-martin_2017}.

\end{proof}

\begin{remark}
    The bijection of Lemma \ref{lem:den_system_to_sequences_of_trees} induces a measurable structure on the space of dendritic systems. This is allows us to speak of random dendritic systems. 
\end{remark}

Given a finite permutation $\sigma$ on the leaf labels and a dendritic system $\mathcal D=(L,\sim,\preceq,p)$ we define the dendritic system $\mathcal{D}^\sigma=(L,\sim^\sigma,\preceq^\sigma,p^\sigma)$ by
\begin{enumerate}
    \item $(i,j)\sim^\sigma (k,\ell)$ if and only if  $(\sigma(i),\sigma(j))\sim (\sigma(k),\sigma(\ell))$,
    \item $(i,j)\preceq^\sigma (k,\ell)$ if and only if $(\sigma(i),\sigma(j))\preceq (\sigma(k),\sigma(\ell))$,
    \item $p^\sigma(i,j)=p(\sigma(i),\sigma(j))$.
\end{enumerate}

\begin{definition}
For a random dendritic system $\mathcal{D}$ we say that $\mathcal{D}$ is
\begin{enumerate}
    \item exchangeable, if $\mathcal{D}$ and $\mathcal D^\sigma$ have the same distribution for every finite permutation $\sigma$ on the leaf labels;
    \item ergodic, if we have that for any event $A$ we have that 
    \begin{equation*}
        \PP(\{ \mathcal{D}\in A \} \Delta \{ \mathcal{D}^\sigma\in A \}) = 0 
    \end{equation*}
    for every finite permutation $\sigma$ on the leaf labels implies that $\PP (\{ \mathcal{D}\in A \})\in \{0,1\}$
\end{enumerate}
\end{definition}

We can now state the main result of this section: any tree--valued Markov chain can be encoded in a dendritic system.

\begin{prop}\label{prop:tree_growth_to_dendritic+ergodic}
    Let $(T_n,n\geq 1)$ be a tree--growth chain with uniform backward dynamics. There exists an exchangeable random dendritic system $\mathcal{D}$ such that if we remove the leaf--labels of $(\widetilde{T}_n,n\geq 1)$, the sequence of leaf--labelled trees corresponding to $\mathcal{D}$, we obtain $(T_n,n\geq 1)$. Furthermore, the distribution of $(T_n,n\geq 1)$ is extremal if and only if $\mathcal{D}$ is ergodic.  
\end{prop}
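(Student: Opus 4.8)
## Proof plan for Proposition~\ref{prop:tree_growth_to_dendritic+ergodic}

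The plan is to build the dendritic system $\mathcal D$ from $(T_n,n\geq 1)$ by first introducing leaf labels in an exchangeable way, then checking exchangeability and the ergodicity--extremality equivalence via a standard de~Finetti--type argument. First I would promote $(T_n,n\geq 1)$ to a sequence of leaf--labelled trees $(\widetilde T_n,n\geq 1)$ with $\widetilde T_n\in\TT_n^{\mathrm{labelled}}$: given a realisation of $(T_n,n\geq 1)$, assign labels recursively so that the label of the leaf of $T_n$ that disappears in the backward step $T_n\mapsto T_{n-1}$ is exactly $n$, and so that the labelling is otherwise uniform; concretely, sample a uniformly random way of labelling the $n$ leaves of $T_n$ consistently with the constraint coming from all earlier trees. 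The uniform backward dynamics of Definition~\ref{def:uniform_backwards} is precisely what makes this consistent: the leaf removed from $T_n$ to obtain $T_{n-1}$ is uniform among the $n$ leaves, which is exactly the statement that ``leaf $n$ is a uniformly random leaf of $\widetilde T_n$ given $\widetilde T_{n-1}$'' — so the $(\widetilde T_n)$ are consistent in the sense of Lemma~\ref{lem:den_system_to_sequences_of_trees} and induce a random dendritic system $\mathcal D=(\NN,\sim,\preceq,p)$.

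Next I would verify exchangeability of $\mathcal D$. It suffices to show that for each $n$, the leaf--labelled tree $\widetilde T_n$ is exchangeable under permutations of $\{1,\ldots,n\}$, since a finite permutation $\sigma$ of $\NN$ fixes all but finitely many labels and $\mathcal D\!\restriction_{[n]}$ determines $\widetilde T_n$. This in turn follows by induction: $\widetilde T_n$ is obtained from $\widetilde T_{n-1}$ (which is exchangeable in $\{1,\ldots,n-1\}$ by hypothesis) by attaching a leaf labelled $n$ at a location determined by $T_n$, and the label $n$ plays a symmetric role because it was chosen to be the ``uniformly removed'' leaf. More carefully, one checks that the conditional law of $\widetilde T_n^\sigma$ given the forward data agrees with that of $\widetilde T_n$ for any transposition $\sigma$, using that the forward transition $T_{n-1}\to T_n$ together with the uniform random relabelling is symmetric in the $n$ labels; transpositions generate the symmetric group, so exchangeability follows. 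The key structural input here is again Definition~\ref{def:uniform_backwards}.

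For the second assertion, the approach is the standard correspondence between extremality in a Choquet simplex and ergodicity of an associated exchangeable object. The map $(T_n,n\geq 1)\mapsto \mathcal D$ (with its extra randomisation) and the forgetful map $\mathcal D\mapsto (T_n,n\geq 1)$ that deletes labels set up an affine bijection between the law of the tree growth chain and a distinguished lift to laws of exchangeable dendritic systems: given the law of $(T_n)$ there is a \emph{canonical} exchangeable lift, namely the one built by the uniform relabelling above, and conversely projecting any exchangeable $\mathcal D$ gives back a tree growth chain with uniform backward dynamics (the uniform backward dynamics of the projection is exactly the content of exchangeability of $\mathcal D$). Under this affine bijection, convex decompositions of the law of $(T_n)$ correspond to convex decompositions of the law of $\mathcal D$ \emph{within the class of exchangeable dendritic systems}; and by the ergodic decomposition theorem for exchangeable random variables (equivalently, by Hewitt--Savage), an exchangeable law is extremal in the simplex of exchangeable laws if and only if it is ergodic. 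Composing the two equivalences gives: the law of $(T_n,n\geq 1)$ is extremal $\iff$ $\mathcal D$ is ergodic.

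The main obstacle I anticipate is the affine-bijection bookkeeping in the last paragraph: one must be careful that the randomised relabelling does not destroy the correspondence between extreme points, i.e.\ that \emph{every} exchangeable dendritic system projecting to a given tree growth chain law is in fact the canonical lift (so the lift is unique, not just existent), and that the lift map is affine. Uniqueness of the lift should follow because exchangeability forces the conditional law of the labels given $(T_n,n\geq 1)$ to be uniform among consistent labellings — any other conditional law would break symmetry of some $\widetilde T_n$ — so there is genuinely only one exchangeable lift and the correspondence is a bijection on the nose. Once that is pinned down, affinity is immediate and the ergodicity equivalence is a citation (e.g.\ to Kallenberg~\cite{kallenberg_foundations_2021}); the analogue in the binary case is \cite[Proposition~5.14 ff.]{evans_doob-martin_2017}, which can be followed closely.
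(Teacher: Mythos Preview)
Your approach is essentially the same as the paper's: construct a consistent exchangeable leaf--labelling of $(T_n,n\geq 1)$, invoke Lemma~\ref{lem:den_system_to_sequences_of_trees} to obtain $\mathcal D$, and then appeal to the standard ergodic--extremal correspondence for exchangeable structures (the paper cites \cite[Proposition~5.19]{evans_doob-martin_2017} and Kallenberg~\cite{kallenberg_probabilistic_2005} directly). The one place where the paper is crisper is the labelling construction itself: rather than your recursive description, it fixes $N$, labels the leaves of $T_N$ uniformly at random, defines $\widetilde T_m$ for $m<N$ by deleting the leaves labelled $m+1,\ldots,N$, checks that the resulting law of $(\widetilde T_1,\ldots,\widetilde T_N)$ is consistent in $N$, and then invokes Kolmogorov's extension theorem --- this sidesteps any ambiguity about ``which leaf disappeared'' in a given realisation and makes exchangeability immediate.
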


\begin{proof}
    Recall the definition of uniform backward dynamics from Definition \ref{def:uniform_backwards}. We cannot apply Lemma \ref{lem:den_system_to_sequences_of_trees} right away as $(T_n,n\geq 1)$ does not have labelled leaves. We follow a similar procedure to \cite[Section 5]{evans_doob-martin_2017} to equip $(T_n,n\geq 1)$ with leaf labels, we recall this procedure here. Let $N \in \NN$ be large and let $S_N$ be a tree with the same distribution as $T_N$. Label the leaves of $S_N$ uniformly by $[N]$ to obtain $\widetilde{S}_N$. For $m<N$, let $\widetilde{S}_m$ be the tree obtained by removing the leaves with labels $m+1,\ldots, N$ from $\widetilde{S}_N$ as well as any vertices of degree $2$. Due to our assumption of uniform backward dynamics, $(\widetilde{S}_1, \ldots, \widetilde{S}_N)$ has the same distribution as $(T_1,\ldots, T_N)$ after removing the leaf labels. Furthermore, if we perform this procedure with $M>N$ and obtain $(\widetilde{S}_1', \ldots, \widetilde{S}_M')$, then $(\widetilde{S}_1, \ldots, \widetilde{S}_N)$ and $(\widetilde{S}_1', \ldots, \widetilde{S}_N')$ have the same distribution. Therefore, by Kolmogorov's extension theorem, there is a unique Markov chain $(\widetilde{S}_n,n\geq 1)$ of leaf--labelled plane trees such that, if the leaf labels are removed, $(\widetilde{S}_n,n\geq 1)$ has the same distribution as $(T_n,n\geq 1)$. Moreover, the backward dynamics of $(\widetilde{S}_n,n\geq 1)$ are given by removing the leaf with the biggest label. Because we chose the leaf leaves of $\widetilde{S}_N$ uniformly, these leaf labels are exchangeable.  

    Given a realisation of $(\widetilde{S}_n,n\geq 1)$, there exists a corresponding dendritic system $\mathcal{D}$ by Lemma \ref{lem:den_system_to_sequences_of_trees}. Consider this as random dendritic system, $\mathcal{D}$ is exchangeable because the leaf labels of $(\widetilde{S}_n,n\geq 1)$ are exchangeable. Given $\mathcal{D}$, we can recover $(\widetilde{S}_n,n\geq 1)$ by Lemma \ref{lem:den_system_to_sequences_of_trees} and thus $(T_n,n\geq 1)$ by removing the leaf labels. 
    Lastly, $\mathcal{D}$ is ergodic if and only if the distribution of $(T_n,n\geq 1)$ is extremal by \cite[Proposition 5.19]{evans_doob-martin_2017}. Their proposition deals with didendritic rather than dendritic systems but that does not affect the proof. 
\end{proof}

Due to the above propositions we will study exchangeable, ergodic dendritic systems instead of extremal tree--valued Markov chains with uniform backward dynamics.

\section{Classification}\label{sec:main_proofs}

In the previous section we have shown that we can encode a tree growth chain $(T_n , n\geq 1)$ with uniform backwards dynamics in an exchangeable dendritic system $\mathcal{D}$.
The proof of Theorem \ref{thm:main_thm_new_version} consists of two steps, recall that we are looking for a rooted, weighted real tree $(\bfT,d,r,\mu)$, a planar order $\psi$ as in Definition \ref{def:planar}, a function $\lambda:\bfT\to[0,1]$ and functions $\beta_a:\{\text{subtrees of $a$}\} \to [0,1]$ for every atom $a$ of $\mu$.
Consider $(T_n , n\geq 1)$ so that its distribution is extremal, equivalently $\mathcal{D}$ is ergodic. In the first step, we show that there is \emph{some} $(\bfT,d,r,\mu,\psi,\lambda,\{\beta_a\}_a)$ that represents $(T_n , n\geq 1)$, see Proposition \ref{prop:there_is_dprt_for_dendritic}. In the second step, we show that this can be chosen uniquely, see Proposition \ref{prop:let_it_be_IP}. And in a third step we show Corollary \ref{cor:decomposition} by decomposing tree--valued Markov chains into extremal distributions. 

First, we translate Construction \ref{construction:sampling_the_bridge} into the language of dendritic systems.
This means we state how to sample a dendritic system $\mathcal{D}=(\NN, \sim, \preceq, p)$ from $(\bfT,d,r,\mu,\psi,\lambda,\{\beta_a\}_a)$. We split the construction into two parts, sampling $(\NN, \sim, \preceq)$ from the tree and determining the planarity function $p$ using $(\psi, \lambda, \{\beta_a\}_a)$ and extra randomness. 

\begin{construction}\label{construction:sample_den_from_tree:part_1}
Sample a sequence $\{ \xi_i\}_{i \in \NN}$ $i.i.d.$\ from $\mu$. We then define for $i,j,k,\ell \in \NN$:

\begin{enumerate}
    \item $(i,i)\sim (k,\ell)$ if and only if $i=k=\ell$.
    
    \item $(i,j)\sim(k,\ell)$ for $i\neq j,k\neq \ell$ if and only if $\llbracket r, \xi_i \rrbracket\cap \llbracket r, \xi_j \rrbracket=\llbracket r, \xi_\ell \rrbracket\cap \llbracket r, \xi_k \rrbracket$.
    
    \item A partial order $\preceq$ on $\NN^2 / \sim$ is inherited from the ancestral partial order $<$ on $\bfT$ and adding $(i,j)\prec(i,i)$ for $i \neq j$. This means for distinct $i,j,k,\ell \in \NN$ we set
    $$(k,\ell)\prec (i,j) \quad \text{   if and only if  }  \quad \llbracket r,\xi_k \rrbracket\cap \llbracket r,\xi_\ell \rrbracket \subsetneq \llbracket r,\xi_i \rrbracket \cap \llbracket r,\xi_j \rrbracket.$$
    
\end{enumerate}
\end{construction}

\begin{construction} \label{construction:sample_den_from_tree:part_2} In the setting of Construction \ref{construction:sample_den_from_tree:part_1}, we now sample $\{ U_i\}_{i \in \NN}$ $i.i.d.$\ uniform random variables from $[0,1]$ independently from $\{ \xi_i\}_{i \in \NN}$.
To determine the planarity function $p$, we distinguish four cases. Recall that we decomposed $\mu = \mu_{\mathrm{atoms}} + \mu_s + \mu_\ell$ into the mass on the atoms, the skeleton (diffusely) and the leaves (diffusely). 
    
    \begin{enumerate}
                    
        \item If neither $\xi_i \preceq \xi_j$ nor $\xi_j \preceq \xi_i$, then $p$ is determined by $\psi_2(\xi_i,\xi_j)$, see Figure \ref{fig:p_from_psi} for an illustration. More precisely, there is a unique plane tree with two leaves and one root, we need to check which of the two leaves is labelled $i$ and which is labelled $j$. We set 
        $$p(i,j)= \begin{cases}1 \quad &\text{if the left leaf is labelled $i$ and the right leaf is labelled $j$ in } \psi_2(\xi_i,\xi_j), \\ -1  &\text{otherwise}. \end{cases}$$
        
        \item If $\xi_i \prec \xi_j$ we distinguish two cases.
        
        \begin{enumerate}
            \item In the case that $\xi_i \in \supp \mu_s$, we set 
            $$p(i,j) =  \begin{cases}
            1 \quad & \text{if } U_i < \lambda( \xi_i), \\
            -1      & \text{if } U_i \geq  \lambda( \xi_i).
            \end{cases}$$
            
            \item In the case that $\xi_i \in \supp  \mu_{\mathrm{atoms}}$, let $\bfS_j$ be the subtree of $\xi_i$ such that $\xi_j \in \bfS_j$.
            We then set 
            $$p(i,j)= \begin{cases} 1 \quad & \text{if } U_i < \beta_{\xi_i}(\bfS_j), \\ -1 & \text{if } U_i \geq \beta_{\xi_i}(\bfS_j). \end{cases}$$

        \end{enumerate}
        
        \item If $\xi_i \succ \xi_j$, we do the same as above with reversed roles for $i$ and $j$.
        
        \item If $\xi_i = \xi_j$, we set 
        $$p(i,j)= \begin{cases}1 \quad &\text{if } U_i<U_j, \\ -1 &\text{if } U_i \geq U_j. \end{cases}$$
    \end{enumerate}
    
\end{construction}

Note that the value of $p((i,j),(k,\ell))$, where $(i,j),(k,\ell)$ are not leaves, is uniquely determined by the values of $p$ on the leaves by imposing the consistency relations $(P1)$--$(P4)$, see Lemma \ref{lemma:den_sys_leaves_p}.

\begin{figure}[ht]
    \centering
    \includegraphics[scale=1]{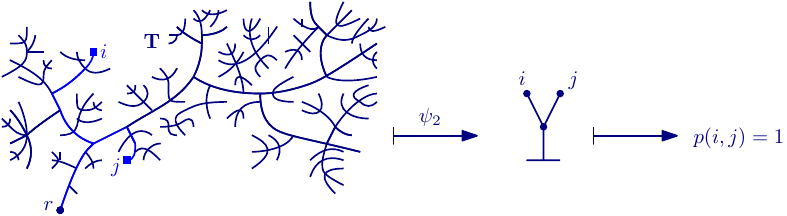}
    \caption{How $\psi$ is used to determine $p$ in the sampling construction. Given two (random) points $\xi_i$ and $\xi_j$ which do not satisfy $\xi_i \preceq \xi_j$ nor $\xi_j \preceq \xi_i$, $\psi_2$ maps these points to the unique tree with two leaves. The two options to label the leaves correspond to $p(i,j)=\pm 1$ respectively.}
    \label{fig:p_from_psi}
\end{figure}

We will show two key propositions.

\begin{prop}\label{prop:there_is_dprt_for_dendritic}
Let $\mathcal{D}$ be an ergodic, exchangeable dendritic system.
Then there exists deterministic $(\bfT, d, r, \mu, \psi, \lambda, \{\beta_a\}_a)$ such that the distribution of $\mathcal D$ equals the one sampled from Constructions \ref{construction:sample_den_from_tree:part_1} and \ref{construction:sample_den_from_tree:part_2}.
\end{prop}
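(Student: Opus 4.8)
The plan is to reconstruct the real tree $\bfT$ from the dendritic system $\mathcal{D}$ by a de Finetti / Aldous--Hoover type argument applied to the exchangeable structure, and then to read off the decorations $(\psi,\lambda,\{\beta_a\}_a)$ from the planarity function. First, I would forget the planarity function and work with the genealogical part $(\NN,\sim,\preceq)$ alone. Since $\mathcal{D}$ is exchangeable, the restriction of the genealogy to $\{1,\dots,n\}$ is an exchangeable sequence of (unlabelled-up-to-relabelling) tree shapes; by ergodicity and a representation theorem for exchangeable hierarchies — this is exactly the setting of Forman--Haulk--Pitman \cite{forman_representation_2018} — there is a deterministic rooted weighted real tree, which after passing to the mass-structural equivalence class (Forman \cite[Theorem 1]{forman_exchangeable_2020}) we may take to be an IP--tree $(\bfT,d,r,\mu)$, together with an i.i.d.\ $\mu$-sequence $(\xi_i)_{i\ge1}$, such that $(\sim,\preceq)$ restricted to $[n]$ agrees in distribution with the genealogy of $\spn(\xi_1,\dots,\xi_n)$ as produced in Construction \ref{construction:sample_den_from_tree:part_1}. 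The key point is that the combinatorial genealogy $(i,j)\mapsto$ class of $\llbracket r,\xi_i\rrbracket\cap\llbracket r,\xi_j\rrbracket$, together with the masses of fringe subtrees (recoverable as almost sure limits of empirical frequencies $\frac1n\#\{k\le n:\xi_k\in F(\cdot)\}$ by the law of large numbers), determines and is determined by the IP--tree; I would cite the uniqueness half of Forman's representation here.

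Next, conditionally on the genealogy, I would analyze the planarity function $p$ restricted to leaf pairs; by Lemma \ref{lemma:den_sys_leaves_p} this determines all of $p$. For a pair $i\ne j$ with $\xi_i,\xi_j$ incomparable under $\preceq$, the sign $p(i,j)$ depends only on the "branching data" of the pair at their meeting point, and exchangeability forces the collection $\{p(i,j): \xi_i,\xi_j \text{ incomparable}\}$ to be consistent in the sense of Definition \ref{def:planar}; the resulting object is exactly a planar order $\psi$ for $\bfT$ — one checks (P1)--(P4) translate into properties (1)--(3) of $\psi_n$, and consistency of $\psi$ across $n$ is (P4) plus exchangeability. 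For a pair with $\xi_i\prec\xi_j$, I would split according to the decomposition $\mu=\mu_{\mathrm{atoms}}+\mu_s+\mu_\ell$: if $\xi_i$ lands in $\supp(\mu_s)$ then almost surely $\deg(\xi_i)=2$, and for each such skeletal point the conditional probability (given the genealogy) that $p(i,j)=1$ depends only on $\xi_i$; this conditional probability, as a function of $\xi_i$, is the branch weight $\lambda:\bfT\to[0,1]$, and one uses a conditional-de-Finetti / martingale-convergence argument together with ergodicity to see it is a deterministic function and that the realized signs are, conditionally, independent fair-or-biased coin flips with parameter $\lambda(\xi_i)$, i.e.\ can be coupled to the $U_i<\lambda(\xi_i)$ mechanism of Construction \ref{construction:sample_den_from_tree:part_2}(2)(a). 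If $\xi_i$ is an atom $a$, the same reasoning applied to each subtree $\bfS$ of $a$ yields the branchpoint weight $\beta_a(\bfS)\in[0,1]$, and the case $\xi_i=\xi_j=a$ is handled by noting that the order in which ties are broken is itself exchangeable and hence (conditionally) uniform, matching the $U_i<U_j$ rule.

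The main obstacle, I expect, is the measurable-selection / "deterministic function" part: ensuring that $\lambda$ and the $\beta_a$ genuinely exist as measurable functions on $\bfT$ (not merely random variables depending on the labels), that they do not depend on the extra randomness, and that the conditional law of the leaf-planarity signs given $(\xi_i)_i$ really factorizes as the independent-coin-flip construction. This requires care: one must disintegrate over the IP--tree, invoke ergodicity to kill any residual randomness in the "parameters", and then assemble a single coupling with i.i.d.\ uniforms $(U_i)$ that simultaneously realizes the incomparable-pair signs via $\psi$, the skeletal signs via $\lambda$, the atom signs via $\beta_a$, and the tie-breaks via the order statistics of the $U_i$. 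A secondary technical point is that $\bfT$ need not be locally compact, so one cannot appeal to compactness-based representation theorems; instead everything should be phrased purely in terms of the countable combinatorial data (masses of fringe subtrees, pairwise genealogical meets, pairwise planarity signs) and its almost sure limits, which is precisely why the dendritic-system encoding was set up. Once $\lambda\in L^1(\mu_s)$ and the $\beta_a$ are in hand, comparing the two constructions term by term on $[n]$ for every $n$ — and using that finite-dimensional agreement plus Lemma \ref{lem:den_system_to_sequences_of_trees} pins down the law — gives the claimed distributional identity.
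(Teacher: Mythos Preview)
Your route diverges from the paper's in both stages. For the genealogical part you invoke the exchangeable--hierarchy representation of Forman--Haulk--Pitman to produce $(\bfT,d,r,\mu)$ directly, whereas the paper first builds an ultrametric on $\NN$ from empirical subtree frequencies, represents it as a coalescent tree, and then applies Gufler's sampling theorem (this is the content of their Proposition~\ref{prop:evans} in the appendix). Either route should yield the tree and the measure; the substantive difference is in how the planarity is handled.

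The paper does \emph{not} attempt to read $(\psi,\lambda,\{\beta_a\})$ off the conditional law of $p$ given $(\xi_i)$. Instead, Proposition~\ref{prop:evans} first produces, via the Aldous--Hoover--Kallenberg theorem for jointly exchangeable arrays, a deterministic function $F:(\bfT\times[0,1])^2\times[0,1]\to\{\pm1\}$ and an i.i.d.\ uniform array $(U_i,U_{ij})$ with $p(i,j)=F(\xi_i,U_i,\xi_j,U_j,U_{ij})$, together with consistency relations $(F1)$--$(F4)$ inherited from $(P1)$--$(P4)$. Only then are $\psi,\lambda,\beta_a$ extracted from $F$: the consistency relations plus an elementary independence lemma (their Lemma~\ref{lemma:app:ind}: if $f(X,Y)=f(X,Z)$ a.s.\ with $Y\perp Z$ then $f(X,Y)=g(X)$) force $F$ to be constant on products of subtrees, which \emph{defines} $\psi$; and $\lambda(x),\beta_a(\bfS)$ are defined as $\PP(F(x,U_1,\xi,U_2,U_3)=1)$ for $\xi$ sampled from the relevant fringe subtree. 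The equality of laws is then checked by a careful partition of $[n]$ into indices $I_1$ (pairwise incomparable), $I_2$ (skeleton), and $I_3^a$ (atoms), with the joint distribution computed explicitly in each block.

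The gap in your proposal is exactly the step you flag as ``the main obstacle'': you never establish that the planarity signs admit a representation through a \emph{single} i.i.d.\ uniform sequence $(U_i)$ that simultaneously governs the skeleton coin--flips, the atom gap--selection via $\beta_a$, and the tie--breaking order among leaves at the same atom. Your argument that $p(i,j)$ for incomparable $\xi_i,\xi_j$ is deterministic given the subtree pair can be made to work via $(P4)$ and ergodicity, and $(P4)$ does show that for $\xi_i\prec\xi_j$ the sign does not depend on $j$. But the claim that, conditionally on $(\xi_i)$, the residual randomness factorises as independent $\mathrm{Ber}(\lambda(\xi_i))$ variables --- let alone that at an atom $a$ the pattern $\{p(i,i_k^a):k\}$ together with the order among $\{i:\xi_i=a\}$ can be encoded by one uniform $U_i$ per leaf --- is not a consequence of a ``conditional de Finetti / martingale--convergence argument'' alone. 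This factorisation is precisely what the Aldous--Hoover--Kallenberg representation buys, and the paper's whole proof of Proposition~\ref{prop:there_is_dprt_for_dendritic} is built on top of that $F$--representation. Without introducing an analogous tool, your Step~3 remains a wish rather than an argument.
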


\begin{prop}\label{prop:let_it_be_IP}
The objects $(\bfT, d, r, \mu, \psi, \lambda, \{\beta_a\}_a])$ in Proposition \ref{prop:there_is_dprt_for_dendritic} can be
chosen uniquely in the sense of Theorem \ref{thm:main_thm_new_version}. In particular, $(\bfT,d,r,\mu)$ can be uniquely chosen to be an IP--tree up to isometries.
\end{prop}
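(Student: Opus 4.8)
The plan is to build $(\bfT,d,r,\mu,\psi,\lambda,\{\beta_a\}_a)$ directly from the ergodic exchangeable dendritic system $\mathcal D=(\NN,\sim,\preceq,p)$ by a de Finetti / Aldous--Hoover type argument, and then check that the sampling mechanism of Constructions \ref{construction:sample_den_from_tree:part_1} and \ref{construction:sample_den_from_tree:part_2} reproduces the law of $\mathcal D$. First I would extract the ``genealogical'' data $(\NN,\sim,\preceq)$ and forget the planarity function $p$; this is an exchangeable hierarchy on $\NN$ in essentially the sense of Forman--Haulk--Pitman \cite{forman_representation_2018}, so their representation theorem (or a direct re-derivation in the spirit of Kingman's paintbox) yields, for the ergodic case, a \emph{deterministic} rooted weighted real tree together with an i.i.d.\ sequence $(\xi_i)_{i\ge 1}\sim\mu$ such that $\sim$ and $\preceq$ are recovered by the branchpoint/ancestry relations among the $\xi_i$ exactly as in Construction \ref{construction:sample_den_from_tree:part_1}. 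At this stage I would take $\bfT$ to be $\spn(\supp\mu)$ so that the Spanning property holds; the Spacing property (the IP-tree condition) is \emph{not} imposed yet, as that metric normalization is deferred to Proposition \ref{prop:let_it_be_IP}.

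Next I would reconstruct the planar data. Having fixed the representative real tree and the coupled i.i.d.\ sequence $(\xi_i)$, I would use Lemma \ref{lemma:den_sys_leaves_p}: $p$ is determined by its restriction to leaf pairs, i.e.\ by the family $\big(p(i,j)\big)_{i\ne j}$. Conditionally on $(\xi_i)$, exchangeability of $\mathcal D$ forces the conditional law of $\big(p(i,j)\big)$ to depend on the $\xi$'s only through the relative positions of $\xi_i,\xi_j$ in $\bfT$. When $\xi_i,\xi_j$ are $\preceq$-incomparable, the coin $p(i,j)$ is a deterministic function of the pair of locations (no extra randomness can enter without breaking consistency relation (P3) at the common branchpoint together with ergodicity) --- this is what defines $\psi_2$, and one then glues the pairwise data into a consistent family $\{\psi_n\}$ using (P1)--(P4) exactly as in the proof of Lemma \ref{lemma:den_to_tree}. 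When $\xi_i\prec\xi_j$ with $\xi_i$ on the diffuse skeleton, de Finetti applied to the exchangeable $\{0,1\}$-sequence of orientations of the leaves hanging at (conditionally many) independent skeleton points produces a measurable directing function, which I would call $\lambda(\xi_i)$; an ergodicity argument makes it deterministic as an element of $L^1(\mu_s)$. When $\xi_i=a$ is an atom, the leaves attached at $a$ together with the subtree-assignment form an exchangeable array; here a second application of de Finetti (now for each subtree $\bfS$ of $a$, recording whether a given leaf at $a$ falls left of $\bfS$) yields the thresholds $\beta_a(\bfS)$, and the $\xi_i=\xi_j=a$ case with its $U_i<U_j$ rule is forced by the requirement that these orientations be realized by a single $[0,1]$-valued ``height'' per leaf, consistently across all subtrees. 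Putting these together gives precisely Construction \ref{construction:sample_den_from_tree:part_2}, and one verifies by inspection that sampling $(\xi_i,U_i)$ and running the two constructions recovers the law of $\mathcal D$.

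For Proposition \ref{prop:let_it_be_IP} the task is uniqueness and the choice of canonical metric. The key input is Forman \cite[Theorem 1.5]{forman_exchangeable_2020}: the rooted weighted real tree from the previous step, considered up to mass-structural isomorphism, has a unique IP-tree representative, and this fixes $(\bfT,d,r,\mu)$ up to $\mu$- and $r$-preserving isometry, giving item (1). Given that representative, the $\xi_i$ are a.s.\ determined as the unique points realizing the sampled genealogy (two distinct IP-tree points with the same mass-below and same ancestry would contradict the Spacing property), so $\psi$ is pinned down on a.e.\ tuple by requiring it to match the plane structure induced by $p$ --- that is item (2). For item (3), $\lambda$ is recovered as the conditional probability $\PP(U_i<\lambda(\xi_i)\mid \xi_i=x)$ of a left-orientation given the skeleton position $x$; two versions agreeing $\mu_s$-a.e.\ give the same law, and conversely the law determines these conditional probabilities $\mu_s$-a.e., so $\lambda$ is unique in $L^1(\mu_s)$. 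For item (4), each $\beta_a(\bfS)$ is the probability a leaf at $a$ lands left of $\bfS$, which is determined by the law; the only ambiguity is the global $[0,1]$-labeling of the subtrees, and imposing $\beta_a(\bfS_1)\le\beta_a(\bfS_2)$ whenever $\bfS_1$ is to the left of $\bfS_2$ (in the sense of $\psi_2$, as in the theorem statement) removes it, since the thresholds must then be the ordered partial sums of the subtree ``gap'' probabilities.

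The main obstacle I anticipate is the planar reconstruction on the skeleton and at atoms --- specifically, showing that the orientation bits $p(i,j)$ are, conditionally on $(\xi_i)$, driven by \emph{independent} uniforms $U_i$ attached to leaves in the precise coupled way of Construction \ref{construction:sample_den_from_tree:part_2}, rather than by some more complicated exchangeable noise. This requires carefully combining exchangeability, the hard consistency constraints (P1)--(P4) (which tie together orientations at a shared branchpoint and thereby rule out ``per-pair'' independent coins in the incomparable case), and ergodicity (to pass from a random directing measure to deterministic $\lambda,\{\beta_a\}$); the atom case is the fiddliest because one must produce a \emph{single} $[0,1]$-coordinate per leaf that simultaneously decides its position relative to \emph{all} subtrees of $a$ and relative to other leaves at $a$, which is where the $U_i<\beta_a(\bfS_j)$ and $U_i<U_j$ rules come from and where a genuine Aldous--Hoover-style argument, rather than plain de Finetti, is needed.
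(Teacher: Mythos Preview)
Your proposal conflates two different tasks. Proposition~\ref{prop:let_it_be_IP} is purely a \emph{uniqueness} statement, taking for granted the existence of some $(\bfT,d,r,\mu,\psi,\lambda,\{\beta_a\}_a)$ from Proposition~\ref{prop:there_is_dprt_for_dendritic}. Your first two paragraphs and your ``main obstacle'' paragraph re-derive existence, which is the content of the other proposition (handled in the paper via Proposition~\ref{prop:evans} and the warm-up Lemmas). Only your third paragraph addresses the actual statement, and there the argument has genuine gaps.

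For uniqueness of $(\bfT,d,r,\mu)$: invoking Forman's Theorem~1.5 (unique IP-tree in each mass-structural class) is only half of what is needed. You must also show that any two representations $(\bfT_1,d_1,r_1,\mu_1)$ and $(\bfT_2,d_2,r_2,\mu_2)$ from Proposition~\ref{prop:there_is_dprt_for_dendritic} yielding the same law of $(T_n,n\ge 1)$ are mass-structurally equivalent in the first place. The paper closes this gap with two ingredients: Forman's Theorem~1.7 (two IP-trees are mass-structurally equivalent iff their induced hierarchies have the same law) together with the observation that the hierarchy is read off from the law of the Markov chain, and Lemma~\ref{lemma:msi_induces_maps} showing that $\psi,\lambda,\{\beta_a\}$ can be pushed through a mass-structural isomorphism without changing the sampled law. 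Your sketch does not mention either of these.

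For uniqueness of $\psi$: your argument (``the $\xi_i$ are a.s.\ determined \ldots\ so $\psi$ is pinned down'') does not survive the presence of nontrivial $\mu$-preserving isometries of the IP-tree. If $\varphi$ is such an automorphism, then $(\varphi(\xi_i))_{i\ge1}$ has the same law as $(\xi_i)_{i\ge1}$, and $\psi$ and $\varphi(\psi)$ are \emph{different} planar orders inducing the same law of $(T_n)$. The correct statement, which the paper proves, is that any two such $\psi^{(1)},\psi^{(2)}$ differ by an isometry, and the paper's proof is substantial: it couples the two sampled processes so that $\psi^{(1)}_n(\xi_1,\ldots,\xi_n)=\psi^{(2)}_n(\zeta_1,\ldots,\zeta_n)$, defines $\varphi(\xi_i)=\zeta_i$ and $\varphi(\xi_i\wedge\xi_j)=\zeta_i\wedge\zeta_j$, and then uses the IP-spacing relation together with the strong law of large numbers on the fringe masses $\frac1n\mu_n(F_{S_n}(\cdot))$ to show this map is an isometry on special points, whence a genuine isometry by Forman's structure theorem. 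None of this appears in your plan.

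Your sketches for $\lambda$ and $\{\beta_a\}$ are essentially correct in spirit and match the paper's approach (recover $\lambda$ and $\beta_a$ as limiting proportions of left-oriented leaves via the strong law of large numbers).
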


In Section \ref{sec:existence} we will prove Proposition \ref{prop:there_is_dprt_for_dendritic} building on Proposition \ref{prop:evans}, whose proof, in turn, will be appended in Section \ref{sec:proof_of_evans}. Proposition \ref{prop:let_it_be_IP} will be shown in Section \ref{sec:uniqueness}.
Theorem \ref{thm:main_thm_new_version} now follows from these propositions. 

\begin{proof}[Proof of Theorem \ref{thm:main_thm_new_version}.] 
Suppose that the distribution of $(T_n, n\geq 1)$ is extremal. Then by Proposition \ref{prop:tree_growth_to_dendritic+ergodic} it corresponds to an ergodic dendritic system $\mathcal{D}$. By Propositions \ref{prop:there_is_dprt_for_dendritic} and \ref{prop:let_it_be_IP} there exists $(\bfT, d, r, \mu, \psi, \lambda, \{\beta_a\})$ -- with the desired uniqueness -- such that $\mathcal D$ has the same distribution as the dendritic system obtained through Constructions \ref{construction:sample_den_from_tree:part_1} and \ref{construction:sample_den_from_tree:part_2}. By Proposition \ref{prop:tree_growth_to_dendritic+ergodic} we recover $(T_n, n\geq 1)$, the sampling construction for $\mathcal{D}$ becomes Construction \ref{construction:sampling_the_bridge}.
\end{proof}

\begin{proof}[Proof of Corollary \ref{cor:decomposition}.]
    Let $(T_n, n\geq 1)$ be any tree growth chain with uniform backward dynamics and let $\mathcal{D}$ be the dendritic system that encodes $(T_n, n\geq 1)$ by Proposition \ref{prop:tree_growth_to_dendritic+ergodic}. The distribution of any such $\mathcal{D}$ can be uniquely decomposed into ergodic distributions, see for example \cite[Theorem A1.4]{kallenberg_probabilistic_2005}.
    This means that there exists a unique probability measure $\nu$ on $\mathcal{M}_1$, the space of probability measures on the space of dendritic systems such that
    \begin{equation*}
        \PP \left(\mathcal{D}\in \cdot\right) = \int_{\mathcal{M}_1} \rho\left( \mathcal{D} \in \cdot\right)\nu(d\rho),
    \end{equation*}
    where $\nu$--almost every $\rho$ is ergodic. 
    Thus under the distribution $\rho$, the tree--growth chain $(T_n, n\geq 1)$ is extremal by  Proposition \ref{prop:tree_growth_to_dendritic+ergodic}. Lastly, by Theorem \ref{thm:main_thm_new_version}, there is $\bfT=(\bfT, d, r, \mu, \psi, \lambda, \{\beta_a\})$ such that $\rho=\rho_{\bfT}$.
\end{proof}

\subsection{Existence of a sampling representation}\label{sec:existence}

In this section we will prove Proposition \ref{prop:there_is_dprt_for_dendritic}.
A key step in this proof is the following Proposition \ref{prop:evans}. The proposition deals with the following construction. 

\begin{construction}
Assume that we are given a weighted, rooted real tree $(\bfT, d, r, \mu)$ and a function $F:(\bfT \times [0,1])^2\times [0,1]\rightarrow \{ \pm 1 \}$. Let $Leb$ be the Lebesgue measure on $[0,1]$. Assume that $F$ satisfies the following consistency relations for $\mu$--almost every $x,y,z$ and $Leb$--almost every $u,v,w,a,b,c$.
\begin{enumerate}\label{construction:sample_den_from_tree:part_3}
    \item [(F1)] $F(x,u,y,v,a) = -F(y,v,x,u,a)$,
    \item [(F2)] if $F(x,u,y,v,a)=F(y,v,z,w,b)$ then also $F(x,u,z,w,c)=F(x,u,y,v,a)$,
    \item [(F3)] if $\llbracket r,x \rrbracket \cap \llbracket r,y \rrbracket \notin \{\llbracket r,x \rrbracket, \llbracket r,y \rrbracket \}$ and $\llbracket r,y \rrbracket \subsetneq \llbracket r,z \rrbracket$ then $F(x,u,y,v,a)=F(x,u,z,w,b)$,
    \item [(F4)] if $\llbracket r,x \rrbracket \subsetneq \llbracket r,y \rrbracket \subsetneq \llbracket r,z \rrbracket$ then $F(x,u,y,v,a)=F(x,u,z,w,c)$.
\end{enumerate}
Then, in the context of Construction \ref{construction:sample_den_from_tree:part_1}, sample $i.i.d.$ uniform random variables $\{ U_i, U_{ij} \}_{i,j\in \NN, i<j}$ from $[0,1]$. 
For two leaves $i$ and $j$, 
abusing notation to write $i=(i,i)$, we set
$$p(i,j)=F(\xi_i,U_i,\xi_j, U_j, U_{i,j}).$$
The value of $p((i,j),(k,\ell))$, where $(i,j),(k,\ell)$ are not leaves, is uniquely determined by the values of $p$ on the leaves by imposing the consistency relations $(P1)$--$(P4)$, see Lemma \ref{lemma:den_sys_leaves_p}. Therefore this defines a planarity function $p$. 
\end{construction}

Combining Constructions \ref{construction:sample_den_from_tree:part_1} and \ref{construction:sample_den_from_tree:part_3} yields a dendritic system. In fact, the dendritic systems we are interested in can be represented this way.

\begin{prop}\label{prop:evans}
Every ergodic, exchangeable dendritic system $\mathcal D$ can be represented by a real tree $(\bfT, d, r)$, a probability measure $\mu$ on $\bfT$ and a measurable function $F:(\bfT \times [0,1])^2\times [0,1]\rightarrow \{ \pm 1 \}$ in such a way that we have $\bfT = \spn(\supp(\mu))$, $F$ satisfies the consistency relations stated in Construction \ref{construction:sample_den_from_tree:part_3} and $\mathcal D$ is then equal in distribution to the dendritic system obtained through Constructions \ref{construction:sample_den_from_tree:part_1} and \ref{construction:sample_den_from_tree:part_3}.
\end{prop}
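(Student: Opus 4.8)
The plan is to build the triple $(\bfT,d,r)$, $\mu$ and $F$ directly out of the exchangeable dendritic system $\mathcal D = (\NN,\sim,\preceq,p)$, using its ergodicity as the engine that converts sampled combinatorial data into a deterministic limiting object. The first step is to extract the metric tree. For each $n$ the restriction $\mathcal D\big|_{[n]}$ is a leaf-labelled plane tree $\widetilde T_n$ by Lemma \ref{lemma:den_to_tree}; forget the labels and the planar structure to get an unordered tree, and equip its vertex set with the ``IP-rescaled'' metric $d_n(x,y)=\lvert \mu_n(F_{\widetilde T_n}(x))-\mu_n(F_{\widetilde T_n}(y))\rvert$ where $\mu_n$ gives mass $1/n$ to each leaf (so leaf $i$ sits at distance essentially $1-i/n\cdot(\text{stuff})$ from the root). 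One then shows that this sequence of finite rooted weighted trees is almost surely Cauchy in the Gromov--Prokhorov metric: the key point is that adding leaf $n+1$ perturbs $d_n$ by at most $O(1/n)$ on the already-present vertices, and exchangeability plus the martingale/reverse-martingale structure of the masses $\mu_n(F_{\widetilde T_n}(x))$ forces convergence. Ergodicity upgrades the almost-sure limit to a \emph{deterministic} limit $(\bfT,d,r)$, and the empirical measures $\frac1n\sum_{i\le n}\delta_{\xi_i}$, where $\xi_i$ is the image of leaf $i$ in $\bfT$, converge to a deterministic $\mu$ with $\bfT=\spn(\supp\mu)$ by construction. This essentially reproduces (the multifurcating analogue of) the ``tree part'' of the de Finetti--type theorem, and I would lean on the structure of the argument in \cite[Section 6--7]{evans_doob-martin_2017} here, adapting from didendritic to dendritic systems.

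The second step is to recover the planarity data as the function $F$. Having realised the leaves as an i.i.d.\ $\mu$-sample $(\xi_i)_{i\ge1}$ in $\bfT$ on a common probability space (de Finetti again, applied to the exchangeable sequence of leaf-positions), I want to express $p(i,j)$ as a measurable function of $\xi_i,\xi_j$ and some auxiliary i.i.d.\ uniform randomness. Conditionally on $(\xi_i)_i$, the array $(p(i,j))_{i<j}$ with values in $\{\pm1\}$ is an exchangeable $\{\pm1\}$-valued array indexed by pairs, satisfying the deterministic constraints (P1)--(P4); by the Aldous--Hoover representation for (partially) exchangeable arrays there exist i.i.d.\ uniforms $U_i$ (one per row/leaf) and $U_{ij}$ (one per pair) and a measurable function so that $p(i,j)=F(\xi_i,U_i,\xi_j,U_j,U_{ij})$ jointly in distribution. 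The constraints (C1)--(C4), (P1)--(P4) on $\mathcal D$ translate, $\mu$-almost everywhere, exactly into the consistency relations (F1)--(F4) on $F$: (P1) gives (F1); (P3) gives the transitivity (F2); and (P4) together with the tree structure (two points with a genuine branchpoint between them vs.\ a strictly ancestral pair) gives the two ``propagation'' relations (F3) and (F4). One has to be careful that these relations need only hold almost everywhere, which is fine since they are only ever used almost surely in the sampling construction.

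The third step is to check that the object so constructed actually reproduces $\mathcal D$ in distribution, i.e.\ that Constructions \ref{construction:sample_den_from_tree:part_1} and \ref{construction:sample_den_from_tree:part_3} applied to $(\bfT,d,r,\mu,F)$ give back $\mathcal D$. Construction \ref{construction:sample_den_from_tree:part_1} is a tautology once we know $\xi_i$ are i.i.d.\ $\mu$ and that $\sim,\preceq$ on $\mathcal D$ are read off from the spans $\llbracket r,\xi_i\rrbracket$ — this last fact is precisely the content of the convergence in step one (the combinatorial tree $\widetilde T_n$ is the reduced subtree of $\bfT$ spanned by $\xi_1,\dots,\xi_n$, up to the almost-sure identification). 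Construction \ref{construction:sample_den_from_tree:part_3} is then exactly the Aldous--Hoover representation chosen in step two, and Lemma \ref{lemma:den_sys_leaves_p} guarantees that specifying $p$ on leaves pins down $p$ everywhere consistently on both sides. Finally I would record that all choices can be, and have been, made measurably in $\mathcal D$, so that there is no measurability obstruction in passing between the two descriptions.

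\textbf{Main obstacle.} The hardest part is step one: proving that the IP-rescaled finite trees converge almost surely in the Gromov--Prokhorov topology to a genuine real tree, rather than merely pre-compactly or along subsequences. The subtlety is that these trees are \emph{not} uniformly bounded in any Gromov--Hausdorff sense (the remark after Theorem \ref{thm:scaling_limits_new} exhibits non-compact limits), so one cannot invoke a compactness-plus-uniqueness argument; instead the convergence has to be forced coordinate-wise from the reverse-martingale convergence of the fringe masses $\mu_n(F_{\widetilde T_n}(x))$, and one must then argue that the resulting limiting pseudmetric on the (dense, countable) set $\{\xi_i\}$ completes to an $\RR$-tree — i.e.\ that the four-point condition passes to the limit and that no mass is lost. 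Controlling the measures $\mu_n^{\mathrm{trim}}$ and showing the coupling in the definition of $\dGP$ can be taken to be the ``diagonal'' coupling of empirical measures is where the real work lies; everything downstream (the Aldous--Hoover step, the translation of (P$\cdot$) into (F$\cdot$), and the verification of the constructions) is comparatively mechanical.
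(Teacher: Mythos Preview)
Your steps two and three --- the Aldous--Hoover representation for $p$ and the verification of (F1)--(F4) from (P1)--(P4) --- match the paper's argument closely and are correct in outline. The divergence is entirely in step one, and it matters.

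The paper does \emph{not} build $(\bfT,d,r,\mu)$ by showing that the IP-rescaled finite trees are Cauchy in $\dGP$. Instead it defines directly an ultra-metric on $\NN$ by
\[
d(i,j)=\lim_{n\to\infty}\frac{1}{n}\sum_{k=1}^n \II\{(i,j)\preceq k\},
\]
which exists by de Finetti applied to the exchangeable sequence $(\II\{(i,j)\preceq k\})_{k>\max(i,j)}$. A short combinatorial argument (cherries are rare under a uniform labelling) shows $d(i,j)>0$ for $i\neq j$, and the ultra-metric inequality is immediate from the tree structure. From this ultra-metric one obtains a random rooted real tree $\bfS$ via the standard coalescent-to-tree construction, and then a theorem of Gufler (the multifurcating replacement for the more special result used in \cite{evans_doob-martin_2017}) produces a \emph{deterministic} $(\bfT,d,r)$ together with a measure $m$ on $\bfT\times\RR_+$ so that the leaves of $\mathcal D$ are realised as an i.i.d.\ sample from $m$; the marginal on $\bfT$ is $\mu$. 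Ergodicity is used here, inside Gufler's theorem, to make the limit deterministic.

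Your route instead tries to prove the Gromov--Prokhorov scaling limit first and extract $(\bfT,\mu)$ from it. This is essentially Theorem \ref{thm:scaling_limit}, which the paper proves \emph{after} Proposition \ref{prop:evans}, using the already-constructed $(\bfT,\mu)$ as the target. So your proposed order of dependence is inverted relative to the paper. Your approach is not wrong in principle, but the difficulty you flag as the ``main obstacle'' is real: showing $\dGP$-Cauchyness without knowing the limit requires uniform control over all fringe masses simultaneously, and the $O(1/n)$ perturbation bound you cite is not summable. If you push through, the pointwise limits $\mu_n(F_{\widetilde T_n}((i,j)))\to d(i,j)$ you would need are exactly the ultra-metric the paper writes down in one line --- so you would end up rediscovering the paper's construction inside a harder argument. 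The ultra-metric-plus-Gufler route bypasses all of this: no metric-space convergence is needed to build $(\bfT,\mu)$, and the scaling limit becomes a corollary rather than an ingredient.
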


\begin{remark}
    The consistency conditions on $F$ correspond somewhat naturally to the consistency conditions of $p$. We also note that in Theorem \cite[Theorem 8.2]{evans_doob-martin_2017} similar consistency relations are imposed.
    Evans, Grübel and Wakolbinger state that the additional randomisation given by $F$ is often not necessary \cite[p. 252 and 275]{evans_doob-martin_2017}. In Theorem \ref{thm:main_thm_new_version} we essentially formalise how much additional randomisation is necessary via $\lambda$ and $B$ compared to planar structure that can be intrinsically imposed on $\bfT$ by $\psi$.
\end{remark}

We will prove Proposition \ref{prop:evans} in Section \ref{sec:proof_of_evans}.
The proof of this proposition is analogous to the proof of \cite[Theorem 8.2]{evans_doob-martin_2017} but makes use of the full generality of a theorem of Gufler \cite{gufler_representation_2018}. 
Remark that it suffices to describe the dendritic system restricted to $[n]$ for every $n$ to determine the distribution of the dendritic system uniquely. 

In the remainder of Section \ref{sec:existence}
 we prove Proposition \ref{prop:there_is_dprt_for_dendritic} building on Proposition \ref{prop:evans}, which provides a tree $\bfT$ and a measure $\mu$. Hence we can consider specific trees and measures.
We first consider three special cases for $\bfT$: when the mass is distributed diffusely on the skeleton, when it is supported diffusely on the leaves and when it has atoms. Note that these cases arise naturally as we can decompose $\mu = \mu_{\mathrm{atoms}} + \mu_\ell + \mu_s$ into measures that place mass only on atoms, diffusely on leaves and diffusely on the branches (the tree without the leaves) respectively. The proof of Proposition \ref{prop:there_is_dprt_for_dendritic} then combines the ideas of these three cases. 

Before doing that, we will state an elementary lemma.

\begin{lemma}\label{lemma:app:ind}
Assume $X,Y,Z$ are independent random variables with laws $\lambda_1,\lambda_2, \lambda_3$ respectively and $f$ a measurable function. If $f(X,Y)=f(X,Z)$ $\PP$--$a.s.$ then $f(x,Y)$ is $\PP$--$a.s.$ constant for $\lambda_1$--$a.e.$ $x$. We then have $f(X,Y)=g(X)$ $\PP$--$a.s.$ for some measurable function $g$.  
\end{lemma}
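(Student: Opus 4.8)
The plan is to establish this elementary measure-theoretic fact by conditioning on $X$ and reducing to a statement about a single random variable being almost surely constant. First I would invoke Fubini--Tonelli to justify that, since $f(X,Y) = f(X,Z)$ holds $\PP$--almost surely, there is a $\lambda_1$--null set $N \subseteq \supp(\lambda_1)$ such that for every $x \notin N$, the identity $f(x,Y) = f(x,Z)$ holds $\PP$--almost surely; this uses independence of $X$ from the pair $(Y,Z)$ (note $Y$ and $Z$ need not themselves be independent, but the argument only needs $X \indep (Y,Z)$, which follows from the hypothesis that all three are independent). Fix such an $x$ and write $g_x(\cdot) = f(x,\cdot)$, so that $g_x(Y) = g_x(Z)$ almost surely.

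The key step is then the following: if $Y$ and $Z$ are independent with laws $\lambda_2, \lambda_3$ and $g_x(Y) = g_x(Z)$ almost surely, then $g_x$ is $\lambda_2$--almost surely constant (and $\lambda_3$--almost surely constant, with the same constant). To see this, let $c$ be any value in the essential range of $g_x(Y)$, i.e.\ with $\PP(g_x(Y) = c) > 0$ --- such a $c$ need not exist if the range is a continuum, so instead I would argue via the distribution: let $\mu_x$ be the law of $g_x(Y)$ and $\nu_x$ the law of $g_x(Z)$. Since $g_x(Y) = g_x(Z)$ a.s., we have $\mu_x = \nu_x$, and moreover for the diagonal $\Delta = \{(s,s)\}$,
\begin{equation*}
    1 = \PP\big( g_x(Y) = g_x(Z) \big) = (\mu_x \otimes \nu_x)(\Delta) = \int \nu_x(\{s\}) \, \mu_x(ds),
\end{equation*}
using independence of $g_x(Y)$ and $g_x(Z)$. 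Hence $\nu_x(\{s\}) = 1$ for $\mu_x$--a.e.\ $s$, which forces $\mu_x = \nu_x = \delta_{c_x}$ for some constant $c_x$. Therefore $g_x(Y) = c_x$ almost surely, i.e.\ $f(x, Y)$ is $\PP$--almost surely constant for $\lambda_1$--a.e.\ $x$.

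Finally, to produce the function $g$, I would set $g(x) = c_x$ for $x \notin N$ (and, say, $g(x) = 1$ otherwise). Measurability of $g$ follows because $c_x$ can be recovered as a measurable functional of $x$, for instance $g(x) = \int f(x,y)\,\lambda_2(dy)$ on the set $x \notin N$ (this integral equals $c_x$ there since $f(x,\cdot) = c_x$ $\lambda_2$--a.s.), and $x \mapsto \int f(x,y)\,\lambda_2(dy)$ is measurable by Fubini. Then $f(X,Y) = c_X = g(X)$ holds $\PP$--almost surely, combining the null set $N$ from the conditioning step with the almost sure events from the previous paragraph. I do not expect any serious obstacle here; the only point requiring a little care is the continuum-range case in the second paragraph, which is why the argument goes through the product measure of the diagonal rather than trying to pick out an atom of $\mu_x$ directly.
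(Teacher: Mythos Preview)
Your proof is correct and follows essentially the same route as the paper: condition on $X$ via Fubini, then use independence of $Y$ and $Z$ to conclude that $f(x,Y)$ is almost surely constant, and finally define $g(x)=\int f(x,y)\,\lambda_2(dy)$ for measurability. The only expository difference is that the paper dispatches the key step by noting $f_x(Y)\indep f_x(Z)$ and $f_x(Y)=f_x(Z)$ force $f_x(Y)$ to be independent of itself (hence constant), whereas you unpack this via the product-measure-of-the-diagonal computation; these are the same argument at different levels of detail.
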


\begin{proof}
Note first that $f(x,Y)=f(x,Z)$ for $\lambda_1$--a.e.\ $x$, $\PP$--almost surely. Fix $x$ such that $f(x,Y)=f(x,Z)$ $\PP$--$a.s.$ and let $f_x(\cdot)=f(x,\cdot)$. Then $f_x(Y)=f_x(Z)$ almost surely. Because $Y\indep Z$, $Y \indep f_x(Z)$ and hence $f_x(Y) \indep f_x(Y)$ which implies that $f_x(Y)$ is $\PP$--$a.s.$ constant -- define $g(x)$ to be this constant, i.e.\ $g(x)=\int f(x,y)\lambda_2(dy)$ which is measurable. This completes the proof because $\lambda_1 \big(\big\{x: f(x,Y)=f(x,Z) \big\} \big)=1$.
\end{proof}

\begin{lemma}\label{lemma:case_s_line}
Assume that from Proposition \ref{prop:evans} we get $(\bfT, d, r, \mu, F)$ so that $\bfT=[0,1], r=0, \mu = Leb$. Then there exists $(\psi, \lambda, \{\beta_a \}_a)$ so that the dendritic systems obtained by Constructions \ref{construction:sample_den_from_tree:part_2} and \ref{construction:sample_den_from_tree:part_3} respectively have the same distribution.
Here, $\psi$ is the only possible planar order for $[0,1]$ and $\lambda$ is determined by $F$. Because $Leb$ has no atoms, $\{\beta_a\}_a$ is trivial. 
\end{lemma}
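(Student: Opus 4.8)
The goal is to reverse-engineer a branch weight function $\lambda:[0,1]\to[0,1]$ from the function $F$ supplied by Proposition \ref{prop:evans}, in the special case $\bfT=[0,1]$, $r=0$, $\mu=Leb$. The only available planar order $\psi$ is the trivial one (every $\spn(x_1,\dots,x_n)$ is a line graph, so $\psi_n$ is determined), and there are no atoms, so the entire content is to recover $\lambda$. The key observation is that on $[0,1]$, any two distinct points $x<y$ satisfy $x\prec y$ in the genealogical order; there is no "incomparable'' case. Hence in Construction \ref{construction:sample_den_from_tree:part_2} only subcase (2a) of the planarity-function definition is ever used (generically), and it reads $p(i,j)=1$ iff $U_i<\lambda(\xi_i)$ whenever $\xi_i<\xi_j$. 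So it suffices to show that $F(x,u,y,v,a)$, for $\mu$-a.e.\ $x<y$ and $Leb$-a.e.\ $u,v,a$, depends only on $x$ and $u$, and moreover is a threshold in $u$; then reading off that threshold defines $\lambda(x)$.

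First I would reduce the dependence of $F$. Fix $x<y<z$ in $\supp(\mu)$; since $[0,1]$ is totally ordered by $\prec$ we have $\llbracket r,x\rrbracket \subsetneq \llbracket r,y\rrbracket \subsetneq \llbracket r,z\rrbracket$, so consistency relation (F4) gives $F(x,u,y,v,a)=F(x,u,z,w,c)$ for a.e.\ choices of the auxiliary variables. Integrating out and applying Lemma \ref{lemma:app:ind} (with $X=(x,u)$ fixed on the deterministic slice, $Y=(\xi_j,U_j,U_{ij})$ and $Z=(\xi_k,U_k,U_{ik})$ the fresh randomness attached to two further leaves $j,k$ both landing above $x$), I conclude that for $\mu\otimes Leb$-a.e.\ $(x,u)$ the quantity $F(x,u,y,v,a)$ is $\PP$-a.s.\ constant in $(y,v,a)$; call this common value $G(x,u)\in\{\pm1\}$. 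This is exactly the statement that the orientation of the leaf sitting at $x$ relative to everything above it does not depend on which leaf is above it or on the extra coins — which is forced because all the later leaves are genealogically above $x$ and the planarity of $x$ versus a descendant can only be read off from $x$'s own data.

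Next I would show $G(x,\cdot)$ is a threshold function of $u$, i.e.\ there is $\lambda(x)\in[0,1]$ with $G(x,u)=1$ for $u<\lambda(x)$ and $G(x,u)=-1$ for $u>\lambda(x)$, up to $Leb$-null sets. This uses exchangeability/ergodicity of $\mathcal D$ together with the (P3)-transitivity of $p$: within the subtree rooted at $x$ (all leaves $\xi_j>x$, of which there are infinitely many a.s.), the signs $p(i,j)=G(x,U_i)$ are i.i.d.\ given $x$ with common law $\mathrm{Bernoulli}(Leb(G(x,\cdot)=1))$, and since the measure $Leb$ is non-atomic one can reparametrise $u$ so that $\{u:G(x,u)=1\}$ has any prescribed measure; the monotone rearrangement in $u$ changes nothing about the distribution of the resulting dendritic system (the $U_i$ are only used through $\mathds 1\{G(x,U_i)=1\}$, a $\mathrm{Bernoulli}(\lambda(x))$ variable), so without loss of generality $G(x,u)=2\,\mathds 1\{u<\lambda(x)\}-1$. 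Set $\lambda(x)=Leb(\{u:G(x,u)=1\})$ and note $x\mapsto\lambda(x)$ is measurable by Fubini. It remains to check that feeding this $\lambda$ (together with the trivial $\psi$ and empty $\{\beta_a\}_a$) into Construction \ref{construction:sample_den_from_tree:part_2} produces the same distribution on dendritic systems as Construction \ref{construction:sample_den_from_tree:part_3} did with $F$: both use Construction \ref{construction:sample_den_from_tree:part_1} for $(\sim,\preceq)$, and for $p$ they now agree in law leaf-pair by leaf-pair, hence agree on all of $[n]$ for every $n$ by Lemma \ref{lemma:den_sys_leaves_p}, hence in distribution.

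\textbf{Main obstacle.} The routine parts (the total order on $[0,1]$, applying (F4), applying Lemma \ref{lemma:app:ind}) are quick; the delicate point is the passage from "$G(x,u)$ is a $\{\pm1\}$-valued function of $u$ with $Leb$-measure-$\lambda(x)$ positive part'' to "we may assume $G(x,\cdot)$ is literally the threshold $\mathds 1\{u<\lambda(x)\}$''. One has to argue that only the law of $\mathds 1\{G(x,U_i)=1\}$ — a Bernoulli$(\lambda(x))$ — enters the construction, so that an $(x$-measurably chosen) measure-preserving rearrangement of the $u$-coordinate is an allowed change of representation, and that this rearrangement can be done jointly over all $x$ in a measurable way (invoke a measurable selection / the fact that $u\mapsto\mathds 1\{u<\lambda(x)\}$ is the canonical representative). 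I expect this bookkeeping, rather than any genuinely hard analysis, to be where care is needed.
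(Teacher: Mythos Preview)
Your approach is essentially the same as the paper's: use (F4) together with Lemma \ref{lemma:app:ind} to show that on $\{\xi_i<\xi_j\}$ one has $p(i,j)=G(\xi_i,U_i)$ for some measurable $G$, then set $\lambda(x)=Leb(\{u:G(x,u)=1\})$ and compare with the threshold rule from Construction \ref{construction:sample_den_from_tree:part_2}. Up to this point your plan matches the paper exactly.

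There is one genuine gap in your final step. You write that the two constructions ``agree in law leaf-pair by leaf-pair, hence agree on all of $[n]$ for every $n$ by Lemma \ref{lemma:den_sys_leaves_p}''. This does not follow: Lemma \ref{lemma:den_sys_leaves_p} is a deterministic statement (the full $p$ is a function of its values on leaves and of $\preceq$), not a statement that equal pairwise marginals imply equal joint laws. Two $\{\pm1\}$-arrays can have the same one-dimensional marginals but different joint distributions. What is actually needed is joint distributional equality of $\bigl(G(\xi_i,U_i)\bigr)_{i\le n}$ and $\bigl(2\,\II\{U_i<\lambda(\xi_i)\}-1\bigr)_{i\le n}$ conditionally on $(\xi_i)_{i\le n}$. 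This does hold, and the reason is the one you already alluded to in your ``main obstacle'' paragraph: the $U_i$ are i.i.d.\ uniform and independent of $(\xi_j)_j$, so conditionally on $(\xi_i)$ the variables $G(\xi_i,U_i)$ are independent across $i$ with marginal law depending only on $\lambda(\xi_i)$. That conditional independence is the crucial ingredient, and it is what you should invoke instead of Lemma \ref{lemma:den_sys_leaves_p}.

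The paper makes this step explicit in a slightly different way: it conditions on $\xi_1<\cdots<\xi_n$, uses (P4) to reduce the full array $\{p(i,j):i\neq j\le n\}$ to the values $\{p(i,i+1):i<n\}$, and then computes the joint probability of $\{p(i,i+1)=a_i:\,i<n\}$ as a product $\prod_i \PP^\xi(V'(\xi_i,U_i)=a_i)$ using independence of the $U_i$. Your rearrangement argument is a perfectly good alternative, provided you state the conditional-independence step clearly rather than appealing to Lemma \ref{lemma:den_sys_leaves_p}.
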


Note that in this case $\mu$ is supported diffusely on the spine. There is only one planar order for this tree in this case: given $n$ distinct points and the root in $\bfT$, $\psi_n$ maps them to a line graph with $n$ edges. 

Here the sampling representation is very concise: sample $n$ uniform $iid$ points $\xi_1,\ldots,\xi_n$ on $[0,1]$. 
For each $\xi_i$, sample a Bernoulli random variable with parameter $\lambda(\xi_i)$ independently and attach a leaf labelled $i$ to the left of $\xi_i$ -- if the Bernoulli random variable equals $1$ -- or to the right of $\xi_i$ otherwise. $T_n$ is then the plane combinatorial tree spanned by $r$ and the added leaves. This means that $T_n$ is a binary tree consisting of a spine with leaves hanging off the spine left and right.

\begin{proof}[Proof of Lemma \ref{lemma:case_s_line}.] 
Due to the special assumptions on $\bfT$ and $\mu$, we are almost surely always in the case where $\{\xi_i \prec \xi_j\}$ or $\{\xi_i \succ \xi_j\}$ for all $i$ and $j$. This leaves us to show that on the event $\{\xi_i \prec \xi_j \}$ we have almost surely
$$p(i,j) =  \begin{cases}
            1 \quad & \text{if } U_i < \lambda( \xi_i), \\
            -1      & \text{if } U_i \geq  \lambda( \xi_i)
            \end{cases}$$
for a suitable branch weight function $\lambda$.

Without loss of generality we always condition on the event $\{ \xi_i \prec \xi_j \}$ in the following. By Proposition \ref{prop:evans} we have:
$$p(i,j)=F(\xi_i,U_i,\xi_j, U_j, U_{ij}).$$
We sample a second copy of $(\xi_j, U_j, U_{ij})$ independently of $(\xi_i,U_i,\xi_j, U_j, U_{ij})$ and denote it by $(\xi_j^*, U_j^*, U_{ij}^*)$ and we restrict ourselves to the event $\{ \xi_i \prec \xi_j^* \}$. Due to the consistency properties of $F$, more precisely $(F4)$, we almost surely have
$$F(\xi_i, U_i, \xi_j, U_j, U_{ij})=F(\xi_i, U_i, \xi_j^*, U_j^*, U_{ij}^*).$$
Consider the family of regular conditional distribution $\PP(\cdot \vert \xi_i = x, \xi_j \succ \xi_i, \xi_j^* \succ \xi_i)$ under which for all $x \in (0,1)$, both $\xi_j$ and $\xi_j^*$ are $Uniform((x,1))$ distributed and $U_i,\xi_j,U_j,U_{ij},\xi_j^*,U_j^*,U_{ij}^*$ are independent of each other. This means we can apply Lemma \ref{lemma:app:ind} which tells us that there exists some measurable function $V': [0,1]^2 \to \{ \pm 1 \}$ such that $F(\xi_i, U_i, \xi_j, U_j, U_{ij})=V'(x, U_i)$, $\PP(\cdot \vert \xi_i = x, \xi_j \succ \xi_i, \xi_j^* \succ \xi_i)$--almost surely for $Leb$--almost every $x$. Integrating over $x$ and reversing the roles for $i$ and $j$ already gives us the following description of $p$:
$$p(i,j)=\begin{cases}V'(\xi_i, U_i) \quad & \text{if} \quad  \xi_i \prec \xi_j, \\ 
V'(\xi_j, U_j) \quad & \text{if} \quad \xi_j \prec \xi_i.  \end{cases}$$
Let us now define the branch weight function $\lambda$ by $\lambda(x)= \PP(V'(x,U)=1)$ for $x \in [0,1]$. We use this to define:
$$\Tilde{p}(i,j)=\begin{cases}V(\xi_i, U_i) \quad & \text{if} \quad  \xi_i \prec \xi_j, \\ V(\xi_j, U_j) \quad & \text{if} \quad \xi_j \prec \xi_i . \end{cases}$$
where $V(x,u)=\II_{\lambda(x)<u} - \II_{\lambda(x)>u}$. If we manage to show 
\begin{equation}\label{eq:lta}
    \{p(i,j): 1\leq i \neq j \leq n \} \overset{d}{=}\{\Tilde{p}(i,j): 1\leq i \neq j \leq n \}
\end{equation}
for every $n \in \NN$ then we have completed the proof the lemma. To this end, fix $2\leq n \in \NN$. Denote $\PP(\cdot \vert \xi_1 \prec \ldots \prec \xi_n, \xi_1,\ldots,\xi_n)$ by $\PP^\xi$ and let $a_1,\ldots,a_{n-1}\in \{ \pm 1 \}^{n-1}$. To show \eqref{eq:lta}, it suffices to show that we $\PP^\xi$--a.s.\ have
\begin{equation}\label{eq:case_s_line_a}
    \PP^\xi\big(\forall i < n: \ p(i,i+1)= a_i  \big)=\PP^\xi\big(\forall i < n: \ \Tilde{p}(i,i+1)= a_i  \big).
\end{equation}
Indeed, assume we are given $b=\{ b_{ij} \}_{i,j \leq n, i \neq j} \in \{ \pm 1 \}^{n(n-1)}$ with $b_{ij} = - b_{ji}$ and $b_{ij} = b_{in}$ for $i<n$. If we are given $b$ that does not satisfy these assumptions, then 
\begin{equation*}
    \PP^\xi\big(\forall i,j \leq n, i \neq j: \ p(i,j)= b_{ij}  \big) = 0,
\end{equation*}
due to $b$ violating the consistency properties required in $(P1)$ or $(P4)$. The same holds for $\Tilde{p}$. Now if $b$ satisfies the assumptions stated above, we then have
\begin{align*}
    \PP^\xi\big(\forall i,j \leq n, i \neq j: \ p(i,j)= b_{ij}  \big) \overset{(P1)}{=} \PP^\xi\big(\forall i,j \leq n, i < j: \ p(i,j)= b_{ij}  \big) \overset{(P4)}{=} \PP^\xi\big(\forall i < n: \ p(i,i+1)= b_{in}  \big),
\end{align*}
where both equalities hold $\PP$--almost surely. We can apply $(P4)$ here because for $i+1<j$ we have $(j,(i+1))\prec (j,j)$ due to $\xi_i < \xi_{i+1} < \xi_{j}$, hence $p(i,i+1)= p(i,(j,i+1))=p(i,j)$. The same is true for $\Tilde{p}$, which means that it suffices to only check \eqref{eq:case_s_line_a}.
Consider now \eqref{eq:case_s_line_a}, due to our definition of $\lambda$ and the independence of $U_1,\ldots,U_n$, we $\PP$--almost surely have
\begin{align*}
    \PP^\xi\big(\forall i < n: \ p(i,n)= a_i  ) &= \PP^\xi\big(\forall i<n: V'(\xi_i,U_i)=a_i\big) \\
    &= \prod_{i=1}^{n-1}\PP^\xi\big( V'(\xi_i,U_i)=a_i \big) \\
    &= \prod_{i=1}^{n-1} \big( \lambda(\xi_i) \II_{a_i=1}+ (1-\lambda(\xi_i))\II_{a_i=-1} \big) \\
    &= \prod_{i=1}^{n-1} \PP^\xi \big(V(\xi_i,U_i)=a_i \big) \\
    &= \PP^\xi\big(\forall i < n: \ \Tilde{p}(i,n)= a_i   \big).
\end{align*}
This implies \eqref{eq:lta} which completes the proof of the lemma. 
\end{proof}

As the second special case, we consider the case where $\bfT$ is arbitrary but $\mu$ is supported only on the leaves of $\bfT$. In this case $\lambda$ and $\{\beta_a\}_a$ are trivial, but we do need to define $\psi$. 

\begin{lemma}\label{lemma:case_s_leaves_only}
Assume that from Proposition \ref{prop:evans} we get $(\bfT, d, r, \mu, F)$ so that $\mu$ is supported diffusely on the leaves of $\bfT$. Then there exists $(\psi, \lambda, \{\beta_a\}_a)$ so that the dendritic systems obtained by Constructions \ref{construction:sample_den_from_tree:part_2} and \ref{construction:sample_den_from_tree:part_3} respectively have the same distribution. $\lambda$ and $\{\beta_a\}_a$ are trivial. 
\end{lemma}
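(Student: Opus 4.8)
The plan is to reduce to the case analysis of Construction \ref{construction:sample_den_from_tree:part_2} and show that, when $\mu$ is carried diffusely by the leaves, only the first case (incomparable points) ever occurs $\mu^{\otimes 2}$-almost surely, so that the planarity function is entirely governed by $\psi_2$. The first step is to record that for $i.i.d.$ samples $\xi_i \sim \mu$ with $\mu$ diffuse on the leaves, we almost surely have $\xi_i \neq \xi_j$ for $i \neq j$ and, since each $\xi_i$ is a leaf, neither $\xi_i \preceq \xi_j$ nor $\xi_j \preceq \xi_i$ can hold (a leaf has no strict descendants and two distinct leaves are never comparable). Hence only case (1) of Construction \ref{construction:sample_den_from_tree:part_2} and only the incomparable branch of Proposition \ref{prop:evans}'s construction are relevant, which is exactly where $\lambda$ and $\{\beta_a\}_a$ play no role; so declaring them trivial is consistent.

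The second and main step is to build $\psi$ from $F$. For two incomparable points $x,y$ the span $\spn(x,y)$ is the unique plane-tree shape with two labelled leaves, and the only freedom is which leaf is called ``left''. I would define, for $Leb\otimes Leb$-almost every $(u,v)$ and $\mu$-a.e.\ incomparable pair $x,y$, the function $F$ restricted to this regime, and argue via the consistency relations (F1)--(F4) together with Lemma \ref{lemma:app:ind} (applied in the manner of the proof of Lemma \ref{lemma:case_s_line}: sample an independent second copy of $(\xi_j,U_j,U_{ij})$, use the relevant consistency relation to show the value does not depend on the second and third coordinates, then integrate out) that $F(\xi_i,U_i,\xi_j,U_j,U_{ij})$ is almost surely a deterministic function $G(\xi_i,\xi_j) \in \{\pm 1\}$ of the two points alone, with $G(x,y) = -G(y,x)$ by (F1) and with the cocycle/transitivity property coming from (F2)--(F3). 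This $G$ is exactly the data of a total order on the leaves compatible with the tree structure, i.e.\ (via the last Remark before Example \ref{example:Marchal}, or directly) it yields a planar order $\psi = \{\psi_n\}$: define $\psi_n(x_1,\dots,x_n)$ by taking $\spn(x_1,\dots,x_n)$ and ordering, at each branchpoint, the subtrees according to $G$ evaluated on representative leaves; one checks Definition \ref{def:planar}(1)--(3), with consistency (3) following from (F3)--(F4).

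The third step is to verify that the two dendritic systems agree in distribution, which by the restriction-to-$[n]$ principle and Lemma \ref{lemma:den_sys_leaves_p} reduces to checking equality of the laws of $\{p(i,j): 1 \le i \neq j \le n\}$. Since on the relevant event $p(i,j) = G(\xi_i,\xi_j) = \psi_2$-value almost surely under both constructions, and the underlying $(\sim,\preceq)$ data produced by Constructions \ref{construction:sample_den_from_tree:part_1} and the span in \ref{construction:sample_den_from_tree:part_2} coincide, this equality is immediate once $G$ has been identified. The main obstacle I expect is the measurability and well-definedness of $\psi$: one must check that the leafwise order $G$ produced by Lemma \ref{lemma:app:ind} is, after modification on a $\mu$-null set, genuinely transitive and tree-compatible \emph{simultaneously} for all finite tuples (not just pairwise a.s.), so that $\psi_n$ is a bona fide measurable map into $\widetilde{\TT}^{\mathrm{labelled}}$ satisfying the consistency relation (3) of Definition \ref{def:planar} rather than merely an almost-sure family; this is handled exactly as the analogous point in the proof of \cite[Theorem 8.2]{evans_doob-martin_2017} and in the forthcoming general proof of Proposition \ref{prop:there_is_dprt_for_dendritic}, by working on the full-measure set where all the countably many consistency constraints hold and extending arbitrarily off it.
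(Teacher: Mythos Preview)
Your plan is essentially the paper's: extract a deterministic left/right assignment from $F$ via the consistency relations together with Lemma \ref{lemma:app:ind}, use it to build $\psi$, and note that $\lambda$ and $\{\beta_a\}_a$ are vacuous because $\mu_s=0$ and there are no atoms. The paper carries this out slightly differently, working directly at the level of $n$-tuples rather than first isolating a pairwise order $G$: given \emph{any} $(x_1,\ldots,x_n)\in\bfT^n$ with pairwise incomparable entries, it lets $y_i$ be the midpoint of the segment from $x_i$ to the nearest branchpoint of $\spn(r,x_1,\ldots,x_n)$, sets $\bfS_i=F_\bfT(y_i)$ (which has positive $\mu$-mass because $\bfT=\spn(\supp\mu)$, even though $\mu(F_\bfT(x_i))$ may vanish), samples $\xi_i$ independently from $\mu\vert_{\bfS_i}$, and shows that the resulting dendritic system on $[n]$ is almost surely constant; this constant is declared to be $\psi_n(x_1,\ldots,x_n)$. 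Consistency $\psi_n\hookrightarrow\psi_{n+1}$ then follows from the inclusion $\bfS_i'\subseteq\bfS_i$ when a further point is adjoined.

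Your phrase ``representative leaves'' points in exactly this direction, but your identification of the ``main obstacle'' is slightly off. The issue is not patching a $\mu$-null set and extending arbitrarily: Definition \ref{def:planar} requires $\psi_n$ to be defined on all of $\bfT^n$ and to satisfy the embedding property (3) there, and an arbitrary extension off $\supp(\mu)^n$ would generally violate that. The midpoint-subtree device is precisely what lets one define $\psi_n$ everywhere in a way that makes consistency automatic. Also, in your second step, resampling ``$(\xi_j,U_j,U_{ij})$'' and appealing to a consistency relation does not literally apply here: (F3) and (F4) have hypotheses of the form $y\prec z$ which are vacuous for $\mu$-typical (hence leaf) points, so one cannot simply replace $\xi_j$ by an independent leaf $\xi_j'$; the paper instead resamples within the \emph{same} subtree $\bfS_j$ and uses the resulting tree structure (equivalently, (P4) at the level of the dendritic system) to force agreement.
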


This shows that in this case the additional randomisation provided by $F$ is not required and the planar order can be intrinsically attributed to $\bfT$ via $\psi$. This strongly depends on the assumption that $\mu$ is supported diffusely on the leaves. 

\begin{proof} 
Our main concern is to define the planar order $\psi$, recall the definition from Definition \ref{def:planar}. More concretely, we need to define $\psi_n(x_1,\ldots,x_n)$ for any $x=(x_1,\ldots,x_n) \in \bfT^n$ such that for all $i\neq j$ we have $x_i \npreceq x_j$ and $x_j \npreceq x_i$ as this determines $\psi$ uniquely. Note that we need to do this for all $(x_1,\ldots,x_n)$ with this property and not just on the support of $\mu$. Fix such $(x_1,\ldots,x_n) \in \bfT^n$. 

We let $\bfS_i$ be the subtree corresponding to $x_i$ in the following sense: let $\overline{x_i}$ be the branchpoint in $\spn(r,x_1,\ldots,x_n)$ closest to $x_i$. Let $y_i$ be the middle point of the segment $[\overline{x_i},x_i]_\bfT$. We set $\bfS_i=F_\bfT(y_i)$, the fringe subtree of $y_i$, see Figure \ref{fig:psi_for_t} for an illustration. The reason for using $F_\bfT(y_i)$ instead of $F_\bfT(x_i)$ is that we always have $\mu( F_\bfT(y_i))>0$ but not necessarily $\mu(F_\bfT(x_i))>0$. This is true because we have $\bfT = \spn(\supp(\mu))$ by Proposition \ref{prop:evans}. Note that if $x_i$ is a leaf, then $\mu(F_\bfT(x_i))=\mu(\{x_i\})=0$ because we assumed $\mu$ to be diffuse. 

Define now $\PP^x=\bigotimes_{i=1}^n\frac{1}{\mu(\bfS_i)}\mu\vert_{\bfS_i}$. Sampling $(\xi_1,\ldots, \xi_n)$ from $\PP^x$ is equivalent to sampling $(\xi_1,\ldots, \xi_n)$ from $\PP$ and conditioning on $\{\xi_i \in \bfS_i\}$ for every $i$. Using $(\xi_1,\ldots, \xi_n)$ sampled from $\PP^x$ in Construction \ref{construction:sample_den_from_tree:part_1} and \ref{construction:sample_den_from_tree:part_3} (instead of $i.i.d.$ $\mu$ samples) yields a random dendritic system $\mathcal{D}_n=([n], \sim_n,\preceq_n, p_n)$ with leaves labelled by $[n]$. We claim that $\mathcal{D}_n$ is in fact almost surely constant. If this is the case, then $\mathcal D_n$ will correspond to a non--random tree $T_n$ by Lemma \ref{lemma:den_to_tree} which is a combinatorial, leaf--labelled and plane tree. We then set
$$\psi_n(x_1,\ldots,x_n)=T_n.$$

Next we need to prove the claim than $\mathcal{D}_n$ is almost surely constant. Let $(\xi_i,U_i,U_{ij};i,j \leq n)$ be the random variables involved in the construction of $\mathcal{D}_n$, let $(\xi_i^*,U_i^*,U_{ij}^*;i,j \leq n)$ be an independent copy with the same distribution, extending the probability space. For every $i\neq j$ we have
$$F(\xi_i,U_i,\xi_j,U_j,U_{ij})=F(\xi_i^*,U_i^*,\xi_j^*,U_j^*,U_{ij}^*)$$
due to the consistency properties $(F2)$ and $(F3)$ of Proposition \ref{prop:evans} and the fact that $\xi_i,\xi_i^*\in \bfS_i$ whereas $\xi_j,\xi_j^*\in \bfS_j$. Informally, sampling once from $\bfS_i$ and $\bfS_j$ already determines which subtree is to the left of the other subtree, hence the second sample must agree with the left--right prescription of the first sample. More formally, $(\xi_i,U_i,U_{ij};i,j \leq n)$ and $(\xi_i^*,U_i^*,U_{ij}^*;i,j \leq n)$ are independent and hence $F$ restricted to $\bfS_i \times [0,1]\times \bfS_j \times [0,1] \times [0,1]$ is $\PP^x$--almost surely constant for any $i\neq j$ by Lemma \ref{lemma:app:ind}. This proves the claim that $\mathcal{D}_n$ is constant and thus yields the map $\psi_n$. By construction we also have the property that $\psi_n(x_1,\ldots,x_n)$ as non--plane combinatorial tree is the combinatorial tree corresponding to $\spn(r,x_1,\ldots,x_n)$. 

\begin{figure}[t]
    \centering
    \includegraphics[scale=1.1]{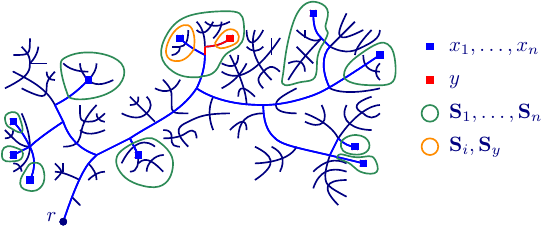}
    \caption{The subsets of $\bfT$ involved in the proof of Lemma \ref{lemma:case_s_leaves_only}.}
    \label{fig:psi_for_t}
\end{figure}

What remains to be shown is that $\psi_{n}(x_1,\ldots,x_{n})$ embeds into $\psi_{n+1}(x_1,\ldots,x_{n},y)$ for any $y\in \bfT$ such that for all $i$ we have $y \npreceq x_i$ and $x_i \npreceq y$. Let $(\bfS_1,\ldots,\bfS_n)$ denote the trees used in the construction of $\psi_n(x_1,\ldots,x_n)$ and $(\bfS_1',\ldots,\bfS_n',\bfS_y)$ the trees used in the construction of $\psi_{n+1}(x_1,\ldots,x_n,y)$.

We need to observe that in general we have $\bfS_i\neq\bfS_i'$. This is because either $y\in \bfS_i$ for some $i$ or because including $y$ introduces new branchpoints in $\spn(r,x_1,\ldots,x_n,y)$ which change $\overline{x_i}$ and hence $\bfS_i$. Nevertheless, we always have
$$\bfS_i' \subseteq \bfS_i \quad \forall i \leq n.$$

Recall that in the first part of the proof we sampled $\xi_i$ from $\mu_i$, i.e.\ from $\mu$ conditioned on $\xi_i\in \bfS_i$ to determine $\psi_n(x_1,\ldots,x_n)$. It is easy to see that we can a posteriori replace $\bfS_i$ with $\bfS_i'$ in the construction and still obtain the same $\psi_n(x_1,\ldots,x_n)$. This is because we concluded that $F(\xi_i,U_i,\xi_j,U_j,U_{ij})$ is almost surely constant for every $i,j\leq n$ so we can condition $\xi_i,i\leq n$ to be in the smaller sets $\bfS'_i$ for every $i\leq n$ and deduce the required constancy on these sets.

There is a distinct advantage to using $(\bfS_i', i\leq n)$ instead of $(\bfS_i, i\leq n)$ as these are the subsets of $\bfT$ used in the construction of $\psi_{n+1}(x_1,\ldots,x_n,y)$. This then yields the embedding of $\psi_n(x_1,\ldots,x_n)$ into $\psi_{n+1}(x_1,\ldots,x_n,y)$. There is a unique edge or branchpoint in $\psi_n(x_1,\ldots,x_n)$ to which we attach the leaf corresponding to $y$, determined by where $\overline{y}$ is located in $\spn(r,x_1,\ldots,x_n)$ because Definition \ref{def:planar} requires that $\psi_{n+1}(x_1,\ldots,x_n,y)$ without the planar order is the combinatorial tree corresponding to $\spn(r,x_1,\ldots,x_n,y)$. There also is a determined way for the planar order of the new leaf corresponding to $y$ which is compatible with the planar order of $\psi_n(x_1,\ldots,x_n)$ because we used the same sets $\bfS_i,i\leq n$, respectively identically distributed random variables, to construct the planar orders.

To conclude the proof of this lemma, note that the choice of $\lambda$ is trivial. Indeed, recall that we view $\lambda$ as an element of $L^1(\mu_s)$. Here $\mu_s = 0$, so $L^1(\mu_s)$ contains only a single element. Similarly, we need not define $B$ as $\mu$ does not have any atoms, hence $B$ is trivial. 

We have to check that the distribution of the dendritic system obtained via Constructions \ref{construction:sample_den_from_tree:part_1} and \ref{construction:sample_den_from_tree:part_2} using $\psi$ is the same as the distribution of the dendritic system in Proposition \ref{prop:evans}. By construction of $\psi$, this is the case on the event where $(\xi_i, i\geq 1)$ satisfies $\xi_i \npreceq \xi_j$ for all $i\neq j$. Because we assumed that $\mu$ is diffusely supported on the leaves of $\bfT$, this happens with probability $1$. This concludes the proof.
\end{proof}

It is important to note that we used the fact that $\mu$ is supported diffusely on the leaves only in the conclusion of the proof but not in the construction of $\psi$. Hence we can also repeat this construction in the general setting of Proposition \ref{prop:evans}. 

\begin{cor}\label{cor:there_is_psi}
In the setting of Proposition \ref{prop:evans}, $F$ induces a deterministic planar order $\psi$ for $\bfT$. In the context of Construction \ref{construction:sample_den_from_tree:part_1}, consider the event that for some finite set $I\subset \NN$ we have for $(\xi_i; i\in I)$ that $\xi_i \npreceq \xi_j$ for $i\neq j$. Then on this event the planarity function $p$ obtained in Construction \ref{construction:sample_den_from_tree:part_2} (which uses this $\psi$) restricted to $I$ has the same distribution as the planarity function of Construction \ref{construction:sample_den_from_tree:part_3} restricted to $I$. 
\end{cor}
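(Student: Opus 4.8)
The plan is to observe that the construction of $\psi$ in the proof of Lemma \ref{lemma:case_s_leaves_only} never used the hypothesis that $\mu$ is diffusely supported on the leaves: the sets $\bfS_i = F_\bfT(y_i)$ were chosen precisely so that $\mu(\bfS_i)>0$ regardless of where $x_i$ sits, using only $\bfT = \spn(\supp(\mu))$ from Proposition \ref{prop:evans}. So I would first restate verbatim the construction of $\psi_n(x_1,\dots,x_n)$ for an arbitrary tuple with $x_i \npreceq x_j$ for $i\neq j$: pass to the regular conditional law $\PP^x = \bigotimes_i \tfrac{1}{\mu(\bfS_i)}\mu|_{\bfS_i}$, run Constructions \ref{construction:sample_den_from_tree:part_1} and \ref{construction:sample_den_from_tree:part_3} to get a random dendritic system $\mathcal{D}_n$ on $[n]$, and invoke $(F2)$, $(F3)$ together with Lemma \ref{lemma:app:ind} (via an independent copy of the $\xi_i$'s restricted to the same $\bfS_i$) to conclude that $\mathcal{D}_n$ is $\PP^x$--a.s.\ constant, hence corresponds to a deterministic plane tree which we declare to be $\psi_n(x_1,\dots,x_n)$. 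The consistency property (item 3 of Definition \ref{def:planar}) follows exactly as before from $\bfS_i' \subseteq \bfS_i$ and the fact that constancy on $\bfS_i$ transfers to the smaller sets $\bfS_i'$. This gives the first assertion: $F$ induces a deterministic planar order $\psi$ for $\bfT$.

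For the second assertion, fix a finite set $I \subset \NN$ and work on the event $E_I = \{\xi_i \npreceq \xi_j \text{ for all } i\neq j \text{ in } I\}$. On $E_I$, Construction \ref{construction:sample_den_from_tree:part_2} assigns, for each pair $i,j \in I$, the value $p(i,j)$ determined by which leaf is labelled $i$ in $\psi_2(\xi_i,\xi_j)$ (case 1 of that construction; the other cases, which involve $\lambda$ and $\{\beta_a\}_a$, do not arise on $E_I$ since they require $\xi_i \preceq \xi_j$). I would then condition further on the values $(\xi_i; i\in I) = (x_i; i\in I)$ and on $E_I$. Under this conditioning the $\xi_i$ are independent with $\xi_i$ drawn from $\mu$ restricted to the appropriate subtree around $x_i$ — but more to the point, by definition $\psi_{|I|}(x_i; i\in I)$ records, for every pair, the common left--right orientation obtained by sampling once from each $\bfS_i$ and running Construction \ref{construction:sample_den_from_tree:part_3}; since that orientation is a.s.\ constant, it equals the orientation $F(\xi_i,U_i,\xi_j,U_j,U_{ij})$ would produce for the actual $\xi_i \in \bfS_i$. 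Hence on $E_I$ the planarity function of Construction \ref{construction:sample_den_from_tree:part_3} restricted to the leaves in $I$ coincides a.s.\ with the one read off from $\psi$ in Construction \ref{construction:sample_den_from_tree:part_2}. Since by Lemma \ref{lemma:den_sys_leaves_p} the planarity function on all of $T$ is determined by its values on the leaves together with $\preceq$, and $\preceq$ is the same in both constructions (both are Construction \ref{construction:sample_den_from_tree:part_1}), the two restricted dendritic systems agree in distribution.

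The main obstacle is a bookkeeping subtlety rather than a conceptual one: one must be careful that the subtrees $\bfS_i$ entering the definition of $\psi_{|I|}$ depend only on the geometry of $\spn(r, x_i; i\in I)$ and not on the ambient randomness, so that "the value $\psi$ assigns" is genuinely the same object as "the a.s.-constant value Construction \ref{construction:sample_den_from_tree:part_3} produces when the $\xi_i$ happen to land with $\xi_i \in \bfS_i$." This requires noting that conditioning $(\xi_i; i\in I)$ on $E_I$ and on a neighbourhood of $(x_i)$ makes each $\xi_i$ fall in $\bfS_i$ with positive probability, and then invoking the already-established a.s.-constancy of $\mathcal{D}_{|I|}$ under $\PP^x$ to identify the two. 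The remaining verifications — that cases 2–4 of Construction \ref{construction:sample_den_from_tree:part_2} are vacuous on $E_I$, and that $(P1)$–$(P4)$ force agreement off the leaves — are routine given Lemma \ref{lemma:den_sys_leaves_p}.
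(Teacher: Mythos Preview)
Your proposal is correct and matches the paper's approach exactly. The paper does not give a separate proof of the corollary; it simply records, in the sentence preceding the statement, that the construction of $\psi$ in the proof of Lemma \ref{lemma:case_s_leaves_only} never used the hypothesis that $\mu$ is diffusely supported on the leaves (that hypothesis enters only at the very end, to ensure the event $\{\xi_i \npreceq \xi_j\ \forall i\neq j\}$ has probability one), and hence the same construction yields $\psi$ in the general setting of Proposition \ref{prop:evans} with the stated conclusion on the restricted event $E_I$. Your write-up is a faithful and slightly more detailed unpacking of precisely this observation.
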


As the third special case, we consider the case where $\bfT$ is a single point. Note that in that case $\lambda$ and $B$ are necessarily trivial and there is again only one choice for the planar order $\psi$. 

\begin{lemma}
Assume that from Proposition \ref{prop:evans} we get $(\bfT, d, r, \mu, F)$ so that $\bfT=\{0\}$, $r=0$ and $\mu=\delta_0$. Then there is $(\psi, \lambda, \{\beta_a\}_a)$ so that the dendritic systems obtained by Constructions \ref{construction:sample_den_from_tree:part_2} and \ref{construction:sample_den_from_tree:part_3} have the same distribution. $\psi, \lambda$ and $\{\beta_a\}_a$ need not be specified due to the special structure of $\bfT$.
\end{lemma}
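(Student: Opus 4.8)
The plan is to reduce everything to the observation that $\mu = \delta_0$ forces $\xi_i = 0$ for all $i$ almost surely, so that both dendritic systems under comparison reduce to a star on the leaf set $[n]$ equipped with a random planar order of the $n$ leaves, and then to check that this planar order is the \emph{uniform} one in both cases.

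First I would spell out Construction \ref{construction:sample_den_from_tree:part_1} in this degenerate setting: since $\llbracket r,\xi_i\rrbracket = \{0\}$ for every $i$, all intersections $\llbracket r,\xi_i\rrbracket\cap\llbracket r,\xi_j\rrbracket$ coincide, so $\sim$ identifies all non--leaf pairs into one internal class and $\preceq$ places that class below all $n$ leaves -- i.e.\ the underlying leaf--labelled tree is the star, independently of $F$ and of $(\psi,\lambda,\{\beta_a\}_a)$. Thus the dendritic system obtained from $(\bfT,d,r,\mu,\psi,\lambda,\{\beta_a\}_a)$ via Constructions \ref{construction:sample_den_from_tree:part_1}--\ref{construction:sample_den_from_tree:part_2} and the one obtained from $(\bfT,d,r,\mu,F)$ via Constructions \ref{construction:sample_den_from_tree:part_1}--\ref{construction:sample_den_from_tree:part_3} share the same $(\sim,\preceq)$, and it remains only to match the planarity functions $p$. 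On the $\psi$--side, every pair $(\xi_i,\xi_j)$ falls into case (4) of Construction \ref{construction:sample_den_from_tree:part_2} (which does not even reference $\psi$, $\lambda$ or $\{\beta_a\}_a$), so $p(i,j)=1$ iff $U_i<U_j$; as the $U_i$ are i.i.d.\ uniform, ties occur with probability zero and the resulting linear order on the leaves is the rank order of $(U_1,\ldots,U_n)$, hence uniform over the $n!$ linear orders of $[n]$.

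Next I would handle the $F$--side, where $p(i,j)=F(0,U_i,0,U_j,U_{ij})$. Here the consistency relations (F1)--(F4) descend to (P1)--(P4): (F1) gives (P1); (P2) is automatic since $F$ is $\{\pm1\}$--valued and no two distinct leaves are $\preceq$--comparable; (F3) and (F4) are vacuous for leaves; and (F2) forces $p(i,k)$ to agree with $p(i,j)$ whenever $p(i,j)=p(j,k)$, which is exactly transitivity (P3). Hence $p$ restricted to $[n]$ is almost surely a genuine planarity function and encodes a random linear order $\omega_n$ on $[n]$. The point is that this dendritic system is exchangeable -- by Proposition \ref{prop:evans} it has the law of $\mathcal D$, or directly because permuting $[n]$ only relabels the i.i.d.\ families $(U_i)_i$ and $(U_{ij})_{i<j}$ -- and the set of linear orders on $[n]$ is a torsor under the relabelling action of the symmetric group, so the unique exchangeable law on it is the uniform one. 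Therefore $\omega_n$ is uniform, matching the $\psi$--side.

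Finally I would conclude: for each $n$ the two dendritic systems restricted to $[n]$ have the same combinatorial type and the same uniformly random planar order, hence the same law; since the law of a dendritic system is determined by its restrictions to $[n]$, $n\ge 1$ (Lemma \ref{lem:den_system_to_sequences_of_trees}), the two systems agree in distribution, and one may take $\psi$, $\lambda$, $\{\beta_a\}_a$ to be the forced trivial objects. I expect the only step needing a word of justification to be the torsor claim -- that an exchangeable random linear order on a finite set is uniform -- which is immediate because the relabelling action of the symmetric group on linear orders is simply transitive, so any invariant probability measure is the Haar (uniform) measure.
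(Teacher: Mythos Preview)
Your argument is correct and follows the same idea as the paper's own proof: both recognise that the tree is forced to be the star and that exchangeability forces the planar order of the leaves to be uniform, realised via $p(i,j)=\II_{U_i<U_j}-\II_{U_i>U_j}$. The paper's proof is a three--line sketch phrased at the level of the labelled tree $T_n$, whereas you work more carefully at the level of the planarity function and spell out why the $F$--side also yields a uniform order (via the symmetric--group torsor observation); this is more detail than the paper gives but not a different approach.
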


\begin{proof}
The only tree with $n$ leaves that can arise in this case is the tree where all $n$ leaves are attached directly to the root. This tree also has a unique planar order. We are left with distributing the leaf labels, but due to exchangeability they form a uniform permutation on $\{1,\ldots,n\}$. This means we have to assign them in a consistent way which can be realised by $p(i,j)=\II_{U_i<U_j}-\II_{U_j>U_i}$.
\end{proof}

Finally, we will prove Proposition \ref{prop:there_is_dprt_for_dendritic} in the general case by combining the ideas of the preceding lemmas.

\begin{proof}[Proof of Proposition \ref{prop:there_is_dprt_for_dendritic}.] 
Let $(\bfT,d,r,\mu,F)$ as in Proposition \ref{prop:evans}. The proof consists of two steps: first, we construct the planar order $\psi$, the branch weight function $\lambda$ and the branchpoint weight functions $\beta_a$. Then we check that the distribution of the dendritic system obtained through Constructions \ref{construction:sample_den_from_tree:part_1} and \ref{construction:sample_den_from_tree:part_2} is the same as the distribution of the dendritic system obtained through Constructions \ref{construction:sample_den_from_tree:part_1} and \ref{construction:sample_den_from_tree:part_3}. \\

\textbf{Step $1$: constructing $(\psi,\lambda,\{\beta_a\}_a)$.} First, define $\psi$ by Corollary \ref{cor:there_is_psi}.

Next, we construct the branch weight function $\lambda:\bfT \to [0,1]$. For $x \notin \supp(\mu_s)$, set $\lambda(x)=0$. Now fix $x \in \supp(\mu_s)$, i.e.\ $x$ is located on the diffuse mass on the branches. By our convention, we can assume $\deg(x)=2$. Define $\mu^x = \frac{1}{\mu(F_\bfT(x))}\mu\vert_{F_\bfT(x)}$, i.e.\ $\mu$ restricted to $F_\bfT(x)$ and normalised. Let $U_1,U_2,U_3$ be independent, uniform $[0,1]$ random variables and let $\xi$ be an independent $\mu^x$--distributed random variable. $\xi$ can also be seen as a $\mu$--distributed random variable conditioned on $\xi \in F_\bfT(x)$. We set
\begin{equation}\label{eq:q_when_deg_2}
    \lambda(x) = \PP\big( F(x,U_1,\xi, U_2, U_3) = 1 \big).
\end{equation}
Note that this defines a measurable function $\lambda$ because $F$ is measurable. On an informal level, $\lambda(x)$ is the probability that a leaf attached to $x$ will be to the left of the subtree $F_\bfT(x)$ of $x$. \\

Lastly, fixing an atom $a$ of $\mu$, we define the branchpoint weight function $\beta_a: \{\text{subtrees of }a\} \to [0,1]$. Let $\bfS$ be a subtree of $a$, consider $\mu^\bfS = \frac{1}{\mu(\bfS)}\mu\vert_{\bfS}$, i.e.\ $\mu$ restricted to $\bfS$ and normalised. Let $U_1,U_2,U_3$ be independent, uniform $[0,1]$ random variables and let $\xi$ be an independent $\mu^\bfS$--distributed random variable. $\xi$ can also be seen as $\mu$--distributed random variable conditioned on $\xi \in \bfS$. We set
\begin{equation}\label{eq:q_when_deg_bigger}
    \beta_a(\bfS) = \PP\big( F(a,U_1,\xi, U_2, U_3) = 1 \big).
\end{equation}

Informally, this is the probability that a leaf attached to $a$ is left of the subtree $\bfS_i$. \\

\textbf{Step $2$: equivalence in distribution.} 
We need to check that the dendritic system $\mathcal{D}^*= (\NN, \sim^*, \prec^*, p^*)$ obtained by Constructions \ref{construction:sample_den_from_tree:part_1} and \ref{construction:sample_den_from_tree:part_2} using $(\psi,\lambda,\{\beta_a\}_a)$ obtained in steps $1$ has the same distribution as the dendritic system $\mathcal{D} = (\NN, \sim, \prec, p)$ obtained by Constructions \ref{construction:sample_den_from_tree:part_1} and \ref{construction:sample_den_from_tree:part_3}. We consider them under a partial coupling which is obtained by using the same sequence $(\xi_i, i\leq 1)$ of $i.i.d.$ $\mu$--random variables for Construction \ref{construction:sample_den_from_tree:part_1}. This means that $\sim$ is equal to $\sim^*$ and $\preceq$ is equal to $\preceq^*$, $\PP$--almost surely. Condition on $(\xi_i, i\leq n)$, let $\PP^\xi$ be a regular conditional probability of $\PP$ given $(\xi_i, i\geq n)$. It now suffices to check that the restrictions of $\mathcal D$ and $\mathcal D^*$ to the leaves labelled by $[n]$ have the same distribution for all $n\in \NN$. Due to the coupling, it suffices to show that $\{p(i,j): i\neq j \in [n] \}$ and $\{p^*(i,j): i\neq j \in [n] \}$ have the same distribution under $\PP^\xi$, $\PP$--almost surely.

We partition $[n]$: Choose a set $I_1 \subset [n]$ such that for $(\xi_i, i \in I_1)$ we have that $\xi_i \npreceq \xi_j$ for $i\neq j$, and such that $\spn(r, \xi_i, i \in I_1) = \spn(r, \xi_i, i \in [n])$. Next, let 
\begin{equation*}
    I_2 = \left\{i \in [n]: \xi_i \in \supp(\mu_s) \quad \text{and} \quad i \notin I_1 \right\}.
\end{equation*}
Lastly, for every atom $a$ of $\mu$, we let 
\begin{equation*}
    I_3^a = \left\{i \in [n]: \xi_i = a \quad \text{and} \quad i \notin I_1 \right\}.
\end{equation*}

\begin{figure}[h]
    \centering
    \includegraphics[scale=1.1]{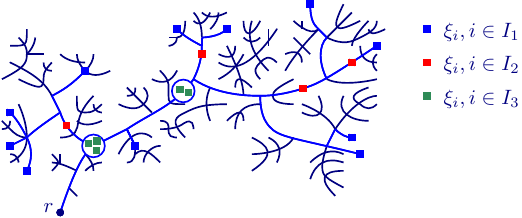}
    \caption{The three sets $I_1,I_2$ and $I_3$. The big, blue circles signify atoms of $\bfT$.}
    \label{fig:p_from_the_leaves}
\end{figure}

By construction, $I_1,I_2$ and $(I_3^a, a \ \text{atom})$ are disjoint and $I_1 \cup I_2 \cup \bigcup_a I_3^a = [n]$. See Figure \ref{fig:p_from_the_leaves} for an illustration of these sets. We show
\begin{equation}\label{eq:goal_step4}
    \{p(i,j): i, j \in [n]; i\neq j\} \overset{d}{=} \{p^*(i,j): i, j \in [n]; i\neq j \} \quad \PP-a.s.
\end{equation}
in three steps. First with $[n]$ replaced by $I_1$, then by $I_1 \cup I_2$ and lastly for $[n]$ itself.
We do these in three steps -- \emph{a,b,c} -- for $I_1$, $I_1 \cup I_2$ and $[n]$ respectively.

\emph{Step $2a$.}
By construction, for $\{ \xi_i, i \in I_1 \}$ we have that $\xi_i \npreceq \xi_j$ for $i\neq j$ and hence 
\begin{equation}\label{eq:step41}
    \{p(i,j): i, j\in I_1; i \neq j \} \overset{d}{=} \{p^*(i,j): i, j\in I_1; i \neq j \} \quad \PP-a.s.
\end{equation}
by Corollary \ref{cor:there_is_psi}. \\

\emph{Step $2b$.}
Consider now $I_2$. For $i \in I_2$ we let $s(i) = \min \{i': i'\in I_1, \xi_i < \xi_{i'} \}$ -- by construction of $I_2$ the set $\{i': i'\in I_1, \xi_i < \xi_{i'} \}$ is never empty for every $i \in I_2$ and thus $s(i)$ is well--defined. 
The idea behind considering $\xi_{s(i)}$ is similar to the proof of Lemma \ref{lemma:case_s_line}, it suffices to consider only one other leaf to determine the orientation of leaf $i$. Here we use $s(i)$, in the lemma we used the smallest leaf above.
Let $i,j \in I_2$ with $i \neq j$. We then have $3$ cases: either $\xi_i < \xi_j$, $\xi_j < \xi_i$ or $\xi_i \nleq \xi_j,\xi_j \nleq \xi_i$. If $\xi_i < \xi_j$, then $(i,j) = (i, s(i)) \prec (j,s(i))$. By $(P4)$ we then almost surely have
\begin{equation}\label{eq:step4:b}
    p(i,j) = p(i,(j,s(i)) = p(i,s(i)).
\end{equation}
Similarly, if $\xi_j < \xi_i$ we almost surely have $p(i,j) = p(s(j),j) = - p(j,s(j))$ where the second equality follows from $(P1)$. If $\xi_i \nleq \xi_j$ and $\xi_j \nleq \xi_i$ we have $(i,j)=(s(i),s(j)) \prec (i,s(i))$ and $(i,j) \prec (j,s(j))$. Hence we almost surely have
\begin{equation}\label{eq:step4:c}
    p(i,j) = p(i,s(j)) = p(s(i),s(j)),
\end{equation}
again by $(P4)$.
The same reasoning works for $p^*$ as well, so that we have analogues of \eqref{eq:step4:b} and \eqref{eq:step4:c} for $p^*$ as well. This implies that to show 
\begin{equation}\label{eq:goal_step4_2}
    \{p(i,j): i, j \in I_1 \cup I_2; i\neq j \} \overset{d}{=} \{p^*(i,j): i, j \in I_1 \cup I_2; i\neq j \} \quad \PP-a.s.,
\end{equation}
it suffices to show that 
\begin{equation}\label{eq:suffices_for_goal_step4_2}
    \{p(i,j):  i,j \in I_1: i \neq j \ \text{or} \ i\in I_2, j = s(i)  \} \overset{d}{=} \{p^*(i,j):  i,j \in I_1: i \neq j \ \text{or} \ i\in I_2, j = s(i)  \} \quad \PP-a.s..
\end{equation}
Consider now $i\in I_2$ and $j=s(i) \in I_1$. Let $\pi(\xi_j)$ be either the branchpoint in $\spn(r,\xi_\ell; \ell \in I_1)$ closest to $\xi_j$ or the closest $\xi_k, k\in [n]$ -- whichever is closer. Denote by $\Tilde{\xi_j}$ the midpoint of the interval $[\pi(\xi_j), \xi_j]$. Let $\bfS_j = F_\bfT(\Tilde{\xi_j})$, by construction $\xi_j \in \bfS_j$ and $\xi_i \notin \bfS_j$. Now let $\zeta_j, \zeta_j'$ be sampled from $\frac{1}{\mu(\bfS_j)}\mu \vert_{\bfS_j}$ -- we assume that they are all independent under $\PP^\xi$ and independent of all uniform variables. Here we note again that $\mu(\bfS_j)>0$, but not necessarily $\mu(F_\bfT(\xi_j))>0$. Let $V_j, V_j', V_{ij}, V_{ij}'$ be additional uniform random variables. We then $\PP^\xi$--almost surely have
\begin{equation*}
    p(i,j) = F(\xi_i, U_i, \xi_j, U_j, U_{ij}) = F(\xi_i, U_i, \zeta_j, V_j, V_{ij}) = F(\xi_i, U_i, \zeta_j', V_j', V_{ij}').
\end{equation*}
The first equality is how $p(i,j)$ is constructed, the other two inequalities follow from $\xi_i, \zeta_i, \zeta_i' \in \bfS_i$ and the consistency properties $(F3)$ and $(F4)$. By Lemma \ref{lemma:app:ind}, this means that there is a function $G_{i}$ such that 
\begin{equation}\label{eq:p(i,s(i))}
    p(i,s(i)) = G_{i}(\xi_i, U_i).
\end{equation}
Note that this is how we have defined $\lambda(\xi_i)$ in \eqref{eq:q_when_deg_2}, $\lambda(\xi_i) = \PP^\xi( G_{i}(\xi_i, U_i) = 1)$. Now let $\gamma = (\gamma_{ij})_{i,j \in I_1 \cup I_2} \in \{ \pm 1 \}^{I_1 \cup I_2}$ be in such a way that 
\begin{equation*}
    \PP^\xi \big( \forall i,j\in I_1 \cup I_2: i\neq j: p(i,j) = \gamma_{ij} \big) >0.
\end{equation*}
Informally, this means we only consider $c$ which does not break the consistency relations of $p$ in an obvious way, for example by not satisfying $\gamma_{ij} = - \gamma_{ji}$. Any such $\gamma$ would have probability $0$, both for the above expression and the same expression with $p$ replaced by $p^*$. Using the observations we have made so far, i.e.\ we only need to show \eqref{eq:suffices_for_goal_step4_2} for \eqref{eq:goal_step4_2} and \eqref{eq:p(i,s(i))}, we get 
\begin{align*}
    \PP^\xi\big( \forall i,j  &\in I_1 \cup I_2; i\neq j: p(i,j) = \gamma_{ij} \big)\\  
    &= \PP^\xi \big( \forall i \in I_2: p(i,s(i)) = \gamma_{i,s(i)}; \forall i ,j \in I_1; i \neq j: p(i,j) = \gamma_{ij}\big) \\
    &= \PP^\xi \big(\forall i \in I_2: p(i,s(i)) = \gamma_{i,s(i)} \big\vert \forall i ,j \in I_1; i \neq j: p(i,j) = \gamma_{ij} \big)  \PP^\xi \big( \forall i ,j \in I_1; i \neq j : p(i,j) = \gamma_{ij} \big)\\
    &=  \PP^\xi \big(\forall i \in I_2:G_{i}(\xi_i,U_i) = \gamma_{i,s(i)} \big\vert \forall i ,j \in I_1; i \neq j: p(i,j) = \gamma_{ij} \big)  \PP^\xi \big( \forall i ,j \in I_1; i \neq j : p(i,j) = \gamma_{ij} \big) \\
    &= \bigg(\prod_{i \in I_2} \PP^\xi \big( G_{i}(\xi_i,U_i) = \gamma_{i,s(i)} \big) \bigg)\PP^\xi \big( \forall i ,j \in I_1; i \neq j: p(i,j) = \gamma_{ij} \big) \\
    &= (*).
\end{align*}
In the last step, we have used the independence of the $U_i,i\in I_2$. As we have noted above, the distribution of $G_{i,s(i)} (\xi_i, U_i)$ is the same as the distribution of $p^*(i,j)$. Combining this with \eqref{eq:step41}, we have
\begin{equation*}
    (*) = \prod_{i \in I_2}\PP^\xi \big( p^*(i,s(i)) = \gamma_{i,s(i)} \big) \PP^\xi \big( \forall i ,j \in I_1; i \neq j: p^*(i,j) = \gamma_{ij} \big) = (**).
\end{equation*}
We reduce this expression with the same reasoning as above, this time for $p^*$ instead of $p$,
\begin{align*}
    (**)
    &= \PP^\xi \big(\forall i \in I_2: p^*(i,s(i)) = \gamma_{i,s(i)} \big\vert \forall i ,j \in I_1; i \neq j: p^*(i,j) = \gamma_{ij} \big)  \PP^\xi \big( \forall i ,j \in I_1; i \neq j : p^*(i,j) = \gamma_{ij} \big)\\
    &= \PP^\xi \big( \forall i \in I_2: p^*(i,s(i)) = \gamma_{ij} ; \forall i ,j \in I_1; i \neq j: p^*(i,j) = \gamma_{ij}) \\
    & = \PP^\xi \big( \forall i,j \in I_1 \cup I_2; i\neq j: p^*(i,j) = \gamma_{ij} \big).
\end{align*}
Because $\gamma$ was arbitrary, we have shown \eqref{eq:goal_step4_2}. \\

\emph{Step $2c$.}
Lastly, we want to show \eqref{eq:goal_step4}, given \eqref{eq:goal_step4_2}.
Fix an atom $a$ such that $I_3^a \neq \emptyset$. Note there can be the case where there is $i\in I_1$ with $\xi_i = a$. To deal with this case and to include this index, we define $\Tilde{I}_3^a = \{i: \xi_i = a \}$. Further, there are $d(a) \geq 1$ and $i_1^a,\ldots,i_{d(a)}^a \in I_1$ such that $a \leq \xi_{i_j^a}$ for $1 \leq j \leq d(a)$ and such that $\spn(r,\xi_i; i \in [n])$ and $\spn(r,\xi_{i_1^a}, \ldots, \xi_{i^a_{d(a)}})$ are the same in a small neighbourhood of $a$.
This means that we choose as many of the leaves in $\spn(r,\xi_i; i \in I_1)$ that sit above $a$ as needed to realise the degree of $a$ in $\spn(r,\xi_i; i \in [n])$. The case of $d(a)=1$ can also happen if there is $i \in I_1$ with $\xi_i =a$. We can choose them in such a way that $p(i^a_k,i^a_{k+1}) = 1$ for $k\leq d(a)-1$, i.e.\ they are indexed from left to right in an increasing manner. 
Let now be $i \in \Tilde{I}_3^a $ and $j \in [n] $, i.e.\ a leaf which is attached to $a$ and another leaf. We consider $p(i,j)$. There are three cases, $\xi_j < a$, or $a < \xi_j$, or $a \nless \xi_j$ and $\xi_j \nless a$. We show that in all three cases we have 
\begin{equation}\label{eq:step4:3:a}
    p(i,j) = p(i^a_k,j),
\end{equation}
$\PP^\xi$--almost surely for an appropriate choice of $1 \leq k \leq d(a)$. If $\xi_j < a$ or if $a \nless \xi_j, \xi_j \nless a$, we choose $i_k^a = i_1^a$ -- \eqref{eq:step4:3:a} then holds by the consistency property $(P4)$ of $p$. If $a< \xi_j$, then there exists some $k$ such that $\xi_{j}$ and $\xi_{i^a_k}$ are in the same subtree of $a$. \eqref{eq:step4:3:a} again holds by $(P4)$.
This means that for an appropriate $\gamma = (\gamma_{ij})_{i\neq j \in [n]} \in \{\pm 1 \}^{n(n-1)}$ that does not violate the consistency conditions we have
\begin{align}
    \PP^\xi \big( \forall i,j \in [n], i\neq j: p(i,j) &= \gamma_{ij} \big)  \nonumber  
    =\PP^\xi \big( \forall i,j \in I_1 \cup I_2, i\neq j: p(i,j) = \gamma_{ij}; \\& \forall a \forall i \in \Tilde{I}_3^a \forall k \leq d(a): p(i,i^a_k) = \gamma_{i,i_k^a}; \forall a \forall i ,j\in \Tilde{I}_3^a , i \neq j: p(i,j) = \gamma_{ij} \big). \label{eq:step4:3:loooong}
\end{align}
The same statement holds if we replace $p$ by $p^*$. Note that in the case where there is $i \in I_1$ with $\xi_i = a$ there is some redundancy in the above statement. There we have $d(a)=1$ and $i_1^a \in \Tilde{I}_3^a$. 

Consider the case where $a < \xi_{i_k^a}$ for all $k\leq d(a)$ (this is true for all $k$ unless $d(a)=1$ and $\xi_{i_1^a}=a$). Then every $k$ corresponds to a subtree $\bfS_k$ of $a$. We necessarily have $\mu(\bfS_k)>0$ for every $1\leq k \leq d(a)$. Let $\zeta_k, \zeta_k'$ be sampled from $\frac{1}{\mu(S_k)} \mu \vert_{S_k}$ and let $V_k,V_k',V_{i k}, V_{i k}'$ be uniform random variables, all of them are assumed to be independent under $\PP^\xi$ and independent of any other uniform random variables. Let $i \in I_3^a$, we then $\PP^\xi$--almost surely have
\begin{equation*}
    p(i,i_k^a) = F(a, U_i, \xi_{i_k^a}, U_{i_k^a}, U_{i, i_k^a}) = F(a,U_i, \zeta_k, V_k, V_{i k}) = F(a,U_i, \zeta_k', V_k', V_{i k}').
\end{equation*}
The first equality is Construction \ref{construction:sample_den_from_tree:part_3}, the latter two equalities follow from the consistency properties $(F3)$ and $(F4)$. By Lemma \ref{lemma:app:ind} we get that there is a function $G^a_k$ such that $\PP^\xi$--almost surely
\begin{equation*}
    p(i,i_k^a) = G^a_k( U_i).
\end{equation*} 
Using this, we continue the considerations of \eqref{eq:step4:3:loooong}. We restrict ourselves to the case where $I_3^a = \Tilde{I}_3^a$.
\begin{align}
    \PP^\xi \big( \forall i,j \in [n],& i\neq j: p(i,j) = \gamma_{ij} \big)  \nonumber  \\
    &=\PP^\xi \big( \forall i,j \in I_1 \cup I_2, i\neq j: p(i,j) = \gamma_{ij}; \forall a \forall i \in I_3^a  \forall k \leq d(a): p(i,i^a_k) = \gamma_{i,i_k^a}; \nonumber \\
    & \quad \quad  \forall a \forall i ,j\in I_3^a , i \neq j: p(i,j) = \gamma_{ij} \big) \nonumber\\
    &= \PP^\xi \big( \forall a \forall i \in I_3^a  \forall k \leq d(a): p(i,i^a_k) = \gamma_{i,i_k^a};\forall a \forall i ,j\in I_3^a , i \neq j: p(i,j) = \gamma_{ij} \big\vert \nonumber\\
    &\quad \quad   \forall i,j \in I_1 \cup I_2, i\neq j: p(i,j) = \gamma_{ij} \big) \PP^\xi\big(  \forall i,j \in I_1 \cup I_2, i\neq j: p(i,j) = \gamma_{ij} \big) \nonumber\\
    &= \prod_a \PP^\xi \big( \forall i \in I_3^a \forall k \leq d(a):G_k^a(U_i) = \gamma_{i,i_k^a}; \forall i ,j\in I_3^a , i \neq j: F(a, U_i, a, U_j, U_{ij} )= \gamma_{ij}\big) \nonumber\\
    &\quad\quad \cdot \PP^\xi\big(  \forall i,j \in I_1 \cup I_2, i\neq j: p(i,j) = \gamma_{ij} \big) \label{eq:step4:3:d}
\end{align}
For the last equality, we have used that due to \eqref{eq:goal_step4}, the uniform variables used at different atoms are independent because for $a\neq a'$ the sets $I_3^a$ and $I_3^{a'}$ are disjoint. Consider now 
\begin{equation*}
    \PP^\xi \big( \forall i \in I_3^a \forall k \leq d(a):G_k^a(U_i) = \gamma_{i,i_k^a}; \forall i ,j\in I_3^a , i \neq j: F(a, U_i, a, U_j, U_{ij} )= \gamma_{ij}\big),
\end{equation*}
for a fixed atom $a$.

Note that we have defined the thresholds of $\beta_a$ exactly so that $\PP(G_k^a(U_i) = 1) = \beta_a(\bfS_k)$, compare to \eqref{eq:q_when_deg_bigger}. 
For every $i \in I_3^a$ there is a unique $k(i)$ such that $a_{k i} = 1$ for $k\leq k(i)$ and $a_{k i}= -1$ for $k>k(i)$. The possible values for $k(i)$ reach from $0$ to $d(a)$ where $0$ and $d(a)$ correspond to the extremal cases where the leaf $i$ is to the left or to the right of all subtrees. Formally define $\beta_a(\bfS_{d(a)+1})=1$. This implies that 
\begin{align}
    &\PP^\xi \big( \forall i \in I_3^a \forall k \leq d(a):G_k^a(U_i) = \gamma_{i,i_k^a}; \forall i ,j\in   I_3^a, i \neq j: F(a, U_i, a, U_j, U_{ij} )= \gamma_{ij}\big)\nonumber \\
    &= \bigg(\prod_{i \in I_3^a} \beta_a(\bfS_{k(i)+1})-\beta_a(\bfS_{k(i)})\bigg) \nonumber \\
    & \hspace{3cm} \times \PP^\xi \big(\forall i ,j\in I_3^a, i \neq j: F(a, U_i, a, U_j, U_{ij} )= \gamma_{ij} \big\vert \forall i \in I_3^a \forall k \leq d(a):G_k^a(U_i) = \gamma_{i,i_k^a}\big) \nonumber\\
    &= \bigg(\prod_{i \in I_3^a}\beta_a(\bfS_{k(i)+1})-\beta_a(\bfS_{k(i)})\bigg) \bigg(\prod_{k=0}^{d(a)} \frac{1}{\vert \{ i\in I_3^a: k(i)=k \}\vert !} \bigg) \nonumber 
\end{align}
The last equality holds because the leaves with indices in the set $\{ i\in I_3^a: k(i)=k \}$ form a uniform random permutation by exchangeability. We have chosen Construction \ref{construction:sample_den_from_tree:part_2} in such a way that
\begin{align}
     \bigg(\prod_{i \in I_3^a}\beta_a(\bfS_{k(i)+1})-\beta_a(\bfS_{k(i)})\bigg) & \bigg(\prod_{k=0}^{d(a)} \frac{1}{\vert \{ i\in I_3^a: k(i)=k \}\vert !} \bigg) \nonumber \\
     &=  \PP^\xi \big( \forall i \in I_3^a \forall k \leq d(a):p^*(i,i_k^a)=\gamma_{i,i_k^a}; \forall i ,j\in I_3^a, i \neq j:p^*(i,j)= \gamma_{ij}\big). \label{eq:step4:last_eq_I_hope}
\end{align}

Before \eqref{eq:step4:3:d} we assumed $I_3^a = \Tilde{I}_3^a$. Consider now the case where $I_3^a \neq \Tilde{I}_3^a$ which happens when there is $i\in I_1$ with $\xi_i = a$. In this case the computations \eqref{eq:step4:3:d} -- \eqref{eq:step4:last_eq_I_hope} become easier. The reason for this is that we do not need to consider the terms of the form
\begin{equation*}
    \left\{ \forall i \in I_3^a  \forall k \leq d(a): p(i,i^a_k) = \gamma_{i,i_k^a} \right\}.
\end{equation*}
This is because $d(a)=1$ and $\xi_{i^a_1} \in \Tilde{I}_3^a$. Besides that we consider $\Tilde{I}_3^a$ instead of $I_3^a$. The computations then proceed as above. \\

Recall that we have already shown \eqref{eq:goal_step4_2}, this means that
\begin{equation}\label{eq:step2:instep3}
    \PP^\xi \big( \forall i\neq j  \in I_1 \cup I_2: p(i,j) = \gamma_{ij} \big) = \PP^\xi \big( \forall i\neq j \in I_1 \cup I_2: p^*(i,j) = \gamma_{ij} \big). 
\end{equation}
Going back to \eqref{eq:step4:3:d}, with \eqref{eq:step2:instep3} we have that
\begin{align*}
    \PP^\xi \big( \forall i,j \in [n], & i\neq j: p(i,j) = \gamma_{ij} \big) \nonumber \\ &= \PP^\xi \big( \forall i,j \in I_1 \cup I_2, i\neq j: p^*(i,j) = \gamma_{ij}; \forall a \forall i \in I_3^a \forall k \leq d(a): p^*(i,i^a_k) = \gamma_{i,i_k^a}; \nonumber\\
    & \quad \quad  \forall a \forall i ,j \in I_3^a, i \neq j: p^*(i,j) = \gamma_{ij} \big) \nonumber \\
    & =\PP^\xi \big( \forall i,j \in [n], i\neq j: p^*(i,j) = \gamma_{ij} \big). 
\end{align*}
This completes showing that $p$ and $p^*$ have the same distribution under $\PP^\xi$ which completes the proof. 
\end{proof}

\subsection{Uniqueness of a canonical representation}\label{sec:uniqueness}

So far we have proven Proposition \ref{prop:there_is_dprt_for_dendritic} which states that there is \emph{some} choice of $(\bfT,d,r,\mu,\psi,\lambda,\{\beta_a\}_a)$ that corresponds to our dendritic system.
In this section we want to find a more canonical representation for this in the form of IP--trees, see Definition \ref{def:ip-tree}. This will lead to us proving Proposition \ref{prop:let_it_be_IP}.
The notion of IP--trees has been introduced by Forman \cite{forman_exchangeable_2020}. 

\begin{definition}[Special points] For a weighted, rooted real tree $(\bfT, d, r, \mu)$ the special points are
\begin{enumerate}
    \item the locations of atoms of $\mu$,
    \item the branch points of $\bfT$, and
    \item the isolated leaves of $\spn(\supp(\mu))$, by which we mean leaves of $\spn(\supp(\mu))$ that are not limit points of the branch points of $\spn(\supp(\mu))$. 
\end{enumerate}

\end{definition}

\begin{definition}[mass--structural isomorphism] \label{def:msi}
Let $\mathscr{S}_i$ be the sets of special points of weighted, rooted real trees $(\bfT_i, d_i, r_i, \mu_i)$ for $i=1,2$. A measurable map $\phi:\bfT_1 \rightarrow\bfT_2$ is a \textit{mass--structural isomorphism} if it has the following properties.

\begin{enumerate}
    \item \textit{Mass preserving.} For every $x\in \mathscr{S}_1$, $\mu_1([r_1,x]_{\bfT_1})=\mu_2([r_2,\phi(x)]_{\bfT_2})$, $\mu_1(\{x\})=\mu_2(\{\phi(x)\})$, and $\mu_1(F_{\bfT_1}(x))=\mu_2(F_{\bfT_2}(\phi(x)).$
    
    \item \textit{Structure preserving.} For $x,y\in \mathscr{S}_1$ we have $x\in[r_1, y]_{\bfT_1}$ if and only if $\phi(x)\in [r_2,\phi(y)]_{\bfT_2}.$
\end{enumerate}
\end{definition}

We call two rooted, weighted real trees mass--structurally equivalent if there exists a mass--structural isomorphism between the two. This is an equivalence relation. We then have the following two theorems of Forman \cite{forman_exchangeable_2020}, the second one concern itself with hierarchies. A hierarchy on $\NN$ $(\mathcal{H}_n,n\geq 1)$ \cite[Definition 1.6]{forman_exchangeable_2020} is an object such that for every $n\geq 1$, $\mathcal{H}_n$ is a collection of subsets of $[n]$ satisfying certain consistency assumptions -- we do not recall these here. To every IP--tree $(\bfT,d,r,\mu)$ we associate a hierarchy, $(\xi_i, i\geq 1)$ are $i.i.d.$ $\mu$--random variables,  
\begin{equation}\label{eq:induced_hierachies}
    \mathcal{H}_n = \left\{\left\{i \in [n]: \xi_i \in F_\bfT(x)\right\}: x\in \bfT\right\} \cup \left\{\left\{i \right\}: i \in [n]\right\} \quad \quad \text{for } n\geq 1. 
\end{equation}
Observe that this is very similar to the first two steps of Construction \ref{construction:sampling_the_bridge}. For a given $n$, $\mathcal{H}_n$ as above can be represented as a discrete tree, therefore we can think of a hierarchy  $(\mathcal{H}_n,n\geq 1)$ as a sequence of growing trees. 

\begin{theorem}\cite[Theorem 1.5]{forman_exchangeable_2020} \label{thm:noah_1}
Each mass--structural equivalence--class of rooted, weighted real trees contains exactly one isomorphism class of IP--trees.
\end{theorem}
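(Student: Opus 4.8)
\emph{Proof plan.} The statement splits into existence (each mass--structural class contains an IP--tree) and uniqueness (only one, up to a measure-- and root--preserving isometry); it is \cite[Theorem 1.5]{forman_exchangeable_2020}, and here is how I would argue it.

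For existence I would re--metrise a given $(\bfT,d,r,\mu)$ by \emph{mass coordinates}. After replacing $\bfT$ by $\spn(\supp\mu)$ -- which has the same special points, hence the same mass--structural class -- I would set, for $x\preceq y$, $\rho(x,y)=\mu(F_\bfT(x))-\mu(F_\bfT(y))$ and, for $x,y$ with most recent common ancestor $x\wedge y$, $\rho(x,y)=\rho(x\wedge y,x)+\rho(x\wedge y,y)$. One checks that $\rho$ is a pseudometric vanishing exactly on skeletal segments that carry no $\mu$--mass and contain no branchpoint. I would then take $\bfT'$ to be the completion of the metric quotient $\bfT/\{\rho=0\}$, with $r'$ the image of $r$, $\mu'$ the pushforward of $\mu$, and $\phi:\bfT\to\bfT'$ the canonical map. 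Since the collapsed segments carry no atom, no branchpoint and no support mass, $\phi$ preserves fringe--subtree masses and the ancestral order, so it is a mass--structural isomorphism and $\bfT'=\spn(\supp\mu')$ is spanning; and for $x'=\phi(x)$ a branchpoint or a point of $\supp\mu'$ one reads off the spacing identity $\rho(r',x')=\mu(\bfT)-\mu(F_\bfT(x))=1-\mu'(F_{\bfT'}(x'))$. The delicate point is the completion: one must verify it introduces no leaf outside $\supp\mu'$ and that $\mu'$ stays a Borel probability measure, which follows from $\bfT=\spn(\supp\mu)$ together with the fact that $\mu(F_\bfT(x))\to 0$ along every ray towards a leaf.

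For uniqueness, let $(\bfT_i,d_i,r_i,\mu_i)$, $i=1,2$, be IP--trees joined by a mass--structural isomorphism $\phi:\bfT_1\to\bfT_2$. At every point $x$ where the spacing property applies -- every branchpoint, every point of $\supp\mu_1$, and, by the spanning property, every leaf -- mass--preservation forces $d_1(r_1,x)=1-\mu_1(F_{\bfT_1}(x))=1-\mu_2(F_{\bfT_2}(\phi(x)))=d_2(r_2,\phi(x))$, and combined with structure--preservation (meets are sent to meets) this shows $\phi$ is isometric on $\mathscr A_1:=\{\text{branchpoints}\}\cup\supp\mu_1$; after replacing $\phi|_{\overline{\mathscr A_1}}$ by its continuous extension we have an isometry there. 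The complement $\bfT_1\setminus\overline{\mathscr A_1}$ is a disjoint union of open skeletal arcs (containing no branchpoints), and $\phi$ sends each such arc to an arc of the same kind in $\bfT_2$ -- using that $\phi^{-1}$ is also a mass--structural isomorphism, so no point of $\mathscr A_2$ lies in the image -- and of the same length, by the displayed identity at the endpoints. Correcting $\phi$ to be linear on each such arc yields an isometry $\psi:\bfT_1\to\bfT_2$ with $\psi(r_1)=r_2$; since $\psi$ still matches fringe subtrees of $\mathscr A_1$--points with equal $\mu$--mass, and these fringe subtrees generate the Borel $\sigma$--algebra, $\psi_*\mu_1=\mu_2$, giving the required isomorphism.

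I expect the main obstacle to be the bookkeeping on the non--special part of the skeleton. In the existence half this is the interaction between collapsing the null--$\rho$ segments and the subsequent completion, i.e.\ keeping the spanning property and $\mu'(\bfT')=1$ after completing. In the uniqueness half it is the free arcs between special points: their lengths, but not the positions of their interior points, are pinned down by the mass--structural data, which is exactly why the conclusion is ``unique up to isometry'' rather than ``$\phi$ is itself an isometry''. Everything else reduces mechanically to the definitions of fringe subtree, mass--structural isomorphism, and the spacing and spanning properties.
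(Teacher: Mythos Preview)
The paper does not prove this theorem at all: it is stated with the citation \cite[Theorem 1.5]{forman_exchangeable_2020} and then invoked as a black box in the proof of Proposition \ref{prop:let_it_be_IP}. There is therefore no ``paper's own proof'' to compare your proposal against.

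That said, your reconstruction is essentially the argument Forman gives. The existence half via the mass--pseudometric $\rho$, quotienting out null segments and completing, is his construction, and you have correctly flagged the only genuinely delicate point (that completion does not spoil spanning). The uniqueness half via ``spacing determines $d(r,\cdot)$ on special points, then straighten the remaining free arcs'' is also his route. Two places where your sketch is a little loose: first, you appeal to ``$\phi^{-1}$ is also a mass--structural isomorphism'', but Definition \ref{def:msi} only asks $\phi$ to be a measurable map satisfying conditions on $\mathscr S_1$, so invertibility on special points has to be \emph{deduced} from the mass--preservation identities (two IP--trees in the same class have the same multiset of fringe masses, and the structure--preservation forces a bijection between their special points). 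Second, when you say the free arcs in $\bfT_1\setminus\overline{\mathscr A_1}$ and their images have the same length, you are implicitly using that the arc endpoints lie in $\overline{\mathscr A_1}$ and that the spacing identity extends by continuity there; this is true but should be said. Neither is a real gap, just bookkeeping you would need to fill in to turn the plan into a proof.
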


\begin{theorem}\cite[Theorem 1.7]{forman_exchangeable_2020} \label{thm:noah_2}
Two IP--trees are mass--structurally equivalent if and only if the induced hierarchies in \eqref{eq:induced_hierachies} have the same law.
\end{theorem}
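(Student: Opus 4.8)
The plan is to show that the law of the induced hierarchy $(\mathcal H_n, n\geq 1)$ is a complete invariant for the mass--structural equivalence class, proving the two implications separately; throughout, for $x\in\bfT$ and an i.i.d.\ $\mu$--sample $(\xi_i)$ write $B_n(x)=\{i\in[n]:\xi_i\in F_\bfT(x)\}$, so $\mathcal H_n=\{B_n(x):x\in\bfT\}\cup\{\{i\}:i\in[n]\}$. For the ``only if'' direction, suppose $\phi\colon\bfT_1\to\bfT_2$ is a mass--structural isomorphism. The key point is that for fixed $n$ the probability $\PP(\mathcal H_n=H)$ of a given finite hierarchy $H$ on $[n]$ can be written as an integral over $\bfT_1^n$ against $\mu_1^{\otimes n}$ whose value depends only on mass--structural data: one splits the sample according to which $\xi_i$ land on atoms (finitely many, with prescribed masses $\mu_1(\{a\})$), on the diffuse skeleton, and on the diffuse leaf mass; since the diffuse parts produce a.s.\ distinct non--atomic points, the resulting probability is a polynomial expression in the fringe masses $\mu_1(F_{\bfT_1}(x))$ at branch points and atoms and in the atom masses, and by the spacing property $d_1(r_1,x)+\mu_1(F_{\bfT_1}(x))=1$ no further metric information enters. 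All these quantities are preserved by $\phi$ (which is mass-- and structure--preserving on special points, and every branch point, atom and isolated leaf is special), so $\PP(\mathcal H_n^{(1)}=H)=\PP(\mathcal H_n^{(2)}=H)$ for all $H$ and $n$, and the finite marginals determine the law. The one subtlety here is that $\phi$ is constrained only on special points, so one must note that the fringe masses at all structurally relevant points are already determined by special--point data; this holds because along a degree--$2$ arc the mass $\mu_1(F_{\bfT_1}(x))$ interpolates monotonically between its values at the two special endpoints.

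For the ``if'' direction I would couple two i.i.d.\ samples $(\xi_i^{(1)})$ from $\mu_1$ and $(\xi_i^{(2)})$ from $\mu_2$ on a common space so that $\mathcal H_n^{(1)}=\mathcal H_n^{(2)}$ for all $n$ almost surely, and then reconstruct a mass--structural isomorphism from the common hierarchy. The consistent family $(\mathcal H_n)$ encodes a combinatorial rooted tree on $\NN$, from which one reads off the ancestral order $\xi_i\preceq\xi_j$, which sample points coincide, and the location of branch points; moreover, by the law of large numbers for the exchangeable sequence, the fringe masses $\mu_k(F_{\bfT_k}(\xi^{(k)}_i))$ and the atom masses $\mu_k(\{\xi^{(k)}_i\})$ are recovered as almost sure limits of empirical frequencies $n^{-1}|B_n(\cdot)|$, hence are measurable functions of $(\mathcal H_n)$ and so identical for $k=1,2$. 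Therefore $\phi_0\colon\xi_i^{(1)}\mapsto\xi_i^{(2)}$ preserves ancestry, branch structure, fringe masses and atom masses on the a.s.\ countable sample set; using the spacing property to express $d_k$ in terms of fringe masses, $\phi_0$ is an isometry for the induced metrics, the sample sets are a.s.\ dense in $\supp(\mu_1)$ and $\supp(\mu_2)$ since $\bfT_k=\spn(\supp\mu_k)$, and $\phi_0$ extends to an isometry $\supp(\mu_1)\to\supp(\mu_2)$ and then, taking spans, to $\phi\colon\bfT_1\to\bfT_2$. One then checks $\phi$ is a genuine mass--structural isomorphism: every branch point of $\bfT_1$ is for large $n$ a branch point of $\spn(\xi_1^{(1)},\dots,\xi_n^{(1)})$, every atom is a.s.\ a sample point, and every isolated leaf lies in $\supp\mu_1$ hence is a limit of sample points, so $\phi$ inherits mass-- and structure--preservation on $\mathscr S_1$ from $\phi_0$ by continuity; since $\bfT_2$ is deterministic the resulting $\phi$ can be taken deterministic.

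I expect the main obstacle to be the bookkeeping in the second direction: verifying that the extension of $\phi_0$ from sample points to all of $\bfT_1$ genuinely respects the mass-- and structure--preservation requirements at \emph{every} special point, given that special points need not be sample points and that the three parts $\mu_{\mathrm{atoms}},\mu_s,\mu_\ell$ interact (for instance diffuse leaf mass accumulating near a leaf that is itself a limit of branch points). A clean way to organize this is to first prove the finite statement that the law of $\mathcal H_n$ determines and is determined by the distribution of the random decorated reduced tree $\spn(\xi_1,\dots,\xi_n)$ (with its branch points and fringe masses), and then pass to the limit $n\to\infty$, using the uniqueness of IP--tree representatives from Theorem \ref{thm:noah_1} to identify the limiting object canonically with $\bfT_k$ and thereby upgrade the a.s.\ reconstruction to a deterministic mass--structural isomorphism.
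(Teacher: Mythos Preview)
The paper does not prove this theorem at all: it is simply quoted as \cite[Theorem 1.7]{forman_exchangeable_2020} and used as a black box in the proof of Proposition~\ref{prop:let_it_be_IP}. There is therefore no in--paper proof to compare your proposal against.

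That said, your sketch is in the right spirit, and in fact the paper does carry out a closely related reconstruction argument---coupling two sample sequences so that the sampled labelled trees agree, reading off fringe masses by the law of large numbers, using the IP spacing relation to convert equality of fringe masses into an isometry, and extending from sample points to all of $\supp(\mu)$ by density---but it does so in the proof of uniqueness of $\psi$ inside Proposition~\ref{prop:let_it_be_IP}, not as a proof of Theorem~\ref{thm:noah_2}. Your ``if'' direction mirrors that argument; the main soft spot you correctly flag is the passage from sample points to all special points, which in Forman's original proof is handled with more care about the three components $\mu_{\mathrm{atoms}},\mu_s,\mu_\ell$ than your outline provides. Your ``only if'' direction is more heuristic: the claim that $\PP(\mathcal H_n=H)$ is a polynomial in fringe and atom masses is not literally true without further argument, since diffuse skeleton mass and diffuse leaf mass contribute through integrals, not finite sums; one would need to argue more carefully that these integrals are determined by mass--structural data.
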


Before we can apply this to our setting, we make sure that we can also pass the planar order $\psi$, the branch weight function $\lambda$ and the branchpoint weight function $B$ through a mass--structural isomorphism.

\begin{lemma}\label{lemma:msi_induces_maps}
A mass--structural isomorphism $\phi$ induces a new planar order $\phi(\psi)$, a new branch weight function $\phi(\lambda)$ and a new branchpoint weight functions $\phi(\beta_a)$.
\end{lemma}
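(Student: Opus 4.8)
The plan is to transport all three decorations along $\phi$ by pushing forward the sampling points. Recall that everything in Construction \ref{construction:sample_den_from_tree:part_1} and \ref{construction:sample_den_from_tree:part_2} depends on $(\bfT,d,r,\mu)$ only through the genealogical structure of i.i.d.\ $\mu$--samples $(\xi_i)_{i \geq 1}$, and that $\phi$ preserves $\mu$, atoms, fringe masses and the genealogical order on special points. I would therefore first record the basic compatibility facts: if $\xi$ is $\mu$--distributed on $\bfT_1$, then $\phi(\xi)$ is $\mu_2$--distributed (mass preservation extends from special points to a measure isomorphism), $\phi$ maps atoms of $\mu_1$ bijectively onto atoms of $\mu_2$ with the same weights, $\phi$ maps $\supp(\mu_{s,1})$ into the diffuse-skeleton part of $\mu_2$, and $\phi$ induces a bijection between the subtrees of an atom $a$ and the subtrees of $\phi(a)$ (using the structure-preserving property together with the fact that a subtree is determined by which samples fall into it).

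Given these facts, I would define the pushed-forward decorations as follows. For the planar order: for $x_1,\dots,x_n \in \bfT_2$ with $x_i \npreceq x_j$ pairwise, pick points $x_i' \in \phi^{-1}(x_i)$ (or, more robustly, use the construction of $\psi$ from Corollary \ref{cor:there_is_psi} applied to the pushed-forward function $F$); set $\phi(\psi)_n(x_1,\dots,x_n) = \psi_n(x_1',\dots,x_n')$, checking that the result does not depend on the choice of preimages because $\psi_n$ depends only on the genealogical structure, which $\phi$ preserves. One then verifies the three axioms of Definition \ref{def:planar}: the tree-structure axiom holds since $\phi$ preserves spans up to the combinatorial tree structure, the labelling axiom is immediate, and consistency is inherited from the consistency of $\psi$. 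For the branch weight function: for $y \in \supp(\mu_{s,2})$ with $\phi(x) = y$, set $\phi(\lambda)(y) = \lambda(x)$; on the rest of $\bfT_2$ set it to $0$. The point is that the formula \eqref{eq:q_when_deg_2} defining $\lambda(x)$ only involves $F$, a $\mu$--sample conditioned to lie in $F_{\bfT_1}(x)$, and independent uniforms, and all of these are carried faithfully to $y$ by $\phi$ (conditioning on $F_{\bfT_1}(x)$ corresponds to conditioning on $F_{\bfT_2}(y)$ since $\phi$ is a measure isomorphism). For the branchpoint weight functions: for an atom $a$ of $\mu_1$ and a subtree $\bfS$ of $a$, set $\phi(\beta_{\phi(a)})(\phi(\bfS)) = \beta_a(\bfS)$, using the subtree bijection from the first step; again \eqref{eq:q_when_deg_bigger} only sees $F$, a sample conditioned to $\bfS$, and uniforms.

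Finally I would check well-definedness where $\phi$ is not injective: if $\phi(x) = \phi(x')$ with $x \ne x'$, then $x,x'$ are mass-structurally identified, so in particular $\mu_1(F_{\bfT_1}(x)) = \mu_1(F_{\bfT_1}(x'))$ and (for branch points on the skeleton with no atom between them) the conditional laws entering \eqref{eq:q_when_deg_2} agree, so $\lambda(x) = \lambda(x')$; the analogous statement for $\psi$ and $\beta$ follows from the genealogical and mass data coinciding. Since $\lambda$ is only defined as an element of $L^1(\mu_s)$ and $\mu_s$ is preserved, any ambiguity on a $\mu_{s,1}$--null set is harmless.

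\textbf{The main obstacle} I anticipate is not any single step but the bookkeeping around the fact that $\phi$ need not be a homeomorphism or even injective, so ``$\phi(x)$'' has to be read carefully: the clean way is to phrase everything in terms of the pushed-forward coding function $F_2 := F_1 \circ (\phi^{-1} \times \mathrm{id})^2 \times \mathrm{id}$ (well-defined $\mu_2$--a.e.\ because $\phi$ is a measure isomorphism and $F_1$ respects the mass-structural identifications), verify that $F_2$ still satisfies the consistency relations (F1)--(F4) — this uses structure preservation for (F2)--(F4) — and then simply re-run the constructions of $\psi$, $\lambda$, $\{\beta_a\}_a$ from Proposition \ref{prop:there_is_dprt_for_dendritic} on $(\bfT_2,d_2,r_2,\mu_2,F_2)$; the outputs are by definition $\phi(\psi)$, $\phi(\lambda)$, $\phi(\{\beta_a\}_a)$, and all required properties are then automatic from that proposition. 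I expect the write-up to lean on this reduction rather than on explicit transport formulas.
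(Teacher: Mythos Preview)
Your direct transport approach is exactly what the paper does: it sets $\phi(\psi)_n(x_1',\ldots,x_n')=\psi_n(\phi^{-1}(x_1'),\ldots,\phi^{-1}(x_n'))$, $\phi(\lambda)(x')=\lambda(\phi^{-1}(x'))$, and $\beta_{a'}=\beta_a$ for $a=\phi^{-1}(a')$, appealing to the structure-preserving property for the planar-order axioms and to mass preservation for the atom and subtree correspondence. The paper's proof is three short paragraphs and does not pause over whether $\phi^{-1}$ is globally well-defined; it simply uses $\phi^{-1}$ as if $\phi$ were invertible, in keeping with the word ``isomorphism''. Your extra discussion of well-definedness when $\phi$ fails to be injective, and of the $L^1(\mu_s)$-identification, is not present in the paper.

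Your alternative route --- push $F$ forward along $\phi$ and re-run the construction of Proposition~\ref{prop:there_is_dprt_for_dendritic} on $(\bfT_2,d_2,r_2,\mu_2,F_2)$ --- is a genuinely different and more robust argument. It sidesteps all pointwise bookkeeping by working at the level of the sampling representation, and it yields for free that the induced decorations produce the same dendritic-system distribution (which is precisely what the subsequent proof of Proposition~\ref{prop:let_it_be_IP} needs). The cost is that it ties the lemma to the particular construction of $(\psi,\lambda,\{\beta_a\}_a)$ via $F$, whereas the paper's pullback definition makes sense for an arbitrary given triple of decorations. Either route is fine here; the paper opts for the shorter, more formal one.
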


\begin{proof}
Assume we have $\phi:(\bfT, d, r, \mu) \rightarrow (\bfT', d', r', \mu')$ and that $\psi$, $\lambda$ and $B$ are a planar order, branch weight function and branchpoint weight function for $(\bfT, d, r, \mu)$. 

For a sequence $x_1',\ldots,x_n'\in \bfT'$ with $x_i \npreceq x_j$ for $i\neq j$, we define $$\phi(\psi)_n(x_1',\ldots,x_n')=\psi_n(\phi^{-1}(x_1'),\ldots,\phi^{-1}(x_n')).$$ Because $\phi$ is structure preserving in the sense of Definition \ref{def:msi} we obtain a sequence $\phi^{-1}(x_1'),\ldots,\phi^{-1}(x_n')$ with the property that $\phi^{-1}(x_i') \npreceq \phi^{-1}(x_j')$ for $i\neq j$. The same property and the fact that $\psi$ is a planar order also implies that we can embed $\phi(\psi)_m(x_1',\ldots,x_m')$ into $\phi(\psi)_n(x_1',\ldots,x_n')$ for $m<n$ respecting the planar structure.

For any $x' \in \bfT'$, define $\phi(\lambda)(x')=\lambda(\phi^{-1}(x'))$ which is again a branch weight function. Similarly, if $a' \in \bfT'$ is an atom of $\mu'$, then $a= \phi^{-1}(a')$ is an atom of $\mu$ because $\phi$ is mass--preserving. We can then define $\beta_{a'}=\beta_a$. Because $\phi$ is structure--preserving, the subtrees of $a'$ are in one-to-one correspondence with the subtrees of $a$.
\end{proof}

With Lemma \ref{lemma:msi_induces_maps} in hand, we can prove Proposition \ref{prop:let_it_be_IP}.

\begin{proof}[Proof of Proposition \ref{prop:let_it_be_IP}]
Assume we have $(\bfT_i,d_i,r_i,\mu_i,\psi^{(i)},\lambda_i,\{\beta_a^i\}_a), i\in \{1,2\}$ such that the induced Markov chains $(T_n^{(i)},n\geq 1), i \in \{1,2\}$ have the same distribution. We show the uniqueness of the associated tuple $(\bfT, d, r, \mu, \psi, \lambda, \{\beta_a\}_a)$ in multiple steps. 

\emph{Uniqueness of $(\bfT, d, r, \mu)$:}
Observe that applying a mass--structural isomorphism using the induced maps of Lemma \ref{lemma:msi_induces_maps} does not change the distribution of the sampled dendritic system: More precisely, assume we are given a mass--structural isomorphism $\phi:(\bfT_1, d_1, r_1, \mu_1)\rightarrow (\bfT_2, d_2, r_2, \mu_2)$ and a planar order $\psi$, branch weight function $\lambda$ and branchpoint weight function $B$ for $\bfT_1$. Sample $\{\xi_i\}_{i\in \NN}$ independently from $\mu_1$ in $\bfT_1$, then $\{\phi(\xi_i)\}_{i\in \NN}$ is an $i.i.d.$--$\mu_2$ sequence. Using these random variables and the same sequence of independent uniform random variables $\{U_i\}_{i \in \NN}$ we can construct two dendritic systems $\mathcal{D}_1=(\NN, \sim_1, \prec_1, p_1)$ and $\mathcal{D}_2=(\NN, \sim_2, \prec_2, p_2)$ via Construction \ref{construction:sampling_the_bridge}. Because $\phi$ is structure preserving, $\sim_1$ and $\sim_2$, respectively $\prec_1$ and $\prec_2$, are almost surely the same. Further, because we defined $\phi(\psi)$, $\phi(\lambda)$ and $\phi(B)$ by pullback, $p_1$ and $p_2$ are almost surely the same. In particular, the distribution of $\mathcal{D}_1$ and $\mathcal{D}_2$ is identical.  

On the other hand, observe that if in Construction \ref{construction:sampling_the_bridge} we do not add planarity to $T_n$ and keep the leaf labels, we retrieve the hierarchy given by \eqref{eq:induced_hierachies}. Now, by Theorem \ref{thm:noah_2} and because the induced Markov chains $(T_n^{(i)},n\geq 1), i \in \{1,2\}$ have the same distribution, the trees  $(\bfT_1, d_1, r_1, \mu_1)$ and $ (\bfT_2, d_2, r_2, \mu_2)$ are mass--structurally isomorphic. 
Having also shown that the distribution of a dendritic system is invariant under mass--structural isomorphism of the decorated planar real tree, this and Theorem \ref{thm:noah_1} then yield the desired uniqueness of $(\bfT, d, r, \mu)$.

\emph{Uniqueness of $\psi$:} 
Assume now that $(\bfT, d, r, \mu)$ is fixed and that we are given two planar orders $\psi^{(1)}$ and $\psi^{(2)}$ with the distribution of the Markov chain being the same. In particular, we assume that the distributions of $(\psi_n^{(1)}(\xi_1, \ldots, \xi_n), n\geq 2)$ and $(\psi_n^{(2)}(\zeta_1, \ldots, \zeta_n), n\geq 2)$ are the same where $(\xi_i,i\geq 1)$ and $(\zeta_i,i\geq 1)$ are $i.i.d.$ $\mu$--random variables. We will show that there is an isometry $\varphi: \bfT \to \bfT$ such that $\varphi(\psi^{(1)}) = \psi^{(2)}$.

By \cite[Theorem 3.4 (i)]{kallenberg_foundations_2021} there exists a kernel $K_1$ such that for appropriate events $A,B$ we have
\begin{equation*}
    \PP\left( (\xi_i, i\geq 1) \in A,(\psi_n^{(1)}(\xi_1, \ldots, \xi_n), n\geq 2) \in B \right) = \int_B K_1(S,A)\PP\left( (\psi_n^{(1)}(\xi_1, \ldots, \xi_n), n\geq 2) \in dS \right).
\end{equation*}
The same is true for $\psi^{(2)}$ with another kernel $K_2$. This means that
we can work on a probability space such that $\psi_n^{(1)}(\xi_1, \ldots, \xi_n) = \psi_n^{(2)}(\zeta_1, \ldots, \zeta_n)$ for all $n\geq 2$ while keeping the joint distribution of $(\xi_i, i \geq 1)$ and $(\psi^{(1)}_n(\xi_1, \ldots, \xi_n),n\geq 2)$ the same. Abbreviate $S_n = \psi_n^{(1)}(\xi_1, \ldots, \xi_n)$. 
We use this to define a map $\varphi:\bfT \to \bfT$. First, for all $i\geq 1$ we define $\varphi(\xi_i) = \zeta_i$. For any $i\geq 1$ and $n\geq i$, $\xi_i$ and $\varphi(\xi_i)$ correspond to the same vertex $x^n_i$ in $S_n$. Next, let  $\xi_i \wedge \xi_j$ be the most recent common ancestor of $\xi_i$ and $\xi_j$ and similarly let $x_i^n \wedge x_j^n$ be the most recent common ancestor of $x_i^n$ and $x_j^n$. Define $\varphi(\xi_i \wedge \xi_j) = \varphi(\xi_i) \wedge \varphi(\xi_j)$, both $\xi_i \wedge \xi_j$ and $\varphi(\xi_i \wedge \xi_j)$ correspond to $x_i^n \wedge x_j^n$ in $S_n$. Observe that for $i,j,k,\ell \in \NN$ if $\xi_i\wedge \xi_j = \xi_k \wedge \xi_\ell$ then $\varphi(\xi_i) \wedge \varphi(\xi_j)=\varphi(\xi_k) \wedge \varphi(\xi_\ell)$, hence $\varphi(\xi_i \wedge \xi_j)$ is well defined. This defines $\varphi$ on $\{\xi_i, i\geq 1\}$ as well as all branchpoints of $\bfT$. Let $\mu_n = \sum_{i=1}^n \delta_{x_i^n}$ on $S_n$ and we observe that $\lim_{n \to \infty} \frac{1}{n} \mu_n ( F_{S_n}(x_i^n)) = \mu(F_\bfT(\xi_i))$, almost--surely by the strong law of large numbers applied to $\{\xi_j,j>i\}$. 
Similarly, $\lim_{n \to \infty} \frac{1}{n} \mu_n ( F_{S_n}(x_i^n\wedge x_j^n)) = \mu(F_\bfT(\xi_i\wedge \xi_j))$ for all $i$ and $j$. This allows us to show that $\varphi$ restricted to $\{\xi_i, i\geq 1\}$ is an isometry, we use the IP--spacing \eqref{eq:IP_spacing},
\begin{align*}
    d(\varphi(\xi_{i}), \varphi(\xi_{j})) &=  \left\vert \mu(F_{\bfT_2}(\varphi(\xi_{i} \wedge \xi_{j}))) - \mu(F_{\bfT}(\varphi(\xi_{i})) ) \right\vert +  \left\vert \mu(F_{\bfT}(\varphi(\xi_{i} \wedge \xi_{j}))) - \mu(F_{\bfT}(\varphi(\xi_{j}) )) \right\vert \\
    & = \lim_{n \to \infty} 
    \frac{1}{n} \left\vert \mu_n(F_{S_n}(x_i^{n}\wedge x_j^n)) - \mu_n(F_{S_n}(x_i^n) ) \right\vert + \lim_{n \to \infty} 
    \frac{1}{n} \left\vert \mu_n(F_{S_n}(x_i^n\wedge x_j^n)) - \mu_n(F_{S_n}(x_j^n) ) \right\vert\\
    &= \left\vert \mu(F_{\bfT}(\xi_{i} \wedge \xi_{j})) - \mu(F_{\bfT}(\xi_{i}) ) \right\vert +  \left\vert \mu(F_{\bfT_1}(\xi_{i} \wedge \xi_{j})) - \mu(F_{\bfT}(\xi_{j}) ) \right\vert \\
    &= d(\xi_{i}, \xi_{j}).
\end{align*}

The same is true for branchpoints.
In particular, this means that $\varphi$ maps Cauchy--sequences to Cauchy--sequences. Hence, assume that for $y\in \supp  \mu$ there is a sequence $y_k,k\geq1$ with $\lim_{k \to \infty} y_k = y$ and for every $k$ we have either $y_k \in \{ \xi_i, i\geq 1\}$ or $y_k$ is a branchpoint in $\bfT$. We then define $\varphi(y) = \lim_{k \to \infty} \varphi(y_k)$. Due to the aforementioned properties of $\varphi$ and because $\bfT$ is a complete metric space, this limit exists and is well--defined, i.e.\ does not depend on the choice of sequence $(y_k)_k$.

The map $\varphi$ can be extended to an isometry. Indeed, because $\bfT$ is an IP--tree, it suffices to show that $\varphi$ restricted to special points ($\supp  \mu$ and branchpoints) is a mass--structural isomorphism. Theorem \ref{thm:noah_1} then tells us that there is an isometry, and by checking the proof in \cite{forman_exchangeable_2020} we can see that this isometry is an extension of the underlying mass--structural isomorphism between special points. Let us now check that $\varphi$ is a mass--structural isomorphism. Clearly, $\varphi$ is structure preserving because $\psi^{(1)}_n(x_1, \ldots, x_n)$ corresponds to $\spn(x_1, \ldots, x_n)$ as combinatorial trees. Further, $\varphi$ is mass preserving: consider $z \in \bfT$, both $z$ and $\varphi(z)$ correspond to the same point in $S_n$, call it $z_n$. We then have
\begin{equation*}
    \mu(F_{\bfT}(z)) = \lim_{n \to \infty} \frac{1}{n} \mu_n(F_{S_n}(z_n)) = \mu(F_{\bfT}(\varphi(z)), \quad \quad \text{almost--surely},
\end{equation*}
where we applied the strong law of large numbers twice. The same approach works for atoms and segments. Hence the $\varphi$ is a mass--structural isomorphism and thus can be extended to an isometry on the whole tree $\bfT$.

Next, we show that $\varphi(\psi^{(1)}) = \psi^{(2)}$. For this, let $n\geq 2$ and let $x_1, \ldots, x_n  \in \{\zeta_i, i\geq 1\} \cup \{\text{branchpoints}\}$ and therefore also $\varphi^{-1}(x_1), \ldots, \varphi^{-1}(x_n) \in \{\xi_i,i\geq 1\}\cup \{\text{branchpoints}\}$. 
Observe that for $N$ large enough $\varphi(\psi^{(1)})_n(x_1, \ldots, x_n)$ and $\psi^{(2)}_n(x_1, \ldots, x_n)$ are subtrees of $S_N$. Moreover due to the coupling they are the same, i.e.\ $\varphi(\psi^{(1)})_n(x_1, \ldots, x_n) = \psi^{(2)}_n(x_1, \ldots, x_n)$. This can be extended to $x_1,\ldots, x_n \in \supp  \mu$ by density of $\{\zeta_i, i\geq 1\}$. Because $\spn(\supp(\mu)) = \bfT$ this is also true for all $x_1,\ldots,x_n \in \bfT$. Indeed, it suffices to specify $\varphi(\psi^{(1)})_n(x_1, \ldots, x_n)$ for $x_1, \ldots, x_n$ with $x_i \npreceq x_j$ for $i\neq j$. If for some $i\in [n]$, $x_i \notin \supp(\mu)$, then we can choose any leaf $x_i'$ with $x_i < x_i'$ to obtain $\varphi(\psi^{(1)})_n(x_1, \ldots ,x_i, \ldots, x_n) = \varphi(\psi^{(1)})_n(x_1, \ldots ,x_i', \ldots, x_n)$. Because all leaves are in the support of $\mu$, this determines $\varphi(\psi^{(1)})_n(x_1, \ldots, x_n)$ -- the same argument works for $\psi_n^{(2)}$. Hence we have shown that $\varphi(\psi^{(1)})_n = \psi^{(2)}_n$, doing this for all $n$ shows that  $\varphi(\psi^{(1)}) = \psi^{(2)}$. This completes showing the uniqueness of $\psi$.

\emph{Uniqueness of $\lambda$:} Assume now that $(\bfT, d, r, \mu, \psi)$ is fixed and we are given two different branch weight functions $\lambda^{(1)}, \lambda^{(2)} \in L^1(\mu_s)$. Let $T_n^{(1)}$ and $T_n^{(2)}$ be the trees obtained from using $\lambda^{(1)}$ and $\lambda^{(2)}$ respectively while sampling from $(\bfT, d, r, \mu)$. There exists a segment $\llbracket x,y \rrbracket \subset \bfT$ such that $\int_{\llbracket x,y \rrbracket}\lambda^{(1)} d\mu_s \neq \int_{\llbracket x,y \rrbracket}\lambda^{(2)} d\mu_s$. The segment $\llbracket x,y \rrbracket \subset \bfT$ corresponds to segments $\llbracket x_n^{(1)}, y_n^{(1)} \rrbracket$ and $\llbracket x_n^{(2)}, y_n^{(2)}\rrbracket$ in $T_n^{(1)}$ and $T_n^{(2)}$ respectively. Let $L_n^{(1)}$ be the proportion of leaves directly attached to the left of $\llbracket x_n^{(1)}, y_n^{(1)}\rrbracket$ -- here we only count vertices of degree $2$ in $\bfT$ to avoid counting atoms. Define $L_n^{(2)}$ similarly. By the strong law of large numbers, we almost surely have as $n\to \infty$
\begin{equation*}
    L_n^{(1)} \longrightarrow \int_{\llbracket x,y \rrbracket}\lambda^{(1)} d\mu_s \quad \quad  \text{and} \quad \quad L_n^{(2)} \longrightarrow \int_{\llbracket x,y \rrbracket}\lambda^{(2)} d\mu_s.
\end{equation*}
By assumption, these two integrals are different and thus the distributions of $\big(T_n^{(1)} , n\geq 1\big)$ and $\big(T_n^{(2)} , n\geq 1\big)$ are different. This shows the uniqueness of $\lambda$.

\emph{Uniqueness of $B$:} Assume now that $(\bfT, d, r, \mu, \psi)$ is fixed and we are given two different branchpoint weight functions $\{\beta_a^1\}_a, \{\beta_a^2\}_a$. Then there exists an atom $a$ and a subtree $\bfS$ of $a$ such that $\beta_a^1(\bfS) \neq \beta_a^2(\bfS)$.
 Let $T_n^{(1)}$ and $T_n^{(2)}$ be the trees obtained from using $\{\beta^{(1)}_a\}_a$ and $\{\beta_a^{(2)}\}$ respectively. For $n$ sufficiently large, the atom $a$ corresponds to $a_n^{(1)} \in T_n^{(1)}$ and to $a_n^{(2)} \in T_n^{(2)}$ respectively, similarly $\bfS$ corresponds to subtrees $S_n^{(1)},S_n^{(2)}$ of $a_n^{(1)}$ and $a_n^{(2)}$ respectively. Let $K_n^{(1)}$ be the proportion of leaves directly attached to $a_n^{(1)}$ on the left of $S_n^{(1)}$, as compared to the right of $S_n^{(1)}$. Define $K_n^{(2)}$ similarly. By the strong law of large numbers, we almost surely have as $n\to \infty$
\begin{equation*}
    K_n^{(1)} \longrightarrow \beta_a^{1}(\bfS) \quad \quad  \text{and} \quad \quad K_n^{(2)} \longrightarrow \beta_a^{2}(\bfS).
\end{equation*}
By assumption, these two values are different and thus the distributions of $\big(T_n^{(1)} , n\geq 1\big)$ and $\big(T_n^{(2)} , n\geq 1\big)$ are different. This shows the uniqueness of $\{\beta_a\}_a$.
\end{proof}

\section{Scaling limits}\label{sec:scaling_limits}

In the following, let $(T_n, n\geq 1)$ be an extremal tree--valued Markov chain with uniform backward dynamics obtained from $(\bfT, d, r, \mu, \psi, \lambda, \{\beta_a\}_a)$ through Construction \ref{construction:sampling_the_bridge} where $(\bfT, d, r, \mu)$ is an IP-tree. The goal of this section is to show that $T_n$ -- trimmed and appropriately rescaled -- converges to $\bfT$ almost surely in the Gromov--Prokhorov topology. Recall the rescaling and trimming from Definition \ref{def:trimming_rescaling}, and the Gromov-Prokhorov metric from Definition \ref{def:GP_in_introduction}.

\begin{remark}
    One might ask why it is necessary to trim $T_n$ before rescaling it. Consider the decorated planar real tree that is made up from a single atom $a$ of weight $1$, here $d,\psi,\lambda,\{\beta_a\}_a$ are all trivial. For any $n\geq 2$ the tree $T_n$ is a star tree with $n$ leaves directly connected to the root. In the IP--rescaling \eqref{eq:ip_rescaling}, all these edges have length $1-1/n$. From Definition \ref{def:GP_in_introduction} we can see that $\dGP(\bfT,T_n) = 1$ for all $n\geq 2$, hence we have no convergence. This problem is solved by trimming.
\end{remark}

An important idea in the proof will be that $T_n^{\mathrm{trim}}$ can be viewed as a subtree of $\bfT$. Recall that $T_n$ is obtained by sampling $(\xi_1, \ldots, \xi_n)$ from $\bfT$ and that the tree structure of $T_n$ is obtained from $\spn(r,\xi_1, \ldots, \xi_n)$ plus additional leaves added through $\lambda$ and $\{\beta_a\}_a$. The trimming removes all additional leaves but also leaves which were not added through $\lambda$ and $\{\beta_a\}_a$. Let us define a function $\eta^n: \bfT^n \to \bfT^n$ which corresponds to trimming on the level of real trees. First, consider the set of all most recent common ancestors of $\xi_1, \ldots, \xi_n$:
\begin{equation*}
    M_n = \left\{ \xi_i \wedge \xi_j, 1 \leq i \neq j \leq n \right\}.
\end{equation*}
where $\xi_i \wedge \xi_j$ is the most recent common ancestor of $\xi_i$ and $\xi_j$. We then set
\begin{equation}\label{eq:def_eta}
    \eta_i^n(\xi_1, \ldots, \xi_n) = \text{argmin}_{y \in M_n, y \in \llbracket r,\xi_i \rrbracket} d_\bfT(y,\xi_i); \quad  \forall  i \leq n,
\end{equation}
which is the closest element of $M_n$ to $\xi_i$ that is an ancestor of $\xi_i$.
We write $\eta^n(\xi_1, \ldots, \xi_n)$ for the $n$--tuple with coordinates $\eta_i^n(\xi_1, \ldots, \xi_n)$
and we will abuse notation by writing $\eta^n(\xi_i) = \eta_i^n(\xi_1, \ldots, \xi_n)$. Equip $\spn(r, \eta^n(\xi_1, \dots, \xi_n) )$ with a probability measure $\mu_n^\eta$ by placing weight $1/n$ on $\eta^n(\xi_i)$ for every $i\leq n$ and with a metric $d^\eta$ according to an IP--rescaling as in \eqref{eq:ip_rescaling}. By construction, we then have the following lemma, see also Figure \ref{fig:all_the_trimmings} for an illustration.

\begin{lemma}\label{lemma:isometry_trimming}
    \hspace{-3pt}
    As rooted, weighted metric spaces, $(T_n^{\mathrm{trim}}, d_n^{\mathrm{trim}}, r_n, \mu_n^{\mathrm{trim}})$ and $(\spn(r, \eta^n(\xi_1, \dots, \xi_n)), d_n^\eta, r, \mu_n^\eta)$ are isomorphic.
\end{lemma}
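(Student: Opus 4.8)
The plan is to build an explicit root-preserving isomorphism $\Phi$ of combinatorial trees between $T_n^{\mathrm{trim}}$ and $\spn(r,\eta^n(\xi_1,\dots,\xi_n))$, to check that $\Phi$ carries $\mu_n^{\mathrm{trim}}$ to $\mu_n^\eta$, and then to observe that $\Phi$ is automatically an isometry: both $d_n^{\mathrm{trim}}$ and $d_n^\eta$ are \emph{defined} by the same rule \eqref{eq:ip_rescaling} (an edge gets length equal to the absolute difference of the $\mu$-masses of the fringe subtrees of its endpoints, extended additively), so once $\Phi$ is a root-preserving tree isomorphism matching the measures it matches all edge lengths. Only the combinatorial content of Construction \ref{construction:sampling_the_bridge} enters, not the planar decorations $\lambda,\{\beta_a\}_a$. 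Throughout, ``isomorphic as rooted weighted metric spaces'' is understood after identifying points at distance $0$ (equivalently, collapsing zero-length edges), and the case $n=1$ is degenerate, handled by the convention $\eta^1(\xi_1)=r$.

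\textbf{Identifying the vertices.} First I would unwind Construction \ref{construction:sampling_the_bridge}: $S_n=\psi_n(\xi_1,\dots,\xi_n)$ has the combinatorial structure of $\spn(\xi_1,\dots,\xi_n)$ with a labelled vertex marking each $\xi_i$, and $T_n$ is obtained from $S_n$ purely by attaching pendant leaves to the labelled vertices. Hence the leaves of $T_n$ are exactly the newly attached pendant leaves together with those vertices of $S_n$ that are leaves of $\spn(\xi_1,\dots,\xi_n)$ carrying a single label; so the vertices of $T_n^{\mathrm{trim}}$ are precisely the vertices of $S_n$ other than these single-labelled leaves, namely $r$, the branch points of $\spn(\xi_1,\dots,\xi_n)$, and those $\xi_i$ admitting a strict sample descendant counting multiplicities --- which is exactly the condition $\xi_i\in M_n$, equivalently $\eta^n(\xi_i)=\xi_i$. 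On the other side, since $\eta^n(\xi_i)=\max_\preceq\{\xi_i\wedge\xi_j:j\neq i\}\in M_n$ while $\llbracket r,a\rrbracket\subseteq\llbracket r,\eta^n(\xi_i)\rrbracket$ for every $a=\xi_i\wedge\xi_j\in M_n$, one gets $\spn(r,\eta^n(\xi_1,\dots,\xi_n))=\spn(r,M_n)$, the reduced span, whose distinguished vertices are $r$, its branch points, and the $\eta^n(\xi_i)$ --- all in $M_n\cup\{r\}$, matching the vertex set of $T_n^{\mathrm{trim}}$. Define $\Phi$ by sending the root-vertex to $r$, a branch-point vertex of $S_n$ to the corresponding branch point of $\bfT$, and a surviving labelled vertex to $\xi_i=\eta^n(\xi_i)$.

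\textbf{$\Phi$ is a tree isomorphism transporting the measure.} The key combinatorial fact is that every point of $M_n$ is already a combinatorial vertex of $S_n$ (a branch point of the span, a multiply-labelled leaf, or a labelled degree-two point). With this, an edge of $S_n$ whose endpoints both survive trimming corresponds to a segment of $\spn(r,M_n)$ containing no distinguished vertex in its interior, hence an edge of $\spn(r,M_n)$; conversely, any vertex of $S_n$ strictly inside such a segment would have to be a single-labelled leaf, which cannot lie on a non-terminal segment. So the edge correspondence is a bijection and $\Phi$ is a root-preserving isomorphism of combinatorial trees. For the measure: for each $i$ the leaf of $T_n$ created on behalf of label $i$ --- a pendant leaf if $\xi_i$ sits at a non-leaf or a multiply-labelled vertex of $S_n$, and the single-labelled leaf $\xi_i$ itself otherwise --- is attached in $S_n$ to the vertex marking $\eta^n(\xi_i)$; this is where the description \eqref{eq:def_eta} of $\eta^n(\xi_i)$ as the closest ancestral element of $M_n$ to $\xi_i$ is used, so that no distinguished vertex intervenes between the leaf's parent and the marked point. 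Hence trimming deposits mass $1/n$ onto $\Phi(\eta^n(\xi_i))$, which is exactly how $\mu_n^\eta$ is built, so $\Phi_*\mu_n^{\mathrm{trim}}=\mu_n^\eta$.

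\textbf{Conclusion and main obstacle.} Being a root-preserving tree isomorphism carrying $\mu_n^{\mathrm{trim}}$ to $\mu_n^\eta$, the map $\Phi$ sends each fringe subtree to the fringe subtree of its image with the same mass, so the edge lengths prescribed by \eqref{eq:ip_rescaling} agree under $\Phi$ and, extending additively, $\Phi$ is an isometry, hence an isomorphism of rooted weighted metric spaces. I expect the genuine work to be the bookkeeping in the previous paragraph: matching the three kinds of trimmed-off leaves to the points $\eta^n(\xi_i)$ and verifying the ``no distinguished vertex in between'' claims, i.e.\ that $M_n$ is exactly the set of non-pendant marked locations. One minor wrinkle: when $\spn(\xi_1,\dots,\xi_n)$ branches at $r$, the rule in Definition \ref{def:planar} makes $\psi_n$ insert an extra planted root, so $T_n^{\mathrm{trim}}$ carries literally one more vertex than $\spn(r,\eta^n)$; but the extra edge has $d_n^{\mathrm{trim}}$-length $0$, so the two spaces still coincide once points at distance $0$ are identified.
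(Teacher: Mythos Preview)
Your argument is correct and carefully unwinds the bookkeeping. The paper itself does not give a proof of this lemma: it simply asserts the statement ``by construction'' and refers to Figure~\ref{fig:all_the_trimmings} for illustration. What you have written is precisely the verification that the figure is meant to convey, so your approach is the same as the paper's in spirit --- you have just made explicit the identifications (vertex set of $T_n^{\mathrm{trim}}$ equals $M_n\cup\{r_n\}$, the trimmed leaf with label $i$ deposits its mass at the vertex corresponding to $\eta^n(\xi_i)$, and the IP--rescaling rule then forces the isometry) that the paper leaves implicit. Your handling of the planted-root wrinkle and the $n=1$ degeneracy is also appropriate.
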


We will implicitly use this representation of $(T_n^{\mathrm{trim}}, d_n^{\mathrm{trim}}, r_n, \mu_n^{\mathrm{trim}})$. We can now state the main theorem of this section. 

\begin{figure}[th]
    \centering
    \label{fig:all_the_trimmings}
    \includegraphics[scale = 1.0]{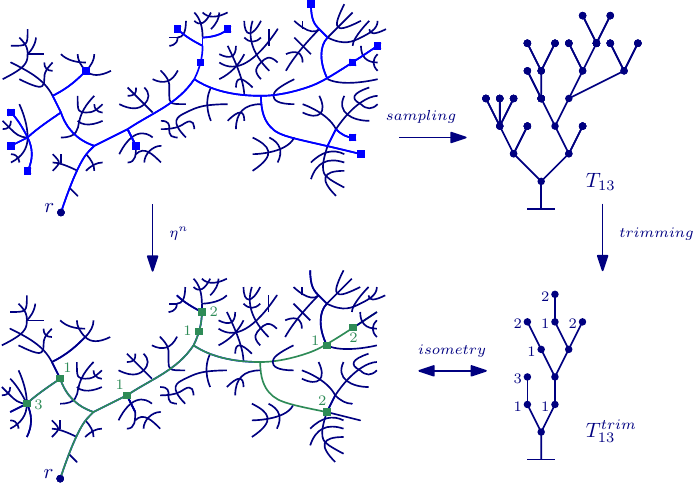}
    \caption{The different operations involved in trimming, this diagram commutes. The double--headed arrow is the isometry of Lemma \ref{lemma:isometry_trimming}. A number $k$ next to a vertex signifies an atom of weight $k/13$ for $\mu_{13}^{\eta}$ and $\mu_{13}^{\mathrm{trim}}$ respectively. Compare this to Figure \ref{fig:sampling_construction} where we obtain $T_{13}$ from $\bfT$ by sampling.}
\end{figure}

\begin{theorem}\label{thm:scaling_limit}
Let $(T_n, n\geq 1)$ be an extremal tree--valued Markov chain obtained from the decorated planar real tree $(\bfT, d, r, \mu, \psi, \lambda, \{\beta_a\}_a)$ through Construction \ref{construction:sampling_the_bridge} where $(\bfT, d, r, \mu)$ is an IP-tree. We then have
$$(T_n^{\mathrm{trim}}, d_n^{\mathrm{trim}}, r_n, \mu_n^{\mathrm{trim}})\xrightarrow{n\to \infty} (\bfT, d,r,\mu),$$
almost surely in the Gromov--Prokhorov topology.
\end{theorem}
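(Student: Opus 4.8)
The plan is to work with the concrete model of $T_n^{\mathrm{trim}}$ supplied by Lemma~\ref{lemma:isometry_trimming}: set $\bfT_n := \spn(r,\eta^n(\xi_1),\dots,\eta^n(\xi_n))$, a finite subtree of $\bfT$ through $r$, equipped with the rescaled metric $d_n^\eta$ and the measure $\mu_n^\eta=\tfrac1n\sum_{i\le n}\delta_{\eta^n(\xi_i)}$, where $(\xi_i)$ are the i.i.d.\ $\mu$-samples of Construction~\ref{construction:sampling_the_bridge} and $\eta^n$ is as in \eqref{eq:def_eta}; by that lemma it suffices to show $\dGP(\bfT_n,\bfT)\to 0$ almost surely. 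The first step is the combinatorial identity
$$\mu_n^\eta\big(F_{\bfT_n}(v)\big)=\bar\mu_n\big(F_\bfT(v)\big)\qquad\text{for every vertex }v\text{ of }\bfT_n,\qquad \bar\mu_n:=\tfrac1n\textstyle\sum_{i\le n}\delta_{\xi_i},$$
which holds because every vertex of $\bfT_n$ lies in the set $M_n$ of pairwise most recent common ancestors, so the defining minimality of $\eta^n(\xi_k)$ gives $v\preceq\xi_k\iff v\preceq\eta^n(\xi_k)$ for all $k\le n$. Since $d_n^\eta(r,v)$ telescopes along the ancestral path to $1-\mu_n^\eta(F_{\bfT_n}(v))$, and every vertex of $\bfT_n$ is a branchpoint of $\bfT$ or a point of $\supp(\mu)$ (as one reads off from the vertex set of $\bfT_n$), the IP--spacing relation~\eqref{eq:IP_spacing} yields $d(r,v)=1-\mu(F_\bfT(v))$, hence $|d_n^\eta(r,v)-d(r,v)|=|\bar\mu_n(F_\bfT(v))-\mu(F_\bfT(v))|\le\delta_n$ with $\delta_n:=\sup_{u\in\bfT}|\bar\mu_n(F_\bfT(u))-\mu(F_\bfT(u))|$. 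Additivity of tree metrics along geodesics, together with the fact that the vertex set of $\bfT_n$ is closed under $\wedge$, then upgrades this to $|d_n^\eta(u,v)-d(u,v)|\le 4\delta_n$ for all vertices $u,v$ of $\bfT_n$.

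Next I would show $\delta_n\to 0$ almost surely. The point is that $\{F_\bfT(v):v\in\bfT\}$ is a Vapnik--Chervonenkis class of dimension at most $2$: in a tree, among any three points at least one pairwise meet is an ancestor of the third, which obstructs shattering. With the routine measurability afforded by separability of $\bfT$ (the set $\{(v,y):v\preceq y\}$ is closed), the Vapnik--Chervonenkis uniform strong law for empirical measures of i.i.d.\ samples gives $\delta_n\to 0$ a.s.

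The step I expect to be the main obstacle is controlling, in an averaged sense, how far trimming moves the samples:
$$\tfrac1n\,\#\{\,i\le n:\ d(\eta^n(\xi_i),\xi_i)>\eps\,\}\ \longrightarrow\ 0\qquad\text{a.s., for each }\eps>0.$$
For $m\le n$ and any $i\le n$ one has $\eta^n(\xi_i)\succeq\zeta_i^m:=\max_{j\le m,\,j\ne i}(\xi_i\wedge\xi_j)$, the maximum being taken in the genealogical order along the geodesic $\llbracket r,\xi_i\rrbracket$, so $d(\eta^n(\xi_i),\xi_i)\le d(\zeta_i^m,\xi_i)$. Splitting the count at $i\le m$ and $i>m$ — where, conditionally on $\xi_1,\dots,\xi_m$, the events $\{d(\zeta_i^m,\xi_i)>\eps\}$ are i.i.d.\ with common probability $q_m:=\PP(d(\zeta^m,\xi)>\eps\mid\xi_1,\dots,\xi_m)$ for an independent $\xi\sim\mu$ and $\zeta^m:=\max_{j\le m}(\xi\wedge\xi_j)$ — the strong law gives $\limsup_n\tfrac1n\#\{i\le n:d(\eta^n(\xi_i),\xi_i)>\eps\}\le q_m$ a.s.\ for every $m$, hence $\le\inf_m q_m$. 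Now $\zeta^m\uparrow\xi$ for $\mu$-a.e.\ $\xi$ and $\PP$-a.e.\ sample sequence: if $\zeta^m$ increased only to some $z'\prec\xi$ then, with $z$ the midpoint of $\llbracket z',\xi\rrbracket$, no $\xi_j$ would ever lie in $F_\bfT(z)$, contradicting $\mu(F_\bfT(z))>0$, which holds because $\xi\in\supp(\mu)$ and $F_\bfT(z)$ contains a ball about $\xi$. Since $d(\zeta^m,\xi)\le\operatorname{diam}(\bfT)\le 2$ by~\eqref{eq:IP_spacing}, dominated convergence gives $\EE[q_m]\to 0$, whence $\EE[\inf_m q_m]=0$ and $\inf_m q_m=0$ a.s., proving the claim.

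Finally I would assemble the bound. With the coupling $\nu_n=\tfrac1n\sum_{i\le n}\delta_{(\eta^n(\xi_i),\xi_i)}$ of $\mu_n^\eta$ and $\bar\mu_n$ and the relation $R_n=\{(\eta^n(\xi_i),\xi_i):d(\eta^n(\xi_i),\xi_i)\le\eps\}$, the two preceding steps give $\nu_n(R_n)\ge1-\eps$ and $\sup_{R_n}|d_n^\eta-d|\le4\delta_n+2\eps$, so $\dGP(\bfT_n,(\spn(r,\xi_1,\dots,\xi_n),d,\bar\mu_n))\le6\eps$ once $n$ is large. As $(\spn(r,\xi_1,\dots,\xi_n),d)$ embeds isometrically and measure-preservingly into $(\bfT,d)$, and $\bar\mu_n\to\mu$ weakly almost surely by Varadarajan's theorem, $\dGP((\spn(r,\xi_1,\dots,\xi_n),d,\bar\mu_n),\bfT)$ is at most twice the Prokhorov distance between $\bar\mu_n$ and $\mu$, which tends to $0$. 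Combining these through the triangle inequality for $\dGP$ and letting $\eps\downarrow0$ gives $\dGP(\bfT_n,\bfT)\to0$ a.s., which is the theorem. Beyond the obstacle above, the only points needing care are the measurability of $\delta_n$ (standard for VC classes) and the elementary structural facts about the vertex set of $\bfT_n$ used in the first step.
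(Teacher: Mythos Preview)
Your proof is correct and structurally parallel to the paper's: both reduce via Lemma~\ref{lemma:isometry_trimming}, pass through the sampled subtree with empirical measure $\bar\mu_n$, and control separately the uniform deviation $\delta_n=\sup_{u}|\bar\mu_n(F_\bfT(u))-\mu(F_\bfT(u))|$ and the displacement caused by trimming. The technical implementation at each step is different, and yours is more streamlined.

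For $\delta_n\to0$, the paper constructs a bespoke finite partition of the IP--tree (Lemma~\ref{lemma:ip_trees_partition}) adapted to the spacing property and applies the strong law cell by cell (Lemma~\ref{lemma:control_on_Ft}). You instead invoke the VC uniform Glivenko--Cantelli theorem, the obstruction to shattering three points by fringe subtrees being exactly the tree four--point condition. Your route is shorter and does not use the IP structure at all for this step, at the cost of importing empirical--process machinery and the attendant measurability check (which is indeed routine here since $\{(v,y):v\preceq y\}$ is closed and the class is image--admissible Suslin).

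For the trimming displacement, the paper uses an explicit level--set decomposition near the leaves (the sets $\textbf{B},\textbf{C},\textbf{L},\textbf{D}$ in Lemma~\ref{prop:scaling_limit_part_2}) together with Lemma~\ref{lemma:control_on_Ft} again. Your monotone argument $\zeta^m\uparrow\xi$ plus dominated convergence is cleaner and avoids any case analysis; it is essentially the observation that a fresh i.i.d.\ sample eventually lands in every fringe subtree of positive mass.

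For the last comparison with $\bfT$, the paper equips the intermediate subtree with its \emph{own} IP--rescaled metric $d_n$ and reuses the partition lemma a third time (Lemma~\ref{prop:scaling_limit_part_3}); you keep the ambient metric $d$ on the intermediate and close with Varadarajan's theorem and the elementary bound $\dGP\le 2\,d_{\mathrm{Prok}}$ for measures on a common space.

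In short: same skeleton, but you replace the paper's hand--built IP--tree partition --- used three times --- by off--the--shelf tools (VC theory, a soft monotone--convergence argument, Varadarajan). Your argument is shorter and only uses the IP structure to identify the limiting metric via the spacing relation; the paper's is fully elementary and self--contained.
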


From this we prove Theorem \ref{thm:scaling_limits_new}.

\begin{proof}[Proof of Theorem \ref{thm:scaling_limits_new}.] This follows immediately from Theorem \ref{thm:scaling_limit}, the decomposition into extremal distributions in Corollary \ref{cor:decomposition} and the classification of extremal tree--valued Markov chains with uniform backward dynamics in Theorem \ref{thm:main_thm_new_version}.
\end{proof}

The proof of the above theorem proceeds by comparing $T_n^{\mathrm{trim}}$ and $\bfT$ with $\spn(r,\xi_1, \ldots, \xi_n)$. For this purpose, let $\bfS_n = \spn(r,\xi_1, \ldots, \xi_n)$ and choose $\mu_n = \frac{1}{n} \sum_{i=1}^n \delta_{\xi_i}$. We then construct a metric $d_n$ for $\bfS_n$ by again considering the inhomogenous IP--rescaling \eqref{eq:ip_rescaling} with respect to the root $r$. The proof of Theorem \ref{thm:scaling_limit} consists of showing that both $\dGP(T_n^{\mathrm{trim}}, \bfS_n)$ and $\dGP(\bfS_n, \bfT)$ are small for $n$ sufficiently large. 

Before we can do this, we will show some general statements about IP--trees. In Lemma \ref{lemma:ip_trees_small_stuff} we show some small auxiliary statements and in Lemma \ref{lemma:ip_trees_partition} we construct a partition of $\bfT$ that helps us to approximate subtrees, i.e.\ sets of the form $F_\bfT(x)$ for $x \in \bfT$.

\begin{lemma}\label{lemma:ip_trees_small_stuff}Let $(\bfT, d, r, \mu)$ be an IP--tree.

    \begin{enumerate}
        \item[(i)] For all $x\in \bfT$ we have
        \begin{equation*}
             d(r,x) \leq 1 - \mu(F_\bfT(x)).
        \end{equation*}

        \item[(ii)] For any $c<1$ the set 
        \begin{equation*}
            \left\{x\in \bfT: d(r,x) = c \right\},
        \end{equation*}
        is at most countably infinite. 

        \item[(iii)]
        
        Suppose $(\bfT', d', r', \mu')$ is another IP--tree and $\varphi:\text{dom} \ \varphi \to \supp ( \mu')$ is an injective map respecting the tree structure with $\text{dom} \ \varphi \subseteq \supp ( \mu)$. This means $\varphi(x)\wedge \varphi(y) = \varphi(x\wedge y)$ for all $x,y\in \bfT$, and if $y \in \llbracket r,x \rrbracket$ then $\varphi(y) \in \llbracket r',\varphi(x) \rrbracket $ for all $x,y\in \bfT$. We then have
        \begin{equation*}
            \sup_{x,y \in \text{dom} \ \varphi} \left\vert 
            d(x,y) - d'(\varphi(x), \varphi(y))\right\vert \leq 4 \sup_{x \in \bfT} \left\vert \mu\left(F_\bfT(x) \right) - \mu'\left(F_{\bfT'}(\varphi(x)) \right) \right\vert.
        \end{equation*}

    \end{enumerate}
\end{lemma}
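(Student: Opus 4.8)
I would prove the three items essentially separately, disposing of (ii) and (iii) quickly and putting all the work into (i). Throughout the spacing property of IP--trees is the main tool, and for (i) also the spanning property $\bfT=\spn(\supp(\mu))$.

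\textbf{Part (i).} Fix $x\in\bfT$. If $x$ is a branchpoint or lies in $\supp(\mu)$, the spacing property already gives $d(r,x)+\mu(F_\bfT(x))=1$. Otherwise $\deg(x)\le 2$ and $\mu(\{x\})=0$; if moreover $\mu(F_\bfT(x))=0$, which forces $x$ to be a leaf, one checks $d(r,x)\le 1$ directly (a leaf of $\spn(\supp(\mu))$ is either isolated, hence in $\supp(\mu)$ with spacing forcing $d(r,x)=1$, or a limit of branchpoints, again forcing $d(r,x)=1$). In the remaining case I would walk up from $x$ along its unique outgoing direction to the first branchpoint-or-support-point $x_1\succeq x$; between $x$ and $x_1$ there is by construction no branchpoint and no mass, so $F_\bfT(x)\setminus F_\bfT(x_1)$ is contained in the segment $\llbracket x,x_1\rrbracket$ and hence $\mu(F_\bfT(x))=\mu(F_\bfT(x_1))$. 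Since $x\preceq x_1$ we have $d(r,x)\le d(r,x_1)$, and spacing at $x_1$ gives $d(r,x_1)=1-\mu(F_\bfT(x_1))=1-\mu(F_\bfT(x))$, whence $d(r,x)+\mu(F_\bfT(x))\le 1$. The technical point is that the ``first special point'' $x_1$ need not exist as an honest branchpoint or support point but may only be a limit of branchpoints $b_n\downarrow x_1$; one then reaches the same conclusion by letting $n\to\infty$ in $d(r,b_n)+\mu(F_\bfT(b_n))=1$, using that $\mu(F_\bfT(b_n))\uparrow\mu(F_\bfT(x_1))$ because $x_1$ has degree $\le 2$ and carries no atom.

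\textbf{Part (ii).} Two distinct points $x,x'$ with $d(r,x)=d(r,x')=c$ have $x\wedge x'$ strictly below both, so $F_\bfT(x)$ and $F_\bfT(x')$ sit in different subtrees of $x\wedge x'$ and are therefore disjoint. It thus suffices to show $\mu(F_\bfT(x))>0$ whenever $d(r,x)=c<1$: the level set then indexes a pairwise disjoint family of positive-mass Borel sets in a probability space, of which there can be at most countably many. If $x$ is a branchpoint or in $\supp(\mu)$, spacing gives $\mu(F_\bfT(x))=1-c>0$; otherwise $\deg(x)\le 2$, and by $\bfT=\spn(\supp(\mu))$ there is a support point $y\succ x$, so a small enough ball around $y$ lies in $F_\bfT(x)$ and has positive mass. (The leaf case is excluded here: a leaf of $\spn(\supp(\mu))$ is at distance exactly $1$ from $r$, as noted in (i), contradicting $c<1$.)

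\textbf{Part (iii).} For $x,y\in\text{dom}\,\varphi\subseteq\supp(\mu)$ with $x\ne y$, the point $x\wedge y$ is either a branchpoint or equals $x$ or $y$, so spacing applies at $x$, at $y$ and at $x\wedge y$; writing $d(x,y)=\big(d(r,x)-d(r,x\wedge y)\big)+\big(d(r,y)-d(r,x\wedge y)\big)$ and substituting $d(r,\cdot)=1-\mu(F_\bfT(\cdot))$ gives
\begin{equation*}
    d(x,y)=2\mu\big(F_\bfT(x\wedge y)\big)-\mu\big(F_\bfT(x)\big)-\mu\big(F_\bfT(y)\big).
\end{equation*}
Since $\varphi$ preserves the tree structure, $\varphi(x)\wedge\varphi(y)=\varphi(x\wedge y)$, and since $\varphi$ maps into $\supp(\mu')$, the image points again satisfy spacing in $\bfT'$, so the identical identity holds for $d'(\varphi(x),\varphi(y))$. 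Subtracting the two identities and bounding each of the three resulting mass-differences by $\sup_{z\in\bfT}\big|\mu(F_\bfT(z))-\mu'(F_{\bfT'}(\varphi(z)))\big|$ yields the claim with constant $2+1+1=4$.

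\textbf{Main obstacle.} The heart of the matter is part (i): the spacing identity is only postulated at branchpoints and support points, so at a generic $x$ there is no a priori control, and the argument must transport the identity along the ancestral line while correctly handling leaves, points with $\mu(F_\bfT(x))=1$, and limits of branchpoints. Once this is in place, (ii) and (iii) are short, with (ii) reusing the same ``$\mu(F_\bfT(x))>0$'' input.
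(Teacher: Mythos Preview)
Your proof is correct and follows essentially the same approach as the paper: for (i) you walk up from $x$ to the nearest point where spacing applies (handling the limit case by approximation), for (ii) you use that the fringe subtrees at a fixed level are disjoint with positive mass, and for (iii) you express $d(x,y)$ in terms of fringe masses via spacing and subtract. The only cosmetic differences are that the paper splits (iii) into the comparable/incomparable cases (getting constants $2$ and $4$ respectively) whereas you use the unified formula $d(x,y)=2\mu(F_\bfT(x\wedge y))-\mu(F_\bfT(x))-\mu(F_\bfT(y))$, and that your separate treatment of the leaf subcase in (i) is in fact vacuous since the spanning axiom already places every leaf in $\supp(\mu)$.
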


\begin{proof}
Recall the definition of IP--tree from Definition \ref{def:ip-tree}.
    \begin{enumerate}
        \item [(i)]If $x$ is either a branch point, a leaf or lies in the support of $\mu$ we have $d(r,x) = 1-\mu(F_\bfT(x))$. If that is not the case, consider 
        \begin{equation}\label{eq:closest_special_child}
            x^* = \inf \{y\in \bfT: x<y \text{ and } y \text{ is a branchpoint, a leaf or in the support of } \mu \},
        \end{equation}
        where the infimum is taken with respect to the ancestral order of $\bfT$. This may be closest descendant of $x$ for which the IP--tree property holds. We then have $\mu(F_\bfT(x)) = \mu(F_\bfT(x^*))$ and hence
        \begin{equation*}
            d(r,x) \leq d(r,x^*) = 1 - \mu(F_\bfT(x^*)) = 1- \mu(F_\bfT(x)).
        \end{equation*}
        It may happen that the infimum in \eqref{eq:closest_special_child} is not a branchpoint, a leaf or in the support of $\mu$. In that case the argument is easily adapted by considering a sequence that converges to $x^*$.

        \item [(ii)]Let $x\in \bfT$ be so that $d(r,x) = c$. Due to the spanning property of IP--trees, i.e.\ $\spn (  \supp (\mu)) = \bfT$, we necessarily have that $\mu(F_\bfT(x)) > 0$. Indeed, if we had $\mu(F_\bfT(x))=0$ then $x$ would need to be a leaf of $\bfT$ contained in the support of the diffuse part of $\mu$. In this case we would have $d(r,x)=1$ which contradicts $d(r,x)=c<1$. Because this is true for all $x$ in $\left\{y\in \bfT: d(r,y) = c \right\}$, this set has to be at most countably infinite. 

        \item [(iii)]Note that for $x,y \in \supp ( \mu)$ we have
        \begin{equation*}
            d(x,y)=\begin{cases}\vert \mu(F_\bfT(x))-\mu(F_\bfT(y))\vert \quad &\text{if} \quad y\in \llbracket r,x \rrbracket \ \ \text{or} \ \ x \in \llbracket r,y \rrbracket ,\\ 2 \mu(F_\bfT(x \wedge y)) - \mu(F_\bfT(x))-\mu(F_\bfT(y)) \quad &\text{else} .\end{cases}
        \end{equation*}
        An analogous statement holds for $\varphi(x),\varphi(y)$ with respect to $d'$ and $\mu'$. We then have for $x,y \in dom \ \varphi$ with $y \in \llbracket r,x \rrbracket$ that
        \begin{align*}
            \left\vert d(x,y) - d'(\varphi(x), \varphi(y)) \right\vert &= \left\vert \left(\mu(F_\bfT(y))-\mu(F_\bfT(x)) \right) - \left(\mu'(F_{\bfT'}(\varphi(y)))-\mu'(F_{\bfT'}(\varphi(x))) \right) \right\vert \\
            &\leq \sup_{z \in \bfT} 2 \left\vert \mu(F_\bfT(z))-\mu'(F_\bfT(\varphi(z))) \right\vert,
        \end{align*}
        where we have used the triangle inequality
        Similarly, if $y \notin \llbracket r,x \rrbracket$ and $x \notin \llbracket r,y\rrbracket $ then \begin{equation*}
            \left\vert d(x,y) - d'(\varphi(x), \varphi(y)) \right\vert \leq \sup_{z \in \bfT} 4 \left\vert \mu(F_\bfT(z))-\mu'(F_\bfT(\varphi(z))) \right\vert. \qedhere
        \end{equation*}
        
    \end{enumerate}
\end{proof}

\begin{lemma}\label{lemma:ip_trees_partition}
    Let $(\bfT, d,r,\mu)$ be an IP--tree. Then for any $\eps>0$ there exist $m_1,m_2\in \NN$ and measurable sets $A_1, \ldots A_{m_1},B_1,\ldots,B_{m_2}, S \subset \bfT$ such that 
    \begin{enumerate}
        \item 
        the sets $A_1, \ldots A_{m_1},B_1,\ldots,B_{m_2}, S$ partition $\bfT$,

        \item 
        $\mu(S)\leq \eps$ and for all $i \in [m_1],j\in [m_2]$ 
        \begin{equation*}
            \text{diam}(A_i) \leq \eps, \ \ \mu(A_i)\leq \eps \quad \text{and} \quad \# B_j  =1,
        \end{equation*}

        \item
        the closure of $\bigcup_{i=1}^{m_1} A_i \cup \bigcup_{j=1}^{m_2} B_j$ is connected,
        
        \item 
        for every $x\in \bfT$ there are $I_x \subseteq [m_1],J_x\subseteq [m_2]$ and $k_x \in [m_1]$ such that
        \begin{equation*}
            F_\bfT(x) \Delta \left( \bigcup_{i \in I_x} A_i \cup \bigcup_{j \in J_x} B_j \right)\subset A_{k_x} \cup S,
        \end{equation*}
        where $\Delta$ denotes the symmetric difference of two sets.

    \end{enumerate}
\end{lemma}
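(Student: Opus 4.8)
The plan is to carve $\bfT$ into a finite compact \emph{core} that carries all the branching, pull the heavy atoms out as singletons, and then distribute the remaining light ``fringe'' either onto the core or, for the fringe pieces that are metrically large, into the error set $S$. Fix $\eps>0$; I will choose a threshold $\delta>0$ (depending on $\eps$) only at the very end, and set $\rho=\eps/4$. Put $\mathcal R=\{x\in\bfT:\mu(F_\bfT(x))\ge\delta\}$. First I would show $\mathcal R$ is a \emph{finite} $\RR$--tree: it is closed (the map $x\mapsto\mu(F_\bfT(x))$ is upper semicontinuous, via $F_\bfT(x)=\bigcap_n F_\bfT(x_n)$ when $x_n\uparrow x$ together with a reverse-Fatou estimate at $b_n=x_n\wedge x$ when $x_n\to x$ from the side); it contains $r$ and every atom of mass $\ge\delta$; its leaves are pairwise $\preceq$--incomparable and carry disjoint fringe trees of mass $\ge\delta$, so there are at most $1/\delta$ of them, whence at most $1/\delta$ branchpoints, each of degree at most $1/\delta$ (a subtree meeting $\mathcal R$ has mass $\ge\delta$); and each of its finitely many segments has length $\le 1-\delta$ by Lemma~\ref{lemma:ip_trees_small_stuff}(i). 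The components of $\bfT\setminus\mathcal R=\{\mu(F_\bfT(\cdot))<\delta\}$ are the hanging subtrees $(\bfS_\alpha)$; each attaches at one point $q_\alpha\in\mathcal R$, and letting $y$ decrease to $q_\alpha$ inside $\bfS_\alpha$ gives $\mu(\bfS_\alpha)=\lim\mu(F_\bfT(y))\le\delta$.

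The next ingredient is a geometric consequence of the spacing identity: inside any fringe tree $F_\bfT(z)$, along any path issuing from $z$, the total length of all maximal gaps (open segments free of branchpoints, atoms and of $\supp(\mu)$) is at most $\mu(F_\bfT(z))$ — because, by the spacing identity, a gap directly above a branchpoint or atom $a$ has length $\mu(\{a\})+\mu(\text{the remaining subtrees of }a)$, and these sets are pairwise disjoint subsets of $F_\bfT(z)$ — and likewise the ``mass part'' of such a path is at most $\mu(F_\bfT(z))$; hence $\text{diam}(F_\bfT(z))\le 4\mu(F_\bfT(z))$. Applied to a hanging subtree: since $\mu(\bfS_\alpha)\le\delta$, the only gap of length $>4\delta$ that $\bfS_\alpha$ can contain is its \emph{initial} gap $(q_\alpha,x_0)$, and then $\bfS_\alpha=(q_\alpha,x_0)\cup F_\bfT(x_0)$ with $\text{diam}(F_\bfT(x_0))\le 4\delta$. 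Define $S^\circ=\bigcup\{\bfS_\alpha:\text{diam}(\bfS_\alpha)>\rho\}$. The key estimate is $\mu(S^\circ)\to 0$ as $\delta\downarrow 0$. For this: if $\text{diam}(\bfS_\alpha)>\rho$ then its initial gap has length $>\rho-4\delta>\rho/2$, and that gap lies on $[q_\alpha,x]$ for every $x\in\bfS_\alpha$; but for a fixed $x$ with $\mu(F_\bfT(x))=0$ (which, in an IP--tree, holds off a set of $\bfS_\alpha$--type points of $\mu$--measure zero, and otherwise $x$ is a diffuse leaf approached by points of positive fringe mass because every leaf is in $\supp(\mu)$) the attaching point of the component of $x$ rises to $x$ as $\delta\downarrow 0$, forcing its initial gap to shrink to $0$; so $\II_{S^\circ}(x)\to 0$ pointwise, and dominated convergence gives $\mu(S^\circ)\to 0$.

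Now fix $\delta$ so small that $\mu(S^\circ)\le\eps$ and $8\delta<\rho$, and assemble the partition. Let $B_1,\dots,B_{m_2}$ be the singletons $\{a\}$ with $a$ an atom of mass $\ge\delta$ (finitely many, all in $\mathcal R$). There are at most $4/\eps$ points $q\in\mathcal R$ at which the small-diameter hanging subtrees carry mass $>\eps/4$ (these masses are disjoint); at each such $q$ greedily bundle those subtrees into finitely many blocks of mass $<\eps$ and diameter $\le 2\rho+8\delta<\eps$ — these blocks are among the $A_i$. Next subdivide $\mathcal R\setminus\bigcup_j B_j$ (a finite union of segments) into finitely many pieces of diameter $<\rho$ whose $\mu$--mass plus ``remaining hanging mass'' is $<\eps$ — a greedy cut works, since at a single point the remaining hanging mass is $<\eps/4$ and the diffuse mass has no atom of size $\ge\delta$ — and enlarge each such piece by the not-yet-bundled small hanging subtrees attaching inside it; these are the remaining $A_i$, of diameter $<\rho+8\delta<\eps$ and mass $<\eps$. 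Finally put $S=S^\circ$. Properties~1--2 hold by construction. For property~3, $\bigcup_iA_i\cup\bigcup_jB_j=\bfT\setminus S^\circ$ is closed and equals $\mathcal R$ together with connected sets each meeting $\mathcal R$, hence connected. For property~4: if $x$ lies in some hanging subtree then $F_\bfT(x)$ is contained in that subtree, hence in a single $A_i$ or in $S$; if $x\in\mathcal R$, then $F_\bfT(x)$ is the union of the $A_i$'s and $B_j$'s lying entirely above $x$, together with the large hanging subtrees above $x$ (which lie in $S$) and the one piece/block $A_{k_x}$ containing $x$ — exactly the asserted inclusion $F_\bfT(x)\,\Delta\,(\bigcup_{i\in I_x}A_i\cup\bigcup_{j\in J_x}B_j)\subseteq A_{k_x}\cup S$.

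The main obstacle is the estimate $\mu(S^\circ)\le\eps$: bounding the $\mu$--mass of the metrically large but light fringe of $\mathcal R$. A weighted $\RR$--tree need not admit such a control, and it is precisely here that the IP (spacing) structure is used twice — once to bound the length of each gap by the mass it separates off, and once to force a light fringe piece of large diameter to hang at the far end of a long gap that cannot survive refining $\delta$. The careful distribution of hanging subtrees among the $A_i$ needed for properties~3 and~4 (keeping the discrepancy inside a \emph{single} $A_{k_x}$) is bookkeeping once the finite core $\mathcal R$ and the gap estimate are in place; I would carry out that bookkeeping by always subdividing segments of $\mathcal R$ compatibly with $\preceq$, so that for each $x\in\mathcal R$ exactly one piece straddles $x$.
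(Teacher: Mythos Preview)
Your approach is correct and genuinely different from the paper's. The paper slices $\bfT$ \emph{horizontally}: it takes the countable level sets $D(\ell\eps/2)=\{x:d(r,x)=\ell\eps/2\}$ (using Lemma~\ref{lemma:ip_trees_small_stuff}(ii)), lets each $x$ in their union define a ring $T(x)=\{y\in F_\bfT(x):d(x,y)<\eps/2\}$, observes that the spacing identity forces $\mu(T(x))\le\eps$, keeps finitely many such rings to capture most of the mass, removes the big atoms from them to get the $A_i$, and throws the rest into $S$. Property~4 is then immediate because the rings are nested compatibly with $\preceq$: for any $x$ there is exactly one ring $T(z)$ containing $x$, and every other kept ring lies entirely inside or outside $F_\bfT(x)$.

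Your construction instead works \emph{vertically}, via the mass-threshold core $\mathcal R=\{\mu(F_\bfT(\cdot))\ge\delta\}$ and its hanging subtrees. The main analytic step you isolate---showing that the $\mu$-mass of hanging subtrees with large initial gap tends to $0$ as $\delta\downarrow 0$, via pointwise convergence of indicators and dominated convergence---is correct, and it is where you use the IP structure most heavily. Two minor remarks: your diameter bound $\text{diam}(F_\bfT(z))\le 4\mu(F_\bfT(z))$ for special $z$ can be sharpened to $2\mu(F_\bfT(z))$ directly from $d(r,y)\le 1$ and $d(r,z)=1-\mu(F_\bfT(z))$, without the gap/mass decomposition; and your claim that $\mathcal R$ is closed is correct, but the relevant property is that $x\mapsto\mu(F_\bfT(x))$ satisfies $\liminf\mu(F_\bfT(x_n))\le\mu(F_\bfT(x))$ along sequences approaching $x$ from below (which you use), not upper semicontinuity per se. The paper's level-set route is shorter because the diameter and mass bounds on the pieces come for free from the slicing, avoiding both the dominated-convergence argument and the greedy bundling; your route, on the other hand, gives a cleaner geometric picture (finite core plus light fringe) that would generalise better if one wanted to relax the IP hypothesis.
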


\begin{proof}
    See Figure \ref{fig:partition_tree} for an illustration of this partition. To construct it, we first consider the atoms of $\mu$. Enumerate them by $\{a_j, 1 \leq j \leq J\}$ with $J\in \NN_0\cup \{\infty\}$ so that $\mu(a_j) \geq \mu(a_{j+1})$. Choose $m_2$ in such a way that
    \begin{equation}\label{eq:partition_small_2}
        \sum_{j=m_2+1}^J \mu(a_j) \leq \frac{\eps}{2}.
    \end{equation}
    Note that if $\mu$ has no atoms we have $m_2 = 0$. We then define
    \begin{equation*}
        B_j = \{ a_j\},
    \end{equation*}
    for all $j\in [m_2]$. Next, we construct $A_1,\ldots, A_{m_1}$. For this, let $L= \lfloor 2/\eps \rfloor$. For a given $0\leq c<1$ consider the set
    \begin{equation*}
        D(c) = \{ x\in \bfT: d(r,x) = c \}.
    \end{equation*}
    By Lemma \ref{lemma:ip_trees_small_stuff} $(ii)$, this is always at most a countable set. Without loss of generality assume  $L2/\eps < 1$. Define $D=\bigcup_{\ell = 0}^L D(\ell \eps /2)$ which is also a countable set. For $x\in D$ we set
    \begin{equation*}
        T(x) = \{ y \in F_\bfT(x): 0\leq d(x,y) < \eps/2 \}.
    \end{equation*}
    Note that $\{T(x): x\in D\}$ is a partition of $\bfT$ and $diam(T(x))\leq \eps$. Choose a finite subset $C\subset D$ such that $r\in C$, $\bigcup_{x \in C} T(x)$ is connected and such that
    \begin{equation}\label{eq:partition_small_1}
        \mu \left( \bigcup_{x\in D\backslash C} T(x) \right) \leq \frac{\eps}{2}.
    \end{equation}
    This is always possible because $\mu$ is a probability measure. Further, we choose $C$ so that $C \neq \emptyset$.
    For every $x\in C$ we set
    \begin{equation*}
        A(x) = T(x) \backslash \bigcup_{j=1}^{m_2}\{ a_j\},
    \end{equation*}
    this means we remove any atoms from $T(x)$ that are already included in $\{B_j\}_j$. 
    Observe that $\mu(A(x))\leq \eps$: Indeed, assume we had $\mu(A(x))>\eps$, then there would exist $y\in A(x)$ with $\mu(F_\bfT(y))<\mu(F_\bfT(x))-\eps/2$. 
    Because of the spacing property \eqref{eq:IP_spacing} of IP--trees this would imply $d(x,y)>\eps/2$ but because $y\in A(x)$ we have $d(x,y)<\eps/2$ which is a contradiction. 

    Now, let $m_1 = \vert C \vert$ and enumerate $\{A(x), x \in C \}$ by $A_1, \ldots, A_{m_1}$. 
    Lastly, we define 
    \begin{equation*}
        S = \bigcup_{x\in D\backslash C}T(x) \cup \bigcup_{\substack{j=m_2 + 1 \\ \forall i\leq m_1: j \notin A_i}}^J \{ a_j \}.
    \end{equation*}
    We include all atoms that are not in $\bigcup_{i=1}^{m_1} A_i \cup \bigcup_{j=1}^{m_2} B_j$. 
    By combining \eqref{eq:partition_small_2} and \eqref{eq:partition_small_1} we can see that $\mu(S)\leq \eps.$ This shows properties $1.)$ and $2.)$ of the lemma. Property $3.)$ follows from the fact that if we take the closure, we have that $ T(x) \subseteq\overline{A(x)}$ and that $\bigcup_{x \in C}T(x)$ is connected. 

    Finally, to complete the proof of the lemma, for a given $x\in \bfT$ we specify $I_x \subseteq [m_1],J_x \subseteq [m_2]$ and $k_x \in [m_1]$ with the required properties. 
    If $x\in S$, then we set $I_x = J_x = \emptyset$ and we choose $k_x$ arbitrarily, say $k_x=1$.

    Assume that $x\notin S$. Then there is $z\in C$ such that $x\in T(z)$. Choose $k_x$ so that $A_{k_x} = A(z)$. For every $i\in [m_1]\backslash \{k_x \}$ we have either $A_i \subset F_\bfT(x)$ or $A_i \cap F_\bfT(x) = \emptyset$. Based on this, we set
    \begin{equation*}
        I_x = \left\{i\in [m_1]: A_i \cap F_\bfT(x) = A_i \right\}.
    \end{equation*}
    Similarly, we set $J_x = \{j\in [m_2]: B_j \cap F_\bfT(x) = B_j \}$. This is the set of atoms $a_j$ with $j\leq m_2$ that are contained in $F_\bfT(x)$. By construction, we have
    \begin{equation*}
        F_\bfT(x) \Delta \left( \bigcup_{i \in I_x} A_i \cup \bigcup_{j \in J_x} B_j \right)\subset A_{k_x} \cup S.
    \end{equation*}
\end{proof}

\begin{figure}[bht]
    \centering
    \includegraphics[scale = 1.0]{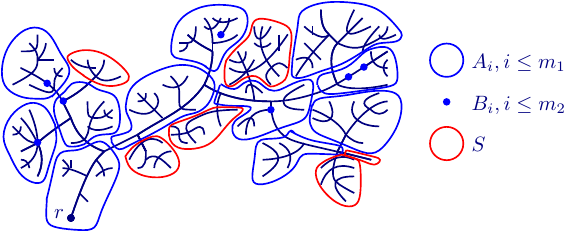}
    \caption{The partition of $\bfT$ in Lemma \ref{lemma:ip_trees_partition}. }
    \label{fig:partition_tree}
\end{figure}

\noindent Recall that $\mu_n = \frac{1}{n} \sum_{i=1}^n \delta_{\xi_i}$.
Now that we have shown some topological properties of IP--trees we show that for large $n$ $\mu_n$ approximates $\mu$ well in the following sense.

\begin{lemma}\label{lemma:control_on_Ft}
For any $\eps>0$, there is a random variable $N_1=N_1(\eps)$ such that for $n\geq N_1$ we have almost surely
$$ \sup_{x \in \bfT}\big\vert \mu(F_\bfT(x))-\mu_n(F_\bfT(x)) \big\vert \leq \eps.$$
\end{lemma}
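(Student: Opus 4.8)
The plan is to reduce this uniform (Glivenko--Cantelli type) estimate over the uncountable family $\{F_\bfT(x): x \in \bfT\}$ to the strong law of large numbers for the \emph{finite} collection of sets provided by Lemma~\ref{lemma:ip_trees_partition}. First I would fix $\eps > 0$ and apply Lemma~\ref{lemma:ip_trees_partition} with parameter $\delta_0 = \eps/8$, obtaining a deterministic partition $A_1, \dots, A_{m_1}, B_1, \dots, B_{m_2}, S$ of $\bfT$ with $\mu(A_i) \le \delta_0$ for all $i$, $\mu(S) \le \delta_0$, each $B_j$ a singleton, and, crucially, for every $x \in \bfT$ index sets $I_x \subseteq [m_1]$, $J_x \subseteq [m_2]$ and $k_x \in [m_1]$ such that
\[
F_\bfT(x) \,\Delta\, \Big( \bigcup_{i \in I_x} A_i \cup \bigcup_{j \in J_x} B_j \Big) \subseteq A_{k_x} \cup S .
\]
Here $m_1, m_2$ depend on $\eps$ but not on $n$, and $F_\bfT(x)$ is Borel (being closed), so all the quantities below are well defined.

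Next I would invoke the strong law of large numbers. Since $\mu_n = \frac1n \sum_{i=1}^n \delta_{\xi_i}$ with $(\xi_i)$ i.i.d.\ $\mu$, for each of the finitely many sets $A_1,\dots,A_{m_1},B_1,\dots,B_{m_2},S$ the empirical mass converges almost surely to the $\mu$-mass. Hence almost surely there is a random $N_1 = N_1(\eps)$ such that for all $n \ge N_1$ one has simultaneously $|\mu_n(A_i) - \mu(A_i)| \le \delta_0/m_1$ for every $i \in [m_1]$, $|\mu_n(B_j) - \mu(B_j)| \le \delta_0/m_2$ for every $j \in [m_2]$, and $|\mu_n(S) - \mu(S)| \le \delta_0$.

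Then for an arbitrary $x \in \bfT$, write $E = \bigcup_{i\in I_x} A_i \cup \bigcup_{j \in J_x} B_j$ and $G = A_{k_x} \cup S$. The containment $F_\bfT(x) \,\Delta\, E \subseteq G$ gives $|\nu(F_\bfT(x)) - \nu(E)| \le \nu(G)$ for both $\nu = \mu$ and $\nu = \mu_n$, so by the triangle inequality
\[
\big| \mu_n(F_\bfT(x)) - \mu(F_\bfT(x)) \big| \le |\mu_n(E) - \mu(E)| + \mu_n(G) + \mu(G) .
\]
Since the partition cells are disjoint, $|\mu_n(E) - \mu(E)| \le \sum_{i\in I_x}|\mu_n(A_i)-\mu(A_i)| + \sum_{j\in J_x}|\mu_n(B_j)-\mu(B_j)| \le 2\delta_0$ for $n \ge N_1$; directly $\mu(G) \le \mu(A_{k_x}) + \mu(S) \le 2\delta_0$; and $\mu_n(G) \le \mu_n(A_{k_x}) + \mu_n(S) \le 4\delta_0$ for $n \ge N_1$. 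Adding these gives $\big| \mu_n(F_\bfT(x)) - \mu(F_\bfT(x)) \big| \le 8\delta_0 = \eps$ for all $n \ge N_1$, and since this bound is independent of $x$, taking the supremum over $x \in \bfT$ finishes the proof.

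The main (essentially only) obstacle has already been absorbed into Lemma~\ref{lemma:ip_trees_partition}: producing a \emph{finite} family of small-mass, small-diameter cells from which every fringe subtree is recovered up to one extra small cell plus the negligible remainder $S$. Given that structural fact about IP--trees, what remains is the routine finite-family SLLN argument above; the only care needed is to split the allowed error $\eps$ evenly across the finitely many cells so that summing over $I_x$ and $J_x$ does not blow it up.
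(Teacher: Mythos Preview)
Your proposal is correct and follows essentially the same approach as the paper: apply Lemma~\ref{lemma:ip_trees_partition} to reduce to a finite family of cells, then use the strong law of large numbers on that finite family and the symmetric-difference containment to obtain a bound uniform in $x$. The only cosmetic difference is bookkeeping---you allocate error $\delta_0/m_i$ per cell so the sums over $I_x,J_x$ stay bounded, whereas the paper bounds by the full sums $\sum_{i=1}^{m_1}|\mu(A_i)-\mu_n(A_i)|$ etc.\ and lets the SLLN drive those to zero.
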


\begin{proof}
    We make use of Lemma \ref{lemma:ip_trees_partition} with constant $\eps/5$. 
    For $x\in S$, we set $I_x=\emptyset, J_x = \emptyset$ and by abuse of notation $A_{k_x}=\emptyset$.
    For all $x\in \bfT$ we let
    \begin{equation}
        \Tilde{F}_\bfT(x)=\bigcup_{i \in I_x} A_i \cup \bigcup_{j \in J_x} B_j \cup S.
    \end{equation}
    By use of the triangle inequality we get
    \begin{align*}
        \big\vert \mu(F_\bfT(x))-\mu_n(F_\bfT(x)) \big\vert 
         &\leq \big\vert \mu(F_\bfT(x))-\mu(\Tilde{F}_{\bfT}(x)) \big\vert 
         + \big\vert \mu_n(F_\bfT(x))-\mu_n(\Tilde{F}_\bfT(x)) \big\vert  
        + \big\vert \mu(\Tilde{F}_{\bfT}(x))-\mu_n(\Tilde{F}_{\bfT}(x)) \big\vert .
    \end{align*}
    We then have by Lemma \ref{lemma:ip_trees_partition} 
    \begin{align*}
        \big\vert \mu(F_\bfT(x))-\mu(\Tilde{F}_{\bfT}(x)) \big\vert \leq \mu(A_{k_x}) + \mu(S) \leq \frac{2}{5}\eps,
    \end{align*}
    as well as 
    $$\big\vert \mu_n(F_\bfT(x))-\mu_n(\Tilde{F}_{\bfT}(x)) \big\vert \leq \mu_n(A_{k_x}) + \mu_n(S),$$
    for all $n \geq 1$.
    Using the definition of $\Tilde{F}_{\bfT}(x)$ we get 
    \begin{align*}
    \big\vert \mu(\Tilde{F}_{\bfT}(x))-\mu_n(\Tilde{F}_{\bfT}(x)) \big\vert 
    &\leq \vert \mu(S)-\mu_n(S) \vert + \sum_{i\in I_x} \vert \mu(A_i) - \mu_n(A_i)\vert + \sum_{j\in J_x} \vert \mu(B_j) - \mu_n(B_j)\vert\\
    &\leq \vert\mu(S)-\mu_n(S) \vert + \sum_{i=1}^{m_1} \vert \mu(A_i) - \mu_n(A_i)\vert + \sum_{j=1}^{m_2} \vert \mu(B_j) - \mu_n(B_j)\vert.
    \end{align*}
    And lastly, using the bound
    $$\mu_n(A_{k_x}) \leq \mu(A_{k_x}) + \vert \mu(A_{k_x})-\mu_n(A_{k_x})\vert \leq \frac{\eps}{5} + \sum_{i=1}^{m_1} \vert \mu(A_{i})-\mu_n(A_{i})\vert,$$
    and similarly
    $$\mu_n(S) \leq \frac{\eps}{5} + \vert \mu(S) - mu_n(S)\vert,$$
    we obtain 
    \begin{equation}\label{eq:estimate_FTx}
        \big\vert \mu(F_\bfT(x))-\mu_n(F_\bfT(x)) \big\vert \leq \frac{4}{5} \eps + 2\vert\mu(S)-\mu_n(S) \vert + 2\sum_{i=1}^{m_1} \vert \mu(A_i) - \mu_n(A_i)\vert + \sum_{j=1}^{m_2} \vert \mu(B_j) - \mu_n(B_j)\vert.
    \end{equation}
    Note that this estimate is uniform in $x\in \bfT$.
    By the strong law of large numbers the family of random variables $\{ \vert\mu(S)-\mu_n(S)\vert,(\vert \mu(A_i) - \mu_n(A_i)\vert)_{i=1,\ldots,m_1},(\vert \mu(B_j) - \mu_n(B_j)\vert)_{j=1,\ldots,m_2}\}$ converges jointly $\PP$--almost surely to $0$. Applying this to \eqref{eq:estimate_FTx} this yields the existence of a random variable $N_1$ such that for every $n\geq N_1$ we have
    \begin{equation*}
        \sup_{x \in \bfT} \big\vert \mu(F_\bfT(x))-\mu_n(F_\bfT(x)) \big\vert \leq \frac{4}{5} \eps + \frac{1}{5}\eps.
    \end{equation*}
\end{proof}

After having established control over $\mu_n$, we can show that $\dGP(\bfS_n, \bfT)$ and $\dGP(T_n^{\mathrm{trim}}, \bfS_n)$ are small. 

\begin{lemma}\label{prop:scaling_limit_part_3}
For any $\eps>0$, there is a random variable $N_2=N_2(\eps)$ such that for $n\geq N_2$ we have almost surely
$$\dGP \left((\bfS_n, d_n, r, \mu_n), (\bfT, d, r, \mu)\right) \leq  \eps.$$
\end{lemma}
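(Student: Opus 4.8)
The plan is to build an explicit correspondence between $\bfS_n = \spn(r,\xi_1,\ldots,\xi_n)$ and $\bfT$ and an explicit coupling of $\mu_n$ and $\mu$, and to estimate the two quantities in Definition \ref{def:GP_in_introduction} using the uniform mass control from Lemma \ref{lemma:control_on_Ft} together with the metric comparison in Lemma \ref{lemma:ip_trees_small_stuff}(iii). Fix $\eps>0$. First I would invoke Lemma \ref{lemma:ip_trees_partition} with a parameter $\delta$ (to be chosen as a small multiple of $\eps$) to obtain the partition $A_1,\ldots,A_{m_1},B_1,\ldots,B_{m_2},S$ of $\bfT$, and Lemma \ref{lemma:control_on_Ft} to obtain $N_1 = N_1(\delta)$ such that $\sup_{x\in\bfT}|\mu(F_\bfT(x))-\mu_n(F_\bfT(x))|\le\delta$ for $n\ge N_1$. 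By the strong law of large numbers I also get that $\mu_n(A_i)\to\mu(A_i)$, $\mu_n(B_j)\to\mu(B_j)$, $\mu_n(S)\to\mu(S)$ jointly almost surely, so for $n$ past some random $N_2\ge N_1$ all these differences are at most $\delta$ as well.

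The coupling $\nu$ of $\mu_n$ and $\mu$ is the natural one adapted to the partition: within each block $B_j=\{a_j\}$ (an atom), $\mu$ and $\mu_n$ both essentially sit at the point represented by $a_j$ in $\bfS_n$ (here I use that atoms of $\mu$ are, for $n$ large, hit by some $\xi_i$ — an event of full probability in the limit, to be handled by enlarging $N_2$), and within each $A_i$ of small diameter I couple $\mu|_{A_i}$ with the portion of $\mu_n$ assigned there, matching total masses up to the $\delta$-error and absorbing the mismatch into the bad set. The relation $R\subseteq\bfS_n\times\bfT$ consists of pairs $(\xi_i, x)$ with $\xi_i$ and $x$ lying in a common block $A_{k}$ or $B_j$; then $\nu(R)\ge 1 - \mu(S) - (\text{accumulated matching errors}) \ge 1 - c\eps$ for a universal constant $c$, once $\delta$ is chosen appropriately. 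For the metric distortion, if $(\xi_i,x),(\xi_{i'},x')\in R$ then $|d_n(\xi_i,\xi_{i'}) - d(x,x')| \le |d_n(\xi_i,\xi_{i'}) - d(\xi_i,\xi_{i'})| + |d(\xi_i,\xi_{i'})-d(x,x')|$; the first term is $\le 4\delta$ by Lemma \ref{lemma:ip_trees_small_stuff}(iii) applied with $\varphi$ the identity embedding of $\{\xi_1,\ldots,\xi_n\}$ (both $\bfS_n$ with $d_n$ and $\bfT$ with $d$ being IP-trees under the mass $\mu_n$ resp. $\mu$, and the relevant supremum controlled by Lemma \ref{lemma:control_on_Ft}), and the second is $\le 2\,\mathrm{diam}(A_{k})+\text{(atom terms)}\le c'\eps$ since $\xi_i,x$ share a small block. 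Taking $\delta = \eps/c''$ for suitable $c''$ then makes both the measure of the bad set and the metric distortion at most $\eps$.

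The main obstacle I expect is twofold and both parts are bookkeeping rather than deep. The first is verifying that $(\bfS_n,d_n,r,\mu_n)$ genuinely is an IP-tree (so that Lemma \ref{lemma:ip_trees_small_stuff}(iii) applies): the spacing property $d_n(r,x)+\mu_n(F_{\bfS_n}(x))=1$ need not hold at every vertex, but it does hold at the $\xi_i$ and at branchpoints by the very definition of the IP-rescaling \eqref{eq:ip_rescaling}, and that is all Lemma \ref{lemma:ip_trees_small_stuff}(iii) needs since $\mathrm{dom}\,\varphi\subseteq\{\xi_i\}$; I would state this carefully, perhaps noting one can extend to the IP-completion. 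The second is the handling of atoms: the identity map $\varphi$ must send a $\xi_i$ equal to an atom $a_j$ to the corresponding point, and if no $\xi_i$ equals $a_j$ that block contributes to the error — but since $\mu(a_j)>0$, almost surely for $n$ large some $\xi_i$ does hit $a_j$ for each of the finitely many $j\le m_2$, and the tail atoms are swept into $S$; this is why $N_2$ must be allowed to depend on the realization. Once these two points are pinned down, assembling the final bound $\dGP\le\eps$ is a direct substitution into Definition \ref{def:GP_in_introduction}.
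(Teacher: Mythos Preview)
Your plan is correct and follows essentially the same route as the paper: both arguments invoke the partition of Lemma \ref{lemma:ip_trees_partition} with a small multiple of $\eps$, build the coupling $\nu_n$ block-by-block (the paper takes $\nu_n(A_i\times A_i)=\min\{\mu(A_i),\mu_n(A_i)\}$ and likewise for $B_j$), use the strong law of large numbers on the finitely many blocks to get $\nu_n(R)\ge 1-\eps$, and bound the metric distortion via Lemma \ref{lemma:ip_trees_small_stuff}(iii) with $\varphi$ the inclusion $\bfS_n\hookrightarrow\bfT$ combined with Lemma \ref{lemma:control_on_Ft}. Your two anticipated obstacles are exactly the bookkeeping the paper does implicitly (restricting to $\supp(\mu_n)\subset\supp(\mu)$ for the IP-spacing identity, and letting the SLLN on the blocks $B_j$ absorb the atom issue rather than arguing separately that each $a_j$ is eventually hit); the only cosmetic difference is that the paper routes the distortion estimate through fixed reference points $r_i\in A_i$ rather than your direct triangle-inequality split, which amounts to the same computation.
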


\begin{proof} 
    Fix $\eps>0$. Recall the defining property of the metric of IP--trees. For $x \in supp(\mu)$ we have
    \begin{equation*}
        d(r,x) = 1-\mu(F_\bfT(x)),
    \end{equation*}
    and similarly $d_n(r,x)= 1-\mu_n(F_{\bfS_n}(x))$ for $x\in \supp (\mu_n)$.
    This means that to understand the metric, we only need to understand the measure. Note that for every $x\in \bfS_n$ we have
    \begin{equation}\label{eq:sip:c}
        \mu_n(F_{\bfS_n}(x)) = \mu_n(F_{\bfT}(x)),
    \end{equation}
    by extending $\mu_n$ to $ \bfT$. 
    Lemma \ref{lemma:control_on_Ft} allows us to control the expressions above. 

    To use Definition \ref{def:GP_in_introduction} to estimate $\dGP(\bfT,\bfS_n)$, we need to couple $\mu$ and $\mu_n$. To do this, we apply Lemma \ref{lemma:ip_trees_partition} with $\eps/12$ in place of $\eps$. Note that $\supp (\mu_n) \subset \supp ( \mu)$ $\PP$--almost surely. Conditional on $\xi_1, \ldots, \xi_n$, consider any coupling $\nu_n$ of $\mu$ and $\mu_n$ such that for every $i\in [m_1], j\in [m_2]$
    $$\nu_n(A_i \times A_i) = \min \{ \mu(A_i),\mu_n(A_i) \}, \ \ \nu_n(B_j \times B_j) = \min \{ \mu(B_j),\mu_n(B_j) \}.$$
    Note that such a coupling always exists. Consider the following subset of $\bfT \times \bfT$,
    \begin{equation*}
        R = \bigcup_{i=1}^{m_1} (A_i \times A_i) \cup \bigcup_{j=1}^{m_2} (B_j \times B_j),
    \end{equation*}
    here we again use that $\bfS_n$ is a subset of $\bfT$.
    By the strong law of large numbers the family of random variables $\{ \vert\mu(S)-\mu_n(S)\vert,(\vert \mu(A_i) - \mu_n(A_i)\vert)_{i=1,\ldots,m_1},(\vert \mu(B_j) - \mu_n(B_j)\vert)_{j=1,\ldots,m_2}\}$ converges jointly $\PP$--almost surely to $0$.   
    Hence, there exists a random variable $N_2^*(\eps)$ such that for every $n\geq N_2^*$ we have
    $$\nu_n \big(R) \geq 1-\eps.$$
    By Definition \ref{def:GP_in_introduction}, it suffices to show for $n$ sufficiently large that
    \begin{equation*}
        \sup_{(x,y),(x',y') \in R} \vert d(x,x') - d_n(y,y') \vert  \leq \eps.
    \end{equation*}

    First, for $x\in A_i$ and $y\in A_j$, we want to decompose $d(x,y)$. We implicitly restrict ourselves to $x,y \in \supp (\mu_n) \subset \supp ( \mu )$ so that we can later apply Lemma \ref{lemma:ip_trees_small_stuff} $(iii)$. For every $i\in [m_1]$, choose $r_i \in A_i$ arbitrarily. In fact, if we look into the proof of Lemma \ref{lemma:ip_trees_partition}, we see that we can choose $r_i$ to be the root of $A_i$ but we will not use this here. By the triangle inequality we have
    $$d(r_i,r_j)- d(x,r_i) - d(y,r_j)\leq d(x,y)\leq d(r_i,r_j)+d(x,r_i)+d(y,r_j),$$
    and by using that $diam(A_i)\leq \eps/12$ and $diam(A_j) \leq \eps/12$ we get
    $$d(r_i,r_j)-\frac{1}{6}\eps \leq d(x,y) \leq d(r_i,r_j) + \frac{1}{6}\eps.$$
    
    Next, we must show a similar statement for $d_n$. For this we need to estimate $diam_n  A_i$, the diameter of $A_i$ under the metric $d_n$. We apply Lemma \ref{lemma:control_on_Ft}, for $n \geq N_1(\eps/48)$ we have
    \begin{align*}
        diam_n(A_i) & \leq 2 \sup_{x\in A_i} d_n(r_i,x) \\
        &\leq 2 \sup_{x \in A_i} d(r_i,x) + 2 \sup_{x \in A_i} \vert d(r_i,x)-d_n(r_i,x) \vert \\
        &\leq 2 \ diam(A_i) + 8 \sup_{z \in \bfT} \vert \mu(F_\bfT(z)) -  \mu_n(F_\bfT(z)) \vert \\
        &\leq \frac{\eps}{6} + \frac{\eps}{6} 
    \end{align*}
    where we also applied Lemma \ref{lemma:ip_trees_small_stuff} $(iii)$; the map $\varphi$ here is the inclusion $\bfS_n \hookrightarrow \bfT$. This yields that for $x\in A_i$ and $y\in A_j$ we have
    \begin{equation*}
        d_n(r_i,r_j)-\frac{2}{3} \eps \leq d_n(x,y) \leq d_n(r_i,r_j) + \frac{2}{3}\eps.
    \end{equation*}
    The same reasoning works if $x\in A_i$ and $y\in B_j$ for some $i \in [m_1], j\in [m_2]$. In that case we have also
    \begin{equation*}
        \vert d(x,y) - d(r_i, y) \vert \leq \frac{\eps}{12},
    \end{equation*}
    and
    \begin{equation*}
        \vert d_n(x,y) - d_n(r_i, y) \vert \leq \frac{\eps}{3}.
    \end{equation*}
    As a consequence, for $(x,y),(x',y') \in R$ we have
    \begin{align*}
        \vert d(x, x') - &d_n(y,y') \vert \leq \frac{5}{6}\eps + \max \left\{ \left\vert d(r',r'') - d_n(r',r'') \right\vert; r',r'' \in \{r_i,i\in [m_1] \} \cup \bigcup_{j=1}^{m_2} B_j\right\}.
    \end{align*}
    And by Lemma \ref{lemma:ip_trees_small_stuff} $(iii)$ and Lemma \ref{lemma:control_on_Ft} as above,
    \begin{align*}
       \vert d(x, x') - d_n(y,y') \vert \leq \frac{5}{6}\eps + 4 \sup_{z\in \bfT} \vert \mu (F_{\bfT}(z)) - \mu_n (F_{\bfT}(z) )\vert \leq \frac{5}{6}\eps + \frac{1}{12}\eps < \eps,
    \end{align*}
    for $n \geq N_1(\eps/48)$.

    We now have
    \begin{equation*}
        \sup_{(x,y),(x',y') \in R} \vert d(x,x') - d_n(y,y') \vert \leq \eps
    \end{equation*}
    Recall that $\nu_n(R)\geq 1 - \eps$. With Definition \ref{def:GP_in_introduction} this yields $\dGP(\bfT,\bfS_n) \leq \eps$, $\PP$--almost surely for $n\geq N_2 := \max \{N_1(\eps/48),N_2^*\}$. 
\end{proof}

\begin{lemma}\label{prop:scaling_limit_part_2}
For any $\eps>0$, there is a random variable $N_3=N_3(\eps)$ such that for $n\geq N_3$ we have almost surely
$$ \dGP \left((T_n^{\mathrm{trim}}, d_n^{\mathrm{trim}}, r_n, \mu_n^{\mathrm{trim}}) ,
(\bfS_n, d_n, r, \mu_n)\right) \leq \eps.$$
\end{lemma}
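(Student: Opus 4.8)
The plan is to compare, via Definition~\ref{def:GP_in_introduction}, the two weighted real trees $(R_n, d_n^\eta, r, \mu_n^\eta)$ and $(\bfS_n, d_n, r, \mu_n)$, where $R_n := \spn(r,\eta^n(\xi_1,\dots,\xi_n))$ is the reduced tree to which $T_n^{\mathrm{trim}}$ is identified by Lemma~\ref{lemma:isometry_trimming}. The first step is to check that $R_n$ is a subtree of $\bfS_n$ on which $d_n^\eta$ and $d_n$ coincide. The key observation is that for $x\in R_n$ one has $\eta^n(\xi_i)\succeq x$ if and only if $\xi_i\succeq x$: the forward direction is immediate since $\eta^n(\xi_i)\preceq\xi_i$, and the converse uses the minimality built into the definition~\eqref{eq:def_eta} of $\eta^n$ --- if $x$ lay strictly between $\eta^n(\xi_i)$ and $\xi_i$, then, writing $x\preceq\eta^n(\xi_j)$ for some $j\ne i$ (possible because $x\in R_n$), the most recent common ancestor $\xi_i\wedge\xi_j\succeq x$ would be an element of $M_n$ on $\llbracket r,\xi_i\rrbracket$ strictly closer to $\xi_i$ than $\eta^n(\xi_i)$, a contradiction. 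Consequently $\mu_n^\eta(F_{R_n}(x)) = \tfrac1n\#\{i\le n:\xi_i\succeq x\} = \mu_n(F_{\bfS_n}(x))$ for every $x\in R_n$; since $R_n$ is closed under taking most recent common ancestors and both metrics are built from these fringe masses through the IP--rescaling~\eqref{eq:ip_rescaling}, it follows that $d_n^\eta$ equals the restriction of $d_n$ to $R_n$.

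Given this, I would bound $\dGP$ using the diagonal coupling $\nu_n = \tfrac1n\sum_{i=1}^n\delta_{(\eta^n(\xi_i),\xi_i)}$ of $\mu_n^\eta$ and $\mu_n$, together with the correspondence $R := \{(\eta^n(\xi_i),\xi_i): \Delta_i^n\le\eps/2\}$, where $\Delta_i^n := d_n(\eta^n(\xi_i),\xi_i) = \mu_n(F_{\bfS_n}(\eta^n(\xi_i))) - \mu_n(F_{\bfS_n}(\xi_i)) \ge 0$. On $R$ the metric distortion is controlled, using the metric identity from the previous paragraph and the triangle inequality in $(\bfS_n,d_n)$:
\[ \big| d_n^\eta(\eta^n(\xi_i),\eta^n(\xi_j)) - d_n(\xi_i,\xi_j) \big| \;\le\; \Delta_i^n + \Delta_j^n \;\le\; \eps . \]
Since $\nu_n(R) = \tfrac1n\#\{i\le n:\Delta_i^n\le\eps/2\}$, it remains to show that $\#\{i\le n:\Delta_i^n>\eps/2\}\le\eps n$ for all $n$ beyond some almost surely finite random index, which then gives $\nu_n(R)\ge 1-\eps$ and hence $\dGP\le\eps$.

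To prove this counting bound I would proceed in two moves. First, translate $\Delta_i^n$ into an intrinsic distance: when $\eta^n(\xi_i)\ne\xi_i$, the point $\eta^n(\xi_i)$ is a branch point of $\bfT$ unless it equals one of the samples, and in either case it, like $\xi_i\in\supp(\mu)$, satisfies the IP--spacing~\eqref{eq:IP_spacing}; subtracting the two instances of~\eqref{eq:IP_spacing} gives $\mu(F_\bfT(\eta^n(\xi_i))) - \mu(F_\bfT(\xi_i)) = d(\eta^n(\xi_i),\xi_i)$, and combining with Lemma~\ref{lemma:control_on_Ft} (applied with a small parameter $\eps'$) we obtain $\Delta_i^n\le d(\eta^n(\xi_i),\xi_i)+2\eps'$ for $n\ge N_1(\eps')$; choosing $\eps'<\eps/16$, this shows $\Delta_i^n>\eps/2$ forces $d(\eta^n(\xi_i),\xi_i)>\eps/4$. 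Second, if $d(\eta^n(\xi_i),\xi_i)>\eps/4$, let $v_i$ be the point of $\llbracket r,\xi_i\rrbracket$ at distance $\eps/8$ below $\xi_i$; the same minimality of $\eta^n$ shows that no sample other than $\xi_i$ lies in $F_\bfT(v_i)$, so $\mu_n(F_\bfT(v_i)) = 1/n$ and therefore, again by Lemma~\ref{lemma:control_on_Ft}, $\mu(F_\bfT(v_i))\le 1/n+\eps'\le 2\eps'$ once $n$ is large. Writing $v_y$ for the point of $\llbracket r,y\rrbracket$ at distance $\eps/8$ below $y$, we have thus shown that
\[ \Delta_i^n>\tfrac{\eps}{2} \;\Longrightarrow\; \xi_i\in W_{\eps'} := \Big\{ y\in\supp(\mu): d(r,y)>\tfrac{\eps}{8},\ \mu\big(F_\bfT(v_y)\big)\le 2\eps' \Big\} \]
for all $n$ large. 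The final ingredient is that $\mu(W_{\eps'})\to 0$ as $\eps'\to 0$: the sets $W_{\eps'}$ decrease in $\eps'$ and their intersection is empty, because for any $y\in\supp(\mu)$ the fringe subtree $F_\bfT(v_y)$ contains the open ball of radius $\eps/8$ about $y$ and hence carries positive $\mu$--mass. Fixing $\eps'$ small enough that $\mu(W_{\eps'})<\eps/2$ and applying the strong law of large numbers to $\tfrac1n\#\{i\le n:\xi_i\in W_{\eps'}\}$, we get $\#\{i\le n:\Delta_i^n>\eps/2\}\le\eps n$ for $n$ past an almost surely finite index; collecting the thresholds $N_1(\eps')$, $1/\eps'$ and the one from the strong law yields $N_3$.

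The main obstacle is exactly this counting estimate --- showing that only a vanishing proportion of the $n$ samples has a non-negligible piece trimmed off --- and the two ideas that make it work are the conversion of $\Delta_i^n$ into the intrinsic distance $d(\eta^n(\xi_i),\xi_i)$ through the IP--spacing property, and the fact that a large trimmed length at $\xi_i$ confines $\xi_i$ to a region of arbitrarily small $\mu$--mass. The remaining points --- the identification of $R_n$ with a subtree of $\bfS_n$, the choice of coupling and the distortion bound --- are soft once the metric identity $d_n^\eta = d_n|_{R_n}$ is in hand. Some routine care is also needed for the measurability of the maps $y\mapsto v_y$ and $y\mapsto\mu(F_\bfT(v_y))$, and hence of $W_{\eps'}$, which I would dispatch by a standard approximation argument on the separable tree $\bfT$.
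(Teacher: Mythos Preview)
Your argument is correct and gives a genuinely different route to the bound than the paper's. Both proofs use the same coupling $\nu_n=\tfrac1n\sum\delta_{(\xi_i,\eta^n(\xi_i))}$ and both ultimately rest on Lemma~\ref{lemma:control_on_Ft} and the IP--spacing identity; the organisation, however, is different. The paper fixes a geometric decomposition in advance: it takes the level set $\textbf{B}=\{x:d(r,x)=1-\eps/16\}$ together with the large leaf--atoms $\textbf{C}$, selects finitely many $z_1,\dots,z_K$ so that $\textbf{D}\cup\textbf{L}:=\bigcup_i\llbracket r,z_i)\cup F_\bfT(z_i)$ carries all but $\eps/2$ of the mass, and then works on the event $\mathcal{A}_n$ that every $F_\bfT(z_j)$ contains at least two samples. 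On $\mathcal{A}_n$ one knows exactly how $\eta^n$ acts on samples in $\textbf{D}$ (identity) and in $\textbf{L}$ (stays in the same small fringe); the distortion is then bounded via Lemma~\ref{lemma:ip_trees_small_stuff}(iii) rather than by an isometry.

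Your approach replaces this explicit geometry by two cleaner observations. First, you prove the identity $d_n^\eta=d_n|_{R_n}$ by checking $\mu_n^\eta(F_{R_n}(x))=\mu_n(F_{\bfS_n}(x))$ for $x\in R_n$; this reduces the distortion bound to a bare triangle inequality $\Delta_i^n+\Delta_j^n$ and makes Lemma~\ref{lemma:ip_trees_small_stuff}(iii) unnecessary. Second, instead of pre--selecting a good region of $\bfT$, you threshold directly on $\Delta_i^n$ and show that a large $\Delta_i^n$ forces $\xi_i$ into the shrinking set $W_{\eps'}$; the monotone--limit argument $\mu(W_{\eps'})\to 0$ then replaces the choice of $K$. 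The trade--off is that the paper's proof gives a more explicit picture of where trimming can move points, while yours is shorter and avoids the auxiliary sets $\textbf{B},\textbf{C},\textbf{D},\textbf{L}$ altogether.
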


\begin{proof}
    Fix $\eps > 0$ small. First, we need to understand $\eta^n$ better, recall the definition of $\eta^n$ from \eqref{eq:def_eta}. To this end, consider the sets 
    \begin{align*}
        \textbf{B} &= \left\{x\in \bfT: d(r,x) = 1-\eps/16 \right\}, \\
        \textbf{C} &= \left\{x \in \bfT: \mu(\{x\}) > \eps/16 \ \text{and} \ F_\bfT(x)=\{x\} \right\}.
    \end{align*}
    This means that $\textbf{B}$ is the level set at height $1-\eps/16$ and $\textbf C$ is the set of atoms with mass greater than $\eps/16$ that are also leaves. By Lemma \ref{lemma:ip_trees_small_stuff} $(ii)$ the set $\textbf{B}$ is at most countably infinite and $\textbf{C}$ is clearly finite. Enumerate $\textbf{ B} \cup \textbf{C}$ by $\{ z_i, i\geq 1\}$,
    \begin{equation*}
        \textbf{B} \cup \textbf{C} = \bigcup_{i=1}^\infty \{ z_i \}.
    \end{equation*}
    Observe that
    \begin{align*}
        \bfT = \bigcup_{i=1}^\infty \llbracket r,z_i) \cup F_\bfT(z_i) = \bigsqcup_{i=1}^\infty \bigg(\llbracket r,z_i) \big\backslash \bigg( \bigcup_{j=1}^{i-1} \llbracket r,z_j) \bigg) \bigg) \cup  F_\bfT(z_i),
    \end{align*}
    where $\sqcup$ is a disjoint union. This implies that we can choose $K$ large enough such that 
    \begin{equation*}
        \mu \bigg(\bfT \big\backslash \bigcup_{i=1}^K \llbracket r,z_i) \cup F_\bfT(z_i) \bigg) \leq \frac{\eps}{2}
    \end{equation*}
    Such a $K$ exists because for every $z\in \textbf{B}$ we have $\mu(F_\bfT(z)) \leq 1-d(r,z) \leq \eps/16$ by Lemma \ref{lemma:ip_trees_small_stuff} $(i)$ and the definition of $\textbf B$. We now define the sets 
    \begin{align*}
        \textbf{L} = \bigcup_{i=1}^K F_\bfT(z_i) \quad \text{and} \quad
        \textbf{D} = \bigcup_{i=1}^K \llbracket r, z_i).
    \end{align*}
    Note that $\textbf{L}$ and $\textbf{D}$ are disjoint and $\mu(\textbf{L}) + \mu(\textbf{D})\geq 1 - \eps/2$ due to our choice of $K$. We think of $\textbf{L}$ as the part of $\bfT$ that is close to the leaves and of $\textbf{D}$ as the skeleton of the tree that leads to $\textbf{L}$. 

    Observe that for every $1\leq i \leq n$ we have $\eta^n(\xi_i)=\xi_i$ if 
    \begin{equation*}
        \left\vert F_\bfT(\xi_i) \cap \left\{ \xi_j; 1 \leq j \leq n \right\}\right\vert \geq 2.
    \end{equation*}
    This leads us to consider the event
    \begin{equation*}
       \mathcal{A}_n = \left\{ \forall 1\leq j \leq K:  \left\vert F_\bfT(z_j) \cap \left\{ \xi_i; 1 \leq i \leq n \right\}\right\vert \geq 2 \right\}. 
    \end{equation*}
    On the event $\mathcal{A}_n$ we have for all $1\leq i \leq n$ with $\xi_i \in \textbf{D} \cup \textbf{L}$ that
    \begin{equation}\label{eq:what_eta_actually_does}
        \begin{cases}
            \eta^n(\xi_i) = \xi_i \quad & \text{if} \quad \xi_i \in \textbf{D}, \\
            \eta^n(\xi_i) \in F_\bfT(z_j) & \text{if} \quad \xi_i \in F_\bfT(z_j) \text{ for some } j\leq K.
        \end{cases}
    \end{equation}
    Next, we want to start estimating $\dGP \left((T_n^{\mathrm{trim}}, d_n^{\mathrm{trim}}, r_n, \mu_n^{\mathrm{trim}}) ,
    (\bfS_n, d_n, r, \mu_n)\right)$. We use Definition \ref{def:GP_in_introduction} to compute $\dGP$. Consider the following subset of $\bfT\times \bfT$
    \begin{equation*}
        R_n = \bigcup_{i \in [n]: \xi_i \in \textbf{D} \cup \textbf{L}} \{\xi_i\} \times \{ \eta^n( \xi_i) \}.
    \end{equation*}
    We choose the natural coupling $\nu_n$ of $\mu_n$ and $\mu_n^{\mathrm{trim}}$, that places mass $1/n$ on $(\xi_i , \eta^n (\xi_i ) )$,
    \begin{equation*}
        \nu_n = \sum_{i=1}^n \frac{1}{n}\delta_{(\xi_i , \eta^n (\xi_i ) )}.
    \end{equation*}
    Note that as $n \to \infty$, we have
    \begin{equation}\label{eq:correspondence_has_mass}
        \liminf_{n \to \infty} \nu_n(R_n) = \liminf_{n \to \infty} \mu_n\left( \textbf{D}\cup \textbf{L} \right) \geq 1- \frac{\eps}{2},
    \end{equation}
    $\PP$--almost surely. Further, note that $\mathcal{A}_{n-1} \subset \mathcal{A}_n$ and $\lim_{n \to \infty}\PP(\mathcal{A}_n) = 1$. Hence, there is a random variable $N_3^* = N_3^*(\eps)$ such that for all $n\geq N_3^*$ we have $\nu_n(R_n) \geq 1 - \eps$. 

    Let us now estimate 
    \begin{equation*}
        \sup_{(x,y),(x',y') \in R_n}  \left\vert d_n(x,x') - d_n^{\mathrm{trim}}(y,y') \right\vert,
    \end{equation*}
    on the event $\mathcal A_n$. Let $I_n = \{i \in [n]: \xi_i \in \textbf{D} \cup \textbf{L} \}$. Note that by the definition of $R_n$ we have
    \begin{equation}
        \sup_{(x,y),(x',y') \in R_n}  \left\vert d_n(x,x') - d_n^{\mathrm{trim}}(y,y') \right\vert = \sup_{i,j \in I_n} \left\vert d_n(\xi_i, \xi_j) - d_n^{\mathrm{trim}}(\eta^n(\xi_i), \eta^n(\xi_j))\right\vert.
    \end{equation}
    We now apply Lemma \ref{lemma:ip_trees_small_stuff} $(iii)$ where $\varphi$ is given by $\eta^n$ restricted to $\{\xi_i, i\in  I_n \}$. We combine this with \eqref{eq:what_eta_actually_does} to obtain
    \begin{align*}
        \sup_{(x,y),(x',y') \in R_n}  \left\vert d_n(x,x') - d_n^{\mathrm{trim}}(y,y') \right\vert &\leq 4 \sup_{i \in I_n} \left\vert \mu_n \left( F_\bfT(\xi_i) \right) - \mu_n\left( F_\bfT(\eta^n(\xi_i)) \right)\right\vert  \\
        &\leq 4 \sup_{\substack{1\leq j \leq K \\ z_j \in \textbf{B}}} \sup_{x,y \in F_\bfT(z_j)}\left\vert \mu_n \left( F_\bfT(x) \right) - \mu_n\left( F_\bfT(y) \right)\right\vert .
    \end{align*}
    By using the triangle inequality twice we have
    \begin{align*}
        4 \sup_{\substack{1\leq j \leq K \\ z_j \in \textbf{B}}} \sup_{x,y \in F_\bfT(z_j)}\left\vert \mu_n \left( F_\bfT(x) \right) - \mu_n\left( F_\bfT(y) \right)\right\vert &\leq 8 \sup_{\substack{1\leq j \leq K \\ z_j \in \textbf{B}}} \sup_{x \in F_\bfT(z_j)}\left\vert \mu_n \left( F_\bfT(x) \right) - \mu\left( F_\bfT(x) \right)\right\vert \\ 
        & \ \ \ \ + 4 \sup_{\substack{1\leq j \leq K \\ z_j \in \textbf{B}}} \sup_{x,y \in F_\bfT(z_j)}\left\vert \mu \left( F_\bfT(x) \right) - \mu\left( F_\bfT(y) \right)\right\vert \\ 
        &\leq 8 \sup_{x \in \bfT}\left\vert \mu_n \left( F_\bfT(x) \right) - \mu\left( F_\bfT(x) \right)\right\vert + 8 \sup_{\substack{1\leq j \leq K \\ z_j \in \textbf{B}}} \mu\left(F_\bfT(z_j) \right).
    \end{align*}
    By construction, we have $\mu\left(F_\bfT(z_j) \right) \leq \eps/16$ for every $z_j \in \textbf{B}$. The other term, $\sup_{x \in \bfT}\vert \mu_n \left( F_\bfT(x) \right) - \mu\left( F_\bfT(x) \right)\vert$, is controlled by Lemma \ref{lemma:control_on_Ft} -- we apply it with parameter $\eps/16$. This means that for $n\geq N_3 := \max\{ N_1(\eps/16), N_3^* \}$ we have
    \begin{equation}
        \sup_{(x,y),(x',y') \in R_n}  \left\vert d_n(x,x') - d_n^{\mathrm{trim}}(y,y') \right\vert \leq 8 \frac{\eps}{16} + 8 \frac{\eps}{16}.
    \end{equation}
    Recall that for $n\geq N_3^*$ we have $\nu_n(R_n) \geq 1- \eps$. This implies that for $n\geq N_3$
    \begin{equation*}
        \dGP \left((T_n^{\mathrm{trim}}, d_n^{\mathrm{trim}}, r_n, \mu_n^{\mathrm{trim}}) ,
(\bfS_n, d_n, r, \mu_n)\right) \leq \eps,
    \end{equation*}
     which is the statement of the lemma.
\end{proof}

Finally, we prove Theorem \ref{thm:scaling_limit}.

\begin{proof}[Proof of Theorem \ref{thm:scaling_limit}.] This now follows straight away from Lemmas \ref{prop:scaling_limit_part_3} and \ref{prop:scaling_limit_part_2}. We have by the triangle inequality
\begin{align*}
    \dGP \big((T_n^{\mathrm{trim}} & , d_n^{\mathrm{trim}}, r_n, \mu_n^{\mathrm{trim}}),
    (\bfT, d, r, \mu) \big)
     \\
    & \leq \dGP \big((T_n^{\mathrm{trim}}, d_n^{\mathrm{trim}}, r_n, \mu_n^{\mathrm{trim}}) ,
    (\bfS_n, d_n, r, \mu_n)\big) + \dGP \big(\bfS_n, d_n, r, \mu_n), (\bfT, d, r, \mu)\big) \\
    & \leq 2\eps. \qedhere
\end{align*}
\end{proof}

\section{Appendix: proof of Proposition \ref{prop:evans}}\label{sec:proof_of_evans}

Here we will sketch the proof of Proposition \ref{prop:evans}. Large parts of it are analogous to arguments seen in Evans, Grübel, Wakolbinger \cite[Sections 6 and 7]{evans_doob-martin_2017} with the difference being that the authors of \cite{evans_doob-martin_2017} consider only binary trees. Compare this also to \cite[Sections 6 and 8]{birkner_patricia_2020} which discusses a similar proof for the modified definition of dendritic systems in \cite{birkner_patricia_2020}. We present the proof to give the reader a more complete picture. 

The proof consists of three steps: First, we go from the dendritic system to an ultra--metric on $\NN$ which can be represented by coalescent tree. In doing this, we lose information about the planar structure. Secondly, we apply a theorem of Gufler \cite{gufler_representation_2018} to derive a sampling procedure for the ultra--metric. The ultra--metric can be represented by sampling points from a real tree. In a third step, we recover the planar structure by use of the Aldous--Hoover--Kallenberg theory of exchangeable arrays, see the book of Kallenberg \cite[Chapter 28]{kallenberg_foundations_2021} for a general reference. 

In the following sections we encode a given exchangeable, ergodic dendritic system $\mathcal{D}=(\NN, \sim, \preceq,p)$. This dendritic system was obtained from a tree--valued Markov chain $(T_n, n\geq 1)$ with uniform backward dynamics, see Proposition \ref{prop:tree_growth_to_dendritic+ergodic}. Each section corresponds to one of the step of the proof of Proposition \ref{prop:evans} outlined above. We will end up with the objects of Proposition \ref{prop:evans}: a rooted real tree $(\bfT,d,r)$, a probability measure $\mu$ and a function $F$ which encodes the planarity function $p$ of $\mathcal{D}$.

\subsection{From the dendritic system to a coalescent tree}

Assume we are given an exchangeable, ergodic dendritic system $\mathcal{D}=(\NN, \sim, \preceq,p)$, our goal is to connect its distribution to a real tree.
The first thing we need to do is to derive an ultra--metric. To this end, given $i,j \in \NN$ and any leaf $k\in \NN$ we set
$$I_k:= \II\{(i,j) \preceq k \},$$
recall that we abuse notation to write $k=(k,k)$.
By exchangeability of $\mathcal D$ the sequence $(I_k)_{k>\max\{i,j\}}$ is also exchangeable. Hence the limit
$$d(i,j)=\lim_{n \to \infty} \frac{1}{n} \sum_{k=1}^n I_k$$
exists almost surely by de Finetti's theorem. Note that $d$ is random. 

\begin{lemma}\label{lemma:ultrametric}
$d$ is almost surely an ultra--metric on $\NN$, that is for all $i,j,k\in \NN$ we have:
\begin{enumerate}
\item $d(i,j)\geq 0$, and $d(i,j)=0 \Leftrightarrow i=j$. 
\item $d(i,j)=d(j,i).$
\item $d(i,k)\leq \max\{ d(i,j),d(j,k)\}.$
\end{enumerate}
\end{lemma}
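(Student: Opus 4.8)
The plan is to verify the three ultra-metric axioms one at a time, deducing each combinatorial fact we need from the axioms (C1)--(C4) via the finite-tree dictionary of Lemma~\ref{lemma:den_to_tree}, and invoking de~Finetti's theorem a second time only where strict positivity is at stake. Non-negativity is clear since every $I_k\in\{0,1\}$, and symmetry is immediate from (C1), which gives $(i,j)\sim(j,i)$ so that the two defining families of indicators agree. For $d(i,i)=0$, restrict $\mathcal D$ to the two labels $\{i,k\}$: by Lemma~\ref{lemma:den_to_tree} this restriction is a genuine plane tree in $\TT_2^{\mathrm{labelled}}$ --- a cherry --- in which the leaf $(i,i)$ is a maximal element of $\preceq$; hence $(i,i)\preceq(k,k)$ can only hold for $k=i$, so $\#\{k\le n:(i,i)\preceq(k,k)\}=1$ for $n\ge i$ and the limit is $0$.

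For the strong triangle inequality I first record a three-point lemma: for distinct $i,j,k$, restricting $\mathcal D$ to $\{i,j,k\}$ gives a tree in $\TT_3^{\mathrm{labelled}}$ (Lemma~\ref{lemma:den_to_tree}), and inspecting the finitely many such trees shows that among the three classes $(i,j),(i,k),(j,k)$ two of them coincide and are $\preceq$ the third; alternatively this is read off from (C2) and (C4), which give $a((i,j),(i,k))=a((i,j),(j,k))=a((i,k),(j,k))=\min_\preceq\{(i,j),(i,k),(j,k)\}$, a minimum which must itself be one of the three elements. Next, the ``fringe mass'' functional $v\mapsto\ell(v):=\lim_{n}\tfrac1n\#\{m\le n: v\preceq(m,m)\}$ is well defined for every class $v$ by exactly the de~Finetti argument already used for $d$, satisfies $d(i,j)=\ell((i,j))$, and is non-increasing along $\preceq$ (if $v\preceq v'$ then $\{m: v'\preceq(m,m)\}\subseteq\{m: v\preceq(m,m)\}$). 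Combining this monotonicity with the three-point lemma, in each of the three possible configurations the two equal-and-smallest classes carry the common largest $\ell$-value, which equals $\max\{d(i,j),d(i,k),d(j,k)\}$ and dominates the remaining one; in particular $d(i,k)\le\max\{d(i,j),d(j,k)\}$.

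The remaining, and only non-formal, point is strict positivity: $d(i,j)>0$ whenever $i\ne j$. Fix such $i,j$ and, for $m\ne i$, write $v_m$ for the class $(i,m)$. Restricting $\mathcal D$ to $\{i,m,m'\}$ shows any two of these are $\preceq$-comparable --- they lie on the ancestral line of the leaf $(i,i)$ --- so $(v_m)_{m\ne i}$ is an exchangeable sequence of points in a totally ordered set; moreover $(i,j)\preceq(m,m)$ holds if and only if $v_j\preceq v_m$ (again by the three-point lemma), so $d(i,j)=\lim_n\tfrac1n\#\{m\le n: v_j\preceq v_m\}$. By de~Finetti's theorem this exchangeable family is governed by a random probability measure $\rho$ on $[0,1]$ together with i.i.d.\ samples $U_m\sim\rho$ satisfying $v_m\preceq v_{m'}\Leftrightarrow U_m\le U_{m'}$ almost surely, whence, by the strong law of large numbers conditionally on $(\rho,U_j)$,
\[
 d(i,j)=\rho\bigl([U_j,1]\bigr)\qquad\text{a.s.}
\]
The set $B_\rho:=\{u\in[0,1]:\rho([u,1])=0\}$ is an upper set, hence an interval with left endpoint $\beta:=\inf B_\rho$, and a monotone-convergence argument gives $\rho(B_\rho)=0$ in both cases $B_\rho=[\beta,1]$ and $B_\rho=(\beta,1]$. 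Since $U_j\sim\rho$, it follows that $\PP(d(i,j)=0\mid\rho)=\rho(B_\rho)=0$, so $d(i,j)>0$ almost surely, and a union bound over the countably many pairs $i\ne j$ gives all of this simultaneously.

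I expect this last step to be the main obstacle: the single exchangeable sequence $(I_k)_k$ does not by itself exclude $d(i,j)=0$, since its de~Finetti parameter could vanish on a positive-probability event, and one genuinely has to exploit that the threshold $v_j$ is itself a $\rho$-sample drawn from the same directing measure that governs the other attachment points. Everything else is bookkeeping with (C1)--(C4) through Lemma~\ref{lemma:den_to_tree}, together with the elementary monotonicity of $v\mapsto\ell(v)$.
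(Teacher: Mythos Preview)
Your treatment of symmetry, $d(i,i)=0$, and the ultrametric inequality is correct and essentially matches the paper's: both read the three-leaf configuration off the finite-tree dictionary and use monotonicity of the fringe-mass functional along $\preceq$.

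For strict positivity you take a genuinely different route, and this is where the gap lies. The assertion that ``by de~Finetti's theorem this exchangeable family is governed by a random probability measure $\rho$ on $[0,1]$ together with i.i.d.\ samples $U_m\sim\rho$ satisfying $v_m\preceq v_{m'}\Leftrightarrow U_m\le U_{m'}$ almost surely'' is not a consequence of de~Finetti's theorem. The $v_m$ take values in a \emph{random} totally ordered set, so de~Finetti does not apply directly; and if you first push the $v_m$ into $[0,1]$ via any natural coordinate (say $U_m:=1-d(i,m)$), you only get the one-sided implication $v_m\preceq v_{m'}\Rightarrow U_m\le U_{m'}$. The reverse implication can fail when $v_m\prec v_{m'}$ strictly but the interval $[v_m,v_{m'})$ has density zero, and nothing you have written rules this out. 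A representation of the kind you want does exist (it is essentially the theory of exchangeable composition structures, cf.\ Gnedin--Pitman), but it is a separate theorem that needs to be cited or proved, and the biconditional must be established rather than assumed. Without it, the identity $d(i,j)=\rho([U_j,1])$ and the subsequent $\rho(B_\rho)=0$ argument are floating.

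The paper bypasses this entirely with a short combinatorial count. Using the same de~Finetti step you already invoked for the single sequence $(I_k)_k$, one sees that $\{d(i,j)=0\}$ agrees almost surely with $\{I_k=0\text{ for all }k\neq i,j\}$, which on the level of $T_n$ says that the leaves labelled $i$ and $j$ form a cherry for every $n$. Since $T_n$ has at most $n/2$ cherries and the leaf labels are exchangeable, the probability that $i$ and $j$ sit in the same cherry is at most $\tfrac{n}{2}\cdot\tfrac{2}{n(n-1)}\to 0$. This avoids any representation theorem for ordered exchangeable families and uses only the uniform labelling.
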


\begin{proof}
The proof of this lemma is analogous to the proof of \cite[Lemma 6.1]{evans_doob-martin_2017}, we repeat it for the reader's convenience. Notice that little changes in going from the binary trees of \cite{evans_doob-martin_2017} to multi--furcating in our setting.

The symmetry of $d$ and $d(i,i)=0$ follow readily from the definition of $I_k$.
We show that $i \neq j$ implies $d(i,j)>0$ almost surely. To this end, we first observe that the events $\{d(i,j)=0\}$ and $\{\forall k \notin \{i,j\}:I_k=0\}$ agree almost surely. Indeed, if we had $\PP(I_k=1)>0$ for some $k$, then this would be true for infinitely many $k$, by exchangeability, and then by de--Finetti's theorem we would then have $d(i,j) >0$. Hence, we almost surely have
$$\big\{d(i,j)=0\big\}=\big\{\forall k \notin \{i,j\}:I_k=0\big\}= \big\{ \nexists k \notin \{i,j\}: (i,j) \preceq k \big\}$$
where the second equality follows from the definition of $I_k$. On the level of trees this means that for all $n\geq \max\{i,j\}$, the leaves labelled $i$ and $j$ are attached to the same vertex in $T_n$ and no other leaves or subtrees are attached to the same vertex. The authors of \cite{evans_doob-martin_2017} call this a \textit{cherry}. 

We now want to estimate the probability of the event that $i$ and $j$ are part of the same cherry in $T_n$, equivalently in the dendritic system restricted to $[n]$. Because $T_n$ has $n$ leaves, the number of cherries is at most $\frac{n}{2}$. Recall that the leaves are labelled exchangeably. This means that we can relabel the leaves of $T_n$ uniformly without changing the distribution. The probability that the labels $i$ and $j$ are part of the same cherry is at most $\frac{n}{2}\frac{2}{n(n-1)}$. This allows us to conclude
$$\PP\big( d(i,j)=0 \big) \leq \limsup_{n \to \infty} \PP \big(i \text{ and } j \text{ form a cherry} \big)\leq \limsup_{n \to \infty}\frac{n}{2}\frac{2}{n(n-1)}=0$$
which is equivalent to $d(i,j)>0$ almost surely for $i\neq j$. 

Lastly, consider the subtree spanned in $T_n$ by the leaves labelled $i,j,k$. We see that we have either $(i,k)=(j,k)\preceq (i,j)$ or a permutation thereof for all $n\geq \max\{i,j,k\}$. 
In the limit as $n\to\infty$ this entails $d(i,k)=d(j,k)\geq d(i,j)$ or a permutation thereof. In any case, the ultra--metric inequality $d(i,k) \leq \max\{ d(i,j), d(j,k) \}$ is satisfied.  
\end{proof}

Given an ultra--metric $d$ on $\NN$ that is bounded above by $1$, we can associate a coalescent and a real tree to it in a canonical way, see for example Evans \cite[Example 3.41]{evans_probability_2008} 

We explain this procedure here. Define a family of equivalence relations $(\equiv_t, t\in [0,1])$ on $\NN$ by declaring $i\equiv_t j$ if and only if $d(i,j) \leq t$. Notice that elements of $\NN$ can be identified with equivalence classes of $\equiv_0$ and that all elements of $\NN$ are equivalent under $\equiv_1$. 
Now, we extend $d$ to a metric of pairs of the form $(A,t)$ where $A$ is an equivalence class of $\equiv_t$, call this set $\bfS^\circ$. Given $(A,t)$ and $(B,s)$ we set
$$H((A,t),(B,s)) = \inf \big\{ u \geq \max \{s,t\}: k\equiv_u \ell, \forall k\in A, \ell \in B \big\}$$
and
$$d((A,t),(B,s)) = H((A,t),(B,s)) - \frac{s+t}{2}.$$
One can check that $d((\{i\},0),(\{j\},0))=d(i,j)$, so $d$ is an extension of the previous metric. Further, one can check that $d$ is indeed a metric and that the metric completion of $(\bfS^\circ, d)$ is a real tree, call it $(\bfS, d)$. $\bfS$ can be endowed by an ancestral order $<$ by setting $(A,t)<(B,s)$ if $s<t$ and $B\subset A$. Root $\bfS$ at the minimal element of $<$, call the root $r$.

Recall that we obtained the dendritic system $\mathcal{D} = (\NN, \sim, \preceq, p)$ from a tree--valued Markov chain $(T_n ,n\geq 1)$. $\mathcal{D}$ restricted to $[n]$ corresponds to $T_n^+$ via Lemma \ref{lemma:den_to_tree} where the $+$ signifies that we have added leaf labels. Remove the planar order from $T_n$ but keep the leaf labels and call the resulting tree $T_n^{\text{unordered}}$.
Similarly to Lemma \ref{lemma:den_to_tree}, $T_n^{\text{unordered}}$ corresponds to $([n], \sim, \preceq)$. 

Further, we can consider $\bfS$ restricted to the set spanned by $\{(\{1\},0),\ldots,(\{n\},0), r\}$. We call the tree structure of this set -- as combinatorial, leaf--labelled tree -- $S_n$. Write $(i,j)\approx_n (k,\ell)$ if the most recent common ancestor of $i$ and $j$ is also the most recent common ancestor of $k$ and $\ell$ in $S_n$. 

\begin{lemma}\label{lemma:t_unordered}
As leaf--labelled, non--plane trees, we have $S_n = T_n^{\text{unordered}}$ almost surely. 
\end{lemma}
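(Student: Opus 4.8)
The plan is to show that the two leaf-labelled combinatorial trees $S_n$ and $T_n^{\text{unordered}}$ encode the same genealogical partial order on $[n]$, since a leaf-labelled, non-plane tree in $\TT_n^{\mathrm{labelled}}$ is determined by the equivalence relation ``most recent common ancestor'' together with the induced ancestral order (this is exactly the correspondence underlying Lemma \ref{lemma:den_to_tree}, now applied to the non-planar structure $([n],\sim,\preceq)$). Concretely, it suffices to prove that for all $i,j,k,\ell\in[n]$ we have $(i,j)\approx_n(k,\ell)$ if and only if $(i,j)\sim(k,\ell)$, and that the ancestral order among the most recent common ancestors agrees in $S_n$ and in $([n],\sim,\preceq)$. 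Both reduce to a single computation: identifying, inside the real tree $\bfS$, the point that is the most recent common ancestor of $(\{i\},0)$ and $(\{j\},0)$, and relating it back to the dendritic data.

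First I would record that, by construction of $\bfS$ from the ultra-metric $d$, the most recent common ancestor of $(\{i\},0)$ and $(\{j\},0)$ in $\bfS$ is the point $(A,t)$ with $t = d(i,j)$ and $A$ the $\equiv_t$-class of $i$ (equivalently of $j$); this follows from the definitions of $H$ and of the ancestral order $<$ on $\bfS$. Hence $(i,j)\approx_n(k,\ell)$ in $S_n$ if and only if $d(i,j)=d(k,\ell)$ \emph{and} $i\equiv_{d(i,j)} k$. So the task becomes: show that $d(i,j)=d(k,\ell)$ and $i\equiv_{d(i,j)}k$ holds if and only if $(i,j)\sim(k,\ell)$ in $\mathcal D$. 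Similarly, $(\{i\},0)\wedge(\{j\},0) < (\{k\},0)\wedge(\{\ell\},0)$ in $\bfS$ should be shown equivalent to $(i,j)\prec(k,\ell)$ in $\mathcal D$, using that $<$ on $\bfS$ orders $(A,t)<(B,s)$ exactly when $s<t$ and $B\subset A$, i.e.\ exactly when $d(k,\ell)<d(i,j)$ and the $\equiv_{d(k,\ell)}$-class containing $k,\ell$ is contained in the one containing $i,j$.

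The key input tying these together is already contained in the proof of Lemma \ref{lemma:ultrametric}: for fixed $n$ and fixed labels in $[n]$, the event $d(i,j)=d(k,\ell)$, $i\equiv_{d(i,j)}k$ can be read off from the finite dendritic system $\mathcal D|_{[n]}$ almost surely, because the indicator sequences $(I_m^{(i,j)})_{m>\max(i,j)}$ and $(I_m^{(k,\ell)})_{m>\max(k,\ell)}$ and their asymptotic frequencies are measurable functions of $\mathcal D$, and the ultra-metric inequality analysis there shows that the ordering of the quantities $d(i,j)$ reflects precisely the nesting of the classes $\{m : (i,j)\preceq m\}$. Thus I would argue: $\{m\in\NN : (i,j)\preceq m\}$ and $\{m\in\NN : (k,\ell)\preceq m\}$ coincide if and only if $(i,j)\sim(k,\ell)$ — the ``if'' is immediate from (C3), and the ``only if'' uses (C4) together with the a.s.\ positivity $d(i,j)>0$ for $i\ne j$ established in Lemma \ref{lemma:ultrametric} to rule out distinct classes having the same frequency. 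Likewise strict inclusion of these sets corresponds to $\prec$. Translating ``same set'' into ``same $\equiv_{d(i,j)}$-class and same value $d(i,j)$'' and feeding this into the description of $\wedge$ in $\bfS$ above yields $(i,j)\approx_n(k,\ell)\iff(i,j)\sim(k,\ell)$ and the matching of ancestral orders, hence $S_n = T_n^{\text{unordered}}$ almost surely.

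The main obstacle I anticipate is the ``only if'' direction: showing that two \emph{different} genealogical classes of $\mathcal D|_{[n]}$ cannot produce the same value of $d$, i.e.\ that the a.s.\ coalescent times are genuinely distinct across non-equivalent pairs. This requires combining the strict-positivity argument of Lemma \ref{lemma:ultrametric} (applied not to single leaves but to the ``new'' leaves separating two branchpoints) with exchangeability, to see that whenever one class is a \emph{strict} subset of another the corresponding frequencies differ almost surely; once that is in hand, the remaining identifications are bookkeeping translations between the dendritic relations $\sim,\preceq$ and the real-tree relations $\wedge,<$.
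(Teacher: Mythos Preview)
Your plan is correct and lands on the same crux as the paper: the only nontrivial point is that strictly nested branchpoints in $\mathcal D$ must have \emph{strictly} different $d$-values, and this is settled by an exchangeability/counting argument in the spirit of the cherry estimate from Lemma \ref{lemma:ultrametric}. Your final paragraph identifies exactly this obstacle and the right tool.

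The paper streamlines the bookkeeping compared to your outline. Rather than working with general quadruples $(i,j,k,\ell)$ and the leaf-set characterisation $\{m:(i,j)\preceq m\}$, it first observes that a leaf-labelled non-plane tree is determined by the relationships among \emph{triples} of leaves, so it suffices to show, for distinct $i,j,k$, that $(j,k)\prec(i,j)$ in $\mathcal D$ if and only if $d(j,k)>d(i,j)$. The easy direction is immediate; for the hard direction the paper notes that on $\{(j,k)\prec(i,j)\}\cap\{d(j,k)=d(i,j)\}$ the vertex $(j,k)$ in every $T_m^{\text{unordered}}$ has exactly two children, one ancestral to $(i,j)$ and the other the single leaf labelled $k$, and by exchangeable relabelling this event has probability at most $1/(m-2)\to 0$. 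This is the concrete form of the ``strict-positivity applied to the new leaves separating two branchpoints'' argument you sketch. Reducing to triples first spares you the translation between ``same leaf set'' and ``same $\equiv_{d(i,j)}$-class with equal $d$-value'', which in your route still requires the same strict-inequality fact at the end; so the approaches are equivalent, with the paper's being a bit more economical.
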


\begin{proof}
Recall by the bijection between dendritic systems (with the planarity function removed) and discrete trees, $T_n^{\text{unordered}}$ corresponds to $([n], \sim, \preceq)$ and that $S_n$ corresponds to $([n], \approx_n, <)$ where $<$ is induced by the ancestral order on $\bfS$. Hence we check that $([n], \sim, \prec)$ and $([n], \approx_n, <)$ have the same distribution. 

Fix distinct $i,j,k \in [n]$. Observe that $(i,k) \approx_n (j,k) < (i,j)$ if and only if $d(i,j)< d(i,k) = d(j,k)$, and $(i,j) \approx_n (i,k) \approx_n (j,k)$ if and only if $d(i,j)=d(i,k)=d(j,k)$. This holds because $S_n$ is derived from $\bfS$.

We will prove that $(i,k)\sim(j,k) \prec (i,j)$ if and only if $d(i,j)< d(i,k) = d(j,k)$ as well as $(i,j) \sim (i,k) \sim (j,k)$ if and only if $d(i,j)=d(i,k)=d(j,k)$.
Because trees are uniquely determined by the relationship between triples of vertices, this will imply that $S_n=T_n^{\text{unordered}}$. To show this, we use ideas of \cite[Lemma 6.2]{evans_doob-martin_2017} which are similar to ideas used in the above proof of Lemma \ref{lemma:ultrametric}.

Note that $(i,k) \sim (j,k)$ if and only if we do not have $(i,k) \prec (j,k)$ or $(j,k) \prec (i,k)$. Similarly, $d(i,k)=d(j,k)$ if and only if we do not have $d(i,k) < d(j,k)$ or $d(j,k)< d(i,k)$. By contraposition it thus suffices to show that $(j,k) \prec (i,j) $ if and only if $d(j,k)> d(i,j)$. 

On the one hand, $d(j,k) > d(i,j)$ implies $(j,k) \prec (i,j)$ in $\mathcal{D}$ and this does not change when we restrict $\mathcal{D}$ to $[n]$. On the other hand, $(j,k) \prec (i,j)$ clearly implies $d(j,k) \geq d(i,j)$. The crucial part is to show that $(j,k) \prec (i,j)$ implies $d(j,k) \neq d(i,j)$.

Similar to Lemma \ref{lemma:ultrametric}, by exchangeability and de--Finetti's theorem we almost surely have 
\begin{equation*}
    \big\{(j,k) \prec (i,j)\big\} \cap \big\{d(j,k)=d(i,j) \big\} = \big\{(j,k) \prec (i,j) \big\} \cap \big\{ \nexists \ell \in \NN \backslash\{k\}: (j,k) \prec \ell, (i,j) \nprec k \big\}.
\end{equation*}

We now want to estimate the probability of the latter event for the tree $T_m^{\text{unordered}}$ where $m > n$. Here, the event corresponds to the vertices $(j,k)$ and $(i,j)$ being connected by a single edge and further $(j,k)$ has only one other offspring, namely the leaf labelled $k$. Call this event $\mathcal{A}$.

We now proceed similarly to Lemma \ref{lemma:ultrametric}. Recall that the leaves are labelled exchangeably. This means that we can relabel the leaves of $T_n^{\text{unordered}}$ uniformly without changing the distribution. We do this, and condition on $(j,k) \prec (i,j)$. The conditional probability of $\mathcal A$ then is $0$ if $(j,k)$ has more than two children or if the child of $(j,k)$ which is not an ancestor of $(i,j)$ has children on its own. If this is not the case, i.e.\ when the only child of $(j,k)$ besides $(i,j)$ is a single leaf, then the probability that this leaf is labelled $k$ is $\frac{1}{m-2}$ which converges to $0$ as $m\to \infty$. This yields
$$\PP \big( \big\{(j,k) \prec (i,j)\big\} \cap \big\{d(j,k)=d(i,j) \big\} \big)=0.$$
Hence $(j,k) \prec (i,j)$ implies $d(j,k) \neq d(i,j)$.
\end{proof}

The considerations of this section lead us to the following statement, 
an analogue of \cite[Proposition 6.3]{evans_doob-martin_2017}. 
This proposition states that we can encode all information contained in $\mathcal{D}$ except for the planar order in the real tree $\bfS$. 

\begin{prop}\label{prop:there_is_S}
There is a rooted real tree $(\bfS, d, r)$ and an injective map $\iota:\mathcal{D} \to (\bfS, d, r)$ such that the tree structure of the span of $\iota([n])$ and $r$ is $T_n^{\text{unordered}}$ for all $n\in \NN$ as combinatorial trees.
Further, the ancestral order $\preceq$ of $\mathcal D$ coincides with the ancestral order partial order $<$ of $\bfS$. 
\end{prop}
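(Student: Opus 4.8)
The plan is to assemble $\iota$ from the ultra--metric $d$ of Lemma~\ref{lemma:ultrametric} and the coalescent tree $(\bfS,d,r)$ constructed just above, using Lemma~\ref{lemma:t_unordered} as the dictionary between the combinatorial data $(\NN,\sim,\preceq)$ of $\mathcal D$ and the metric/order structure of $\bfS$. Recall that a vertex of $\mathcal D$ is a $\sim$--class $[(i,j)]$ for some $i,j\in\NN$, that $\equiv_t$ denotes the equivalence relation $i\equiv_t j\iff d(i,j)\le t$ on $\NN$, and that points of $\bfS^\circ$ are pairs $(A,t)$ with $A$ a $\equiv_t$--class. I would define
\[
\iota\big([(i,j)]\big)=\big(A_{ij},\,d(i,j)\big),
\]
where $A_{ij}$ is the $\equiv_{d(i,j)}$--class containing $i$; geometrically this is the most recent common ancestor in $\bfS$ of $(\{i\},0)$ and $(\{j\},0)$, and for $i=j$ it is simply $(\{i\},0)$. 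In particular $\iota$ applied to the leaf $[(i,i)]$ is $(\{i\},0)$, so (with the usual abuse of notation) $\iota([n])=\{(\{i\},0):i\le n\}$.

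The first thing to check is that $\iota$ is well defined on $\sim$--classes. For this I would extract from the proof of Lemma~\ref{lemma:t_unordered} the statement that, for indices in $\NN$, $(i,j)\sim(k,\ell)$ holds in $\mathcal D$ if and only if $d(i,j)=d(k,\ell)=:t$ and $i\equiv_t k$ --- equivalently, iff the most recent common ancestor of $i,j$ equals that of $k,\ell$ in the combinatorial tree $S_n$ for every $n\ge\max\{i,j,k,\ell\}$, which in turn is read off from $d$ exactly as in that proof. Granting this equivalence, $(i,j)\sim(k,\ell)$ forces $\iota([(i,j)])=(A,t)=\iota([(k,\ell)])$, so $\iota$ descends to the vertex set of $\mathcal D$, and the same equivalence run backwards gives injectivity: if $(A_{ij},d(i,j))=(A_{k\ell},d(k,\ell))$ then $d(i,j)=d(k,\ell)$ and $i\equiv_{d(i,j)}k$, hence $(i,j)\sim(k,\ell)$.

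Next I would verify the span correspondence. By construction, $\bfS$ restricted to $\spn\{(\{1\},0),\dots,(\{n\},0),r\}$ is the finite combinatorial leaf--labelled tree $S_n$, whose non--leaf, non--root vertices are precisely the most recent common ancestors of pairs from $\{(\{i\},0):i\le n\}$, that is, the points $\iota([(i,j)])$ with $i,j\le n$. Thus $\iota$ maps the vertex set of $\mathcal D$ restricted to $[n]$ onto the vertex set of $S_n$ preserving adjacency and the root, and Lemma~\ref{lemma:t_unordered} gives $S_n=T_n^{\text{unordered}}$ as leaf--labelled combinatorial trees; this is exactly the asserted correspondence, and since $\spn(\iota([n]))$ depends only on the globally defined $d$ the restrictions are automatically consistent across $n$. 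Finally, for the order statement: $T_n^{\text{unordered}}$ corresponds to $([n],\sim,\preceq)$ by (the proof of) Lemma~\ref{lemma:den_to_tree}, so the genealogical order on $T_n^{\text{unordered}}$ is $\preceq$ restricted to $[n]$, while the genealogical order on $S_n\subset\bfS$ is the restriction of the ancestral order of $\bfS$. Equating the two via Lemma~\ref{lemma:t_unordered} and taking the union over $n\ge\max\{i,j,k,\ell\}$ shows that $(i,j)\preceq(k,\ell)$ in $\mathcal D$ iff $\iota([(i,j)])\preceq\iota([(k,\ell)])$ in $\bfS$, which is the last claim of Proposition~\ref{prop:there_is_S}.

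The proof is essentially bookkeeping once Lemmas~\ref{lemma:ultrametric} and~\ref{lemma:t_unordered} are in hand; the one point demanding care is the clean statement and use of the dictionary ``$(i,j)\sim(k,\ell)$ (resp.\ $\preceq$) $\iff$ equality (resp.\ inclusion) of the corresponding $\equiv_t$--classes and heights'', since this is where the planar--order--free content of $\mathcal D$ is matched against $\bfS$. The only other subtlety is the harmless planted--root discrepancy between the tree built from $\mathcal D|_{[n]}$ and $T_n\in\TT_n$, which is absorbed into the phrase ``corresponds to'' in the statement.
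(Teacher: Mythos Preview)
Your proof is correct and follows the same route as the paper's own argument: both rely on Lemma~\ref{lemma:den_to_tree} to identify $T_n^{\text{unordered}}$ with $([n],\sim,\preceq)$, on Lemma~\ref{lemma:t_unordered} to match this with $S_n$, and on the explicit construction of $\bfS$ to furnish the map $\iota$. The paper's proof is terse (three sentences citing the lemmas), whereas you spell out the definition of $\iota$ on arbitrary $\sim$--classes, verify well-definedness and injectivity via the dictionary $(i,j)\sim(k,\ell)\iff$ equality of MRCAs in $\bfS$, and check order-preservation explicitly; this extra detail is entirely in line with the paper's intent and fills in exactly what the short proof leaves implicit.
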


\begin{proof}
    For each $k\in \NN$, we set $\iota(k):= (\{k\},0)\in \bfS$. By construction, $S_n$ is the tree structure of the span of $\iota([n])$ and $r$. By Lemma \ref{lemma:t_unordered}, $T_n^{\text{unordered}}$ and $S_n$ are almost surely the same tree.
\end{proof}

Note that $\bfS$ is random because $\mathcal D$ is random. In the next section we study the distribution of $\bfS$.

\subsection{From the coalescent tree to a sampling from a real tree}

We have encoded the non--planar part of the dendritic system in an ultra--metric $d$ which we represented as a real tree. This coalescent tree is random and we want to find a more algorithmic representation for it. This is done by applying Gufler \cite[Theorem 3.9]{gufler_representation_2018}, we state this theorem in our notation. This theorem states that we can represent $\bfS$ by attaching some extra branches to a deterministic real tree $\bfT$.

We formalise this construction: assume we are given a rooted real tree $(\bfT,d_\bfT,r)$ and a probability measure $m$ on $\bfT \times \RR_+$. Consider a sequence of $i.i.d.$ samples $(\xi_i,v_i)_{i\in \NN}$ where $(\xi_i,v_i)$ is distributed according to $m$ for every $i$. Define 
$$\delta(i,j)=\big(d_\bfT (\xi_i,\xi_j)+v_i+v_j \big)\II_{i \neq j},$$
which can be shown to be a pseudo--metric. The idea behind this construction is that we attach a leaf labelled $i$ with a branch of length $v_i$ to the point $\xi_i$ in $\bfT$. $\delta$ is then the path metric on $\bfT$ plus the lengths of the branches. 

Let $(\bfS, d_\bfS, r)$ be the real tree of Proposition \ref{prop:there_is_S}. Let $d$ be the induced ultra--metric on $\NN$ by restricting $\bfS$ to $\iota(\NN)$. This is the ultra--metric which we used to construct $\bfS$. 

\begin{theorem}\cite[Theorem 3.9]{gufler_representation_2018}\label{thm:gufler}
There exists a rooted real tree $(\bfT,d_\bfT,r)$ and a probability measure $m$ on $\bfT \times \RR_+$ such that $\delta = d$ where $d$ is the ultra--metric of the coalescent and $\delta$ is constructed as above. Moreover, under the assumption of ergodicity on $d$, $(\bfT,d_\bfT,r)$ and $m$ are deterministic.
\end{theorem}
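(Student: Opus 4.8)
The statement is \cite[Theorem~3.9]{gufler_representation_2018}, so in principle one invokes it directly; here I sketch the argument and its specialisation to the coalescent at hand. The plan is to realise the pair $(\bfT,m)$ inside the tree $\bfS$ of Proposition~\ref{prop:there_is_S}, obtain the identity $\delta=d$ pathwise, and then upgrade ``random'' to ``deterministic'' via ergodicity.

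For each $i$ put $\xi_i:=\pi(\iota(i))\in\bfS$ and let $v_i:=d_{\bfS}(\iota(i),\pi(\iota(i)))\ge 0$ be the length of the pendant edge joining $\iota(i)$ to $\xi_i$; let $\bfT$ be the closed span of $\{\xi_i\}_{i\in\NN}$, rooted at $r$. Since $\bfS$ carries no branch point strictly between $\iota(i)$ and $\xi_i$, the path from $\iota(i)$ to $\iota(j)$ in $\bfS$ runs through $\xi_i$ and $\xi_j$, whence
\[
 d(i,j)=d_{\bfS}(\iota(i),\iota(j))=d_{\bfT}(\xi_i,\xi_j)+v_i+v_j\qquad(i\neq j),
\]
so the decomposition $\delta=d$ holds pathwise with these choices, and by Lemma~\ref{lemma:t_unordered} the finite tree spanned by $\xi_1,\dots,\xi_n$ underlies $T_n^{\text{unordered}}$. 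It then remains to identify the law of $(\xi_i,v_i)_{i\in\NN}$: exchangeability of $\mathcal D$ makes it an exchangeable sequence valued in the (random) Polish space $\bfS\times\RR_{+}$, hence by de Finetti it is conditionally i.i.d.\ with some random directing probability measure $m$, and $\bfT=\spn(\supp(\mu))$ where $\mu$ is the first marginal of $m$. If $d$ is ergodic the relevant tail $\sigma$-field is $\PP$-trivial, so $m$ --- and therefore $(\bfT,d_{\bfT},r)$ --- is a.s.\ non-random; this yields the claimed $m$ and $\mu$.

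The substantive content, and the step I expect to be the real obstacle, is the bookkeeping that makes the previous paragraph rigorous: one must encode the random marked tree $(\bfT,d_{\bfT},r,m)$ as a measurable function of $d$ inside a Polish space --- a Gromov-weak topology on marked metric measure spaces --- so that the ergodicity hypothesis on $d$ transfers to it, and one must verify that the marks in $\RR_{+}$ are genuinely necessary, i.e.\ that atoms of $\mu$ located at isolated leaves of the coalescent correspond precisely to strictly positive pendant lengths $v_i$ (this is exactly why $\bfT$ is spanned by the $\pi(\iota(i))$ and not by the $\iota(i)$). Given that framework, uniqueness of $(\bfT,m)$ up to measure- and root-preserving isometry is the reconstruction theorem for exchangeable distance matrices (Vershik; Gromov's reconstruction of a metric measure space from its matrix distribution), and the weak convergence $\tfrac1n\sum_{i\le n}\delta_{(\xi_i,v_i)}\to m$ reduces to the strong law of large numbers in the Gromov-weak topology.
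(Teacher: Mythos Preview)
Your sketch is correct and matches the paper's own treatment: the paper does not prove this theorem either but cites Gufler and gives a short comment outlining exactly the construction you describe --- take $\bfT$ to be the closure of the span of $\{\pi(\iota(i))\}$, form the empirical measures $m_n=\tfrac{1}{n}\sum_{i\le n}\delta_{(\pi(\iota(i)),\,d_{\bfS}(\iota(i),\pi(\iota(i))))}$, pass to the weak limit $m$ via exchangeability and de~Finetti--style arguments, and invoke ergodicity for determinism. Your additional remarks on the measurability bookkeeping in the Gromov--weak framework and on the role of the pendant lengths $v_i$ are a reasonable expansion of what the paper leaves to the reference.
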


Denote the marginal distribution of $m$ on $\bfT$ by $\mu$. In fact, we can choose $\bfT$ to be the span of $\{\pi(\iota(i))\}_{i\in \NN}$ where $\pi$ is the map that maps isolated leaves, i.e.\ leaves that are not accumulation points of other leaves, to the closest branchpoint. If $x$ is a leaf but not isolated, we set $\pi(x) = x$.


Let us comment on how this theorem is proved. Define $\bfT$ to be closure of the smallest subtree of $\bfS$ that contains $(\pi(y_i))_{i \geq 1}$. Next, let $m_n = \sum_{i=1}^n \delta_{(\pi(y_i), d_\bfS(y_i,\pi(y_i)))}$ the empirical measure on $\pi(y_i)$ with the associated leaf lengths. $m$ then is the weak limit of $m_n$ as $n\to \infty$. Gufler's proof makes use of exchangeability and de Finetti--style theorems which are used to show that the weak limit exists as well as that ergodicity implies that $\bfT$ is deterministic. 

Recall the properties $(C1)$--$(C4)$ of dendritic systems from Definition \ref{def:planar_den_system}.
At this point we have successfully encoded $(C1)$--$(C4)$ in a real tree $(\bfT, r)$ with associated probability measure $\mu$, the distribution $m$ does not matter if we only want to retrieve the dendritic system. Let us explain how to obtain $(\NN, \sim, \preceq)$ of our dendritic system from $(\bfT,r, \mu)$. Given $(\bfT, r, \mu)$, sample $(\xi_i)_{i \in \NN}$ $i.i.d.$ from $\mu$. We define a random equivalence relation $\sim^*$ and a random ancestral order $\preceq^*$ on $\NN \times \NN$. 

\begin{enumerate}
    \item $(i,i)\sim^* (k,l)$ if and only if $(i,i)=(k,l)$.
    \item $(i,j)\sim^*(k,l)$ for $i\neq j,k\neq l$ if and only if $[r, \xi_i]\cap [r, \xi_j]=[r, \xi_l]\cap [r, \xi_k]$.
    \item The partial order $\preceq^*$ is inherited from the partial order on $S$ and adding $(i,j)\preceq^*(i,i)$ for $i \neq j$. This means for distinct $i,j,k,\ell \in \NN$ we have
    $$(k,\ell)\prec^* (i,j) \quad \text{   if  }  \quad \llbracket r,\xi_k\rrbracket\cap [r,\xi_\ell] \subsetneq \llbracket r,\xi_i\rrbracket \cap \llbracket r,\xi_j\rrbracket.$$
\end{enumerate}

\begin{prop}\label{prop:first_part_of_prop_evans}
The above--defined random relations $(\NN,\sim^*, \preceq^*)$ have the same distribution as $(\NN,\sim, \preceq)$ of the dendritic system $\mathcal D = (\NN,\sim, \preceq,p)$.
\end{prop}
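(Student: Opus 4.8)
The plan is to build an explicit coupling in which the relations $(\sim,\preceq)$ read off $\mathcal D$ coincide almost surely with the geometrically defined $(\sim^*,\preceq^*)$, and then to observe that this identifies their laws, since $(\sim^*,\preceq^*)$ is a fixed measurable functional of an i.i.d.\ $\mu$--sequence and all i.i.d.\ $\mu$--sequences have the same law.

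First I would fix the coupling coming from Theorem~\ref{thm:gufler}. Keeping the notation of Proposition~\ref{prop:there_is_S}, let $(\bfS,d_\bfS,r)$ be the coalescent tree, $\iota:\NN\to\bfS$ the injection, $\pi$ the map sending isolated leaves to their nearest branchpoint (and fixing non-isolated leaves), and set $\xi_i:=\pi(\iota(i))$ and $\bfT:=\overline{\spn\{\xi_i:i\in\NN\}}$. Theorem~\ref{thm:gufler} together with the standing ergodicity assumption gives that $(\bfT,d_\bfT,r)$ and the law $\mu$ of $\xi_1$ are deterministic and that $(\xi_i)_{i\in\NN}$ is i.i.d.\ $\mu$ (it consists of the first coordinates of the exchangeable sequence $(\xi_i,d_\bfS(\iota(i),\xi_i))_{i\in\NN}$, whose deterministic directing measure is the $m$ of Theorem~\ref{thm:gufler}, with first marginal $\mu$).

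The main work — and the step I expect to be the only genuine obstacle — is the geometric comparison. I would show that for $i\neq j$ the meet $\xi_i\wedge\xi_j$ in $\bfT$ (the point with $\llbracket r,\xi_i\rrbracket\cap\llbracket r,\xi_j\rrbracket=\llbracket r,\xi_i\wedge\xi_j\rrbracket$) equals the most recent common ancestor $\iota(i)\wedge_\bfS\iota(j)$ in $\bfS$: the latter is a branchpoint of $\bfS$ lying above both $\iota(i)$ and $\iota(j)$, hence above their nearest branchpoint ancestors $\xi_i$ and $\xi_j$, while any common ancestor of $\xi_i,\xi_j$ is a common ancestor of $\iota(i),\iota(j)$ and so lies below $\iota(i)\wedge_\bfS\iota(j)$. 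Since $\pi$ preserves the ancestral order, the partial order of $\bfT$ restricted to these meet points agrees with that of $\bfS$. Invoking Proposition~\ref{prop:there_is_S} — for $n\ge\max\{i,j\}$ the combinatorial tree of $\spn_\bfS(\iota([n]))$ is $T_n^{\mathrm{unordered}}$, and the vertex of $\mathcal D$ labelled $(i,j)$ corresponds to $\iota(i)\wedge_\bfS\iota(j)$ — I then obtain $(i,j)\sim(k,\ell)\iff \xi_i\wedge\xi_j=\xi_k\wedge\xi_\ell\iff (i,j)\sim^*(k,\ell)$, and for the strict order $(k,\ell)\prec(i,j)\iff \xi_k\wedge\xi_\ell\text{ is a strict ancestor of }\xi_i\wedge\xi_j\iff(k,\ell)\prec^*(i,j)$; the relations involving the diagonal pairs $(i,i)$ hold by definition on both sides. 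Hence $(\sim^*,\preceq^*)=(\sim,\preceq)$ almost surely in this coupling. The care needed here is purely bookkeeping: linking meets in $\bfS$, their $\pi$--images in $\bfT$, the branchpoints of the trees $T_n^{\mathrm{unordered}}$ and the two partial orders, while correctly handling atoms (where $\xi_i=\xi_j$ may occur) and isolated versus non-isolated leaves.

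Finally I would close with a transfer argument: the assignment $G:(x_i)_{i\in\NN}\mapsto(\sim^*,\preceq^*)$ is a fixed measurable map on $\bfT^{\NN}$ (measurability is immediate from the defining conditions on finite tuples), the previous step shows $G((\xi_i)_i)=(\sim,\preceq)$ almost surely, and the law of $G$ applied to an i.i.d.\ $\mu$--sequence does not depend on which such sequence is used; therefore $(\sim,\preceq)$ and the $(\sim^*,\preceq^*)$ built from a fresh i.i.d.\ $\mu$--sample have the same distribution, as claimed.
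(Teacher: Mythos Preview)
Your proposal is correct and is essentially a careful unpacking of the paper's proof, which consists of the single sentence ``This is a combination of Theorem~\ref{thm:gufler} and Proposition~\ref{prop:there_is_S}.'' You make explicit the coupling via $\xi_i=\pi(\iota(i))$, the identification of meets in $\bfS$ and $\bfT$, and the final transfer to a fresh i.i.d.\ sample --- all of which the paper leaves implicit.
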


\begin{proof}
    This is a combination of Theorem \ref{thm:gufler} and Proposition \ref{prop:there_is_S}.
\end{proof}

This means we have almost proven Proposition \ref{prop:evans}, except for the representation of the planarity function $p$. We will do this in the next section. 

\subsection{Encoding the planar structure}\label{section:encoding_p}

We complete the proof of Proposition \ref{prop:evans} by encoding the planar structure, i.e.\ the planarity function $p$.

\begin{proof}[Proof of Proposition \ref{prop:evans}.]

Recall Proposition \ref{prop:there_is_S} and let $\pi$ be the map that maps a leaf of $\bfS$ to the closest branchpoint. Set $\xi_i=\pi(\iota(i))$. By Theorem \ref{thm:gufler} and Proposition \ref{prop:first_part_of_prop_evans}, $(\xi_i)_{i \in \NN}$ is an exchangeable sequence of $\mu$--distributed random variables on $\bfT$.

Consider the array 
$$\{(\xi_i,\xi_j,p(i,j)) \}_{i,j\in \NN, i \neq j}.$$ 
This array takes values in sequences of $\bfT^2 \times \{\pm 1\}$--valued random variables and is jointly exchangeable.

This implies that there is a measurable function $F:(\bfT \times [0,1])^2 \times [0,1] \to \{\pm 1 \}$ with 
\begin{equation}\label{eq:rep3}
    \{\xi_i, \xi_j, p(i,j)) \}_{i,j\in \NN, i \neq j}  \overset{d}{=} \{\xi_i, \xi_j, F(\xi_i, U_i, \xi_j, U_j, U_{ij}) \}_{i,j\in \NN, i \neq j}.
\end{equation}
where $(U_i)_{i \in \NN}, (U_{ij})_{i,j\in \NN, i\neq j}$ are independent uniform random variables on $[0,1]$ with $U_{ij}=U_{ji}$ which are independent of $(\xi_i)_{i\in \NN}$. This is a general result from the Aldous--Hoover--Kallenberg theory for exchangeable arrays that we state and deduce as Lemma \ref{lemma:application_of_Aldous_Hoover_Kallenberg} later. 

The function $F$ satisfies some consistency relations which we will state and prove here. Let $Leb$ be the Lebesgue measure on $[0,1]$. For $\mu$--almost every $x,y,z \in \bfT$ and $Leb$--almost every $u,v,w,a,b,c$ we the following consistency relations.

\begin{enumerate} 
    \item [(F1)] $F(x,u,y,v,a) = -F(y,v,x,u,a)$,
    \item [(F2)] if $F(x,u,y,v,a)=F(y,v,z,w,b)$ then also $F(x,u,z,w,c)=F(x,u,y,v,a)$,
    \item [(F3)] if $\llbracket r,x \rrbracket \cap \llbracket r,y \rrbracket \notin \{\llbracket r,x \rrbracket, \llbracket r,y \rrbracket \}$ and $\llbracket r,y \rrbracket \subsetneq \llbracket r,z \rrbracket$ then $F(x,u,y,v,a)=F(x,u,z,w,b)$,
    \item [(F4)] if $\llbracket r,x \rrbracket \subsetneq \llbracket r,y \rrbracket \subsetneq \llbracket r,z \rrbracket$ then $F(x,u,y,v,a)=F(x,u,z,w,c)$.
\end{enumerate}

Let us prove these claims. Recall the consistency relations (P1)-(P4) of $p$ as specified in Definition \ref{def:planar_den_system}. Let $\xi_i, \xi_j, \xi_k$ be $i.i.d.$ $\mu$--random variables and let $U_i,U_j,U_k,U_{i k},U_{ij}, U_{j k}$ be independent $i.i.d.$ uniform random variables on $[0,1]$.  By Skorokhod's representation theorem, we can work on a probability space where \eqref{eq:rep3} is an almost--sure equality. The statements in the new probability will translate back to the claimed distributional statements claimed above.

\begin{enumerate}
    \item [(F1)] Firstly, by $(P1)$, we have
    $$p(i,j)=-p(i,j) \quad a.s. \implies F(\xi_i, U_i, \xi_j, U_j, U_{ij})=-F(\xi_j, U_j, \xi_i, U_i, U_{ij})\quad a.s..$$

    \item [(F2)] Secondly, consider the event $A_{ijk}=\{F(\xi_i, U_i, \xi_j, U_j, U_{ij}) = F(\xi_j, U_j, \xi_k, U_k, U_{j k}) = 1\}$. By $(P3)$ we have 
    $$\begin{cases}p(i,j) = 1 \\ p(j,k) = 1 \end{cases} \xrightarrow{(P3)}  \quad p(i,k)=1 \quad a.s.\implies\quad  F(\xi_i, U_i, \xi_k, U_k, U_{ik}) = 1 \quad \text{on } A_{ijk}. $$
    By $(P1)$ and $(F1)$, the same works if we replace $1$ by $-1$. 
    
    \item [(F3)] Thirdly, consider the event $B_{ijk} = \{\llbracket r,\xi_i\rrbracket \cap \llbracket r,\xi_j\rrbracket \notin \{\llbracket r,\xi_i\rrbracket, \llbracket r,\xi_j\rrbracket \}$ and $\llbracket r,\xi_j\rrbracket \subsetneq \llbracket r,\xi_k\rrbracket \}$. This implies that in the dendritic system we have $(i,j) \prec (j,k)$. Then on the intersection of $B_{ijk}$ and $\{p(i,j)=1$, $p((i,i),(j,k))=1\}$, we have by $(P4)$ that $p(i,k)=1$ which in turn is equivalent to $F(\xi_i, U_i, \xi_k, U_k, U_{i k}) =1$ on these events. The same works if we replace $1$ by $-1$ by $(P1)$ and $(F1)$. 
    
    \item [(F4)] Fourthly, consider the event $\{\llbracket r,\xi_i\rrbracket \subsetneq \llbracket r,\xi_j\rrbracket \subsetneq \llbracket r,\xi_k\rrbracket\}$. On this event we have that $(i,j) \prec (j,k)$. In this case $(P4)$ states that $p(i,j)=1$ implies that $p((i,i),(j,k))=1$ which in turn implies $p(i,k)=1=F(\xi_i, U_i, \xi_k, U_k, U_{ik})$. The same works if we replace $1$ by $-1$ by $(P1)$ and $(F1)$. 
\end{enumerate}
\end{proof}

Lastly, let us comment on why there is no consistency relation for $F$ which is derived from $(P2)$. To apply $(P2)$, we need two vertices of our dendritic system $x,y\in \mathcal D$ which satisfy $x \prec y$. This will never be the case for leaves of $\mathcal{D}$ and $F$ is only used to determine the planar relation between leaves.

Finally, we prove a lemma that we skipped earlier. 
Assume we work on the probability space $(\Omega, \mathcal{A}, \PP)$.

\begin{lemma}\label{lemma:application_of_Aldous_Hoover_Kallenberg}
Assume we have a jointly exchangeable, ergodic array $\{ \xi_i, \xi_j, \zeta_{ij} \}_{i,j\in \NN; i\neq j}$ of random variables taking values in $S_1 \times S_1 \times S_2$ where $S_1$ and $S_2$ are some Borel spaces. We can enlarge the probability space so that there exists an array $\{U_i,U_{ij} \}_{i,j\in \NN, i< j}$ of $i.i.d.$ uniform $[0,1]$ random variables which is independent of $\{\xi_i\}_{i\in \NN}$. Set $U_{ji}=U_{ij}$ for $i<j$.
We then have
\begin{equation*}
    \{\xi_i, \xi_j, \zeta_{ij} \}_{i,j\in \NN; i\neq j} \overset{d}{=} \{ F(\xi_i, U_i, \xi_j, U_j, U_{ij}) \}_{i,j\in \NN; i\neq j},
\end{equation*}
for some measurable function $F:S_1 \times [0,1] \times S_1 \times [0,1] \times [0,1] \to S_2$. 
\end{lemma}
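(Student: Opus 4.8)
The plan is to invoke the Aldous--Hoover--Kallenberg representation theorem for jointly exchangeable arrays \cite[Chapter 28]{kallenberg_foundations_2021} and then repackage the abstract representation so that the ``row randomness'' is carried by the prescribed variables $\xi_i$. Concretely, I would first regard $\{(\xi_i,\xi_j,\zeta_{ij})\}_{i,j\in\NN,\,i\neq j}$ as a jointly exchangeable array indexed by ordered pairs, valued in the Borel space $S:=S_1\times S_1\times S_2$. Since the array is ergodic, the theorem provides i.i.d.\ uniform random variables $(\eta_i)_{i\in\NN}$ and $(\eta_{ij})_{i<j}$ (with $\eta_{ji}:=\eta_{ij}$), all jointly independent, together with a measurable map $G=(G_1,G_2,G_3)\colon[0,1]^3\to S$ such that
\[
  \{(\xi_i,\xi_j,\zeta_{ij})\}_{i\neq j}\ \overset{d}{=}\ \{\bigl(G_1(\eta_i,\eta_j,\eta_{ij}),\,G_2(\eta_i,\eta_j,\eta_{ij}),\,G_3(\eta_i,\eta_j,\eta_{ij})\bigr)\}_{i\neq j};
\]
ergodicity is precisely what lets the directing random element of the general representation be chosen deterministic.

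The next step is to \emph{peel off} the first two coordinates. In the left-hand array the first coordinate of the $(i,j)$-entry is $\xi_i$, which does not depend on $j$; transporting this a.s.\ identity to the right-hand array gives $G_1(\eta_i,\eta_j,\eta_{ij})=G_1(\eta_i,\eta_k,\eta_{ik})$ a.s.\ for distinct $i,j,k$. Applying Lemma~\ref{lemma:app:ind} with $X=\eta_i$ and $Y=(\eta_j,\eta_{ij})$, $Z=(\eta_k,\eta_{ik})$ (independent of each other given $\eta_i$) shows that $G_1$ is a.s.\ a function $h(\eta_i)$ of its first argument alone; the symmetric argument, using that the first coordinate of the $(i,j)$-entry equals the second coordinate of the $(j,i)$-entry (both being $\xi_i$), shows $G_2(\eta_i,\eta_j,\eta_{ij})=h(\eta_j)$ a.s.\ for the same $h$. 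Writing $F_0:=G_3$ we obtain
\[
  \{(\xi_i,\xi_j,\zeta_{ij})\}_{i\neq j}\ \overset{d}{=}\ \{(h(\eta_i),\,h(\eta_j),\,F_0(\eta_i,\eta_j,\eta_{ij}))\}_{i\neq j},
\]
and in particular $(\xi_i)_i$ is i.i.d.\ with some law $\mu$ on $S_1$ and $h(\eta_1)$ has law $\mu$.

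Finally I would transfer the row randomness onto the $\xi_i$. Enlarge the probability space by i.i.d.\ uniform variables $(U_i)_{i\in\NN}$ and $(U_{ij})_{i<j}$ independent of everything present, and set $U_{ji}:=U_{ij}$. Because $[0,1]$ is Borel, a regular conditional distribution of $\eta_1$ given $h(\eta_1)$ exists, and Kallenberg's transfer theorem \cite{kallenberg_foundations_2021} yields a measurable $\psi\colon S_1\times[0,1]\to[0,1]$ with $(\xi_1,\psi(\xi_1,U_1))\overset{d}{=}(h(\eta_1),\eta_1)$. Applying $\psi$ coordinatewise, $\widetilde\eta_i:=\psi(\xi_i,U_i)$ are i.i.d.\ uniform, independent of $(U_{ij})_{i<j}$, and $h(\widetilde\eta_i)=\xi_i$ almost surely; thus $(\widetilde\eta_i)_i,(U_{ij})_{i<j}$ has exactly the directing distribution appearing in the representation above. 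Setting $F(x,u,y,v,w):=F_0\bigl(\psi(x,u),\psi(y,v),w\bigr)$ we then get
\[
  \{(\xi_i,\xi_j,F(\xi_i,U_i,\xi_j,U_j,U_{ij}))\}_{i\neq j}=\{(h(\widetilde\eta_i),\,h(\widetilde\eta_j),\,F_0(\widetilde\eta_i,\widetilde\eta_j,U_{ij}))\}_{i\neq j}\ \overset{d}{=}\ \{(\xi_i,\xi_j,\zeta_{ij})\}_{i\neq j},
\]
which is the assertion. The main obstacle is this last repackaging: the Aldous--Hoover--Kallenberg theorem hands over a representation in terms of \emph{auxiliary} i.i.d.\ uniforms, and one must check, via the peeling argument together with the transfer theorem, that the first coordinate can be identified with the prescribed $\xi_i$ while keeping the pair-randomness $U_{ij}$ genuinely independent and symmetric; once this is arranged the choice of $F$ is forced. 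An essentially equivalent alternative would be to apply the relative Aldous--Hoover--Kallenberg theorem to the array $(\zeta_{ij})_{i\neq j}$ conditionally on $(\xi_i)_i$.
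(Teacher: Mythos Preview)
Your proposal is correct and follows essentially the same route as the paper: apply the Aldous--Hoover--Kallenberg representation, use ergodicity to delete the global parameter, observe that the first coordinate of $G$ depends only on the row variable, and then transfer so that the row randomness is carried by the given $\xi_i$. The only cosmetic differences are that you make the ``peeling'' step explicit via Lemma~\ref{lemma:app:ind} and invoke the abstract transfer theorem, whereas the paper works on a space where \eqref{eq:rep2} is an almost-sure equality, conditions on $\{\xi_i\}=\{x_i\}$, and writes down the conditional quantile function $\Phi_{x_i}^{-1}$ of $V_i$ given $\xi_i$ directly; these are two phrasings of the same construction.
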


\begin{proof}
Without loss of generality we can assume that $S_1 = S_2 = [0,1]$. We use the Aldous--Hoover--Kallenberg theory of exchangeable arrays.

The representation theorem \cite[Theorem 7.22]{kallenberg_probabilistic_2005} for arrays of exchangeable random variables yields the existence of a measureable function $G':[0,1]^4 \to [0,1]^3$ such that
\begin{equation}\label{eq:rep}
    \{(\xi_i,\xi_j,\zeta_{ij}) \}_{i,j\in \NN, i \neq j} \overset{d}{=} \{G'(V,V_i,V_j,V_{ij} ) \}_{i,j\in \NN, i \neq j},
\end{equation}
where $V, (V_i)_{i \in \NN}, (V_{ij})_{i,j\in \NN, i< j}$ are independent uniform random variables on $[0,1]$ and we set $V_{ij}=V_{ji}$.

Recall our assumption of ergodicity of the exchangeable array $\{(\xi_i,\xi_j,\zeta_{ij}) \}$. \cite[Lemma 7.35]{kallenberg_probabilistic_2005} now yields that our representation \eqref{eq:rep} does not depend on $V$. More precisely, there is a measurable function $G:[0,1]^3 \to [0,1]^3$ such that
\begin{equation}\label{eq:rep2}
    \{(\xi_i,\xi_j,\zeta_{ij}) \}_{i,j\in \NN, i \neq j}  \overset{d}{=} \{G(V_i,V_j,V_{ij} ) \}_{i,j\in \NN, i \neq j}.
\end{equation}

We can work in a probability space where \eqref{eq:rep2} is a $\PP$--almost sure equality. We now condition on $\{ \xi_i \}_{i \in \NN} = \{ x_i \}_{i\in \NN}$ for some sequence in $[0,1]$. Choose a family of regular conditional distributions $\PP^x$ under which the $\{V_i,V_{ij} \}$ are still all independent of each other, $\{V_{ij}\}$ is still uniformly distributed but $\{V_i\}$ are not necessarily uniformly distributed. For $t\in [0,1]$, consider
$$\Phi_{x_i}(t) = \PP^x(V_i \leq t).$$
Observe that $\Phi_{x_i}(t)$ and $\Phi_{x_i,x_j}(t)$ depend measurably on $x_i$ and $x_j$ for any $i,j$. Enlarge the probability space again so that there is $\{U_i\}_{i\in \NN; i}$, an array of $i.i.d.$ uniform $[0,1]$ random variables. We then have the distributional equality under $\PP^x,$
\begin{equation*}
    \{\zeta_{ij} \}_{i,j\in \NN, i \neq j} \overset{d}{=} \{G_3(\Phi^{-1}_{x_i}(U_i), \Phi^{-1}_{x_j}(U_j), V_{ij} \}_{i,j\in \NN, i \neq j}.
\end{equation*}
Here $G_3$ is the third coordinate of $G$, i.e.\ $G(\cdot) = (G_1(\cdot), G_2(\cdot),G_3(\cdot))\in [0,1]^3$.
This means that there exists a measurable function $F:[0,1]^5 \to [0,1]$ so that 
\begin{equation*}
    \{\zeta_{ij} \}_{i,j\in \NN, i \neq j} \overset{d}{=} \{F(x_i,U_i,x_j,U_j,U_{ij})\}_{i,j\in \NN, i \neq j}.
\end{equation*}
Because we are using the same random variables $(U_i,U_{ij})_{i,j}$ regardless of the choice of $\{ x_i \}_{i\in \NN}$, we have that $\{U_i,U_{ij} \}_{i,j\in \NN, i\neq j}$ is independent of $\{ \xi_i, \xi_j\}_{i,j\in \NN, i\neq j}$. 
\end{proof}

\begin{center}
    \textbf{Acknowledgements}
\end{center}

The author would like to thank his PhD--supervisor Matthias Winkel for many useful discussions and for reading countless drafts. He would also like to acknowledge the support of EPSRC grant EP/W523781/1.

\printbibliography[heading=bibintoc]

\end{document}